\newcommand{\R}{\mathbb{R}}
\newcommand{\E}{\mathbb{E}}
\newcommand{\rd}{\mathrm{d}}
\newcommand{\esssup}{\mathop{\mathrm{ess\,sup}}}
\theoremstyle{definition}
\newtheorem{defn}{Definition}[section]
\theoremstyle{plain}
\newtheorem{thm}[defn]{Theorem}
\newtheorem*{thm*}{Theorem}
\newtheorem{lemma}[defn]{Lemma}
\newtheorem{coroll}[defn]{Corollary}
\newtheorem{prop}[defn]{Proposition}
\theoremstyle{remark}
\newtheorem{rmk}[defn]{Remark}
\newtheorem{assumptions}[defn]{Assumptions}
\title{Malliavin Calculus for rough stochastic differential equations}
\author{Fabio Bugini}
\address[Fabio Bugini]{Institut für Mathematik, Technische Universität Berlin, Straße des 17. Juni 136, 10623 Berlin} 
\email{fabio.bugini@tu-berlin.de}
\author{Michele Coghi}
\address[Michele Coghi]{Dipartimento di Matematica, Universit\`a degli Studi di Trento, via Sommarive 14, 38123 Povo (Trento), Italy; ORCID ID: 0000-0002-4198-0856} 
\email{michele.coghi@unitn.it}
\author{Torstein Nilssen}
\address[Torstein Nilssen]{Department of Mathematical Sciences, University of Agder, Postboks 422, 4604 Kristiansand, Norway} 
\email{torstein.nilssen@uia.no}
\date{\today}
\begin{document}

\maketitle

\begin{abstract}

In this work we show that rough stochastic differential equations (RSDEs), as introduced in \cite{friz2021rough}, are Malliavin differentiable.
We use this to prove existence of a density when the diffusion coefficients satisfies standard ellipticity assumptions. Moreover, when the coefficients are smooth and the diffusion coefficients satisfies a H\"{o}rmander condition, the density is shown to be smooth. 

The key ingredient is to develop a comprehensive theory of linear rough stochastic differential equations, which could be of independent interest. 



\end{abstract}

\smallskip
\noindent \textbf{Keywords.} Malliavin caluculus, rough paths, linear rough-stochastic equations, Hörmander theorem.

\smallskip
\noindent \textbf{MSC (2020).} 60H07, 60H10, 60L20.

\tableofcontents

\section{Introduction}
On a complete filtered probability space $(\Omega,\mathcal{F},\{\mathcal{F}_t\}_{t \in [0,T]}, \mathbb{P})$, we consider the following mixed rough path and stochastic differential equation on $\R^d$, for $t\in [0,T]$,
\begin{equation} \label{eq:RSDE_intro}
dX_t=b(X_t)dt+\sigma(X_t)dB_t+\beta(X_t)d\mathbf{Z}_t, \qquad X_0 = x_0 \in \R^d.
\end{equation} 
Here, $B=(B_t)_{t \in [0,T]}$ is an $m-$dimensional $\{\mathcal{F}_t\}$-Brownian motion and $\mathbf{Z}=(Z,\mathbb{Z})$ is a deterministic geometric $\alpha$-rough path, with $\alpha \in \left(\frac{1}{3},\frac{1}{2}\right]$.  
The coefficients of the equation are $b:\R^d\to\R^d$, $\sigma:\R^d\to\mathscr{L}(\R^m,\R^d)\equiv \R^{d \times m}$ and $\beta:\R^d\to\mathscr{L}(\R^n,\R^d)$.


A solution theory for equation \eqref{eq:RSDE_intro} can be found in \cite{friz2021rough}: Equation \eqref{eq:RSDE_intro} admits a unique solution $(X_t)_{t\in[0,T]}$, under the assumptions on the coefficients one would expect from considering the rough path and the stochastic equation separately; 
When $\beta\equiv 0$ equation \eqref{eq:RSDE_intro} is a classical stochastic differential equation (SDE) which is well-posed if the coefficents are Lipschitz. On the other hand, when $b,\sigma \equiv 0$, rough path theory (see \cite{lyons1998differential}) tells us that equation \eqref{eq:RSDE_intro} is well-posed when $\beta$ is three times differentiable, see also \cite{friz2020course} for a general introduction on rough paths.

In this paper, we study the Malliavin differentiability of the solution $X$ and, consequently, the existence (and smoothness) of the density with respect to the Lebesgue measure of the law of $X_t$, $t\in[0,T]$. 


One motivation for studying equations of the type \eqref{eq:RSDE_intro} or \eqref{eq:DSDE} comes from McKean-Vlasov models with common noise. 
When the deterministic rough path $\mathbf{Z}$ in equation \eqref{eq:RSDE_intro} is replaced by a stochastic process $W=(W_t)_{t\in[0,T]}$, independent of $B$, we obtain the following so called doubly stochastic differential equation
\begin{equation} \label{eq:DSDE}
dX_t=b(X_t)dt+\sigma(X_t)dB_t+\beta(X_t)dW_t, \qquad X_0 = x_0 \in \R^d.
\end{equation} 
Consider a system of $N$ (interacting) particles subject to both independent Brownian motions $B^1, \dots, B^N$ (one for each particle) and a common noise $W$. The empirical measure of the system is then expected to converge, when $N\to \infty$, to a random measure $\mu_t = \mathcal{L}(X_t\mid \sigma(W_s, \ s\leq t))$, which is the conditional law of a process $X$ solution to \eqref{eq:DSDE} given the common noise $W$. This is a phenomenon of conditional propagation of chaos, see \cite{kurtz1999particle} or \cite{coghi2016propagation} for a case without independent noise. In the case with interaction, some (or all) the coefficients of \eqref{eq:DSDE} might depend on $\mu$ itself, this case is beyond the scope of the current work. An important generalization of mean field theory is the theory of mean field games with common noise, which arises when the dynamics of the particles (in this case players or agents) are controlled and each agent tries to maximise a value function \cite{carmona2018probabilistic}. 

Let us emphasize that \eqref{eq:RSDE_intro} is more general than \eqref{eq:DSDE}, since, in general $\mathbf{Z}$ can be sampled form any stochastic process $W$ with continuous trajectories of Hölder regularity in $(\frac{1}{3},\frac{1}{2}]$, for which there exists a rough path lift. In particular $\mathbf{Z}$ can be sampled from a fractional Browinan motion, a more general Gaussian process or a Brownian bridge.

A second motivation for studying equations of the type \eqref{eq:DSDE} comes from filtering theory: in many applications one wants to have information about a stream of data called the signal, say the process $(X_t)_{t\in[0,T]}$, but only has access to the process $(W_t)_{t\in[0,T]}$, called the observation. See \cite{bain2008fundamentals} for a general introduction on stochastic filtering. Also in this context, the object of interest is the so called filtering measure, which is the distribution of $X$ given the observation $W$. As it turns out, under suitable assumptions and the application of a Girsanov tranformation, $X$ can be seen as a diffusion process which depends on the observation $W$ and on other independent fluctuations, as described by equation \eqref{eq:DSDE}. One fundamental question is the robustness of the model, or how the filtering measure depends on the observation or even if this dependence is continuous. Notoriously, the solution of a rough differential equation depends continuously on the driver and rough paths can be employed to construct robust filters, as in \cite{crisan2013robust} or \cite{coghi2023rough}.

Another motivation comes from linear rough PDEs as follows.
At least at the formal level, if we define $u_t := \mathcal{L}(X_t)$, then $u$ satisfies the Fokker-Planck equation
\begin{equation}
\label{eq: rough FP}
    \rd u = \mathcal{L}^\ast \ u \rd t + \nabla \cdot (\beta u) \rd \mathbf{Z}_t, \qquad u_0 = \delta_{x_0}
\end{equation}
where $\mathcal{L}^\ast$ is the adjoint operator of
\begin{equation*}
    \mathcal{L} = \sum_{i=1}^d b^i \frac{\partial}{\partial x_i}
    + \frac{1}{2} \sum_{i,j=1}^d a^{i,j}(x)\frac{\partial^2}{\partial x_i x_j},
\end{equation*}
and $a(x) := \sigma(x)\sigma(x)^\top$. 
If one shows that any solution of \eqref{eq: rough FP} can be represented by \eqref{eq:RSDE_intro} by the Feynman-Kac formula  (in a similar vein of \cite{diehl2017stochastic}), then the results of the present paper could be used to show smoothing properties in the solution of \eqref{eq: rough FP}.



The central result of our paper is the following

\begin{thm*}[see Theorem \ref{thm:malliavincalculusforRSDEs}]
If $b$ and $\sigma$ are differentiable and bounded and $\beta$ is three times differentiable and bounded, then the solution $X$ to equation \eqref{eq:RSDE_intro} is Malliavin differentiable.
\end{thm*}

Using the notation from \cite{malliavin2015stochastic} and \cite{nualart2006malliavin} we will actually prove that $X$ belongs to the space $\mathbb{D}^{1,p}$ for every $p\geq 1$.
The strategy of the proof is based on an approximation argument as follows. We replace $\mathbf{Z}$ in equation \eqref{eq:RSDE_intro} with the canonical lift of a smooth path, say $\mathbf{Z}^n$, where $(\mathbf{Z}^n)_{n\in \mathbb{N}}$ converges to $\mathbf{Z}$ in the rough path metric.
When equation \eqref{eq:RSDE_intro} is driven by $\mathbf{Z}^n$, its solution, $X^n$, is well known to be Malliavin differentiable and its derivative satisfies a linearized equation. 
Provided we can show good a priori estimates on this linearized equation, 
Malliavin differentiability of $X$ now follows by $L^2(\Omega)$ stability of $\mathbf{Z} \mapsto X$ in the rough path metric coupled with the fact that the Malliavin derivative is a closed operator.

Towards this goal, we will give a meaning and prove well-posedness to solutions of equations of the form
\begin{align} \begin{aligned}
     dY_t &= dF_t +  G_tY_tdt + S_tY_tdB_t + f_t Y_td\mathbf{Z}_t, \quad t \in [0,T] ,\\
     Y_0&=\xi,
\end{aligned} \label{eq:linearRSDE_model intro}
\end{align}
on any finite dimensional Hilbert space.
Indeed, as in the classical case, we expect the Malliavin derivative $Y^{\theta}_t = \mathbb{D}_{\theta}X_t \in \R^{d \times m}$, $t\ge \theta \in [0,T]$, to solve the following linear rough stochastic equation,
\begin{equation} \label{eq:linearRSDEforthemalliavinderivative intro}
\begin{aligned} &dY^\theta_t=Db(X_t)Y^\theta_t dt+D\sigma(X_t)Y^\theta_t dB_t+D\beta(X_t) Y^\theta_t d\mathbf{Z}_t, \ t \in [\theta,T] \\ &Y^\theta_\theta=\sigma(X_\theta).
\end{aligned}
\end{equation}

In equation \eqref{eq:linearRSDE_model intro} the coefficients $G$ and $S$ are random linear vector fields and $\xi \in L^p(\Omega;W)$. The linear term, $f$, and the forcing term, $F$, are assumed to be \emph{stochastically controlled vector fields}, as introduced in \cite{friz2020existence}; This is a generalization of controlled paths in the sense of Gubinelli \cite{gubinelli2004controlling} (see Section \ref{sec: linear rough stochastic equations} below).

Since the vector fields of \eqref{eq:linearRSDE_model intro} are linear, hence not bounded, we cannot apply the well-posedness results of \cite{friz2021rough}. Our main technical contribution is thus the following. 
\begin{thm*}[see Theorem \ref{thm:wellposednesslinearRSDEs}]
    Equation \eqref{eq:linearRSDE_model intro} admits a unique solution. Moreover, the solution is continuous in all the inputs $(F,G,S,f,\mathbf{Z},\xi)$.
\end{thm*}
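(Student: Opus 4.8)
The plan is to prove existence and uniqueness by a standard fixed-point argument on a short time interval, carried out in the space of stochastically controlled vector fields, and then patch the local solutions together to cover all of $[0,T]$; the continuity in the data will follow from a quantitative version of the same estimates. The first step is to set up the right solution space: one looks for $Y$ as a stochastically controlled path with Gubinelli derivative dictated by the rough part of the equation, i.e. $Y' = f\, Y$ (plus the controlled-path structure inherited from $F$), living in an appropriate $L^p(\Omega)$-valued controlled-path norm on a subinterval $[s,s+h]$. On this space one defines the solution map $\mathcal{M}$ that sends a candidate $Y$ to the path obtained by solving the (now explicit) rough-stochastic integral equation
\begin{equation*}
    Y_t = \xi + F_t - F_s + \int_s^t G_r Y_r\, dr + \int_s^t S_r Y_r\, dB_r + \int_s^t f_r Y_r\, d\mathbf{Z}_r,
\end{equation*}
where the rough-stochastic integral $\int f Y\, d\mathbf{Z}$ is the one constructed in \cite{friz2021rough} for stochastically controlled integrands. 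The key point that makes the map well-defined is that, although the vector fields $G,S,f$ are linear and hence unbounded, the a priori bounds only ever see them multiplied by $Y$ itself, so the relevant estimates are linear (not quadratic) in the size of $Y$; this is precisely why the local existence time $h$ can be chosen \emph{independently of the size of the initial datum} $\xi$, which is what allows the patching step to reach the full interval $[0,T]$ in finitely many steps.

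The second step is the contraction estimate. On $[s,s+h]$ one bounds the controlled-path norm of $\mathcal{M}(Y) - \mathcal{M}(\tilde Y)$ in terms of that of $Y - \tilde Y$, with a constant of the form $C\,h^{\kappa}$ for some $\kappa>0$ depending on $\alpha$ and on the integrability exponents, where $C$ depends on the (finite) norms of $G,S,f,F$ and on $\mathbf{Z}$ but not on $\xi$. The drift term contributes a factor $h$, the Brownian term a factor $h^{1/2}$ via the Burkholder–Davis–Gundy inequality applied to the $L^p(\Omega)$-norm, and the rough term a factor $h^{\alpha}$ via the sewing/stochastic-sewing estimates for the rough-stochastic integral; choosing $h$ small makes $\mathcal{M}$ a strict contraction, giving a unique local fixed point, which is the unique solution on $[s,s+h]$. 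Uniqueness on $[0,T]$ then follows by concatenation together with the observation that two global solutions must agree on each small block.

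The third step is continuity in the inputs $(F,G,S,f,\mathbf{Z},\xi)$. Here one revisits the local estimate but now compares the solutions $Y$ and $\bar Y$ associated to two sets of data; the difference $Y - \bar Y$ solves a linear rough-stochastic equation of the same type with a forcing term built out of the differences of the data (and of the two solutions, which are already controlled by the a priori bounds of Step 1), so exactly the same contraction-type estimate yields Lipschitz-type control of $Y - \bar Y$ in the local norm by the distances between the data, uniformly over one block; iterating over the finitely many blocks propagates this to $[0,T]$. One subtlety is that when $\mathbf{Z}$ varies the Gubinelli derivative of the solution changes too, so the comparison must be made in the ``$\mathbf{Z}$ versus $\bar{\mathbf{Z}}$'' distance for controlled paths, as in the stability theory of \cite{friz2020course}; handling the rough integral difference $\int fY\,d\mathbf{Z} - \int \bar f\bar Y\,d\bar{\mathbf{Z}}$ cleanly is, I expect, the main technical obstacle, since it requires the continuity of the stochastic sewing construction jointly in the integrand data and in the rough path, in the $L^p(\Omega)$ setting. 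Everything else is a fairly mechanical adaptation of the classical linear SDE theory and of the rough-path fixed-point scheme, made possible by the linearity-in-$Y$ structure noted above.
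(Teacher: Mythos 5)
Your plan is sound and rests on the right structural observations — above all that linearity of $G,S,f$ makes the solution map affine in $Y$, so the contraction constant (and hence the local existence time $h$) can be chosen independently of $\|\xi\|_p$, which is what makes concatenation reach $[0,T]$. It is, however, a genuinely different implementation from the paper's. You propose a fixed point in a controlled-path space on short intervals $[s,s+h]$ and then patch; the paper instead runs a Picard iteration \emph{globally} on $[0,T]$, controlling the iterates in exponentially weighted norms $(|\cdot|)_{p;\lambda}$, $(|\delta\cdot|)_{\gamma;p;\lambda}$ (Appendix \ref{appendix weighted norms}) in the spirit of Bailleul--Riedel--Scheutzow, where the small parameter $\lambda$ plays the role of your interval length $h$. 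This sidesteps the concatenation step entirely — in your scheme you would have to restart the equation at each block with a random $\mathcal{F}_{s+h}$-measurable initial condition and check that the controlled-path structure (Gubinelli derivative $Y'=fY+F'$ and remainder bounds) glues correctly at the junctions, which is routine but adds bookkeeping. Two further points you should make explicit if you pursue your route: (i) the hypothesis $p\gamma>1$ (which the paper uses via Kolmogorov's criterion to upgrade $L^p$-convergence of the Picard iterates to $\mathbb{P}$-a.s.\ uniform convergence, hence a.s.\ continuity of the limit) would be needed in your setup too, since norm convergence of the fixed-point iterates does not by itself give a continuous modification; and (ii) you correctly flag that the hard step for continuity in the inputs is comparing $\int fY\,d\mathbf{Z}$ with $\int \bar f\bar Y\,d\bar{\mathbf{Z}}$ for different rough paths — the paper resolves this (Lemma \ref{lemma:stabilityforremainder}) by pairing $\hat Z=(Z,\bar Z)$ and treating both controlled paths as $\hat Z$-controlled, so the remainder difference $R^{fY}-\bar R^{\bar f\bar Y}$ becomes a telescoping sum of remainders with respect to the same reference path; without some such device the comparison does not close.
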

The solution of \eqref{eq:linearRSDE_model intro} is understood in the sense of Davie \cite{davie2008differential} (see Definition \ref{def:solutionlinearRSDEs} below) as in \cite{friz2021rough}. A priori estimates are proved using techniques introduced in \cite{bailleul2017unbounded} and stochastic sewing techniques.
When the coefficients of equation \eqref{eq:RSDE_intro} are smooth, we establish (infinite) Malliavin differentiability for the solutions.

\bigskip

Next, we apply these results to study the density of \eqref{eq:RSDE_intro} as follows. In the regularity class of \cite{friz2021rough} and under a uniform ellipticity condition on $\sigma$ we can show that the solution of \eqref{eq:RSDE_intro} admits a density with respect to Lebesgue measure. 
Smoothness of the density is shown when the coefficients are smooth and satisfy an appropriate Hörmander condition as described in the following.

Define the space of vector fields $\mathcal{S}_0 := \{\sigma^k \mid k=1,\dots,m\}$ and recursively
\begin{align}
\label{eq: def of S spaces}
    \mathcal{S}_{i+1} := \mathcal{S}_{i} \cup
    \{
        [F,V] \mid F\in\{b,\sigma^1,\dots,\sigma^m,\beta^1,\dots,\beta^n\},
        V\in\mathcal{S}_{i}
    \},
    \qquad
    i\in \mathbb{N}
\end{align}
as well as $\mathcal{S} = \bigcup_{i} \mathcal{S}_{i}$. Here, we have used the commutator notation between two vector fields $F,G$, viz
\begin{equation*}
    \label{eq: lie brackets}
    [F,G](x) = DG(x)F(x)-DF(x)G(x), \qquad x\in \R^d.
\end{equation*}
For $x \in \R^d$ we define $\mathcal{S}_{i}(x) = \text{span}\{V(x) | V \in \mathcal{S}_{i} \}$ and $\mathcal{S}(x) = \operatorname{span}\{V(x) \mid V\in\mathcal{S}\}$. 

Our Hörmander condition reads as follows.
\begin{assumptions}[H\"ormander condition]
\label{def: heormander condition}
    Assume $b \in \mathcal{C}^{\infty}_b(\R^d;\R^d)$, $\sigma \in \mathcal{C}^{\infty}_b(\R^d;\mathscr{L}(\R^m,\R^d))$, and $\beta \in \mathcal{C}^{\infty}_b(\R^d;\mathscr{L}(\R^n,\R^d))$.
We say that \textit{H\"ormander's condition} is satisfied if $\mathcal{S}(x) = \R^d$ for every $x\in \R^d$.
\end{assumptions}


Our main results are summarized in the following theorem. See Theorems \ref{thm: existence under hoermander} and \ref{thm: smoothness of densities} for precise results and Definitions \ref{def: truly rough} and \ref{def: theta hoelder rough} for the definition of true roughness and $\theta$-Hölder roughness. Heuristically, an $\alpha$-Hölder continuous path, $Z$, is truly rough if it is \emph{not} $2\alpha$-Hölder continuous at every time and this notion yields a suitable rough path version of the Doob-Meyer decomposition. The notion of $\theta$-Hölder roughness is a further quantitative measure on this roughness and yields a suitable rough path version of Norris lemma. 
Both properties are shared, for instance, by the Brownian motion and more in general by a class of Gaussian processes which  include the fractional Brownian motion.
\begin{thm*}
    Suppose the coefficients of \eqref{eq:RSDE_intro} satisfy Assumption \ref{def: heormander condition} and $Z$ is truly rough. 
    Then, for each fixed $t$,  the solution $X_t$ has a density w.r.t. Lebesgue measure.

    If, in addition, $Z$ is $\theta$-Hölder rough, then the density is smooth. 
\end{thm*}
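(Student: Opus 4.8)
The plan is to run the standard Malliavin-calculus criteria for existence and smoothness of densities — namely, that $X_t \in \mathbb{D}^{1,p}$ (resp. $\mathbb{D}^{\infty}$) with Malliavin matrix $\gamma_t := \mathbb{D}X_t (\mathbb{D}X_t)^\top$ invertible a.s. (resp. with $\det \gamma_t^{-1} \in L^p(\Omega)$ for all $p$) — and to verify the nondegeneracy of $\gamma_t$ using the rough-path Doob--Meyer / Norris machinery in place of the classical semimartingale arguments. The Malliavin differentiability inputs are already supplied by the central theorems of the excerpt: under Assumption \ref{def: heormander condition} the coefficients are smooth and bounded, so $X_t \in \mathbb{D}^{\infty}$ and, for $\theta \le t$, $\mathbb{D}_\theta X_t = J_{\theta \to t}\, \sigma(X_\theta)$ where $J_{s\to t} := J_t J_s^{-1}$ is the (rough-stochastic) Jacobian flow solving the linear RSDE \eqref{eq:linearRSDEforthemalliavinderivative intro} with the forcing/linear terms read off from $Db,D\sigma,D\beta$ along $X$; the well-posedness and moment bounds for this linear equation, and the invertibility of $J_t$, come from Theorem \ref{thm:wellposednesslinearRSDEs}. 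Consequently $\gamma_t = \int_0^t J_{\theta\to t}\,\sigma(X_\theta)\sigma(X_\theta)^\top J_{\theta\to t}^\top \, \rd\theta = J_t \Bigl( \int_0^t J_\theta^{-1} a(X_\theta) (J_\theta^{-1})^\top \rd\theta \Bigr) J_t^\top$, so it suffices to show the reduced matrix $C_t := \int_0^t J_\theta^{-1} a(X_\theta)(J_\theta^{-1})^\top \rd\theta$ is a.s. invertible, and, for smoothness, that $\|C_t^{-1}\| \in L^p(\Omega)$ for every $p$.

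For \emph{existence} of the density, I would argue by contradiction in the now-classical way: if $C_t$ is not a.s. invertible, there is a unit vector $v \in \R^d$ (measurable, $\mathcal{F}_t$-valued) with $v^\top C_t v = 0$, i.e. $|\sigma(X_\theta)^\top (J_\theta^{-1})^\top v| = 0$ for a.e. $\theta \in [0,t]$; by continuity, $v^\top J_\theta^{-1}\sigma^k(X_\theta) = 0$ for all $\theta \in [0,t]$ and all $k$, so the process $M^k_\theta := v^\top J_\theta^{-1}\sigma^k(X_\theta) \equiv 0$. The key computation is an Itô/rough product rule for $\theta \mapsto v^\top J_\theta^{-1} V(X_\theta)$ for a general smooth vector field $V$: since $J^{-1}$ conjugates the flow, the $\rd\theta$-drift picks up $v^\top J_\theta^{-1}[V,b](X_\theta)$ plus second-order Stratonovich corrections in the $\sigma$-directions, the $\rd B$-martingale part is $\sum_k v^\top J_\theta^{-1}[V,\sigma^k](X_\theta)\,\rd B^k_\theta$, and the $\rd \mathbf{Z}$-part is $\sum_\ell v^\top J_\theta^{-1}[V,\beta^\ell](X_\theta)\,\rd Z^\ell_\theta$ — exactly the bracket structure defining the spaces $\mathcal{S}_i$ in \eqref{eq: def of S spaces}. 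Starting from $V = \sigma^k \in \mathcal{S}_0$ and knowing the resulting process is identically zero, both its Brownian martingale part and, \emph{using true roughness of $Z$}, its $\rd\mathbf{Z}$-part must vanish (true roughness is precisely the rough-path Doob--Meyer statement that a controlled path which is constant has vanishing Gubinelli derivative along $Z$, and the martingale-plus-drift decomposition lets me separate the $B$-part too); hence $v^\top J_\theta^{-1}[F,\sigma^k](X_\theta) \equiv 0$ for every $F \in \{b,\sigma^1,\dots,\beta^n\}$. Iterating over the inductively defined $\mathcal{S}_i$ shows $v^\top J_\theta^{-1} V(X_\theta) \equiv 0$ for all $V \in \mathcal{S}$, and evaluating at $\theta = 0$ (where $J_0 = \mathrm{Id}$) gives $v \perp \mathcal{S}(x_0) = \R^d$, a contradiction.

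For \emph{smoothness} of the density, the qualitative argument must be upgraded to the quantitative estimate $\mathbb{P}(v^\top C_t v < \varepsilon) \lesssim \varepsilon^p$ uniformly in unit vectors $v$, for every $p$; this is where $\theta$-Hölder roughness enters, via the rough version of Norris's lemma (the deterministic/pathwise estimate bounding the sup-norm of a controlled path and its Gubinelli derivative by a power of the sup-norm of the controlled integral, with constants depending on the Hölder-roughness modulus). One runs the same bracket iteration but tracks $L^p$-norms: smallness of $v^\top C_t v$ forces smallness of $\|M^k\|_\infty$, Norris's lemma then forces smallness of the $\rd B$- and $\rd\mathbf{Z}$-integrands, i.e. of $\|v^\top J^{-1}[F,\sigma^k](X_\cdot)\|_\infty$, and after finitely many steps (enough to span, using a compactness/stratification argument so that a uniform finite level suffices on compacts, together with the uniform Hörmander hypothesis $\mathcal{S}(x)=\R^d$ for all $x$) one contradicts $\sum_{V} |v^\top J_\theta^{-1} V(X_\theta)|^2$ being bounded below near $\theta = 0$. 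Combined with the moment bounds on $J_t$, $J_t^{-1}$ from Theorem \ref{thm:wellposednesslinearRSDEs}, this yields $\det \gamma_t^{-1} \in \bigcap_p L^p(\Omega)$, and the Malliavin smoothness criterion delivers a $C^\infty$ density.

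I expect the \textbf{main obstacle} to be the rough-path Norris lemma step in the smooth case: one must control, in $L^p(\Omega)$, the remainder terms produced when applying the product rule to $v^\top J_\theta^{-1} V(X_\theta)$ — these are stochastically controlled (not merely controlled) paths, so the interplay between the stochastic-sewing estimates, the conditional structure, and the pathwise Hölder-roughness constant of $Z$ has to be handled with care; in particular, obtaining estimates that are \emph{uniform over the unit sphere of directions $v$} and stable under the regularity class of the problem is the technically delicate point, and it is likely the reason the authors developed the full linear RSDE theory as a standalone contribution.
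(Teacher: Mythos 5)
Your proposal follows essentially the same route as the paper: reduce to invertibility (resp.\ quantitative nondegeneracy) of the reduced Malliavin matrix $C_t = \int_0^t J_\theta^{-1}\sigma\sigma^\top(X_\theta)(J_\theta^{-1})^\top\,\rd\theta$, derive the rough It\^{o} dynamics of $W_\theta(V):=v^\top J_\theta^{-1}V(X_\theta)$ (which produces the Lie-bracket structure and the Stratonovich correction $\frac12\sum_k[\sigma^k,[\sigma^k,V]]$), and then kill each of the $\rd\theta$, $\rd B$, and $\rd\mathbf{Z}$ integrands inductively — qualitatively via true roughness and a weak Doob--Meyer/Norris statement for existence, and quantitatively via a $\theta$-H\"older-roughness Norris lemma for smoothness — before evaluating at $\theta=0$.

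One small overcomplication worth noting: in the smoothness part you invoke a compactness/stratification argument to find a uniform finite bracket level. This is unnecessary here because the initial condition $x_0$ is deterministic: $W_0(G)=\xi^\top G(x_0)$ is a fixed vector, the H\"{o}rmander condition gives a single $k$ with $\operatorname{span}\{G(x_0):G\in\bar{\mathcal S}_k\}=\R^d$, and so for $|\xi|=1$ and $\epsilon$ small the event $\{|\xi^\top G(x_0)|\le\epsilon^{r_k}\ \forall G\in\bar{\mathcal S}_k\}$ is simply empty — no uniformity over compacts in state space is needed. Otherwise the proposal is on target, and your identification of the rough Norris lemma for stochastically controlled paths as the technically delicate input is exactly right.
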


Note that the roughness of $Z$ spans the diffusivity in more directions via each of the vector fields $\beta^1, \dots, \beta^n$ than would be the case if $Z$ was a smooth path. To see this, consider the case when $Z_t^j = t$ for each $j =1, \dots, n$ for which we have classical diffusion
$$
dX_t = \big( b(X_t) + \sum_{j=1}^n \beta^j(X_t) \big) dt + \sigma(X_t) dB_t.
$$
In this setting the first 2 sets of vector fields would be
$$
\hat{\mathcal{S}}_0 = \{\sigma^k \mid k=1,\dots,m\} = \mathcal{S}_0
$$
and 
$$
\hat{\mathcal{S}}_1 = \hat{\mathcal{S}}_0 \cup \{
        [F,V] \mid F\in\{b + \sum_{j=1}^n \beta^j ,\sigma^1,\dots,\sigma^m\},
        V\in \hat{\mathcal{S}}_{0}
    \}
$$
which clearly gives $\hat{\mathcal{S}}_1(x) \subset \mathcal{S}_1(x)$.

\begin{rmk}
    It is worth noting that, in accordance with the classical case, when the diffiusion coeffient $\sigma$ is uniformly elliptic, Hörmander condition is immediately satisfied and there exists a density. In this case the coefficients of the equation don't need to be smooth (this is apparent from the proof of existence of densities). 
\end{rmk}



Malliavin differentiability for solutions to rough differential equations of the form
\begin{equation} \label{eq:purely rough path eq}
dX_t = b(X_t) dt + \xi(X_t) d\mathbf{W}_t
\end{equation}
has been first studied in \cite{cass2010densities} for the case without drift, i.e. $b\equiv 0$, and when $\textbf{W}$ is a centered Gaussian process, with suitable additional properties (those include the fractional Brownian motion of Hurst parameter $H>\frac{1}{4}$). In particular, it is shown that, under Hörmander condition, the solution admits a density.

Later, in \cite{hairer2013regularity} the authors proved smoothness of densities under Hörmander condition for equations of the form \eqref{eq:purely rough path eq} with drift, when the driver is a fractional Brownian motion of Hurst parameter $H\in (\frac{1}{3},\frac{1}{2}]$. A less general case was studied in \cite{hu2013smooth}: in this work smoothness of densities has been proven under the additional assumption that the drift $b$ is $0$ and the columns of $\xi$ are $n$-nilpotent. The existence of smooth densities for the very special case of the rough path lift of a $2$-dimensional fractional Brownian motion with Hurst parameter $H\in (\frac{1}{3},\frac{1}{2}]$ (equation \eqref{eq:purely rough path eq} with constant coefficient $\xi$ and no drift) has been earlier proved in \cite{driscoll2010smoothness}.
The previous results have been generalized in \cite{cass2015smoothness}. The authors consider as driver $\mathbf{W}$ a centered Gaussian process satisfying some additional assumptions (which include, among others, the fractional Brownian motions with Hurst parameter $H>\frac{1}{4}$ and the Brownian bridge) and they prove that the solution to equation \eqref{eq:purely rough path eq} admits a smooth density if the coefficients are smooth and satisfy Hörmander condition.

In principle, one can use the classical approach of joining the deterministic rough path with Brownian motion (see for instance \cite{diehl2017stochastic}, \cite{diehl2015levy}, \cite{friz2020existence} or \cite{coghi2019rough}) to obtain a purely rough equation, as described below. 
Consider the joint rough path 
$$
W_{s,t} := 
\left(
\begin{array}{c}
B_{s,t} \\
Z_{s,t} \\
\end{array}
\right), 
\qquad 
\mathbb{W}_{s,t} := 
\left(
\begin{array}{cc}
\mathbb{B}_{s,t} & \int_s^t Z_{s,r} \otimes dB_r \\
\int_s^t B_{s,r} \otimes dZ_r & \mathbb{Z}_{s,t} \\
\end{array}
\right) 
$$
and the vector fields
$$
\xi_j := 
\left\{
\begin{array}{cc}
\sigma_j & \textrm{ for } j = 1, \dots, m \\
\beta_{j-m} & \textrm{ for } j = m+1, \dots, m+n. \\
\end{array}
\right.
$$
We can then consider equation \eqref{eq:purely rough path eq}, where $\xi : \R^d \rightarrow \mathscr{L}(\R^{m+n};\R^d)$.
However, if we try to fit equation \eqref{eq:RSDE_intro} in the pure rough framework, we would incur in a couple of drawbacks;
First, the pure rough approach requires $\xi$ (and, hence, $\sigma$) to be $3$-times differentiable with bounded derivatives of all orders. As it has been shown in \cite{friz2021rough} this assumption can be weakened for rough stochastic differential equations of the form \eqref{eq:RSDE_intro}, thanks to the stochastic sewing approach, first developed in \cite{le2020stochastic}.

Second, to invoke a Norris type argument for $W = (B,Z)$ in the rough path framework one would need to know that the sample paths of this process are almost surely $\theta$-Hölder rough. 
However, to the best of the authors knowledge, it is not possible to conclude $\theta$-Hölder roughness of $W$ based on knowledge of $B$ and $Z$ separately; showing this property using existing techniques (see e.g. \cite[Proposition 6.11]{friz2020course}) relies on probabilistic arguments on the full path. With this in mind, such a condition would need to be \emph{assumed} for $W$
\footnote{On the other hand, if $Z$ itself is a sample path of the Brownian motion, then one could argue that with probability 1 the path is indeed $\theta$-Hölder rough, since then $W$ itself is just a sample path of the $m+n$-dimensional Brownian motion.}.
Finally, notice that in previous works on densities for rough differential equations, like \cite{cass2010densities}, \cite{hairer2011malliavins} and \cite{cass2015smoothness}, one must assume that $(B,Z)$ is centered Gaussian, which is not the case when $Z$ is a deterministic path.


The paper is organized as follows.
In Section \ref{sec: notation} we introduce some notation and we state some preliminary results in stochastic rough analysis. In Section \ref{sec: linear rough stochastic equations} we prove existence, uniqueness and stability for rough stochastic linear equations. Section \ref{sec: malliavin} contains the main result about Malliavin differentiability of the solutions to rough stochastic differential equations. In section \ref{sec: hoermander} we prove existence and smoothness of the density under Hörmander condition.

\subsection*{Acknowledgment}
F.B. acknowledges support from the IRTG 2544, which is funded by the DFG.

M.C. thanks Istituto Nazionale di
Alta Matematica, group GNAMPA, through the project ‘Fluidodinamica Stocastica’ - E53C23001670001.

The authors would like to thank Peter Friz for useful insights on joint rough paths lifts.

\section{Notation and Preliminaries}
\label{sec: notation}

\subsection{Notation} We write $a \lesssim b$ to express that there exists a non-negative constant $C\ge 0$ such that $a \le Cb$. If such a constant depends on some parameters $\theta_1,\dots,\theta_N$ we write $a \lesssim_{\theta_1,\dots,\theta_N} b$.
For any $\pi=\{a=t_0 < t_1< \dots <t_N=b \}$ partition of $[a,b]$, we define its mesh size as \begin{equation*}
    |\pi| := \max_{i=0,\dots,N-1} |t_{i+1}-t_i|.
\end{equation*}
For any $x \in \R^d$, we denote by $x^i$ its $i$-th component ($i=1,\dots,d)$. 

 Let $(W,|\cdot|_W)$ and $(\bar{W},|\cdot|_{\bar{W}})$ be two Banach spaces. We denote by $\mathscr{L}(W,\bar{W})$ the space of linear and continuous functions from $W$ to $\bar{W}$. The latter is a Banach space, if endowed with the norm $|T|_{\mathscr{L}(W,\bar{W})} := \sup_{x \in W, |x|_W\le 1} |Tx|_{\bar{W}}$. For any $T \in \mathscr{L}(W,\bar{W})$ and for any $x \in W$, we have that \begin{equation*}
    |Tx|_{\bar{W}} \le |T|_{\mathscr{L}(W,\bar{W})}|x|_W.
\end{equation*} 
If $W$ and $\bar{W}$ are finite dimensional Hilbert spaces, say $W \equiv \R^n$ and $\bar{W} \equiv \R^m$, we often identify the tensor product $W \otimes \bar{W}$ with the matrix space $\R^{n\times m}$.
Given any other finite dimensional Hilbert space $(V,|\cdot|_V)$, the following identifications hold: \begin{equation*}
    \mathscr{L}(W \otimes \bar{W},V) \equiv \mathscr{L}(W,\mathscr{L}(\bar{W},V)) \equiv \mathscr{L}^2(W \times \bar{W},V),
\end{equation*} where $\mathscr{L}^2(W \times \bar{W},V)$ denotes the space of bilinear and continuous maps from $W \times \bar{W}$ to $V$.

Given any function $F:W \to \bar{W}$, we denote by $|F|_\infty := \sup_{x \in W} |F(x)|_{\bar{W}}$. If $|F|_\infty<+\infty$, the function $F$ is said to be bounded. Moreover, given $n \in \mathbb{N}$, we denote by ${C}^n_b(W;\bar{W})$ the space of bounded and continuous functions from $W$ to $\bar{W}$ which are $n$-times continuously differentiable with bounded derivatives.
If $F$ is differentiable in $W$, we consider its derivative as a map $DF:W \to \mathscr{L}(W,\bar{W})$. 
The space ${C}^n_b(W;\bar{W})$ is a Banach space if endowed with the norm \begin{equation*}
    |F|_{{C}_b^n} := |F|_\infty + |DF|_\infty + \dots + |D^nF|_\infty,
\end{equation*} 
where $D^nF$ denotes the $n$-th derivative of $F$.
Any function $F \in {C}^1_b(W,\bar{W})$ satisfies the following mean value theorem: \begin{equation*}
    F(x+h) - F(x) = \int_0^1 DF(x+\theta h) d\theta \  h
\end{equation*} for any $x,h \in W$, where the integral can be intended as a componentwise integral, namely \begin{equation*}
    F^i(x+h) - F^i(x) = \sum_{j=1}^n \big(\int_0^1 \frac{\partial F^i}{\partial x^j}(x+\theta h) d\theta \big) h^j \qquad i=1,\dots,n.
\end{equation*}
Similarly, for any $F \in {C}^2_b(W,\bar{W})$ and for any $x,h \in W$, \begin{equation*}
    F(x+h) - F(x) - DF(x) h = \frac{1}{2} \int_0^1 (1-\theta) D^2F(x +\theta h) d\theta \ h \otimes h.
\end{equation*}

Given a probability space $(\Omega,\mathcal{F},\mathbb{P})$ and an integrability esponent $p\in [1,\infty)$, we denote by $\|\cdot\|_{p;W}$ - or simply $\|\cdot\|_p$, if clear from the context - the usual norm on the Lebesgue space $L^p(\Omega;W)$ of $W$-valued random variables. A $W$-valued stochastic process $X=(X_t)_t$ is said to be $L^p$-integrable if any $X_t$ belongs to $L^p(\Omega;W)$. If $W=\R$, we sometimes write $L^p(\Omega)$ instead of $L^p(\Omega;\R)$. For a given sub-$\sigma$-field $\mathcal{G}\subseteq \mathcal{F}$, the space $L^p(\Omega,\mathcal{G};W)$ is the space of $\mathcal{G}$-measurable random variables in $L^p(\Omega;W)$.
The norm $\|\cdot\|_\infty$ denote the essential supremum norm. 

\subsection{Rough paths theory}
Let $(E,|\cdot|_E)$ be any Banach space. If clear from the context, we denote $|\cdot|_E$ simply by $|\cdot|$.
For any $T>0$ we define \begin{equation*}
    \Delta_{[0,T]} := \{ (s,t) \in [0,T]^2 \ | \ s\le t\}
\end{equation*} and \begin{equation*}
    \Delta^2_{[0,T]} := \{ (s,u,t) \in [0,T]^3 \ | \ s\le u \le t\}. 
\end{equation*}
For any path $[0,T]\ni t \mapsto X_t \in E$, we define its increment as the two parameter function
\begin{equation*}
    \delta X_{s,t} := X_t - X_s \qquad (s,t) \in \Delta_{[0,T]}.
\end{equation*}
Similarly, given a two-parameter function  $A:\Delta_{[0,T]} \to E$, $(s,t) \mapsto A_{s,t}$, we define \begin{equation*}
    \delta A_{s,u,t}:= A_{s,t} - A_{s,u} - A_{u,t} \qquad (s,u,t) \in \Delta_{[0,T]}^2. 
\end{equation*}

\begin{defn} Let $T>0$ and $\alpha \in (0,1]$. An $\alpha$-Hölder continuous path with values in $E$ is a map $X:[0,T] \to E$ for which there exists a constant $C>0$ such that \begin{equation*}
    |\delta X_{s,t}|_E \le C|t-s|^\alpha \qquad \text{for any $(s,t) \in \Delta_{[0,T]}$.}
\end{equation*} 
We write $X \in {C}^\alpha([0,T];E)$ and we denote by \begin{equation*}
    |\delta X|_{\alpha;E} := \sup_{0\le s < t \le T} \frac{|X_t-X_s|_E}{|t-s|^\alpha}.
\end{equation*}  
Similarly, an $\alpha$-Hölder continuous two-parameter path with values in $E$ is a map $A:\Delta_{[0,T]} \to E$ for which there exists a constant $C>0$ such that \begin{equation*}
    |A_{s,t}|_E \le C|t-s|^\alpha \qquad \text{for any $(s,t) \in \Delta_{[0,T]}$.}
\end{equation*} 
We write $A \in {C}_2^\alpha(\Delta_{[0,T]};E)$ and we denote by \begin{equation*}
    |A|_{\alpha;E} := \sup_{\substack{(s,t)\in\Delta_{[0,T]}\\ s \ne t}} \frac{|A_{s,t}|_E}{|t-s|^\alpha}.
\end{equation*}
If $X \in {C}^\alpha([0,T];E)$, then $\delta X \in {C}_2^\alpha(\Delta{[0,T]};E)$. The converse is a priori not true.
If clear from the context, we write $|\cdot|_\alpha$ instead of $|\cdot|_{\alpha;E}$.
\end{defn}

\begin{defn} An $\alpha$-Hölder rough path on $[0,T]$ with values in $\R^n$ is a pair $\mathbf{Z}=(Z,\mathbb{Z})$ such that \begin{enumerate}
    \item[a)] $Z \in {C}^\alpha([0,T];\R^n)$;
    \item[b)] $\mathbb{Z} \in {C}_2^{2\alpha}(\Delta_{[0,T]};\R^n \otimes \R^n) $;
    \item[c)] (Chen's relation) for any $(s,u,t) \in \Delta_{[0,T]}^2$, it holds \begin{equation*}
        \mathbb{Z}_{s,t}-\mathbb{Z}_{s,u}-\mathbb{Z}_{u,t} = \delta Z_{s,u}\otimes\delta Z_{u,t}
    \end{equation*} or, equivalently, $\mathbb{Z}_{s,t}^{ij}-\mathbb{Z}_{s,u}^{ij}-\mathbb{Z}_{u,t}^{ij} = \delta Z_{s,u}^i \delta Z_{u,t}^j$ for any $i,j=1,\dots,n$.
\end{enumerate}

We write $\mathbf{Z}=(Z,\mathbb{Z}) \in \mathscr{C}^\alpha([0,T];\R^n)$, and we define \begin{align*}
    |\mathbf{Z}|_\alpha := |\delta Z|_\alpha + |\mathbb{Z}|_{2\alpha}.
\end{align*}
\end{defn}

Given any pair of rough paths $\mathbf{Z}=(Z,\mathbb{Z}), \ \bar{\mathbf{Z}}=(\bar{Z},\bar{\mathbb{Z}}) \in \mathscr{C}^\alpha([0,T];\R^n)$,  we define their distance as \begin{equation*} \begin{aligned}
    \rho_{\alpha}(\mathbf{Z},\bar{\mathbf{Z}}) &:= |\delta Z -\delta\bar{Z}|_\alpha + |\mathbb{Z}-\bar{\mathbb{Z}}|_{2\alpha} =  \\ &= \sup_{0\le s<t\le T} \frac{|\delta Z_{s,t} - \delta \bar{Z}_{s,t}|_{\R^n}}{|t-s|^\alpha} + \sup_{0\le s<t\le T} \frac{|\mathbb{Z}_{s,t} -  \bar{\mathbb{Z}}_{s,t}|_{\R^n \otimes \R^n}}{|t-s|^{2\alpha}} .
\end{aligned}
\end{equation*}

Let us now consider a smooth path $Z \in {C}^\infty([0,T];\R^n)$. We denote by $\dot{Z}$ its time derivative. It is possible to show that such a path can be canonically lifted to a $\alpha$-rough path for any $\alpha \in (0,1]$, by defining  \begin{equation*}
    \mathbb{Z}_{s,t} := \int_s^t \delta Z_{s,r} \otimes dZ_r = \int_s^t \delta Z_{s,r} \otimes \dot{Z}_r \ dr \qquad \text{for any $(s,t)\in\Delta_{[0,T]}$}.
\end{equation*}
We write $\mathbf{Z}=(Z,\mathbb{Z}) \in \mathscr{L}({C}^\infty([0,T];\R^n))$ and we call it a smooth rough path. 

\begin{defn} A weakly geometric $\alpha$-rough path with values in $\R^n$ is an element  $\mathbf{Z}=(Z,\mathbb{Z}) \in \mathscr{C}^\alpha([0,T];\R^n)$ such that, for any $(s,t) \in \Delta_{[0,T]}$, \begin{equation*}
Sym(\mathbb{Z}_{s,t}):=\frac{\mathbb{Z}_{s,t}+\mathbb{Z}_{s,t}^\top}{2} = \frac{1}{2}\delta Z_{s,t} \otimes \delta Z_{s,t}.
\end{equation*}
We write $\mathbf{Z}\in \mathscr{C}^\alpha_g([0,T];\R^n)$.
\end{defn}



\begin{defn} \label{def:geometricroughpaths} A geometric $\alpha$-rough path with values in $\R^n$ is an element of the closure of $\mathscr{L}({C}^\infty([0,T];\R^n))$ in $(\mathscr{C}^\alpha([0,T];\R^n),\rho_\alpha)$. Namely, $\mathbf{Z}=(Z,\mathbb{Z})$ is a geometric rough path - and we write $\mathbf{Z} \in \mathscr{C}^{0,\alpha}_g([0,T];\R^n) $ - if and only if it belongs to $\mathscr{C}^\alpha([0,T];\R^n)$ and there exists a sequence $(\mathbf{Z}^n)_n \subseteq  \mathscr{L}({C}^\infty([0,T];\R^n))$ of smooth rough paths such that \begin{equation*}
    \rho_\alpha(\mathbf{Z}^n,\mathbf{Z}) \to 0 \qquad \text{as $n \to +\infty$.}
\end{equation*}     
\end{defn}

\subsection{Preliminaries on Malliavin Calculus} For a complete discussion about Malliavin calculus and its application to stochastic differential equations we refer to \cite{nualart2006malliavin}. 
The general framework to develop Malliavin calculus consists in a complete probability space $(\Omega,\mathcal{F},\mathbb{P})$, a separable real Hilbert space $(H,\langle \cdot,\cdot \rangle_H)$ and an isonormal Gaussian process on $H$. The latter
 is a family $\{W(h), h \in H\}$ of real-valued centered Gaussian random variables on $(\Omega,\mathcal{F},\mathbb{P})$ such that \begin{equation*}
        \E(W(h)W(g)) = \langle h,g \rangle_H 
    \end{equation*} 
for any $h,g \in H$. The mapping $H \ni h \mapsto W(h) \in L^2(\Omega)$ provides a linear isometry of $H$ onto a closed subspace of $L^2(\Omega,\mathcal{F},\mathbb{P})$. \\
We are interested in developing Malliavin calculus on a complete filtered probability space $(\Omega,\mathcal{F},\{\mathcal{F}_t\}_{t \in [0,T]},\mathbb{P})$, on which it is defined an $m$-dimensional Brownian motion $B=((B^1_t,\dots,B^m_t))_{t \in [0,T]}$. 
Therefore it is classical to choose \begin{equation*}
    H=L^2(\mathcal{T},\mathscr{B},\mu),
\end{equation*} 
where $(\mathcal{T},\mathscr{B},\mu)$ is the finite measure space defined by 
\begin{equation*}
    \mathcal{T}:= [0,T] \times \{1,\dots,m\}, \quad \mathscr{B}:= \mathcal{B}([0,T])\otimes \mathcal{P}(\{1,\dots,m\}), \quad \mu:= \lambda_{|_{[0,T]}} \otimes \#
\end{equation*} with $\lambda$ denoting the one-dimensional Lebesgue measure, $\#$ denoting the counting measure on the discrete set $\{1,\dots,m\}$ and $\mathcal{B}([0,T])$ the Borel $\sigma$-field on $[0,T]$. Standard functional analysis results lead to the following identifications (i.e.\ the spaces are isometrically isomorphic): \begin{itemize}
    \item $L^2(\mathcal{T}) \equiv L^2([0,T];\R^m)$;
    \item $L^2(\Omega;L^2(\mathcal{T})) \equiv L^2([0,T]\times\Omega;\R^m)$;
    \item $L^2(\mathcal{T})^{\otimes{k}} \equiv L^2([0,T]^k; (\R^{m})^{k}), \ k\in \mathbb{N}_{\ge 1}$.
\end{itemize}
The scalar product on $L^2(\mathcal{T})$ can therefore be written as \begin{equation*}
    \langle h,g \rangle_{L^2(\mathcal{T})} = \int_{\mathcal{T}} h_\tau g_\tau \mu(d\tau) = \sum_{j=1}^m \int_0^T h_t^j g_t^j dt.
\end{equation*} and, for any $h\in L^2(\mathcal{T})$, we define \begin{equation*}
    W(h) := \sum_{j=1}^m \int_0^T h^j_r dB^j_r, 
\end{equation*} 
where the integral of a deterministic function on $[0,T]$ against a Brownian motion is intended as a Wiener integral. The map $W:L^2(\mathcal{T}) \to L^2(\Omega)$ satisfies all the requirements to be isonormal Gaussian process on $L^2(\mathcal{T})$. 
In this setting, for any fixed $p\in [1,\infty)$, the Malliavin derivative operator is a linear operator \begin{equation*}
    \mathbb{D}: \mathbb{D}^{1,p} \subseteq L^p(\Omega) \to L^p(\Omega;L^2([0,T];\R^m)).
\end{equation*}
The case $p=2$ is particularly interesting, since the Malliavin derivative of a random variable can be interpreted as a $\lambda$-a.e.\ defined stochastic process taking values in $\R^m$. Namely, for any  $F \in \mathbb{D}^{1,2}$, $\mathbb{D}F$ is an element of $L^2([0,T]\times \Omega;\R^m)$, and, for $\lambda$-almost every $t \in [0,T]$ and for any $k=1,\dots,m$, we write \begin{equation*}
    \mathbb{D}_t F := \mathbb{D}F (t,\omega).
\end{equation*} 
If we consider a random vector $X=(X^1,\dots,X^d)$ such that $X^i\in \mathbb{D}^{1,2}$ for any $i=1,\dots,d$, we often define its Malliavin derivative as \begin{equation*}
    \mathbb{D}X := (\mathbb{D}X^1,\dots,\mathbb{D}X^d)^T \in [L^2(\Omega \times [0,T];\R^m)]^d \equiv L^2(\Omega \times [0,T];\R^{d \times m})
\end{equation*}  
and we identify it with the $\R^{d\times m}$-valued process given by $\mathbb{D}_t X = ((\mathbb{D}_t X^i)^k)_{i=1,\dots,d; k=1,\dots,m}$. 
Similarly, the $k$-th order Malliavin derivative operator is defined as a map \begin{equation*}
    \mathbb{D}^k: \mathbb{D}^{k,p} \subseteq L^p(\Omega) \to L^p(\Omega;L^2([0,T]^k;(\R^{m})^k)) \quad k\in \mathbb{N}_{\ge 1}, \ p\in [1,\infty). 
\end{equation*}  
Given $F\in \mathbb{D}^{k,2}$, its Malliavin derivative $\mathbb{D}^kF$ can be seen as an element of $L^2([0,T]^k\times \Omega; (\R^{m})^k)$ and we write \begin{equation*}
    \mathbb{D}^k_{t_1,\dots,t_k}F := \mathbb{D}^k F ((t_1,\dots,t_k),\omega) \qquad (t_1,\dots,t_k) \in [0,T]^k.
\end{equation*}
Given a random vector $X=(X^1,\dots,X^d)$ such that $X^i\in \mathbb{D}^{k,2}$ for any $i=1,\dots,d$, we consider its $k$-th Malliavin derivative as \begin{equation*}
    \mathbb{D}^k X := (\mathbb{D}^kX^1,\dots,\mathbb{D}^kX^d)^\top \in L^2([0,T]^k\times \Omega;\R^{d\times km}).
\end{equation*}

\subsection{Rough stochastic analysis}
A key concept for developing a rough stochastic analysis are mixed $L^{p,q}$-norms, as introduced in \cite{friz2021rough}.

\begin{defn}[] Let $(\Omega,\mathcal{F},\mathbb{P})$ be a probability space and let $(E,|\cdot|_E)$ be a Banach space. Let $p \in [1,\infty)$ and let $\mathcal{G}\subseteq\mathcal{F}$ be a sub-$\sigma$-field. Given  an $E$-valued random variable $\xi$, we define - if it exists - its conditional $L^p$-norm with respect to  $\mathcal{G}$ as the (unique) $\mathcal{G}$-measurable $\R$-valued random variable given by \begin{equation*}
    \|\xi|\mathcal{G}\|_{p;E} := \E\left(|\xi|_E^p|\mathcal{G}\right)^{\frac{1}{p}}.
\end{equation*} 
We often abbreviate the notation as $\|\xi|\mathcal{G}\|_p$.  
In particular, for any $q\in[p,\infty]$, the mixed $L^{p,q}$-norm $\|\|\xi |\mathcal{G}\|_p\|_q$ denotes the $L^q(\Omega)$-norm of $\|\xi|\mathcal{G}\|_p$.
\end{defn}

Let us fix a complete filtered probability space $(\Omega,\mathcal{F},\{\mathcal{F}_t\}_{t \in [0,T]},\mathbb{P})$ and let $(W,|\cdot|)$ be a finite dimensional real Hilbert space.

\begin{thm}[Stochastic sewing lemma] \label{thm:stochasticsewing} Let $p\in [2,\infty)$ and $q \in [p,\infty]$. Let $A=(A_{s,t})_{(s,t)\in\Delta_{[0,T]}}$ be a $W$-valued stochastic process with $A_{t,t}=0$ and such that $\delta A$ is $L^p$-integrable and $A_{s,t}$ is $\mathcal{F}_t$-measurable, for any $(s,t)\in\Delta_{[0,T]}$.
Assume there are some constants $\lambda_1,\lambda_2\ge0$ and $\varepsilon_1,\varepsilon_2>0$ satisfying, for any $(s,u,t)\in\Delta^2_{[0,T]}$, \begin{align*} 
      \|\|\delta A_{s,u,t}|\mathcal{F}_s\|_p\|_q &\le \lambda_1|t-s|^{\frac{1}{2}+\varepsilon_1} \\ 
      \|\E_{s}(\delta A_{s,u,t})\|_q &\le \lambda_2|t-s|^{1+\varepsilon_2}.
\end{align*} 
Then there exists a unique (up to modifications) adapted stochastic process $\mathcal{A}=(\mathcal{A}_t)_{t \in [0,T]}$ taking values in $W$ such that \begin{enumerate} \renewcommand{\labelenumi}{\roman{enumi})}
    \item $\mathcal{A}_0=0;$
    \item $\mathcal{A}_t-A_{0,t} \in L^p(\Omega;W)$ for any $t \in [0,T]$;
    \item there exist $C_1=C_1(\varepsilon_1,p)>0$ and $C_2=C_2(\varepsilon_2)>0$ with \begin{align*}
         \|\|\delta \mathcal{A}_{s,t} - A_{s,t}|\mathcal{F}_s\|_p\|_q&\le C_1\lambda_1|t-s|^{\frac{1}{2}+\varepsilon_1} + C_2\lambda_2|t-s|^{1+\varepsilon_2} \\ \|\E_s(\delta \mathcal{A}_{s,t} - A_{s,t})\|_q&\le C_2\lambda_2|t-s|^{1+\varepsilon_2} 
    \end{align*}
    for any $(s,t)\in\Delta_{[0,T]}$.
    \end{enumerate}
Moreover, for any $t \in [0,T]$,  \begin{equation*} 
    \mathcal{A}_t = L^p \text{-}\lim_{|\pi|\to 0} \sum_{[u,v]\in \pi} A_{u,v}
\end{equation*}
where the limit is taken over any sequence of partitions of the interval $[0,t]$ with mesh size tending to zero.
Assume in addition that, for any $s \in [0,T]$ and for $\mathbb{P}$-a.e.\ $\omega \in \Omega$, the map $[s,T]\ni t \mapsto A_{s,t}(\omega) \in W$ is continuous and that there are some constants $\lambda_3\ge0,\ \varepsilon_3>0$ satisfying \begin{equation*}
    \|\|\sup_{\tau \in [\frac{s+t}{2},t]}|\delta A_{s,\frac{s+t}{2},\tau}| |\mathcal{F}_s\|_p\|_q \le \lambda_3 |t-s|^{\frac{1}{p}+\varepsilon_3}
\end{equation*} for any $(s,t)\in\Delta_{[0,T]}$. Then $\mathcal{A}=(\mathcal{A}_t)_{t\in [0,T]}$ admits a continuous modification - denoted by $\tilde{\mathcal{A}}=(\tilde{\mathcal{A}}_t)_{t \in [0,T]}$ - satisfying \begin{equation} \label{eq:inequality_SSLcontinuity}
    \|\|\sup_{t \in [0,T]}|\sum_{[u,v]\in\pi,u\le t}A_{u,v \wedge t}-\tilde{\mathcal{A}}_t| | \mathcal{F}_0\|_p\|_q \lesssim |\pi|^{\varepsilon_1 \wedge \varepsilon_2 \wedge \varepsilon_3} (\lambda_1+\lambda_2+\lambda_3)
\end{equation} 
for any $\pi$ partition of $[0,T]$, where the implicit constant only depends on $p,\varepsilon_1,\varepsilon_2,\varepsilon_3,T$.
\end{thm}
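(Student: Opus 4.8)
\emph{Proof proposal.} The plan is to follow the stochastic sewing approach of \cite{le2020stochastic}, which adapts Gubinelli's deterministic sewing by estimating the germ $\delta A$ not through the triangle inequality alone but by splitting it into its conditional mean and a conditionally centered fluctuation, controlling the latter by a conditional Burkholder--Davis--Gundy (BDG) inequality in mixed $L^{p,q}$ norm. Fix $t$, let $\pi_n$ be the $n$-th dyadic partition of $[0,t]$, and set $A^n_t:=\sum_{[u,v]\in\pi_n}A_{u,v}$. Refining $\pi_n$ to $\pi_{n+1}$ inserts each midpoint $m$ of $[u,v]\in\pi_n$ and gives $A^{n+1}_t-A^n_t=-\sum_{[u,v]\in\pi_n}\delta A_{u,m,v}$. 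Writing $\delta A_{u,m,v}=\mathbb{E}_u(\delta A_{u,m,v})+D_{u,v}$ with $D_{u,v}$ centered given $\mathcal{F}_u$: the conditional means are summed by the triangle inequality and the second hypothesis, using $\sum_{[u,v]\in\pi_n}|v-u|^{1+\varepsilon_2}=t\,(t2^{-n})^{\varepsilon_2}$; the $D_{u,v}$, arranged along the partition, form a conditionally centered array to which conditional BDG applies, bounding them in $\|\|\cdot\,|\mathcal{F}_0\|_p\|_q$ by $\big(\sum_{[u,v]\in\pi_n}(2\lambda_1|v-u|^{1/2+\varepsilon_1})^2\big)^{1/2}=2\lambda_1\,t^{1/2}(t2^{-n})^{\varepsilon_1}$. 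Both are summable in $n$, so $(A^n_t)_n$ is Cauchy in $L^p(\Omega;W)$; let $\mathcal{A}_t$ be its limit. Adaptedness passes to the limit, (i) is trivial, (ii) holds since $\mathcal{A}_t-A_{0,t}$ is the $L^p$-limit of finite sums of increments of $\delta A$, and the Riemann-sum characterisation (independence of the approximating partitions) follows by applying the same estimate to common refinements.

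\textbf{A priori bounds and uniqueness.} For (iii) I rerun this on an arbitrary subinterval $[s,t]$, conditioning on $\mathcal{F}_s$: $\delta\mathcal{A}_{s,t}-A_{s,t}$ is the $L^p$-limit of $-\sum_{[u,v]}\delta A_{u,m,v}$ over dyadic refinements of $[s,t]$; the conditional-mean part gives the $C_2\lambda_2|t-s|^{1+\varepsilon_2}$ term and is the only part surviving $\mathbb{E}_s$ (second inequality of (iii)), and the conditionally centered part gives the $C_1\lambda_1|t-s|^{1/2+\varepsilon_1}$ term via conditional BDG. For uniqueness, if $\mathcal{A},\tilde{\mathcal{A}}$ both satisfy (i)--(iii) then $\mathcal{B}:=\mathcal{A}-\tilde{\mathcal{A}}$ is adapted with $\mathcal{B}_0=0$, and $B_{s,t}:=\delta\mathcal{B}_{s,t}$ satisfies $\delta B\equiv 0$ together with $\|\|B_{s,t}|\mathcal{F}_s\|_p\|_q\lesssim\lambda_1|t-s|^{1/2+\varepsilon_1}+\lambda_2|t-s|^{1+\varepsilon_2}$ and $\|\mathbb{E}_sB_{s,t}\|_q\lesssim\lambda_2|t-s|^{1+\varepsilon_2}$. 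Using $\delta B\equiv 0$ to write $B_{s,t}=\sum_iB_{t_i,t_{i+1}}$ over a partition $\pi$ of $[s,t]$ and splitting each term into conditional mean and centered part, the triangle inequality plus conditional BDG give $\|\|B_{s,t}|\mathcal{F}_s\|_p\|_q\lesssim\lambda_2|\pi|^{\varepsilon_2}|t-s|+\lambda_1|\pi|^{\varepsilon_1}|t-s|^{1/2}$, which tends to $0$ as $|\pi|\to 0$; hence $B\equiv 0$ and $\mathcal{B}\equiv\mathcal{B}_0=0$.

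\textbf{Continuous modification.} Under the additional hypothesis with $\lambda_3,\varepsilon_3$, the quantity $\sup_{\tau\in[(s+t)/2,t]}|\delta A_{s,(s+t)/2,\tau}|$ controls the \emph{entire} oscillation over a sub-interval produced when a single midpoint is inserted, while the assumed pathwise continuity of $t\mapsto A_{s,t}(\omega)$ makes these suprema measurable and well behaved. I would then estimate $\sup_{t\in[0,T]}\big|\sum_{[u,v]\in\pi,u\le t}A_{u,v\wedge t}-\tilde{\mathcal{A}}_t\big|$ by a chaining argument: telescope over successive dyadic refinements of the given partition $\pi$, bound each refinement step's supremum in $\|\|\cdot\,|\mathcal{F}_0\|_p\|_q$ using all three hypotheses --- the $\lambda_1,\lambda_2$ contributions as above and the $\lambda_3$ contribution to absorb the supremum over inserted sub-intervals --- and sum the resulting geometric series, whose ratio is controlled by $2^{-(\varepsilon_1\wedge\varepsilon_2\wedge\varepsilon_3)}$. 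This yields \eqref{eq:inequality_SSLcontinuity} and, in particular, uniform-in-$t$ $L^{p,q}$-convergence of the partition sums, hence the continuous modification $\tilde{\mathcal{A}}$.

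\textbf{Main obstacle.} The technical heart is the conditional BDG / martingale inequality in mixed $L^{p,q}$ norm: for a family $(D_i)$ with $D_i$ being $\mathcal{F}_{t_{i+1}}$-measurable and $\mathbb{E}_{t_i}D_i=0$, an estimate of the form $\|\|\sum_iD_i\,|\,\mathcal{F}_{t_0}\|_p\|_q\lesssim_p\big(\sum_i\|\|D_i\,|\,\mathcal{F}_{t_i}\|_p\|_q^2\big)^{1/2}$. It is precisely the square-function structure of this bound that lets the exponent $\tfrac12+\varepsilon_1$ --- rather than $1+\varepsilon_1$, as deterministic sewing would demand --- suffice, and it is the reason stochastic sewing applies to genuinely martingale-type germs. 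The remaining difficulty is bookkeeping: always conditioning at the left endpoint, correctly combining the two exponent regimes, and, for the last part, upgrading every estimate to a uniform-in-$t$ statement via chaining, where the extra hypothesis $\lambda_3$ is essential.
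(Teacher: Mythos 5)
The paper does not supply a proof of Theorem~\ref{thm:stochasticsewing}: it cites \cite[Theorem~2.8]{friz2021rough} verbatim. Your proposal reconstructs essentially the argument that the cited reference carries out --- dyadic refinement, decomposition of each $\delta A_{u,m,v}$ into its $\mathcal{F}_u$-conditional mean plus a conditionally centered remainder, triangle inequality on the mean part, conditional BDG on the centered part, a priori estimates over an arbitrary subinterval for (iii), additivity plus the vanishing-mesh estimate for uniqueness, and a chaining/telescoping argument plus the $\lambda_3$ hypothesis for the continuous modification. That is the right skeleton, and your identification of the square-function gain (exponent $\tfrac12+\varepsilon_1$ rather than $1+\varepsilon_1$) as the crucial feature is correct.

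Two points you should tighten. First, the displayed conditional BDG inequality
\[
\Bigl\|\,\bigl\|\textstyle\sum_i D_i\,\big|\,\mathcal{F}_{t_0}\bigr\|_p\,\Bigr\|_q
\;\lesssim_p\;
\Bigl(\sum_i \bigl\|\,\|D_i\,|\,\mathcal{F}_{t_i}\|_p\,\bigr\|_q^2\Bigr)^{1/2}
\]
is not the form one gets directly. The natural chain is: first prove the pathwise conditional estimate
$\|\sum_i D_i\,|\,\mathcal{F}_{t_0}\|_p \lesssim_p \bigl(\sum_i \|D_i\,|\,\mathcal{F}_{t_0}\|_p^2\bigr)^{1/2}$ a.s.\ (this uses $p\ge 2$ plus conditional BDG and conditional Minkowski); then take $L^q$-norms and use Minkowski in $L^{q/2}$ (valid since $q\ge p\ge 2$) to pull the sum out of the square root; finally you must pass from $\|\,\|D_i|\mathcal{F}_{t_0}\|_p\,\|_q$ to $\|\,\|D_i|\mathcal{F}_{t_i}\|_p\,\|_q$, which uses the tower property and conditional Jensen and is where $q\ge p$ enters again. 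Writing this chain out explicitly is the real content of the ``obstacle'' you flagged, and glossing it is the one genuine gap in your sketch. Second, for the last part you should note that the assumed $\mathbb{P}$-a.s.\ continuity of $t\mapsto A_{s,t}(\omega)$ is what makes the supremum in the $\lambda_3$-hypothesis and in \eqref{eq:inequality_SSLcontinuity} a well-defined random variable (replace $\sup$ by a countable $\sup$); without that remark the chaining argument is not even set up. Both points are handled in \cite{friz2021rough}, which is why the paper simply cites it; if you want a self-contained proof you must include them.
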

\begin{proof}
    See \cite[Theorem 2.8]{friz2021rough}. 
\end{proof}

Stochastic controlled rough paths are introduced in \cite[Definition 3.1-3.2]{friz2021rough}.
\begin{defn}[Stochastic controlled paths] \label{def:stochasticcontrolledpaths} Let $p\in [1,\infty)$ and $\ q \in [p,\infty]$. Let $\gamma \in (0,1]$ and let $Z \in {C}^\alpha([0,T];\R^n)$ be an $\alpha$-Hölder continuous deterministic path, for a certain $\alpha \in (0,1]$.  
A pair $(X,X')$ is said to be a $L^{p,q}$-integrable $W$-valued stochastic controlled path with $\gamma$-Hölder regularity - and we write $(X,X') \in \mathscr{D}_Z^{2\gamma} L^{p,q}(W)$ - if 
\begin{enumerate} \renewcommand{\labelenumi}{\alph{enumi})}
\item $X=(X_t)_{t \in [0,T]}$ is an adapted $W$-valued stochastic processes such that $\delta X$ is $L^p$-integrable and \begin{equation*}
    \|\delta X\|_{\gamma;p,q} := \sup_{0\le s < t\le T} \frac{\|\|\delta X_{s,t}|\mathcal{F}_s\|_p\|_q}{|t-s|^\gamma} < +\infty.
\end{equation*} 
In such a case, we often write $X\in {C}_2^{\gamma} L^{p,q}(W)$, and when $p=q$ we write simply $X\in {C}_2^{\gamma} L^{p}(W)$;
\item $X'=(X'_t)_{t \in [0,T]}$ is an adapted $\mathscr{L}(\R^n,W)$-valued stochastic processes such that $\sup_{t \in [0,T]} \|X'_t\|_q < +\infty$ and
\begin{equation*}
    \|\delta X'\|_{\gamma;p,q} := \sup_{0\le s < t\le T} \frac{\|\|\delta X'_{s,t}|\mathcal{F}_s\|_p\|_q}{|t-s|^{\gamma}} < +\infty;
\end{equation*}
\item denoting by $R^X_{s,t}=\delta X_{s,t} - X'_s \delta Z_{s,t} $ for any $(s,t)\in\Delta_{[0,T]}$, it holds that the process $\E_\cdot R^X=(\E_s(R^X_{s,t}))_{(s,t)\in\Delta_{[0,T]}}$ satisfies
\begin{equation*}
    \|\E_\cdot R^X\|_{2\gamma;q}:=\sup_{0\le s < t \le T} \frac{\|\E_s(R^X_{s,t})\|_q}{|t-s|^{2\gamma}} < + \infty.
\end{equation*} 
\end{enumerate}
The process $X'$ is called the (stochastic) Gubinelli derivative of $X$ (with respect to $Z$). In the case $p=q$, we simply write $(X,X') \in \mathscr{D}_Z^{2\gamma} L^{p}(W)$.
\end{defn}

\begin{prop}[Rough stochastic integral] \label{prop:roughstochastiintegral}
Let $\mathbf{Z}=(Z,\mathbb{Z}) \in \mathscr{C}^{\alpha}([0,T];\mathbb{R}^n)$ be a deterministic rough path with $\alpha \in (\frac{1}{3},\frac{1}{2}]$ and let $(X,X')\in \mathscr{D}^{2\gamma}_ZL^{p,q}(\mathscr{L}(\R^n;W))$ be a stochastic controlled path with $\gamma\in (\frac{1}{3},\alpha]$. Then the two-parameter process \begin{equation*}
    A_{s,t}:=X_s\delta Z_{s,t}+X'_s\mathbb{Z}_{s,t} \qquad (s,t)\in \Delta_{[0,T]}
\end{equation*} takes values in $W$ and satisfies the assumptions of Theorem \ref{thm:stochasticsewing}. 
We define the rough stochastic integral of $(X,X')$ against $\mathbf{Z}$ as the unique (up to indistinguishability) continuous and adapted $W$-valued stochastic process $\mathcal{A}$ given by Theorem \ref{thm:stochasticsewing}, namely
\begin{equation*}
    \int_0^\cdot (X_r,X'_r) d\mathbf{Z}_r := \mathcal{A}_\cdot .
\end{equation*} 
Then, for any $(s,t)\in \Delta_{[0,T]}$, the following estimates hold:
\begin{align*}  
&\|\|\int_s^t (X_r,X'_r) d\mathbf{Z}_r -X_s\delta Z_{s,t}|\mathcal{F}_s\|_p \|_q \\
&\lesssim_{\alpha,\gamma,p,T} \big(\|\delta{X}\|_{\gamma;p,q} |\delta Z|_\alpha + \sup_{t \in [0,T]}\|X'_t\|_q (|\delta Z|_\alpha^2 + |\mathbb{Z}|_{2\alpha}) + \|\delta{X}'\|_{\gamma;p,q} |\mathbb{Z}|_{2\alpha} + \\ 
& \quad \quad + \|\E_\cdot R^X\|_{2\gamma;q}|\delta Z|_\alpha\big)|t-s|^{\alpha+\gamma}, \\
        &\|\E_s(\int_s^t (X_r,X'_r) d\mathbf{Z}_r -X_s\delta Z_{s,t}-X'_s\mathbb{Z}_{s,t}) \|_q \\ 
        &\lesssim_{\alpha,\gamma,T} \big(\|\E_\cdot R^X\|_{2\gamma;q}|\delta Z|_\alpha+ \|\delta{X}'\|_{\gamma;p,q} |\mathbb{Z}|_{2\alpha}\big) |t-s|^{\alpha+2\gamma}. 
    \end{align*}
If in addition $\sup_{t \in [0,T]} \|X_t\|_q<+\infty$, then \begin{equation*}
        (\int_0^\cdot(X_r,X_r') d\mathbf{Z}_r, X_\cdot) \in \mathscr{D}^{2\gamma}_ZL^{p,q}((W)).
    \end{equation*}
\end{prop}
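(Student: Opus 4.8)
The plan is to verify the three defining conditions of a stochastic controlled path in Definition \ref{def:stochasticcontrolledpaths} for the pair $(\mathcal{A}, X)$, where $\mathcal{A}_t = \int_0^t (X_r, X_r') d\mathbf{Z}_r$, using the two conditional estimates already established in the first part of the proposition together with the hypothesis $\sup_t \|X_t\|_q < \infty$. Condition b) is essentially free: the Gubinelli derivative of $\mathcal{A}$ is $X$ itself, which is adapted, $\mathscr{L}(\R^n, W)$-valued (recall $X_s \in \mathscr{L}(\R^n, W)$ since $(X,X') \in \mathscr{D}^{2\gamma}_Z L^{p,q}(\mathscr{L}(\R^n;W))$), satisfies $\sup_t \|X_t\|_q < \infty$ by assumption, and has $\|\delta X\|_{\gamma;p,q} < \infty$ by condition a) for the original controlled path.

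For condition a) applied to $\mathcal{A}$, I need to bound $\|\|\delta \mathcal{A}_{s,t} | \mathcal{F}_s\|_p\|_q$ by a constant times $|t-s|^\gamma$. Writing $\delta \mathcal{A}_{s,t} = \big(\delta \mathcal{A}_{s,t} - X_s \delta Z_{s,t}\big) + X_s \delta Z_{s,t}$, the first term is controlled by the first displayed estimate of the proposition, which gives a bound of order $|t-s|^{\alpha + \gamma} \le |t-s|^\gamma$ (using $\alpha + \gamma \geq \gamma$ on a bounded interval, absorbing $T$-powers into the implicit constant). For the second term, $\|\|X_s \delta Z_{s,t}|\mathcal{F}_s\|_p\|_q \le \|X_s\|_q |\delta Z_{s,t}| \le \sup_t\|X_t\|_q\, |\delta Z|_\alpha |t-s|^\alpha$, since $X_s$ is $\mathcal{F}_s$-measurable and thus pulls out of the conditional norm. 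Again $|t-s|^\alpha \le |t-s|^\gamma$ up to the interval length. This settles condition a), and in fact also shows $\mathcal{A} \in C_2^\gamma L^{p,q}(W)$.

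For condition c), I must estimate the remainder $R^{\mathcal{A}}_{s,t} = \delta \mathcal{A}_{s,t} - X_s \delta Z_{s,t}$ after taking $\E_s$, i.e.\ $\|\E_s(R^{\mathcal{A}}_{s,t})\|_q \lesssim |t-s|^{2\gamma}$. Here I split $\E_s(R^{\mathcal{A}}_{s,t}) = \E_s\big(\delta \mathcal{A}_{s,t} - X_s \delta Z_{s,t} - X_s' \mathbb{Z}_{s,t}\big) + X_s' \mathbb{Z}_{s,t}$ (using that $X_s'$ is $\mathcal{F}_s$-measurable). The first term is exactly what the second displayed estimate of the proposition bounds, giving order $|t-s|^{\alpha + 2\gamma} \le |t-s|^{2\gamma}$. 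The second term satisfies $|X_s' \mathbb{Z}_{s,t}| \le |X_s'| |\mathbb{Z}|_{2\alpha} |t-s|^{2\alpha}$, and since $2\alpha \ge 2\gamma$ and $\sup_t \|X_t'\|_q < \infty$ (condition b) for the original path), taking $L^q$-norms finishes condition c). Finally, $\mathcal{A}$ is continuous and adapted by the construction via Theorem \ref{thm:stochasticsewing} (Proposition \ref{prop:roughstochastiintegral} already records this), so all requirements of Definition \ref{def:stochasticcontrolledpaths} hold and $(\mathcal{A}, X) \in \mathscr{D}^{2\gamma}_Z L^{p,q}(W)$.

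The only mild subtlety — hardly an obstacle — is bookkeeping the exponents: one must consistently use $\gamma \le \alpha$ to trade the higher Hölder exponents $\alpha+\gamma$, $\alpha$, $\alpha + 2\gamma$, $2\alpha$ down to $\gamma$ or $2\gamma$, paying only powers of $T$ that get absorbed into the $\lesssim_{\alpha,\gamma,p,T}$ constants. Everything else reduces to the measurability-based pull-out of $X_s$ and $X_s'$ from conditional expectations and the two a priori estimates already proved, so there is no genuine difficulty beyond assembling the pieces.
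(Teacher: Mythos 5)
Your argument is a correct verification of the final ``if in addition'' conclusion, and it aligns with the structure of the cited reference: the paper itself gives no proof beyond citing \cite[Theorem 3.7]{friz2021rough}, so there is nothing to compare in detail. Your handling of condition c) --- splitting $\E_s R^{\mathcal{A}}_{s,t}$ into the term controlled by the second estimate and the $\mathcal{F}_s$-measurable term $X'_s\mathbb{Z}_{s,t}$, then using $2\alpha \ge 2\gamma$ --- is exactly the right move, and the pull-out of $X_s$ in condition a) is also correct. One thing worth flagging for completeness: you take as given the first two parts of the proposition (that $A_{s,t}$ satisfies the hypotheses of the stochastic sewing lemma and that the two conditional estimates hold), but those are the substantive content of the cited theorem and are not established by your argument. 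In particular, to make the whole proposition self-contained one would still need to verify, via Chen's relation and the controlled-path decomposition $\delta A_{s,u,t} = -\delta X_{s,u}\delta Z_{u,t} - \delta X'_{s,u}\mathbb{Z}_{u,t} - R^X_{s,u}\delta Z_{u,t}$, that $\delta A$ satisfies the $\frac12+\varepsilon$ and $1+\varepsilon$ decay bounds of Theorem \ref{thm:stochasticsewing}, and then read off the two displayed estimates from the conclusion of that lemma. What you have written is the correct and complete derivation of the last claim from the earlier ones, which is all that a fresh argument is needed for.
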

\begin{proof}
    See \cite[Theorem 3.7]{friz2021rough}.
\end{proof}

 If it is clear from the context, when denoting the rough stochastic integral of a stochastic controlled path we can omit the Gubinelli derivative in the integrand. Namely, if $(X,X')\in \mathscr{D}_Z^{2\gamma}L^{p,q}(W)$, we write \begin{equation*}
    \int_0^\cdot X_r d\mathbf{Z}_r := \int_0^\cdot (X_r,X'_r) d\mathbf{Z}_r.
\end{equation*}

Next we collect some technical results on stability of the rough stochastic integral. 
\begin{lemma} \label{lemma:technicalconvergence}
   Let $(A^n)_n$ be a sequence of $L^p$-integrable two-parameter stochastic processes such that $\sup_n \|A^n\|_{\beta ; p, q} < + \infty$, and let $A = (A_{s, t})_{(s, t) \in\Delta_{[0, T]}}$ be $L^p$-integrable and such that \begin{equation*}
      \sup_{(s, t) \in \Delta_{[0, T]}} \|\|A^n_{s, t} - A_{s, t} |
     \mathcal{F}_s \|_p \|_q \to 0 \quad \text{as $n
     \to + \infty$} .
  \end{equation*}
  Then $\|A\|_{\beta ; p, q} \le \sup_n \|A^n \|_{\beta ; p, q}$ and, for any $\alpha < \beta$, \begin{equation*}
      \|A^n - A\|_{\alpha ; p, q} \to 0 \quad \text{as $n
  \to + \infty$.}
  \end{equation*}  
\end{lemma}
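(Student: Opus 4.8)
The plan is to prove the two assertions in the order they are stated, because the convergence in the $\alpha$-norm will use the $\beta$-Hölder bound on the limit $A$, which is precisely the content of the first assertion. The first assertion is a lower-semicontinuity statement that follows from the triangle inequality applied pointwise in $(s,t)$; the second is an elementary interpolation between the uniform (in $(s,t)$) control of $A^n-A$ and the uniform bound on the stronger $\beta$-norm.

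First I would note that, for each fixed $s$, the functional $\xi\mapsto\|\|\xi|\mathcal{F}_s\|_p\|_q$ is subadditive: conditional Minkowski (valid for $p\ge1$) gives subadditivity of $\xi\mapsto\|\xi|\mathcal{F}_s\|_p$, and Minkowski in $L^q(\Omega)$ (valid for $q\ge1$) completes it. Hence for every $n$ and every $(s,t)\in\Delta_{[0,T]}$,
\[
\|\|A_{s,t}|\mathcal{F}_s\|_p\|_q \le \|\|A^n_{s,t}|\mathcal{F}_s\|_p\|_q + \|\|A^n_{s,t}-A_{s,t}|\mathcal{F}_s\|_p\|_q \le \Big(\sup_m\|A^m\|_{\beta;p,q}\Big)|t-s|^\beta + \sup_{(u,v)\in\Delta_{[0,T]}}\|\|A^n_{u,v}-A_{u,v}|\mathcal{F}_u\|_p\|_q.
\]
Letting $n\to\infty$ kills the last term by hypothesis; dividing by $|t-s|^\beta$ and taking the supremum over $(s,t)$ gives $\|A\|_{\beta;p,q}\le\sup_m\|A^m\|_{\beta;p,q}$. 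In particular $\|A\|_{\beta;p,q}<\infty$, so, writing $D^n:=A^n-A$, subadditivity of $\|\cdot\|_{\beta;p,q}$ yields $M:=\sup_n\|D^n\|_{\beta;p,q}\le 2\sup_n\|A^n\|_{\beta;p,q}<\infty$.

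For the second assertion, set $\varepsilon_n:=\sup_{(s,t)\in\Delta_{[0,T]}}\|\|D^n_{s,t}|\mathcal{F}_s\|_p\|_q$, which tends to $0$ by assumption. Fix $\delta\in(0,T]$ and split the supremum defining $\|D^n\|_{\alpha;p,q}$ according to whether $|t-s|\le\delta$ or $|t-s|>\delta$: on the first set, since $\beta>\alpha$, $\tfrac{\|\|D^n_{s,t}|\mathcal{F}_s\|_p\|_q}{|t-s|^\alpha}\le\|D^n\|_{\beta;p,q}|t-s|^{\beta-\alpha}\le M\delta^{\beta-\alpha}$; on the second set, $\tfrac{\|\|D^n_{s,t}|\mathcal{F}_s\|_p\|_q}{|t-s|^\alpha}\le\varepsilon_n\delta^{-\alpha}$. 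Hence $\|D^n\|_{\alpha;p,q}\le\max\{M\delta^{\beta-\alpha},\varepsilon_n\delta^{-\alpha}\}$, and given $\eta>0$ one first picks $\delta$ with $M\delta^{\beta-\alpha}<\eta$ and then $N$ with $\varepsilon_n\delta^{-\alpha}<\eta$ for $n\ge N$, so $\|A^n-A\|_{\alpha;p,q}\to0$. There is no real obstacle; the only points to be careful about are that the conditional-norm functional genuinely satisfies the triangle inequality (this uses $q\ge p\ge1$) and that finiteness of $M$ rests on the first assertion, which forces the stated order of the argument.
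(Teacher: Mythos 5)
Your proof is correct. The first part is essentially the same as the paper's: you use the triangle inequality for the conditional norm (conditional Minkowski in $p$ followed by Minkowski in $q$), while the paper phrases it as the convergence $\|\|A^n_{s,t}|\mathcal{F}_s\|_p\|_q \to \|\|A_{s,t}|\mathcal{F}_s\|_p\|_q$, which amounts to the reverse triangle inequality; these are the same observation.

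For the second part, you take a genuinely different route. The paper's argument is a multiplicative interpolation: it writes
\[
\|\|A^n_{s,t}-A_{s,t}|\mathcal{F}_s\|_p\|_q = \|\|A^n_{s,t}-A_{s,t}|\mathcal{F}_s\|_p\|_q^{\alpha/\beta}\,\|\|A^n_{s,t}-A_{s,t}|\mathcal{F}_s\|_p\|_q^{1-\alpha/\beta},
\]
bounds the first factor by the uniform $\beta$-Hölder control (producing the $|t-s|^\alpha$) and the second by $\varepsilon_n^{1-\alpha/\beta}$. This yields a quantitative rate $\|A^n-A\|_{\alpha;p,q}\lesssim \varepsilon_n^{1-\alpha/\beta}$ in a single line. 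Your argument instead splits the supremum additively by the scale of $|t-s|$: on $\{|t-s|\le\delta\}$ you use the $\beta$-norm bound and the factor $|t-s|^{\beta-\alpha}\le\delta^{\beta-\alpha}$, and on $\{|t-s|>\delta\}$ you divide the uniform bound $\varepsilon_n$ by $\delta^\alpha$; then you choose $\delta$ first and $N$ second. This is the classical dichotomy-by-scale argument and is equally valid, but as written it only gives convergence without an explicit rate; optimizing $\delta\sim(\varepsilon_n/M)^{1/\beta}$ would recover the paper's rate. Both approaches require the first assertion in order to control $\sup_n\|A^n-A\|_{\beta;p,q}$, and you correctly flag this logical dependency.
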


\begin{proof}
  Let $(s, t) \in \Delta_{[0, T]}$. Then we can write \begin{equation*}
      \|\|A_{s, t} | \mathcal{F}_s \|_p \|_q = \lim_{n \to + \infty} \|\|A^n_{s, t} | \mathcal{F}_s \|_p \|_q \le (\sup_n \|A^n \|_{\beta ; p, q}) | t - s |^{\beta}.
  \end{equation*} Moreover,
  \begin{align*}
    \|\|A^n_{s, t} - A_{s, t} | \mathcal{F}_s \|_p \|_q & = 
    \|\|A^n_{s, t} - A_{s, t} | \mathcal{F}_s  \|_p
    \|_q^{\frac{\alpha}{\beta}} \|\|A^n_{s, t} - A_{s, t} | \mathcal{F}_s  \|_p \|_q^{1 - \frac{\alpha}{\beta}}\\
    & \le (2 \sup_n \|A^n \|_{\beta ; p, q}  | t - s
    |^{\beta})^{\frac{\alpha}{\beta}}  \|\|A^n_{s, t} - A_{s, t} | \mathcal{F}_s \|_p \|_q^{1 - \frac{\alpha}{\beta}}\\
    & \lesssim  \left( \sup_n \|A^n \|_{\beta ; p, q}^{\frac{\alpha}{\beta}}\right) | t - s |^{\alpha}  \sup_{(s, t) \in \Delta_{[0, T]}} \|\|A^n_{s, t} - A_{s, t} | \mathcal{F}_s  \|_p \|_q^{1 - \frac{\alpha}{\beta}} .
  \end{align*}
\end{proof}

\begin{coroll} \label{coroll:technicalconvergence}
    For any $n\in \mathbb{N}$, consider $\mathbf{Z}^n = (Z^n, \mathbb{Z}^n) \in \mathscr{C}^{\alpha}([0,T];\R^n)$ and $( Y^n, (Y^n)') \in \mathscr{D}^{2 \alpha}_{Z^n} L^{p}(\R^d)$, with
    \begin{equation*}
        \sup_n \| ( Y^n, (Y^n)')\|_{\mathscr{D}^{2\alpha}_{Z^n} L^{p}} < + \infty . 
    \end{equation*}
  Let $\mathbf{Z} = (Z, \mathbb{Z}) \in \mathscr{C}^{\alpha}([0,T];\R^n)$ be such that $\rho_{\alpha} (\mathbf{Z}^n, \mathbf{Z})\rightarrow 0$ as $n \rightarrow + \infty$.
  Let $(Y, Y') \in \mathscr{D}^{2
  \alpha}_Z L^{p, q}$ and assume $$\sup_{(s, t) \in \Delta_{[0, T]}} \|
  \delta Y^n_{s, t} - \delta Y_{s, t} \|_p
  \rightarrow 0 \quad \text{and} \quad \sup_{t \in [0, T]} \left\| (Y^n)'_t - Y_t' \right\|_p
  \rightarrow 0$$ as $n \rightarrow + \infty$. Then, for any $\gamma < \alpha$,
  \begin{equation*}
      \|\delta (Y^n-Y)\|_{\gamma;p} + \|\delta ((Y^n)'-Y')\|_{\gamma;p} + \|\E_\cdot (R^{n,Y^n}-R^Y)\|_{2\gamma;p} \to 0
  \end{equation*} as $n \to +\infty$, where $R^{n,Y^n}_{s,t}:=\delta Y^n_{s,t}-(Y^n)'_s\delta Z^n_{s,t}$.
\end{coroll}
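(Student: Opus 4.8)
The plan is to derive all three convergences from Lemma~\ref{lemma:technicalconvergence}, applied to three different two-parameter processes, after one preliminary remark on the norms. Since $p=q$ here, the tower property gives $\|\,\|\xi|\mathcal{F}_s\|_p\,\|_p=\|\xi\|_p$, so the conditional mixed norms appearing in Definition~\ref{def:stochasticcontrolledpaths} and in Lemma~\ref{lemma:technicalconvergence} collapse to ordinary $L^p(\Omega)$-norms. In particular the uniform bound $\sup_n\|(Y^n,(Y^n)')\|_{\mathscr{D}^{2\alpha}_{Z^n}L^p}<\infty$ yields uniform-in-$n$ bounds on $\|\delta Y^n\|_{\alpha;p}$, $\|\delta(Y^n)'\|_{\alpha;p}$, $\sup_t\|(Y^n)'_t\|_p$ and $\|\E_\cdot R^{n,Y^n}\|_{2\alpha;p}$; likewise $(Y,Y')\in\mathscr{D}^{2\alpha}_ZL^{p,q}$ provides the analogous finite bounds for $Y$ (using $\|\cdot\|_p\le\|\cdot\|_q$ for the term $\|\E_\cdot R^Y\|_{2\alpha;p}$), and $|\delta Z^n|_\alpha\le|\delta Z|_\alpha+\rho_\alpha(\mathbf{Z}^n,\mathbf{Z})$ is bounded uniformly in $n$ since $\rho_\alpha(\mathbf{Z}^n,\mathbf{Z})\to0$.

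For the first term I would apply Lemma~\ref{lemma:technicalconvergence} with $A^n=\delta Y^n$, $A=\delta Y$ and $\beta=\alpha$: the uniform bound on $\|\delta Y^n\|_{\alpha;p}$ together with the hypothesis $\sup_{(s,t)}\|\delta Y^n_{s,t}-\delta Y_{s,t}\|_p\to0$ are exactly the assumptions of that lemma, whose conclusion for any $\gamma<\alpha$ is $\|\delta(Y^n-Y)\|_{\gamma;p}\to0$. The second term is identical, with $A^n=\delta(Y^n)'$, $A=\delta Y'$, where $\sup_{(s,t)}\|\delta(Y^n)'_{s,t}-\delta Y'_{s,t}\|_p\to0$ follows from the hypothesis $\sup_t\|(Y^n)'_t-Y'_t\|_p\to0$.

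For the remainder term, I would use the decomposition $R^{n,Y^n}_{s,t}-R^Y_{s,t}=(\delta Y^n_{s,t}-\delta Y_{s,t})-((Y^n)'_s-Y'_s)\delta Z^n_{s,t}-Y'_s(\delta Z^n_{s,t}-\delta Z_{s,t})$. Since $\|\E_s(\cdot)\|_p\le\|\cdot\|_p$, the quantity $\|\E_s(R^{n,Y^n}_{s,t}-R^Y_{s,t})\|_p$ is bounded by $\|\delta Y^n_{s,t}-\delta Y_{s,t}\|_p+|\delta Z^n_{s,t}|\,\|(Y^n)'_s-Y'_s\|_p+|\delta Z^n_{s,t}-\delta Z_{s,t}|\,\|Y'_s\|_p$, and using $|\delta Z^n|_\alpha\lesssim1$, $\sup_t\|Y'_t\|_q<\infty$ and the three convergence hypotheses ($\sup_{(s,t)}\|\delta Y^n_{s,t}-\delta Y_{s,t}\|_p\to0$, $\sup_t\|(Y^n)'_t-Y'_t\|_p\to0$, $\rho_\alpha(\mathbf{Z}^n,\mathbf{Z})\to0$) one obtains $\sup_{(s,t)}\|\E_s(R^{n,Y^n}_{s,t}-R^Y_{s,t})\|_p\to0$. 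A final application of Lemma~\ref{lemma:technicalconvergence} with $A^n=\E_\cdot R^{n,Y^n}$, $A=\E_\cdot R^Y$ and $\beta=2\alpha$ — legitimate since $\sup_n\|\E_\cdot(R^{n,Y^n}-R^Y)\|_{2\alpha;p}<\infty$ by the triangle inequality and the uniform bounds above — then gives $\|\E_\cdot(R^{n,Y^n}-R^Y)\|_{2\gamma;p}\to0$ for every $\gamma<\alpha$, completing the proof.

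I do not expect a genuine obstacle: the statement is a soft corollary of Lemma~\ref{lemma:technicalconvergence}, and the only point needing mild care is the bookkeeping in the remainder term — specifically, noticing that no gain from conditioning is required there, since the crude unconditional $L^p$-bound on $R^{n,Y^n}_{s,t}-R^Y_{s,t}$ already suffices once one interpolates it, via Lemma~\ref{lemma:technicalconvergence}, against the uniform $2\alpha$-Hölder bound.
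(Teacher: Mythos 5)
Your proof is correct and follows essentially the same route as the paper's: the first two convergences are direct applications of Lemma \ref{lemma:technicalconvergence}, and the third uses exactly the paper's triangle-inequality decomposition of $R^{n,Y^n}_{s,t}-R^Y_{s,t}$ followed by another invocation of that lemma. You merely make explicit the final interpolation step for the remainder term, which the paper leaves implicit.
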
 
\begin{proof}   
    The convergence to zero of the first two addends comes straightforwardly from Lemma \ref{lemma:technicalconvergence}. In addition notice that, uniformly in $(s,t)\in \Delta_{[0,T]}$, \begin{equation*} \begin{aligned}
        &\|\E_s(\delta Y^n_{s,t} - (Y^n)'_s\delta Z^n_{s,t} - \delta Y_{s,t} + Y_s'\delta Z_{s,t})\|_p \\
        &\le \|\delta Y^n_{s,t}-\delta Y_{s,t}\|_p + \|Y'_s\|_p |\delta Z^n_{s,t}-\delta Z_{s,t}| + \|(Y^n)'_s-Y'_s\|_p |\delta Z^n_{s,t}| \rightarrow 0
    \end{aligned}
    \end{equation*} as $n\to +\infty$.    
\end{proof}

Let $B=(B_t)_{t\in [0,T]}$ be an $m$-dimensional Brownian motion on $(\Omega,\mathcal{F},\{\mathcal{F}_t\}_{t\in [0,T]},\mathbb{P})$ and let $\mathbf{Z}=(Z,\mathbb{Z})\in \mathscr{C}^\alpha([0,T];\R^n)$ for $\alpha\in (\frac{1}{3},\frac{1}{2}]$. 
Let $b:\R^d \to \R^d$ and $\sigma:\R^d \to \mathscr{L}(\R^m;\R^d)$ be two bounded functions, let $\beta: \R^d \to \mathscr{L}(\R^n;\R^d)$ and let $x_0 \in \R^d$ be given. We define solutions of
\begin{equation} \label{eq:RSDE}
dX_t=b(X_t)dt+\sigma(X_t)dB_t+\beta(X_t)d\mathbf{Z}_t, \qquad X_0 = x_0 \in \R^d.
\end{equation} 
as follows (cf.\ \cite[Definition 4.2]{friz2021rough}).

\begin{defn} \label{def:solutionRSDEs}
    Let $p\in [2,\infty), \ q \in [p,\infty]$ and let $x_0\in \R^d$. An $L^{p,q}$-integrable solution to \eqref{eq:RSDE}, starting from $x_0$, is a continuous and adapted $\R^d$-valued stochastic process satisfying the following: \begin{enumerate}
    \item[i)] $X_0=x_0$;
    \item[ii)] $(\beta(X),D\beta(X)\beta(X)) \in \mathscr{D}_Z^{2\alpha}L^{p,q}(\mathscr{L}(\R^n,\R^d))$;
    \item[iii)] there exist $C_1,C_2 \ge 0$ such that, for any $(s,t)\in \Delta_{[0,T]}$, \begin{equation*}
            \|\|X^\natural_{s,t}|\mathcal{F}_s\|_p\|_q \le C_1 |t-s|^{2\alpha} \quad \text{and} \quad \|\|\E_s(X^\natural_{s,t})|\mathcal{F}_s\|_p\|_q \le C_2 |t-s|^{3\alpha},
        \end{equation*} where \begin{equation*}
            X^\natural_{s,t} := \delta X_{s,t} - \int_s^tb(X_r)dr -\int_s^t \sigma(X_r)dB_r - \beta(X_s)\delta Z_{s,t} - D\beta(X_s)\beta(X_s)\mathbb{Z}_{s,t}.
        \end{equation*}
    \end{enumerate}
    It is equivalent to replace iii) with \begin{itemize}
    \item[iii')] $\mathbb{P}$-a.s.\ and for any $t\in [0,T]$, \begin{equation*}
        X_t = x +\int_0^tb(X_r)dr + \int_0^t \sigma(X_r)dB_r + \int_0^t (\beta(X_r),D\beta(X_r)\beta(X_r))d\mathbf{Z}_r. 
    \end{equation*}
\end{itemize}
\end{defn}

We only consider deterministic initial conditions, since it is the proper setting to develop Malliavin calculus. 
In the case $q=\infty$ it is possible to prove the following existence-and-uniqueness result (see \cite[Theorem 4.7]{friz2021rough}). 
\begin{thm}
    Assume $b\in C^1_b(\R^d;\R^d), \ \sigma\in C^1_b(\R^d;\mathscr{L}(\R^m;\R^d)$ and $\beta \in C^3_b(\R^d;\mathscr{L}(\R^n;\R^d))$. Then, for any $p\in [2,\infty)$ and for any $x_0 \in \R^d$, there exists a unique $L^{p,\infty}$-integrable solution $X=(X_t)_{t\in [0,T]}$ to \eqref{eq:RSDE} starting from $x_0$. In particular, $(X,\beta(X))\in \mathscr{D}_Z^{2\alpha}L^{p,\infty}(\R^d)$.
\end{thm}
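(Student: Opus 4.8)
The plan is to construct the solution by a Picard fixed-point argument in a space of stochastic controlled paths, and then to pass from a local to a global statement using the boundedness of the coefficients. First I would fix a time horizon $\tau\in(0,T]$ to be chosen later and work, on $[0,\tau]$, with the set $\mathcal{B}_M$ of those $(Y,\beta(Y))\in\mathscr{D}_Z^{2\alpha}L^{p,\infty}(\R^d)$ with $Y$ continuous and adapted, $Y_0=x_0$, and $\|(Y,\beta(Y))\|_{\mathscr{D}_Z^{2\alpha}L^{p,\infty}}\le M$; with the ambient $\mathscr{D}_Z^{2\alpha}L^{p,\infty}$-metric this is a complete metric space (nonempty for $M$ large, as it contains $Y\equiv x_0$). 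On $\mathcal{B}_M$ define
\[
\mathcal{M}(Y)_t := x_0 + \int_0^t b(Y_r)\,\rd r + \int_0^t \sigma(Y_r)\,\rd B_r + \int_0^t \big(\beta(Y_r),\,D\beta(Y_r)\beta(Y_r)\big)\,\rd\mathbf{Z}_r,
\]
paired with the Gubinelli derivative $\beta(Y)$; by Definition \ref{def:solutionRSDEs} and the equivalence of iii) and iii') there, a fixed point of $\mathcal{M}$ is exactly an $L^{p,\infty}$-integrable solution of \eqref{eq:RSDE} on $[0,\tau]$ with $(X,\beta(X))\in\mathscr{D}_Z^{2\alpha}L^{p,\infty}$.

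The key technical ingredient is a composition/stability lemma for stochastic controlled paths: if $\phi\in C^k_b$ and $(Y,\beta(Y))\in\mathscr{D}_Z^{2\alpha}L^{p,\infty}$, then $(\phi(Y),D\phi(Y)\beta(Y))\in\mathscr{D}_Z^{2\alpha}L^{p,\infty}$ with norm bounded by a polynomial in $\|(Y,\beta(Y))\|$ and $|\phi|_{C^k_b}$. Applied to $\phi=\beta$ this shows $\beta(Y)$ is controlled; applied once more it shows that the Gubinelli derivative $D\beta(Y)\beta(Y)$ is itself a controlled path, so that Proposition \ref{prop:roughstochastiintegral} applies to produce the rough stochastic integral — and this is precisely the point where $\beta\in C^3_b$ rather than $C^2_b$ is used. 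The three terms of $\mathcal{M}(Y)$ are then estimated separately: the Lebesgue integral is Lipschitz in time and contributes an $O(\tau)$ factor to the relevant seminorms; the Itô integral has vanishing conditional expectation, hence is controlled by a Burkholder–Davis–Gundy / stochastic-sewing estimate gaining $|t-s|^{1/2}$ and contributing an $O(\tau^{1/2-\alpha})$ factor to the $\mathscr{D}_Z^{2\alpha}$-seminorms; the rough integral is estimated via Proposition \ref{prop:roughstochastiintegral} and contributes an $O(\tau^{\alpha})$ factor. Collecting, $\|(\mathcal{M}(Y),\beta(Y))\|_{\mathscr{D}_Z^{2\alpha}L^{p,\infty}}\le C(1+M)\tau^{\kappa}+C$ for some $\kappa>0$ and $C$ depending only on $\alpha,p,T$, the $C^k_b$-norms of $b,\sigma,\beta$, and $|\mathbf{Z}|_\alpha$; choosing first $M$ large and then $\tau$ small makes $\mathcal{M}$ map $\mathcal{B}_M$ into itself.

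An entirely parallel computation on differences $\mathcal{M}(Y)-\mathcal{M}(\tilde Y)$, using the Lipschitz-type bounds from the composition lemma together with the linearity of the three integrals, shows that $\mathcal{M}$ is a strict contraction on $\mathcal{B}_M$ once $\tau$ is small (depending on $M$). The Banach fixed-point theorem then gives a unique $L^{p,\infty}$-integrable solution on $[0,\tau]$. To reach $[0,T]$, I would observe that, because $b,\sigma,\beta$ are bounded, the a priori bound above — and the analogous bound for a solution restricted to any subinterval $[s_0,s_0+\tau]$ — depends on the data only, not on the value $X_{s_0}$ at the left endpoint; hence $\tau$ can be fixed once and for all and the unique solutions on $[0,\tau],[\tau,2\tau],\dots$ concatenated into a solution on $[0,T]$. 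Uniqueness propagates along the same partition: two global solutions coincide on $[0,\tau]$ by local uniqueness, hence have equal value at $\tau$, hence coincide on $[\tau,2\tau]$, and so on.

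I expect the main obstacle to be the composition/stability estimates for stochastic controlled paths in the conditional mixed $L^{p,\infty}$-norms, and in particular the verification that the remainder $X^\natural$ of Definition \ref{def:solutionRSDEs} satisfies the two-sided bound with the sharp exponents $2\alpha$ and $3\alpha$. This forces one to carefully separate the ``martingale-type'' contributions, which must gain the extra $|t-s|^{1/2+\varepsilon}$ demanded by the first hypothesis of the stochastic sewing Lemma \ref{thm:stochasticsewing}, from the ``drift-type'' contributions, which gain $|t-s|^{1+\varepsilon}$ and feed the second, conditional-expectation hypothesis; it is also this bookkeeping, rather than the fixed-point scheme itself, that pins down the $C^3_b$-regularity requirement on $\beta$.
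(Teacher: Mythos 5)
The paper does not actually prove this theorem: it is stated with the remark that ``it is possible to prove the following existence-and-uniqueness result,'' and the proof is delegated wholesale to \cite[Theorem 4.7]{friz2021rough}. There is therefore no in-paper argument to compare against. Your proposal reconstructs the expected Picard fixed-point strategy in a ball of stochastic controlled paths — a composition lemma for $C^k_b$ functions in the conditional $L^{p,\infty}$ norms, separate estimates for the Lebesgue, It\^o and rough stochastic integrals via the stochastic sewing lemma and Proposition \ref{prop:roughstochastiintegral}, local contraction for small $\tau$, then concatenation — which is indeed the family of argument underlying the cited result.

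One step does not quite close as written. You define $\mathcal{B}_M$ as the set of pairs of the constrained form $(Y,\beta(Y))$, and then declare that $\mathcal{M}(Y)$ is paired with the Gubinelli derivative $\beta(Y)$. But $(\mathcal{M}(Y),\beta(Y))$ is not an element of $\mathcal{B}_M$ unless $\mathcal{M}(Y)=Y$: membership would require the second component to be $\beta(\mathcal{M}(Y))$. Proposition \ref{prop:roughstochastiintegral} does output $\beta(Y)$ as the natural Gubinelli derivative of the rough-integral term, so to obtain the invariance $\mathcal{M}(\mathcal{B}_M)\subset\mathcal{B}_M$ you must additionally verify that $\beta(\mathcal{M}(Y))$ is \emph{also} an admissible Gubinelli derivative for $\mathcal{M}(Y)$, with $\beta(\mathcal{M}(Y))-\beta(Y)$ absorbed into the remainder using the $C^3_b$-bound on $\beta$ and the $L^{p,\infty}$-estimate on $\delta\mathcal{M}(Y)-\delta Y$. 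Cleaner alternatives are to run the iteration on an unconstrained ball in $\mathscr{D}_Z^{2\alpha}L^{p,\infty}$ and read off $Y'=\beta(Y)$ only at the fixed point, or to pair $\mathcal{M}(Y)$ with $\beta(\mathcal{M}(Y))$ from the start and show directly that this pair is controlled. This is a routine fix, but the invariance step as stated has a gap. The globalization by concatenation over subintervals of fixed length $\tau$ is correct and is correctly attributed to the boundedness of $b$, $\sigma$, $\beta$, which makes $M$ and $\tau$ independent of the left-endpoint value; just note that the local theory must then be phrased for $\mathcal{F}_{s_0}$-measurable random initial data, which the conditional $L^{p,\infty}$ framework accommodates without change.
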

We refer to \cite{friz2021rough} to generalizations and further results, concerning for example a priori bounds and stability estimates.

\subsection{Rough It\^o formula for functions of polynomial growth}

Throughout this section we fix a weakly geometric rough path $\mathbf{Z}\in \mathscr{C}^{\alpha}_g([0,T],\R^n)$ with $\alpha \in (\frac{1}{3},\frac{1}{2}]$. 
We prove an It\^o-type formula for \textit{rough It\^o processes} and functions of polynomial growth. The idea is based on the rough It\^o's formula for $C^2$-bounded functions proved in \cite[Section 4.5]{friz2021rough}, which cannot be directly applied in our case, as we need a chain rule for the product of two rough It\^o processes, which is not bounded. 


As a first step to prove the rough It\^o formula we must show that the composition of a $C^2$ function with polynomial growth with a stochastic controlled path is still a stochastic controlled path.
\begin{prop}
\label{pro: chain rule C3 function}
    Let $U,W$ be finite dimensional Hilbert spaces. Let $f\in C^2(U;W)$ such that there exists $c>0$ and $q\geq 1$ such that for $k=1,2$,
    \begin{equation*}
        |D^kf(x)| \leq c (1+|x|^q).
    \end{equation*}
    Moreover, assume that, for every $p\geq 2$ ,  and that
    \begin{equation*}
    \label{eq: X stoch controlled and bounded}
        (X,X^{\prime})\in \mathscr{D}^{2\alpha}_{Z}L^p(U), \qquad
        \sup_{t\in[0,T]}\|X\|_{p} < \infty.
    \end{equation*}
    Then,
    \begin{equation*}
        (f(X), Df(X)X^{\prime}) \in \mathscr{D}^{2\alpha}_{Z}L^p(W)
    \end{equation*}
\end{prop}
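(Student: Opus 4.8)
The plan is to verify the three defining conditions (a)–(c) of Definition \ref{def:stochasticcontrolledpaths} for the pair $(f(X), Df(X)X')$ with respect to $Z$, using the polynomial growth bounds on $Df$ and $D^2f$ together with the fact that all moments of $\sup_t|X_t|$ are finite. The main technical device throughout will be the generalized Hölder inequality: whenever we split a product of a "polynomial-in-$X$" factor times an increment factor, we put the increment in $L^{p',q'}$ for an exponent slightly larger than $p$ and absorb the polynomial factor in a high $L^r$ norm, which is finite by hypothesis. Since the bounds we want are at fixed integrability exponent $p$, this costs nothing.

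First I would handle condition (b), the Gubinelli derivative $Y' := Df(X)X'$. Boundedness $\sup_t \|Y'_t\|_q \le \|\,|Df(X_t)|\,|X'_t|\,\|_q \lesssim \|(1+|X_t|^{\mathfrak q})|X'_t|\|_q < \infty$ follows from Hölder and the moment bounds on $X$ and on $X'$ (the latter coming from $(X,X')\in\mathscr D^{2\alpha}_Z L^p$, which gives $\sup_t\|X'_t\|_q<\infty$; note one uses here that $X'$ has finite moments of all orders via interpolation of the given $L^p$ controls plus boundedness — more precisely we simply invoke the hypothesis for every $p\ge 2$). For the Hölder-type control of $\delta Y'$, write $\delta(Df(X))_{s,t} = \int_0^1 D^2 f(X_s + \theta \delta X_{s,t})\,d\theta\, \delta X_{s,t}$ using the mean value theorem stated in the preliminaries; then
$$
\delta Y'_{s,t} = \delta(Df(X))_{s,t}\, X'_s + Df(X_t)\,\delta X'_{s,t}.
$$
Taking $\|\|\cdot|\mathcal F_s\|_p\|_q$, the second term is bounded by $\|\,|Df(X_t)|\,|\delta X'_{s,t}|\,\|$-type quantities controlled by $\|\delta X'\|_{\alpha;p,q}|t-s|^\alpha$ after a Hölder split, and the first term by the $C^2$-growth bound on $D^2f$, the finite moments of $X_s$, $\delta X_{s,t}$, $X'_s$, and $\|\delta X\|_{\alpha;p,q}|t-s|^\alpha$. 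Condition (a) for $Y=f(X)$ is analogous but simpler: $\delta(f(X))_{s,t} = \int_0^1 Df(X_s+\theta\delta X_{s,t})\,d\theta\,\delta X_{s,t}$, so $\|\|\delta f(X)_{s,t}|\mathcal F_s\|_p\|_q \lesssim |t-s|^\alpha$ by the same mechanism, and $L^p$-integrability of $\delta f(X)$ follows likewise.

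The crux is condition (c): the remainder $R^{f(X)}_{s,t} = \delta f(X)_{s,t} - Df(X_s)X'_s\,\delta Z_{s,t}$ must satisfy $\|\E_s R^{f(X)}_{s,t}\|_q \lesssim |t-s|^{2\alpha}$. Here I would Taylor-expand to second order: using the $C^2$ expansion from the preliminaries,
$$
\delta f(X)_{s,t} = Df(X_s)\,\delta X_{s,t} + \int_0^1 (1-\theta)\, D^2 f(X_s+\theta\delta X_{s,t})\, d\theta\,\, \delta X_{s,t}\otimes\delta X_{s,t}.
$$
Then $R^{f(X)}_{s,t} = Df(X_s)(\delta X_{s,t} - X'_s\delta Z_{s,t}) + (\text{quadratic term}) = Df(X_s) R^X_{s,t} + (\text{quadratic term})$. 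For the first piece, $Df(X_s)$ is $\mathcal F_s$-measurable, so $\E_s[Df(X_s)R^X_{s,t}] = Df(X_s)\E_s R^X_{s,t}$, and we bound $\|Df(X_s)\E_s R^X_{s,t}\|_q$ by a Hölder split into $\|(1+|X_s|^{\mathfrak q})\|_r$ (finite) times an $L^{q'}$ norm of $\E_s R^X_{s,t}$, controlled by $\|\E_\cdot R^X\|_{2\alpha;q}|t-s|^{2\alpha}$; one must check the exponent bookkeeping works, i.e. that the conditional-expectation bound survives the Hölder split — this is where one uses that $\|\E_\cdot R^X\|_{2\gamma;q}$ is assumed finite for the relevant $q$ (again invoking the hypothesis at every exponent). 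For the quadratic term, one does not even need the conditional expectation: $\|\|\delta X_{s,t}\otimes\delta X_{s,t}|\mathcal F_s\|_p\|_q \lesssim |t-s|^{2\alpha}$ by Cauchy–Schwarz in the conditional expectation combined with $\|\delta X\|_{\alpha;p',q'}$ bounds, and the polynomial factor $D^2 f(X_s+\theta\delta X_{s,t})$ is absorbed in a high moment norm uniformly in $\theta$. This yields the claimed $|t-s|^{2\alpha}$ bound.

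The main obstacle I anticipate is purely bookkeeping rather than conceptual: keeping track of the mixed $L^{p,q}$ norms through each Hölder split so that the polynomial-growth factors (which are not bounded) are always paired against a factor with all moments finite, and making sure that "for every $p\ge 2$" in the hypothesis on $X$ is genuinely enough to close every estimate at the target exponent — in particular that $X'$ and the remainder inherit enough integrability. No new analytic idea beyond the mean value/Taylor formulas and the stochastic-controlled-path definition is needed; the rough path $\mathbb Z$ and the stochastic sewing lemma play no role here, since this proposition is only about the controlled-path structure, not about integration.
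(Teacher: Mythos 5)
Your approach is essentially identical to the paper's: both verify conditions (a)--(c) of Definition \ref{def:stochasticcontrolledpaths} by expanding $\delta f(X)_{s,t}$ via the mean value theorem, decomposing $R^{f(X)}_{s,t}$ into $Df(X_s)R^X_{s,t}$ plus a quadratic-in-$\delta X_{s,t}$ term, and closing every estimate with Hölder/Cauchy--Schwarz against the all-moments bounds on $X$ and $X'$. The only cosmetic difference is that you write the quadratic remainder as a second-order Taylor integral $\int_0^1(1-\theta)D^2f(X_s+\theta\delta X_{s,t})\,d\theta\,(\delta X_{s,t})^{\otimes 2}$ whereas the paper writes it as $\int_0^1\big(Df(X_s+\theta\delta X_{s,t})-Df(X_s)\big)d\theta\,\delta X_{s,t}$; these are the same object, and the subsequent estimates agree.
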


\begin{proof}
    We start the proof by expanding the increment of $f(X_t)$ as follows
    \begin{equation*}
        \delta f(X)_{s,t} = Df(X_s)X_s^{\prime} \delta Z_{s,t} + R^{f}_{s,t},
    \end{equation*}
    where, for $R^{X}_{s,t}:= \delta X_{s,t} - X^{\prime}_s \delta Z_{s,t}$,
    \begin{equation*}
        R^f_{s,t} := Df(X_s)R^{X}_{s,t}
        + \int_{0}^{1} \left(Df(X_s + \theta \delta X_{s,t}) - Df(X_s) \right) \rd \theta \delta X_{s,t}.
    \end{equation*}
    We must prove that Definition \ref{def:stochasticcontrolledpaths} is satisfied by $f(X)$. Since $(X_t)_{t\in[0,T]}$ and $(X^{\prime}_t)_{t\in[0,T]}$ are $\mathcal{F}_t$-adapted, so are also the processes $(f(X_t))_{t\in[0,T]}$ and $(Df(X_t)X^\prime_t)_{t\in [0,T]}$.
    We show that $\delta f(X) \in C^{\alpha}_2 L^p(W)$ by using \eqref{eq: X stoch controlled and bounded} and applying Cauchy-Schwarz inequality
    \begin{align*}
        \|\delta f(X)_{s,t} \|_{p}
        & = \| \int_0^1 Df(X_s+\theta \delta X_{s,t})\rd \theta \delta X_{s,t}\|_{p}\\
        & \leq c \|(1+|X_s|^q+|\delta X_{s,t}|^q \|_{{2p}}
        \| \delta X_{s,t} \|_{2p}\\
        & \lesssim \sup_{u\in[0,T]}\|X_u\|_{{2pq}}^q \|X\|_{\alpha;2p} |t-s|^{\alpha}.
    \end{align*}
    This shows that $f(X)$ satisfies point a) in Definition \ref{def:stochasticcontrolledpaths}.
    With similar arguments one shows that 
    \begin{align*}
        \| \delta(Df(X)X^\prime)_{s,t}
        \|_{p}
        & \lesssim (1+\sup_{u\in[0,T]}\|X_u\|_{{2pq}}^q + \sup_{u\in[0,T]}\|X^{\prime}_u\|_{{2pq}}^q)
        (\|X^{\prime}\|_{\alpha;2p} + \|X\|_{\alpha;2p}) |t-s|^{\alpha}\\
        \sup_{t\in[0,T]}\|Df(X_t)X^{\prime}_t\|_{p} 
        & \lesssim (1+\sup_{u\in[0,T]}\|X_u\|_{{2pq}}^q) \sup_{u\in[0,T]}\|X^{\prime}_u\|_{{2p}}\\
        \|\mathbb{E}_sR^f_{s,t}\|_{p} 
        &\lesssim (1+\sup_{u\in[0,T]}\|X_u\|_{{2pq}}^q)
        (\|X\|_{\alpha;4p}^2+ \|\mathbb{E}_{\cdot}R^x\|_{2\alpha;2p}) |t-s|^{2\alpha}.
    \end{align*}
    This complete the proof that $f(X)$ satisfies Definition \ref{def:stochasticcontrolledpaths} points b) and c).
\end{proof}
We are now ready to introduce the definition of rough It\^o process on a finite dimensional Hilbert space $U$.
\begin{defn}
    \label{def: rough ito process}
    Assume that
     \begin{align*}
         (b_t)_{t\in [0,T]}\in U,
         \qquad
         (\sigma_t)_{t\in [0,T]}\in\mathscr{L}(\R^m,U) 
     \end{align*}
     are progressive processes such that, for every $p\geq 2$,
     \begin{equation}
         \label{eq: boundedness in Lp coefficients}
         \sup_{t\in [0,T]}\|b_t\|_{p},
         \sup_{t\in [0,T]}\|\sigma_t\|_{p} < \infty.
     \end{equation}
     Moreover, for every $p\geq 2$, let $(X^{\prime},X^{\prime\prime}) \in \mathscr{D}_Z^{2\alpha} L^p(\mathscr{L}(\R^{n},U))$ such that
     \begin{equation}
     \label{eq: boundedness in Lp derivative}
          \sup_{t\in [0,T]}\|X^{\prime}_t\|_{p}
         < \infty.
     \end{equation}
     For every $p\geq 2$ assume that $X_0\in L^p(\Omega;U)$ is $\mathcal{F}_0$-measurable.
     
     We say that $(X_t)_{t\in[0,t]} \in U$ is a \textit{rough It\^o process} if it is a continuous and adapted process such that
     \begin{align*}
         X_t &= X_0 + \int_0^t b_s \rd s + \int_0^t\sigma_s \rd B_s + \int_0^t (X^{\prime}, X^{\prime\prime})_s \rd \mathbf{Z}_s.
     \end{align*}
\end{defn}

\begin{prop}
\label{pro: rough ito process is stoch controlled}
    If $X$ is a rough It\^o process, then
    $X\in \mathscr{D}_Z^{2\alpha} L^p(U)$ and
        \begin{equation}
        \label{eq: boundedness rough ito}
        \sup_{t\in[0,T]}\|X_t\|_{p} < \infty;
        \end{equation}
\end{prop}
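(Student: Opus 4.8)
The plan is to use $X'$ itself as the Gubinelli derivative of $X$ with respect to $Z$ and to verify the three conditions of Definition \ref{def:stochasticcontrolledpaths} with $\gamma=\alpha$ and $p=q$; in this case $\|\,\|\xi|\mathcal{F}_s\|_p\,\|_p=\|\xi\|_p$, so every mixed norm below is an ordinary $L^p$-norm. Write $\delta X_{s,t}=I^1_{s,t}+I^2_{s,t}+I^3_{s,t}$ with $I^1_{s,t}=\int_s^t b_r\,\rd r$, $I^2_{s,t}=\int_s^t\sigma_r\,\rd B_r$ and $I^3_{s,t}=\int_s^t(X',X'')_r\,\rd\mathbf{Z}_r$; progressivity of $b,\sigma$ and adaptedness of $X$ (part of being a rough It\^o process) make $\delta X_{s,t}$ $\mathcal{F}_t$-measurable.

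For point a) I would collect the elementary bounds: by \eqref{eq: boundedness in Lp coefficients}, $\|I^1_{s,t}\|_p\le|t-s|\sup_r\|b_r\|_p\lesssim_T|t-s|^\alpha$, and by the Burkholder--Davis--Gundy inequality $\|I^2_{s,t}\|_p\lesssim|t-s|^{1/2}\sup_r\|\sigma_r\|_p\lesssim_T|t-s|^\alpha$, using $\alpha\le\frac12$. Since $(X',X'')\in\mathscr{D}^{2\alpha}_Z L^p(\mathscr{L}(\R^n,U))$, Proposition \ref{prop:roughstochastiintegral} (with $\gamma=\alpha$, $p=q$) applies and gives $\|I^3_{s,t}-X'_s\delta Z_{s,t}\|_p\lesssim|t-s|^{2\alpha}$ as well as $\|\E_s(I^3_{s,t}-X'_s\delta Z_{s,t}-X''_s\mathbb{Z}_{s,t})\|_p\lesssim|t-s|^{3\alpha}$, the implicit constants depending only on $\alpha,p,T,|\mathbf{Z}|_\alpha$ and on the norms of $(X',X'')$ as a stochastic controlled path, which are finite together with $\sup_t\|X'_t\|_p$ by \eqref{eq: boundedness in Lp derivative}. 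Combining these with $\|X'_s\delta Z_{s,t}\|_p\le\sup_t\|X'_t\|_p|\delta Z|_\alpha|t-s|^\alpha$ yields $\|\delta X_{s,t}\|_p\lesssim|t-s|^\alpha$, i.e.\ point a). Point b) is immediate: $X'$ is adapted and $\mathscr{L}(\R^n,U)$-valued, $\sup_t\|X'_t\|_p<\infty$ by assumption, and $\|\delta X'\|_{\alpha;p}<\infty$ is exactly condition a) of Definition \ref{def:stochasticcontrolledpaths} for the pair $(X',X'')$.

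For point c) write $R^X_{s,t}=\delta X_{s,t}-X'_s\delta Z_{s,t}=I^1_{s,t}+I^2_{s,t}+(I^3_{s,t}-X'_s\delta Z_{s,t})$ and apply $\E_s$: the martingale increment disappears, $\E_s I^2_{s,t}=0$; the drift gives $\|\E_s I^1_{s,t}\|_p\le|t-s|\sup_r\|b_r\|_p\lesssim_T|t-s|^{2\alpha}$ using $2\alpha\le1$; and for the rough term I would insert and remove $X''_s\mathbb{Z}_{s,t}$, using the second estimate of Proposition \ref{prop:roughstochastiintegral} for $\|\E_s(I^3_{s,t}-X'_s\delta Z_{s,t}-X''_s\mathbb{Z}_{s,t})\|_p\lesssim|t-s|^{3\alpha}$ together with $\|X''_s\mathbb{Z}_{s,t}\|_p\le\sup_t\|X''_t\|_p|\mathbb{Z}|_{2\alpha}|t-s|^{2\alpha}$. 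Hence $\|\E_s R^X_{s,t}\|_p\lesssim|t-s|^{2\alpha}$, which is point c), so $(X,X')\in\mathscr{D}^{2\alpha}_Z L^p(U)$. Finally, for \eqref{eq: boundedness rough ito} the triangle inequality in $L^p$ reduces matters to the four summands defining $X_t$: $X_0$ lies in $L^p$, the drift and stochastic integrals are bounded uniformly in $t$ by the estimates above, and, since $\sup_t\|X'_t\|_p<\infty$, the last part of Proposition \ref{prop:roughstochastiintegral} gives $(\int_0^\cdot(X',X'')_r\,\rd\mathbf{Z}_r,X'_\cdot)\in\mathscr{D}^{2\alpha}_Z L^p$, whence $\|\int_0^t(X',X'')_r\,\rd\mathbf{Z}_r\|_p\lesssim t^\alpha$ is bounded as well. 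The only point requiring care is the bookkeeping in c): one must invoke the sharper $|t-s|^{\alpha+2\gamma}=|t-s|^{3\alpha}$ bound for the conditional expectation of the rough integral rather than its $|t-s|^{2\alpha}$ bound, since otherwise the extra power of $|t-s|$ needed to absorb $X''_s\mathbb{Z}_{s,t}$ is lost; everything else is a routine application of standard moment estimates.
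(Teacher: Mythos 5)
Your proof is correct and, modulo the level of detail, takes the same route as the paper: decompose $\delta X$ into drift, martingale and rough parts, bound the first two by Jensen/BDG, and invoke Proposition \ref{prop:roughstochastiintegral} (applied to $(X',X'')$) for the third, then observe that the martingale disappears under $\E_s$ and the drift is $O(|t-s|)\lesssim|t-s|^{2\alpha}$ while the rough remainder is already of the right order.

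One small inaccuracy in your closing remark: you claim that in part c) one \emph{must} use the sharper $|t-s|^{3\alpha}$ estimate for $\E_s(I^3-X'_s\delta Z_{s,t}-X''_s\mathbb{Z}_{s,t})$ and then re-absorb $X''_s\mathbb{Z}_{s,t}$. This detour works, but it is not necessary: the first estimate of Proposition \ref{prop:roughstochastiintegral} already gives $\|\|I^3_{s,t}-X'_s\delta Z_{s,t}\,|\mathcal{F}_s\|_p\|_p\lesssim|t-s|^{\alpha+\gamma}=|t-s|^{2\alpha}$, and since $\|\E_s(\cdot)\|_p\le\|\|\cdot\,|\mathcal{F}_s\|_p\|_p$ this immediately yields $\|\E_s(I^3_{s,t}-X'_s\delta Z_{s,t})\|_p\lesssim|t-s|^{2\alpha}$ without any insertion of $X''_s\mathbb{Z}_{s,t}$. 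So there is no bookkeeping obstacle; both routes deliver the required $2\alpha$ exponent.
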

    \begin{proof}
It follows from the definition of rough stochastic integral and keeping in mind assumption \eqref{eq: boundedness in Lp derivative} that $X\in \mathscr{D}_Z^{2\alpha} L^p(U)$. The bounds \eqref{eq: boundedness rough ito} follow from standard considerations using Jensen and Burkholder-Davis-Gundy inequalities on the drift and the diffusion terms and \cite[Lemma 2.12]{friz2021rough} on the rough integral.
\end{proof}
We are now ready to state and proof the rough It\^o formula for functions of polynomial growth.
\begin{prop}
     \label{prop: product formula}  
     Let $U$ be a finite dimensional Hilbert spaces and $f\in C^3(U;W)$ such that, there exists $q\geq1$ and $c>0$ such that, for $k=1,2,3$
    \begin{equation*}
        |D^kf(x)| \leq c(1+|x|^q),
        \qquad
        x\in U.
    \end{equation*}
    Assume that $X$ is a rough It\^o process on $U$.
    
    Then, for every $p\geq 2$, $(Y,Y^\prime) := (Df(X)X^{\prime}, D^2f(X)(X^{\prime})^2 + Df(X)X^{\prime})\in \mathscr{D}_Z^{2\alpha} L^p(\mathscr{L}(\R^n,W))$ and the rough It\^o formula holds: For every $t\in [0,T]$, $\mathbb{P}$-a.s.,
        \begin{align}
        \label{eq: product formula}
        \begin{aligned}
            f(X_t) - f(X_0)
            = & \int_{0}^{t} Df(X_u) b_u \rd u
            + \int_{0}^{t} Df(X_u) \sigma_u \rd B_u
            + \int_{0}^{t} (Y,Y^{\prime})_u \rd \mathbf{Z}_u\\
            &\quad  + \frac{1}{2} \int_{s}^{t} \operatorname{trace}(\sigma_u \sigma^{\top}_u D^2f(X_u)) \rd u.
                    \end{aligned}
        \end{align}
\end{prop}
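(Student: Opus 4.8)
The plan is to reduce the It\^o formula \eqref{eq: product formula} to the rough It\^o formula for bounded $C^2$ coefficients established in \cite[Section~4.5]{friz2021rough} by a localisation argument, and then to pass to the limit by controlling every error term --- in particular the rough stochastic integral --- through the stability results of Proposition~\ref{prop:roughstochastiintegral} and Corollary~\ref{coroll:technicalconvergence}. It is convenient to treat separately the two assertions of the statement: that $(Y,Y')$ is a stochastic controlled path, and the identity \eqref{eq: product formula} itself.

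For the first assertion, Proposition~\ref{pro: rough ito process is stoch controlled} gives $X\in\mathscr{D}_Z^{2\alpha}L^p(U)$ with $\sup_{t\in[0,T]}\|X_t\|_p<\infty$ for every $p\ge 2$. Since $f\in C^3(U;W)$, the map $Df\in C^2(U;\mathscr{L}(U,W))$ has first and second derivatives of polynomial growth, so Proposition~\ref{pro: chain rule C3 function} applied to $Df$ yields $(Df(X),D^2f(X)X')\in\mathscr{D}_Z^{2\alpha}L^p(\mathscr{L}(U,W))$, a process which is moreover bounded in $L^p$ uniformly in time by the polynomial growth of $Df$ and the moment bounds on $X$. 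Combining this with $(X',X'')\in\mathscr{D}_Z^{2\alpha}L^p$ and $\sup_t\|X'_t\|_p<\infty$ from Definition~\ref{def: rough ito process}, the product rule for stochastic controlled paths --- proved by the same Taylor-expansion and Cauchy--Schwarz estimates as Proposition~\ref{pro: chain rule C3 function} --- gives $(Y,Y')=(Df(X)\,X',\,D^2f(X)(X')^2+Df(X)X'')\in\mathscr{D}_Z^{2\alpha}L^p(\mathscr{L}(\R^n,W))$.

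For the identity, fix cut-off functions $\chi_N\in C_c^\infty(U;[0,1])$ with $\chi_N\equiv 1$ on $\{|x|\le N\}$, $\chi_N\equiv 0$ on $\{|x|\ge 2N\}$ and $|D\chi_N|+|D^2\chi_N|+|D^3\chi_N|\lesssim 1$ uniformly in $N$, and set $f_N:=\chi_N f\in C_b^3(U;W)$. The rough It\^o formula for bounded $C^3$ coefficients (which follows componentwise from \cite[Section~4.5]{friz2021rough}), applied to $f_N$ and to the rough It\^o process $X$, gives \eqref{eq: product formula} with $f$ replaced by $f_N$ and $(Y,Y')$ replaced by $(Y^N,(Y^N)'):=(Df_N(X)X',\,D^2f_N(X)(X')^2+Df_N(X)X'')$. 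One then lets $N\to\infty$ term by term. Since $X$ has continuous paths on $[0,T]$ we have $\sup_{t\in[0,T]}|X_t|<\infty$ almost surely, and in fact $\E\sup_{t\in[0,T]}|X_t|^p<\infty$ for every $p$ by the continuity part of Theorem~\ref{thm:stochasticsewing} together with standard martingale inequalities; hence $f_N(X_t)\to f(X_t)$ and $f_N(X_0)\to f(X_0)$ almost surely. For the drift and It\^o terms, $Df_N(X)\to Df(X)$ and $D^2f_N(X)\to D^2f(X)$ pointwise in $(\omega,u)$, with the differences dominated by $C(1+|X_u|^{q+1})\mathbf 1_{\{|X_u|\ge N\}}\le C(1+|X_u|^{q+1})$; so dominated convergence together with \eqref{eq: boundedness in Lp coefficients}, H\"older's inequality and the Burkholder--Davis--Gundy inequality yields convergence of $\int_0^t Df_N(X_u)b_u\,du$, $\int_0^t Df_N(X_u)\sigma_u\,dB_u$ and $\tfrac12\int_0^t\operatorname{trace}(\sigma_u\sigma_u^\top D^2f_N(X_u))\,du$ to the corresponding quantities with $f$, in $L^1(\Omega)$ and hence in probability. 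For the rough stochastic integral, the polynomial-growth constants of $f_N$ are uniform in $N$, which through the estimates behind Proposition~\ref{pro: chain rule C3 function} gives $\sup_N\|(Y^N,(Y^N)')\|_{\mathscr{D}_Z^{2\alpha}L^p}<\infty$, while $\sup_{(s,t)\in\Delta_{[0,T]}}\|\delta Y^N_{s,t}-\delta Y_{s,t}\|_p\to 0$ and $\sup_{t\in[0,T]}\|(Y^N)'_t-Y'_t\|_p\to 0$, because $Df_N-Df$ and $D^2f_N-D^2f$ vanish on $\{|x|<N\}$ and $\|(1+\sup_u|X_u|^{r})\mathbf 1_{\{\sup_u|X_u|\ge N\}}\|_p\to 0$ by dominated convergence. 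Corollary~\ref{coroll:technicalconvergence}, with the constant sequence $\mathbf Z^n\equiv\mathbf Z$, then gives $(Y^N,(Y^N)')\to(Y,Y')$ in $\mathscr{D}_Z^{2\gamma}L^p$ for every $\gamma\in(\tfrac13,\alpha)$, and applying the estimate of Proposition~\ref{prop:roughstochastiintegral} to the difference $(Y^N-Y,(Y^N)'-Y')$ (together with $\|(Y^N_s-Y_s)\delta Z_{s,t}\|_p\to 0$) shows $\int_0^t(Y^N,(Y^N)')_u\,d\mathbf Z_u\to\int_0^t(Y,Y')_u\,d\mathbf Z_u$ in $L^p(\Omega)$. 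Passing to the limit in the $f_N$-version of \eqref{eq: product formula} completes the proof.

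The main obstacle is this last point: establishing the uniform bound $\sup_N\|(Y^N,(Y^N)')\|_{\mathscr{D}_Z^{2\alpha}L^p}<\infty$ and the $L^p$-convergence of the controlled-path data of $(Y^N,(Y^N)')$ while the difference $f_N-f$ is only controlled through a cut-off supported on $\{|x|\ge N\}$ with polynomial growth uniform in $N$. Concretely, one must rerun the Taylor and Cauchy--Schwarz estimates of Proposition~\ref{pro: chain rule C3 function} with the extra indicator factors $\mathbf 1_{\{|X_s|\ge N/2\}}+\mathbf 1_{\{|\delta X_{s,t}|\ge N/2\}}$ split off by H\"older's inequality, exploiting $\E\sup_{t\in[0,T]}|X_t|^p<\infty$ and $\|\delta X_{s,t}\|_p\lesssim|t-s|^\alpha$. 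Note that one cannot expect convergence in the full $2\alpha$-seminorm --- the factor $\mathbf 1_{\{|\delta X_{s,t}|\ge N/2\}}$ produces smallness only at the cost of a power of $|t-s|^{\alpha}$ --- which is precisely why the argument is routed through the weaker topology of $\mathscr{D}_Z^{2\gamma}$, $\gamma<\alpha$, via Corollary~\ref{coroll:technicalconvergence}.
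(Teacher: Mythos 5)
Your argument is correct in spirit but takes a genuinely different route from the paper's. The paper proves the identity \emph{directly} through the stochastic sewing lemma: it Taylor-expands $\delta f(X)_{s,t}$ to second order, bounds the third-order remainder by $|t-s|^{3\alpha}$ (using the polynomial growth of $D^3f$ and the moment bounds on $X$), and then, defining $I_t$ to be the candidate right-hand side of \eqref{eq: product formula}, verifies that $\delta I_{s,t}-A_{s,t}$ — with $A_{s,t}$ the second-order Taylor germ — satisfies the $o(|t-s|)$ conditional estimates required by the uniqueness part of Theorem~\ref{thm:stochasticsewing}; this identifies $I_t$ with $f(X_t)$ in one step, with no approximation layer. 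You instead reduce to the bounded case by truncating $f$, invoke the bounded-coefficient formula of \cite[Section~4.5]{friz2021rough} as a black box, and pass to the limit, routing the rough-integral term through Corollary~\ref{coroll:technicalconvergence} in the weaker $2\gamma$-topology — a subtle point you flag correctly, since the $\mathbf{1}_{\{|\delta X_{s,t}|\ge N/2\}}$ factor does cost a power of $|t-s|$. Two places in your route deserve extra care. First, $\E\sup_{t\in[0,T]}|X_t|^p<\infty$ is strictly stronger than the $\sup_t\|X_t\|_p<\infty$ that Proposition~\ref{pro: rough ito process is stoch controlled} actually states; it does follow from Kolmogorov continuity together with BDG and the continuity part of Theorem~\ref{thm:stochasticsewing}, but that is an extra lemma the direct argument never needs. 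Second, and more substantively, your black-box use of \cite[Section~4.5]{friz2021rough} presupposes that the formula there is stated for bounded functions of a \emph{general} rough Itô process (time- and $\omega$-dependent drift and diffusion, general stochastic controlled pair $(X',X'')$), not only for autonomous RSDE solutions; the paper's own remark lists only the unboundedness of $f$ as the obstruction, so this is plausible, but it is exactly the step you should confirm against the source — and avoiding that dependency is one of the things the paper's direct proof buys.
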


\begin{proof}
    First note that $(Y,Y^{\prime}) \in \mathscr{D}_{Z}^{2\alpha}L^p(\mathscr{L}(\R^n,W))$ thanks to Proposition \eqref{pro: chain rule C3 function} applied to the function $U\times \mathscr{L}(\R^n,U) \ni (x,x^\prime) \mapsto Df(x)x^{\prime} \in \mathscr{L}(\R^n,W)$, which is $2$-times differentiable with derivatives of polynomial growth.

    We apply now Taylor expansion to compute the increment of $f(X)$. Let $0\leq s \leq t \leq T$, we have $\delta f(X)_{s,t}
        = A_{s,t} + R_{s,t}$,
        where
    \begin{align*}
        A_{s,t} &:= Df(X_s)\delta X_{s,t} + \frac{1}{2} D^2f(X_s) (\delta X_{s,t})^{\otimes 2}\\
        R_{s,t} &:= \int_{0}^{1}\int_{0}^{1}\int_0^1
            D^3f(X_s + \xi \theta \zeta \delta X_{s,t}) \rd \xi \theta \rd \theta  \rd \zeta (\delta X_{s,t})^{\otimes 3}.
    \end{align*}
    Using the polynomial growth property of $D^3 f$ and Cauchy-Schwarz inequality, we have that $R\in C^{3\alpha}L^2(W)$. Indeed, there exists $c>0$ such that
    \begin{equation*}
        \|R_{s,t}\|_{2}
        \le c (1+\sup_{u\in[0,T]}\|X\|_{{4q}}^q) \|\delta X\|_{\alpha;12}^3 |t-s|^{3\alpha}.
    \end{equation*}
    This immediately implies that
    \begin{equation}
    \label{eq: L2 limit of A}
        \delta f(X)_{s,t} = L^2-\lim_{n\to\infty} \sum_{[u,v]\in \pi_n} A_{u,v},
    \end{equation}
    where $\pi_n$ is a sequence of partitions of $[s,t]$ whose mesh goes to $0$ as $n\to \infty$.

    We now define $I_{t}$ as the right-hand side of \eqref{eq: product formula} evaluated for $s=0$ and $t=t$. By definition $(I_t)_{t\in [0,T]}$ is an $\mathcal{F}_t$-adapted stochastic process with continuous paths and $I_t \in L^p(W)$ for every $t\in[0,T]$ and $p\geq 2$. We want to now apply the uniqueness part of the stochastic sewing lemma to see that $\delta I_{s,t}$ is the $L^2$ limit of the partial sums over $A_{s,t}$ and thus coincides with $\delta f(X)_{s,t}$ almost surely.
    Using the definition of $I$, $A$, $(Y,Y^\prime)$ expanding $\delta X_{s,t}$ and adding and subtracting the relevant terms, we have that
    \begin{align}
        \delta I_{s,t} - A_{s,t} 
        = &\int_{s}^{t} \delta Df(X)_{s,u} b_u \rd u
        + \int_{s}^{t} \delta Df(X)_{s,u} \sigma_u \rd B_u
        \label{eq: expansion 1}\\
        &+ \int_{s}^{t} (Y,Y^{\prime})_u\rd \mathbf{Z}_u
        - Y_s \delta Z_{s,t} - Y_s^{\prime} \mathbb{Z}_{s,t}
        \label{eq: expansion 2}\\
        & - Df(X_s) \left(\int_{s}^t (X^\prime, X^{\prime\prime})_u \rd \mathbf{Z}_u - X^{\prime}_s \delta Z_{s,t} - X^{\prime \prime}_s \mathbb{Z}_{s,t}
        \right)
        \label{eq: expansion 3}
        \\
        & -\frac{1}{2} D^2f(X_s) \left( (\delta X_{s,t})^{\otimes 2} - 2 (X^{\prime}_s)^2 \operatorname{Symm}(\mathbb{Z}_{s,t})\right)
        \label{eq: expansion 4}\\
         &+ \frac{1}{2} \int_{s}^{t}\operatorname{trace}(\sigma_u \sigma^{\top}_u D^2f(X_u)) \rd u.
         \label{eq: expansion 5}
    \end{align}
    By standard arguments of stochastic calculus one sees that $\|\eqref{eq: expansion 1}\|_{2} \lesssim |t-s|^{\frac{1}{2} + \alpha}$ and $\|\mathbb{E}_s\eqref{eq: expansion 1}\|_{2} \lesssim |t-s|^{1 + \alpha}$. It is immediate to notice that terms \eqref{eq: expansion 2} and \eqref{eq: expansion 3} are rough stochastic integrals minus their local expansions. By Proposition \ref{prop:roughstochastiintegral}, we immediately have
    \begin{equation*}
        \|\eqref{eq: expansion 2}\|_{2} \lesssim |t-s|^{2\alpha}, \qquad \|\mathbb{E}_s\eqref{eq: expansion 2}\|_{2} \lesssim |t-s|^{3\alpha},
    \end{equation*}
    and similar bounds hold for \eqref{eq: expansion 3}, using Hölder inequality.

    For term \eqref{eq: expansion 4} we have
    \begin{align*}
        &(\delta X_{s,t})^{\otimes 2} - 2 (X^{\prime}_s)^2 \operatorname{Symm}(\mathbb{Z}_{s,t})
        = (\delta X_{s,t})^{\otimes 2} - (X^{\prime}_s)^2 (\delta Z_{s,t})^{\otimes 2}\\
        & \qquad = \left(\int_s^t \sigma_u \rd B_u\right)^{\otimes 2} + \left(\int_s^t X^{\prime}_u \rd \mathbf{Z}_u \right)^{\otimes 2}
        + \bar{R}_{s,t} - (X^{\prime}_s)^{2}(\delta Z_{s,t})^{\otimes 2},
    \end{align*}
    where $\bar{R}_{s,t}$ is such that
    \begin{equation*}
        \|\bar{R}_{s,t}\|_{2} \lesssim |t-s|^{2\alpha},
        \qquad
        \|\mathbb{E}_s\bar{R}_{s,t}\|_{2} \lesssim |t-s|^{3\alpha}.
    \end{equation*}
    Using again H\"older inequality and the fact that $X,X^\prime$ have all moments bounded uniformly in time, we obtain
    \begin{align*}
        &\|\left(\int_s^t X^{\prime}_u \rd \mathbf{Z}_u \right)^{\otimes 2}
         - (X^{\prime}_s)^{2}(\delta Z_{s,t})^{\otimes 2}\|_{2}\\
        & \qquad \leq
        \|\left(\int_s^t \delta X^{\prime}_{s,u} \rd \mathbf{Z}_u \right)\otimes \left(\int_s^t \delta X^{\prime}_{u} \rd \mathbf{Z}_u \right)\|_{2}
         + \| X^{\prime}_s\delta Z_{s,t}\otimes 
         \left(\int_s^t \delta X^{\prime}_{s,u} \rd \mathbf{Z}_u
         \right)\|_{2}
         \\
         & \qquad \lesssim |t-s|^{3 \alpha}.
    \end{align*}
    In order to have the required bounds on $\eqref{eq: expansion 4}+\eqref{eq: expansion 5}$, we are only left with computing the following
    \begin{align*}
        S_{s,t} := &\int_s^t \operatorname{trace}(\sigma_u\sigma_u^{\top} D^2 f(X_u)) \rd u - D^2f(X_s)\left(\int_s^t \sigma_u \rd B_u\right)^{\otimes 2}
    \end{align*}
    From standard stochastic calculus, the polynomial growth of $D^kf$ and the assumptions on the moments of $X$, one readily obtains
    \begin{equation*}
        \|\mathbb{E}_s S_{s,t}\|_{2} \lesssim |t-s|^{1+\alpha},
        \qquad
        \|S_{s,t}\|_{2}
        \lesssim |t-s|.
    \end{equation*}
    Putting all previous estimates together, we have that there exists $\epsilon_1, \epsilon_2 > 0$ such that
    \begin{equation*}
        \|\mathbb{E}_s \delta I_{s,t}-A_{s,t}\|_{2} \lesssim |t-s|^{1+\epsilon_1},
        \qquad
        \|\delta I_{s,t}-A_{s,t}\|_{2}
        \lesssim |t-s|^{1+\epsilon_2}.
    \end{equation*}
    By the uniqueness part of the stochastic sewing lemma, Theorem \ref{thm:stochasticsewing}, we have that $\delta I_{s,t}$ coincides with the $L^2$ limit in \eqref{eq: L2 limit of A} and it is almost surely equal to $\delta f(X)_{s,t}$.
\end{proof}

\section{Linear rough stochastic differential equations}
\label{sec: linear rough stochastic equations}
In this section we develop a theory of RSDEs with linear coefficients on a complete filtered probability space $(\Omega,\mathcal{F},\{\mathcal{F}_t\}_{t \in [0,T]}, \mathbb{P})$. We consider an $m$-dimensional Brownian motion $B=(B_t)_{t \in [0,T]}$ on $(\Omega,\mathcal{F},\{\mathcal{F}_t\}_{t \in [0,T]}, \mathbb{P})$ and a deterministic rough path $\mathbf{Z}=(Z,\mathbb{Z}) \in \mathscr{C}^{\alpha}([0,T];\R^n)$, with $\alpha \in (\frac{1}{3},\frac{1}{2}]$. We assume that solutions to linear RSDEs live in the finite dimensional real Hilbert spaces which we denote by $W$ and $\bar{W}$, typically they are $\R^d$ or $\R^{d\times m}$ for integers $d$ and $m$.

\begin{defn}\label{def:stochasticlinearvectorfields} A stochastic linear vector field from $W$ to $\bar{W}$ is a progressive $\mathscr{L}(W,\bar{W})$-valued stochastic process $G=(G_t)_{t\in [0,T]}$ such that \begin{equation*}
        \|G\|_\infty := \esssup_{\omega \in \Omega} \sup_{t \in [0,T]} |G_t(\omega)|_{\mathscr{L}(W,\bar{W})} < +\infty.
    \end{equation*} 
\end{defn}

\begin{defn}[Stochastic controlled linear vector fields] \label{def:stochasticcontrolledlinearvectorfields} 
Let $Z \in {C}^\alpha([0,T];\R^n)$ be a deterministic $\alpha$-Hölder continuous path with $\alpha \in (0,1]$. Let $p\in[2,\infty)$, $q \in [p,\infty]$ and let $\gamma\in(0,1]$. A pair $(f,f')$ is said to be a $L^{p,q}$-integrable stochastic controlled linear vector field from $W$ to $\bar{W}$ with $\gamma$-H\"older regularity - and we write $(f,f') \in \mathbf{D}^{2\gamma}_ZL_{lin}^{p,q}(W,\bar{W})$ - if \begin{itemize}
    \item[a)] $f=(f_t)_{t\in [0,T]}$ is an adapted $\mathscr{L}(W,\bar{W})$-valued stochastic process such that \begin{equation*}
        \|f\|_\infty:= \esssup_{\omega \in \Omega} \sup_{t \in [0,T]} |f_t|_{\mathscr{L}(W,\bar{W})}<+\infty
    \end{equation*} and \begin{equation*} \label{eq:normofcontrolledvectorfields1}
        \|\delta f\|_{\gamma;p,q}:=\sup_{0\le s<t\le T} \frac{\|\|f_t-f_s|\mathcal{F}_s\|_p\|_{q}}{|t-s|^{\gamma}}<+\infty;
    \end{equation*}
    \item[b)] $f'=(f'_t)_{t\in [0,T]}$ is an adapted $\mathscr{L}(W,\mathscr{L}(\R^n, \bar{W}))$-valued stochastic process such that \begin{equation*}
        \|f'\|_\infty:= \esssup_{\omega \in \Omega} \sup_{t \in [0,T]} |f'_t|_{\mathscr{L}(W,\mathscr{L}(\R^n,\bar{W})}<+\infty
    \end{equation*} and \begin{equation*} \label{eq:normofcontrolledvectorfields2}
        \|\delta f'\|_{\gamma;p,q}:=\sup_{0\le s<t\le T} \frac{\|\|f'_t-f'_s|\mathcal{F}_s\|_p\|_{q}}{|t-s|^{\gamma}}<+\infty;
    \end{equation*}
    \item[c)] denoting by $R_{s,t}^f:=f_t-f_s-f'_s\delta Z_{s,t}$ for any $(s,t)\in \Delta_{[0,T]}$, it holds \begin{equation*} \label{eq:remainderofstochasticcontrolledlinearvectorfields} \|\E_{\cdot}R^f\|_{2\gamma;q}:=\sup_{0\le s<t\le T} \frac{\|\E_s(R_{s,t}^f)\|_{q}}{|t-s|^{2\gamma}}<+\infty.
    \end{equation*}  
\end{itemize} 
For such a stochastic controlled linear vector field we write \begin{align*}
    \|(f,f')\|_\infty &:= \|f\|_\infty + \|f'\|_\infty \\ 
    \|(f,f')\|_{\mathbf{D}_Z^{2\gamma}L_{lin}^{p,q}} &:= \|\delta f\|_{\gamma;p,q} + \|\delta f'\|_{\gamma;p,q} + \|\E_{\cdot}R^f\|_{2\gamma;q}.
\end{align*}
\end{defn}

The following is a stability result for stochastic controlled paths under the composition with stochastic controlled linear vector fields.

\begin{prop} \label{prop:stambilityundercompositionlinearvectorfields} 
Let $Z \in {C}^\alpha([0,T];\R^n)$ and let $p \in [2,\infty), \ \gamma \in (0,1]$. Let $(Y,Y') \in \mathscr{D}_Z^{2\gamma}L^{p}(W)$ such that $\sup_{t \in [0,T]} \|Y_t\|_p < +\infty$ and let $(f,f')\in  \mathbf{D}^{2\gamma}_ZL_{lin}^{p,\infty}(W,\bar{W})$. Then \begin{equation*}
    (f,f')Y := (f_\cdot Y_\cdot,f'_\cdot Y_\cdot + f_\cdot Y'_\cdot) \in \mathscr{D}_Z^{2\gamma}L^{p}(\bar{W}).
\end{equation*}
\end{prop}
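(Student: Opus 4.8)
The plan is to verify, directly and in order, the three conditions a)--c) of Definition \ref{def:stochasticcontrolledpaths} for the pair $(fY,\,f'Y+fY')$. We work with $p=q$, so the conditional norm collapses, $\|\,\|\xi|\mathcal F_s\|_p\,\|_p=\|\xi\|_p$, and it suffices to control plain $L^p$-norms of increments together with the conditional expectation of the remainder. Adaptedness and the tensorial bookkeeping are immediate: $f'_t\in\mathscr L(W,\mathscr L(\R^n,\bar W))$ and $f_t\in\mathscr L(W,\bar W)$ give $f'_tY_t+f_tY'_t\in\mathscr L(\R^n,\bar W)$, as required of a Gubinelli derivative.

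For a)--b) I would expand $\delta(fY)_{s,t}=\delta f_{s,t}\,Y_s+\delta f_{s,t}\,\delta Y_{s,t}+f_s\,\delta Y_{s,t}$ and bound term by term: the middle and last terms use $|\delta f_{s,t}|\le 2\|f\|_\infty$ together with $\|\delta Y_{s,t}\|_p\le\|\delta Y\|_{\gamma;p}|t-s|^\gamma$; the first term is the only delicate one, and one handles it by conditioning on $\mathcal F_s$, where $Y_s$ is measurable,
\[
\|\delta f_{s,t}Y_s\|_p\le\big\|\,\|\delta f_{s,t}|\mathcal F_s\|_p\,\big\|_\infty\,\|Y_s\|_p\le\|\delta f\|_{\gamma;p,\infty}\Big(\sup_{u}\|Y_u\|_p\Big)|t-s|^\gamma,
\]
which is exactly where the essential-supremum control built into $\mathbf D^{2\gamma}_ZL_{lin}^{p,\infty}$ is used. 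The same computation applied to $\delta(f'Y)_{s,t}$ and $\delta(fY')_{s,t}$, now also invoking $\|f'\|_\infty$, $\|\delta Y'\|_{\gamma;p}$ and $\sup_u\|Y'_u\|_p<\infty$, gives $\|\delta(f'Y+fY')_{s,t}\|_p\lesssim|t-s|^\gamma$ and $\sup_t\|(f'Y+fY')_t\|_p\le\|f'\|_\infty\sup_t\|Y_t\|_p+\|f\|_\infty\sup_t\|Y'_t\|_p<\infty$, settling conditions a) and b).

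The heart of the proof is c). Inserting $\delta f_{s,t}=f'_s\delta Z_{s,t}+R^f_{s,t}$ and $\delta Y_{s,t}=Y'_s\delta Z_{s,t}+R^Y_{s,t}$ into the expansion of $\delta(fY)_{s,t}$ above and subtracting $(f'_sY_s+f_sY'_s)\delta Z_{s,t}$, the two $\delta Z$-terms cancel exactly and one is left with
\[
R^{fY}_{s,t}=R^f_{s,t}Y_s+f_s R^Y_{s,t}+\delta f_{s,t}\,\delta Y_{s,t}.
\]
For the first term, $\E_s(R^f_{s,t}Y_s)=(\E_sR^f_{s,t})Y_s$, so $\|\E_s(R^f_{s,t}Y_s)\|_p\le\|\E_\cdot R^f\|_{2\gamma;\infty}(\sup_u\|Y_u\|_p)|t-s|^{2\gamma}$ (again using $q=\infty$ for $f$); for the second, $\E_s(f_sR^Y_{s,t})=f_s\E_sR^Y_{s,t}$, so $\|\E_s(f_sR^Y_{s,t})\|_p\le\|f\|_\infty\|\E_\cdot R^Y\|_{2\gamma;p}|t-s|^{2\gamma}$.

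The remaining, genuinely mixed term $\delta f_{s,t}\,\delta Y_{s,t}$ — in which neither factor is $\mathcal F_s$-measurable, and where merely bounding $|\delta f_{s,t}|\le 2\|f\|_\infty$ and dropping $\E_s$ by Jensen would lose a factor and only yield order $|t-s|^\gamma$ — is the main obstacle, and I would treat it with conditional Cauchy--Schwarz:
\[
\big|\E_s(\delta f_{s,t}\,\delta Y_{s,t})\big|\le\|\delta f_{s,t}|\mathcal F_s\|_2\,\|\delta Y_{s,t}|\mathcal F_s\|_2,
\]
then dominate the conditional $L^2$-norms by conditional $L^p$-norms (here $p\ge2$ is essential), take the outer $L^p$-norm, and move the essentially bounded factor $\big\|\,\|\delta f_{s,t}|\mathcal F_s\|_p\,\big\|_\infty$ out, obtaining order $|t-s|^\gamma\cdot|t-s|^\gamma=|t-s|^{2\gamma}$. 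Summing the three contributions gives $\|\E_\cdot R^{fY}\|_{2\gamma;p}<\infty$, which is condition c) and completes the proof. Note that this grouping never re-expands $\delta Z$ inside the remainder, so no relation between $\gamma$ and $\alpha$ enters.
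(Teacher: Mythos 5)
Your proof is correct and takes essentially the same route as the paper: the identical algebraic decomposition $R^{fY}_{s,t}=R^f_{s,t}Y_s+f_sR^Y_{s,t}+\delta f_{s,t}\delta Y_{s,t}$, the same use of the $L^{p,\infty}$-structure of $f$ to extract essential-supremum factors, and the same conditioning argument on the mixed term. The only (cosmetic) difference is that you apply conditional Cauchy--Schwarz with exponents $(2,2)$ followed by $\|\cdot|\mathcal F_s\|_2\le\|\cdot|\mathcal F_s\|_p$ on both factors, whereas the paper applies conditional H\"older with conjugate exponents $(\bar p,p)$, $\bar p\in(1,2]$, and then uses $\bar p\le p$ on the $f$-factor only.
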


\begin{proof} For any $(s,t) \in \Delta_{[0,T]}$, $f_tY_t$ and $f'_tY_t+f_tY'_t$ are $\mathcal{F}_t$-measurable and
\begin{equation} \label{eq:esttimateforapriori1}
    \|f'_tY_t+f_tY'_t\|_p \le \|f'\|_\infty \sup_{t\in [0,T]}\|Y_t\|_p + \|f\|_\infty \sup_{t\in [0,T]}\|Y'_t\|_p < +\infty.
\end{equation} It is also easy to verify that \begin{equation}\begin{aligned} \label{eq:esttimateforapriori2}
     \|\delta(f_\cdot Y_\cdot)_{s,t}\|_p &\le \|f_t\delta Y_{s,t}\|_p + \|\delta f_{s,t} Y_s\|_p \\
     &\le \big(\|f\|_\infty \|\delta Y\|_{\gamma;p}  + \|\delta f\|_{\gamma;p,\infty} \sup_{t\in [0,T]}\|Y_t\|_p\big) |t-s|^\gamma
\end{aligned}
\end{equation} and, similarly, \begin{equation} \label{eq:esttimateforapriori3}
    \begin{aligned}
        &\|\delta(f'_\cdot Y_\cdot + f_\cdot Y'_\cdot)_{s,t}\|_p  \\ &\le \big(\|f'\|_\infty \|\delta Y\|_{\gamma;p} + \|\delta f'\|_{\gamma;p,\infty} \sup_{t\in [0,T]}\|Y_t\|_p + \|f\|_\infty \|\delta Y'\|_{\gamma;p} + \|\delta f\|_{\gamma;p,\infty} \sup_{t\in [0,T]}\|Y'_t\|_p\big) |t-s|^\gamma.
    \end{aligned}
\end{equation} Moreover, by linearity we can write \begin{equation} \label{eq:remaindercompositionwithstochasticcontrolledlinearvectorfields}
    \begin{aligned}
        R_{s,t}^{f_\cdot Y_\cdot}&:= f_tY_t-f_sY_s-(f'_s Y_s+f_sY'_s)\delta Z_{s,t} \\ 
        &= f_tY_t-f_sY_t+f_s(Y_t-Y_s-Y'_s\delta Z_{s,t})-f'_sY_s\delta Z_{s,t} \\  
        &= \delta f_{s,t} \delta Y_{s,t} +f_sR^Y_{s,t} +R^{f}_{s,t}Y_s.
    \end{aligned}
\end{equation}
Let $\bar{p} \in (1,2]$ be the conjugate exponent of $p$. By means of the conditional Hölder inequality we deduce that \begin{equation*}
    \begin{aligned}
        \|\E_s(\delta f_{s,t}\delta Y_{s,t})\|_p &\le \E(\E_s(|\delta f_{s,t}\delta Y_{s,t}|)^p)^{\frac{1}{p}} \le \E(\|\delta f_{s,t}|\mathcal{F}_s\|_{\bar{p}}^p \|\delta Y_{s,t}|\mathcal{F}_s\|_p^p)^{\frac{1}{p}}  \\
        &\le \|\|\delta f_{s,t}|\mathcal{F}_s\|_{\bar{p}}\|_\infty \|\delta Y_{s,t}\|_p \le \|\delta f\|_{\gamma;p,\infty}\|\delta Y\|_{\gamma;p} |t-s|^{2\gamma},
    \end{aligned}
\end{equation*} 
and we also have \begin{align*}
    \|\E_s(f_sR^Y_{s,t})\|_p &\le  \|f\|_\infty \|\E_\cdot R^Y\|_{2\gamma;p} |t-s|^{2\gamma} \\ 
    \|\E_s(R^{f}_{s,t}Y_s)\|_p &\le  \|\E_\cdot R^f\|_{2\gamma;p} \sup_{t\in [0,T]}\|Y_t\|_p |t-s|^{2\gamma}.
\end{align*}
\end{proof}

\begin{rmk}
    From the proof of Proposition \ref{prop:stambilityundercompositionlinearvectorfields}, it is clear that we have stability of the composition only if we take a $L^{p,q}$-integrable stochastic controlled linear vector field with $q=\infty$. Otherwise, due to the application of the (possibly conditional) Hölder inequality, we lose some integrability and we can only conclude that $(f,f')Y \in \mathscr{D}_Z^{2\gamma}L^\frac{p}{2}(\bar{W})$.
\end{rmk}

The following lemma gives an example on how it is possible to construct stochastic linear vector fields, starting from solutions of RSDEs.

\begin{lemma}[] \label{lemma:consistencyandaprioriforstochasticlinearvectorfields}
    Let $\mathbf{Z}=(Z,\mathbb{Z})\in\mathscr{C}^\alpha([0,T];\R^n)$, with $\alpha\in (\frac{1}{3},\frac{1}{2}]$, be such that $|\mathbf{Z}|_\alpha \le C$ for some $C>0$, and let $b\in {C}^1_b(\R^d;\R^d)$, $\sigma\in C^1_b(\R^d;\R^{d\times m})$ and $\beta \in {C}^3_b(\R^d;\R^{d\times n})$. Let $X=(X_t)_{t\in [0,T]}$ be the solution of the following RSDE, as introduced in Definition \ref{def:solutionRSDEs}: \begin{align*}
        dX_t = b(X_t)dt + \sigma(X_t)dB_t + (\beta(X_t),D\beta(X_t)\beta(X_t))d\mathbf{Z}_t.
    \end{align*}
    Then $(Db(X_t))_{t \in [0,T]}$ and $(D\sigma(X_t))_{t \in [0,T]}$ are stochastic linear vector fields from $\R^d$ to $\R^d$ and $\mathscr{L}(\R^m,\R^d)$, respectively. Moreover, for any $p \in [2,\infty)$, \begin{equation*}
     (D\beta(X_\cdot),D^2\beta(X_\cdot)\beta(X_\cdot)) \in \mathbf{D}^{2\alpha}_ZL_{lin}^{p,\infty}(\R^d,\mathscr{L}(\R^n,\R^d))
    \end{equation*} and there exists a positive constant $K_C=K_C(|b|_\infty,|\sigma|_\infty,|\beta|_{\mathcal{C}^3_b},C,\alpha,p,T)$ such that  \begin{equation} \label{eq:estimateinaprioriforlinearvectorfields}
        \|(D\beta(X_\cdot),D^2\beta(X_\cdot)\beta(X_\cdot))\|_{ \mathbf{D}^{2\alpha}_ZL_{lin}^{p,\infty}} \le K_C.
    \end{equation}
\end{lemma}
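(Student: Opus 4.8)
The plan is to obtain all the claimed regularity by composing the $\mathcal{C}^k_b$-functions $Db$, $D\sigma$, $D\beta$, $D^2\beta$ with the solution $X$ and exploiting that $X$ is (the diffusion/rough-Itô part of) a rough Itô process with bounded coefficients. First I would record the a priori bound: since $b,\sigma,\beta$ are bounded and $|\mathbf{Z}|_\alpha\le C$, the existence theorem of \cite{friz2021rough} quoted above gives $(X,\beta(X))\in\mathscr{D}_Z^{2\alpha}L^{p,\infty}(\R^d)$ together with $\sup_{t}\|X_t\|_p<\infty$ and a quantitative estimate $\|(X,\beta(X))\|_{\mathscr{D}_Z^{2\alpha}L^{p,\infty}}\le K_C$ depending only on the stated data; this is the input on which everything rests. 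In particular $X$ is a rough Itô process in the sense of Definition \ref{def: rough ito process} with $b_t=b(X_t)$, $\sigma_t=\sigma(X_t)$ bounded and $X'_t=\beta(X_t)$, $X''_t=D\beta(X_t)\beta(X_t)$ bounded, so all the boundedness hypotheses \eqref{eq: boundedness in Lp coefficients}, \eqref{eq: boundedness in Lp derivative} hold with uniform-in-time $L^p$ bounds controlled by $K_C$.

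For $Db(X)$ and $D\sigma(X)$: since $Db\in\mathcal{C}^0_b$ and $D\sigma\in\mathcal{C}^0_b$, these are adapted, progressive, and bounded by $|b|_{\mathcal{C}^1_b}$, $|\sigma|_{\mathcal{C}^1_b}$ respectively, hence they are stochastic linear vector fields in the sense of Definition \ref{def:stochasticlinearvectorfields}; nothing more is needed here. For the controlled structure of $(D\beta(X),D^2\beta(X)\beta(X))$ I would invoke Proposition \ref{pro: chain rule C3 function} (the chain rule for $C^2$ functions of polynomial — here bounded — growth composed with bounded stochastic controlled paths): applying it to the map $g:=D\beta\in\mathcal{C}^2_b(\R^d;\mathscr{L}(\R^n,\R^d))$, which has bounded first and second derivatives, and to $(X,\beta(X))\in\mathscr{D}_Z^{2\alpha}L^p(\R^d)$ with $\sup_t\|X_t\|_p<\infty$, yields $(g(X),Dg(X)\beta(X))=(D\beta(X),D^2\beta(X)\beta(X))\in\mathscr{D}_Z^{2\alpha}L^p(\mathscr{L}(\R^n,\R^d))$. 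One must then verify that this lands in the \emph{linear} class $\mathbf{D}^{2\alpha}_ZL^{p,\infty}_{lin}$, i.e. with $q=\infty$: the $\infty$-bounds $\|D\beta(X)\|_\infty\le|\beta|_{\mathcal{C}^3_b}$ and $\|D^2\beta(X)\beta(X)\|_\infty\le|\beta|_{\mathcal{C}^3_b}^2$ are immediate from boundedness of $\beta$ and its derivatives, while the conditional-Hölder-type seminorms $\|\delta(D\beta(X))\|_{\alpha;p,\infty}$, $\|\delta(D^2\beta(X)\beta(X))\|_{\alpha;p,\infty}$ and $\|\E_\cdot R^{D\beta(X)}\|_{2\alpha;\infty}$ are estimated exactly as in the proof of Proposition \ref{pro: chain rule C3 function} — using the mean value theorem and the second-order Taylor expansion for $D\beta$, boundedness of $D\beta,D^2\beta,D^3\beta$, and conditional Jensen/Hölder — which replaces the polynomial-growth factors $\sup\|X_u\|_{2pq}^q$ appearing there by absolute constants. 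Tracking those constants back through: they are all controlled by $|\beta|_{\mathcal{C}^3_b}$, the bound $K_C$ on $\|(X,\beta(X))\|_{\mathscr{D}_Z^{2\alpha}L^{p,\infty}}$, and on $\|\delta X\|_{\alpha;p,\infty}$ (itself $\lesssim|b|_\infty+|\sigma|_\infty+|\beta|_\infty(C+C^2)$ by the defining RSDE relation), giving a final constant of the asserted form $K_C=K_C(|b|_\infty,|\sigma|_\infty,|\beta|_{\mathcal{C}^3_b},C,\alpha,p,T)$.

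The main obstacle is the bookkeeping needed to upgrade $q=p$ to $q=\infty$ and to keep all constants uniform in the stated parameters. The point is that in the increment/remainder estimates one never differences the \emph{coefficient}-type factors $X_s$, $\delta X'_{s,t}$ etc. against $\mathcal{F}_s$ in a way that requires unconditional moments — the boundedness of $D^k\beta$ means every such factor carries an honest $L^\infty$ bound, so the conditional $L^p$ norms $\|\cdot|\mathcal{F}_s\|_p$ are themselves $L^\infty(\Omega)$-bounded, which is precisely what $q=\infty$ demands; one must be careful to apply the conditional Hölder inequality (as in the remark following Proposition \ref{prop:stambilityundercompositionlinearvectorfields}) in the direction that puts the $L^\infty$ factor on the bounded term and the $L^p$ factor on $\delta X$, rather than the other way around. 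Modulo this care, the estimate \eqref{eq:estimateinaprioriforlinearvectorfields} follows by collecting the bounds from the previous paragraph.
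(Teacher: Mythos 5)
Your proposal is essentially the same argument the paper gives: express the controlled structure of $(D\beta(X),D^2\beta(X)\beta(X))$ via the mean value theorem and a second-order Taylor expansion of $D\beta$, use boundedness of $D\beta,D^2\beta,D^3\beta$ together with the $\mathscr{D}^{2\alpha}_Z L^{p,\infty}$ structure of $(X,\beta(X))$ to get conditional $L^p$ bounds that are uniformly (i.e.\ $L^\infty$) controlled, and invoke the a priori estimate of \cite{friz2021rough} to make the constant explicit. The only presentational difference is that you route the argument through Proposition \ref{pro: chain rule C3 function} and then note you must rerun the same Taylor/conditional-Hölder estimates to upgrade $q=p$ to $q=\infty$ — which, as you say yourself, amounts to redoing them directly with $L^\infty$ factors, exactly as the paper does without the detour.
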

\begin{proof} 
    The first part of the statement easily follows from the assumptions on $b$ and $\sigma$ and from the fact that $X$ is continuous and adapted.
    By definition of Fréchet derivative, $D\beta:W \to \mathscr{L}(W,\bar{W})$, while $D^2\beta$ is a map from $W$ to $\mathscr{L}(W,\mathscr{L}(W,\bar{W}))$. Being $X$ adapted, we have that $f$ and $f'$ are adapted. Recall that, from the general theory about RSDEs, $(X,\beta(X)) \in \mathscr{D}_Z^{2\alpha}L^{p,\infty}(\R^d)$ for any $p \in [2,\infty)$. For any $t \in [0,T]$ and $\mathbb{P}$-almost surely, we have $|f_t|\le|D\beta|_\infty$ and \begin{equation*} \label{eq:infinitynormoff'}|f'_t|=|D^2\beta(X_t)\beta(X_t)| \le |D^2\beta|_\infty |\beta|_\infty < +\infty.
    \end{equation*} 
    Let now $(s,t) \in \Delta_{[0,T]}$. Then,  \begin{equation*}
        \begin{aligned}
            \|f_t-f_s|\mathcal{F}_s\|_p &= \|D\beta(X_t)-D\beta(X_s)|\mathcal{F}_s\|_p = \E(|D\beta(X_t)-D\beta(X_s)|^p|\mathcal{F}_s)^\frac{1}{p} \\ &\le |D^2\beta|_\infty \|X_t-X_s|\mathcal{F}_s\|_p \le |D^2\beta|_\infty \|\|\delta X_{s,t}|\mathcal{F}_s\|_p\|_\infty \\ &\le |D^2\beta|_\infty \|\delta X\|_{\alpha;p,\infty}|t-s|^\alpha \qquad \text{$\mathbb{P}$-a.s.},
        \end{aligned}
    \end{equation*} and, similarly, \begin{equation*}
        \begin{aligned}
            \|f'_t-f'_s|\mathcal{F}_s\|_p &= \|D^2\beta(X_t)\beta(X_t)-D^2\beta(X_s)\beta(X_s)|\mathcal{F}_s\|_p \\
            &\le \big(|D^2\beta|_\infty|\beta|_\infty  + |D^3\beta|_\infty|\beta|_\infty\big) \|\delta X\|_{\alpha;p,\infty} |t-s|^\alpha \qquad \text{$\mathbb{P}$-a.s.}
        \end{aligned}
    \end{equation*}  
    The previous estimates prove that both $\|\delta f\|_{\alpha;p,\infty}$ and $\|\delta f'\|_{\alpha;p,\infty}$ are finite. By linearity and applying again the mean value theorem, we can write \begin{equation*}
        \begin{aligned}
            R^f_{s,t} &= D\beta(X_t)-D\beta(X_s) - D^2\beta(X_s)X'_s \delta Z_{s,t} \\
            &= D\beta(X_t)-D\beta(X_s) - D^2\beta(X_s)\delta{X}_{s,t} + D^2\beta(X_s)(\delta{X}_{s,t}- X'_s \delta Z_{s,t}) \\ 
            &= \frac{1}{2}  \int_0^1 (1-\theta) D^3\beta(X_s +\theta \delta{X}_{s,t}) d\theta (\delta{X}_{s,t})^{\otimes 2}  + D^2\beta(X_s)R^X_{s,t}. 
        \end{aligned}
    \end{equation*} 
    By applying the conditional Hölder inequality we have \begin{equation*}
        \begin{aligned}
            |\E_s(\int_0^1 (1-\theta) D^3\beta(X_s +\theta \delta{X}_{s,t}) d\theta (\delta{X}_{s,t})^{\otimes 2} ) | &\le |D^3\beta|_\infty \E_s(|\delta X_{s,t}|^2)  \\
            &\le |D^3\beta|_\infty \|\delta{X}\|_{\alpha;p,\infty}^2 |t-s|^{2\alpha}   \qquad \text{$\mathbb{P}$-a.s.},
        \end{aligned}
    \end{equation*} and by definiton of stochastic controlled path \begin{equation*}
        \begin{aligned}
            |\E_s(D^2\beta(X_s)R^X_{s,t})| &= |D^2\beta(X_s)\E_s(R^X_{s,t})|
            \le  |D^2\beta|_\infty \|\E_\cdot R^X\|_{2\alpha;\infty}|t-s|^{2\alpha} \qquad \text{$\mathbb{P}$-a.s.}
        \end{aligned}
    \end{equation*} 
    Thus we deduce that \begin{equation*} \label{eq:normoftheremainderinconsistencyforvecotrfields}
        \|\E_s(R^f_{s,t})\|_\infty \le (|D^3\beta|_\infty \|\delta{X}\|_{\alpha;p,\infty}^2+|D^2\beta|_\infty \|\E_\cdot R^X\|_{2\alpha;\infty})|t-s|^{2\alpha}
    \end{equation*}
    In conclusion, \eqref{eq:estimateinaprioriforlinearvectorfields} follows by applying the a priori estimate contained in \cite[Proposition 4.6]{friz2021rough} to the previous computations.   
\end{proof}

From this point till the end of this section, let $G$ be a stochastic linear vector field from $W$ to $W$ and let $S$ be a stochastic linear vector field from $W$ to $\mathscr{L}(\R^m,W)$. Let us fix $p \in [2,\infty)$ and $\gamma \in (\frac{1}{3},\alpha]$. Let $(f,f') \in \mathbf{D}^{2\gamma}_ZL_{lin}^{p,\infty}(W,\mathscr{L}(\R^n,W))$. We also consider a stochastic controlled path $(F,F') \in \mathscr{D}^{2\gamma}_Z L^p(W)$ such that $F$ is $\mathbb{P}$-a.s.\ continuous and $\sup_{t \in [0,T]} \|F_t\|_p <+\infty$. Such a path is often called the forcing term.
We want to give a meaning to a solution of \begin{equation} \begin{aligned}
     dY_t &= dF_t +  G_tY_tdt + S_tY_tdB_t + (f_t,f'_t)Y_td\mathbf{Z}_t, \quad t \in [0,T] .
\end{aligned} \label{eq:linearRSDE_model}
\end{equation}

\begin{defn}[Solution in the sense of Davie] \label{def:solutionlinearRSDEs}
Let $\xi \in L^p(\Omega,\mathcal{F}_0;W)$.  An $L^p$-integrable solution in the sense of Davie of \eqref{eq:linearRSDE_model}, starting from $\xi$, is a continuous and adapted $W$-valued stochastic process $Y=(Y_t)_{t \in [0,T]}$ with \begin{equation*}
        \sup_{t \in [0,T]} \|Y_t\|_p<+\infty
    \end{equation*} 
    and such that 
    \begin{enumerate}
        \item[i)] $Y_0 = \xi$;
        \item[ii)] there exist $C_1,C_2\ge0$ and $\varepsilon_1,\varepsilon_2 >0$ satisfying \begin{equation*}
            \|Y_{s,t}^{\natural}\|_p \le C_1 |t-s|^{\frac{1}{2}+\varepsilon_1} \quad \text{and} \quad \|\E_s(Y_{s,t}^{\natural})\|_p \le C_2 |t-s|^{1+\varepsilon_2}
        \end{equation*} for any $(s,t) \in \Delta_{[0,T]}$, where \begin{align*} \label{davieexpansion}
            Y_{s,t}^{\natural} &:= \delta Y_{s,t} - \delta F_{s,t} - \int_s^t G_rY_r dr - \int_s^t S_rY_rdB_r - f_sY_s\delta Z_{s,t} + \\ & -(f'_sY_s+f_s^2Y_s+f_sF'_s)\mathbb{Z}_{s,t}.   
\end{align*}
    \end{enumerate}
\end{defn}

\begin{rmk}
    There is no loss of generality in assuming $F_0=0$. Indeed, from the notion of solution that we have in Definition \ref{def:solutionlinearRSDEs}, it is sufficient to replace $(F_t)_{t \in [0,T]}$ with $(F_t-F_0)_{t\in [0,T]}$, and $\xi$ with $\xi + F_0$.
\end{rmk}

\begin{lemma} \label{lemma:solutionsoflinearRSDEsarecontrolled}
    Let $Y=(Y_t)_{t \in [0,T]}$ be a continuous and adapted $W$-valued stochastic process such that $\sup_{t \in [0,T]} \|Y_t\|_p < +\infty$ and satisfying condition $ii)$ of Definition \ref{def:solutionlinearRSDEs}.
    Then \begin{equation*}
        (Y_{\cdot},f_{\cdot}Y_{\cdot}+F'_\cdot)\in \mathscr{D}^{2\gamma}_ZL^p(W).  
    \end{equation*}
\end{lemma}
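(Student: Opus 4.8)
The plan is to verify the three defining properties of a stochastic controlled path in Definition \ref{def:stochasticcontrolledpaths} for the pair $(Y_\cdot, f_\cdot Y_\cdot + F'_\cdot)$, using only the standing hypotheses ($Y$ continuous, adapted, uniformly $L^p$-bounded, satisfying ii)) together with the assumed regularity of $G$, $S$, $f$, $f'$ and $F$. First I would observe that $Y_t$ is $\mathcal{F}_t$-measurable and $f_tY_t + F'_t$ is as well, since $f$ and $F'$ are adapted; moreover $\sup_t\|f_tY_t + F'_t\|_p \le \|f\|_\infty\sup_t\|Y_t\|_p + \sup_t\|F'_t\|_p < +\infty$ by hypothesis, which settles the boundedness requirement for the Gubinelli derivative.

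The core of the argument is to extract Hölder-type bounds from condition ii). Rearranging the definition of $Y^\natural_{s,t}$, we write
\begin{equation*}
\delta Y_{s,t} = \delta F_{s,t} + \int_s^t G_r Y_r\,dr + \int_s^t S_r Y_r\,dB_r + f_sY_s\,\delta Z_{s,t} + (f'_sY_s + f_s^2Y_s + f_sF'_s)\mathbb{Z}_{s,t} + Y^\natural_{s,t}.
\end{equation*}
I would estimate each term in the conditional $L^p$-norm: the Lebesgue integral contributes $O(|t-s|)$ since $G$ is bounded and $Y$ is uniformly $L^p$-bounded; the stochastic integral contributes $O(|t-s|^{1/2})$ by a conditional Burkholder–Davis–Gundy argument (as in the proof of Proposition \ref{pro: rough ito process is stoch controlled}), using $\|S\|_\infty < \infty$; the $\delta F$ term is $O(|t-s|^\gamma)$ by assumption; the $\mathbb{Z}$ term is $O(|t-s|^{2\gamma})$ since $f, f', F'$ are bounded in the appropriate norms and $|\mathbb{Z}|_{2\alpha} < \infty$ with $\gamma \le \alpha$; and $Y^\natural$ is $O(|t-s|^{1/2+\varepsilon_1})$ by ii). Since $\gamma \in (\tfrac13,\alpha]$ and $\alpha \le \tfrac12$, the dominant small-scale behaviour is $|t-s|^\gamma$, so $\|\delta Y\|_{\gamma;p} < \infty$, which is property a). For property b), I would write $\delta(f_\cdot Y_\cdot + F'_\cdot)_{s,t} = \delta f_{s,t}\,Y_t + f_s\,\delta Y_{s,t} + \delta F'_{s,t}$ and bound each piece in conditional $L^p$ using $\|\delta f\|_{\gamma;p,\infty}$, the just-established $\|\delta Y\|_{\gamma;p}$, $\|f\|_\infty$, and $\|\delta F'\|_{\gamma;p}$ (with a conditional Hölder inequality for the first term, which is harmless because $f$ is controlled with $q=\infty$).

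For property c), the remainder $R^Y_{s,t} = \delta Y_{s,t} - (f_sY_s + F'_s)\delta Z_{s,t}$ must be controlled after taking $\mathbb{E}_s$. Substituting the expansion of $\delta Y_{s,t}$ and using $R^F_{s,t} = \delta F_{s,t} - F'_s\delta Z_{s,t}$, one gets
\begin{equation*}
R^Y_{s,t} = R^F_{s,t} + \int_s^t G_rY_r\,dr + \int_s^t S_rY_r\,dB_r + (f'_sY_s + f_s^2Y_s + f_sF'_s)\mathbb{Z}_{s,t} + Y^\natural_{s,t}.
\end{equation*}
Applying $\mathbb{E}_s$: the stochastic integral vanishes in conditional expectation; $\mathbb{E}_s R^F_{s,t}$ is $O(|t-s|^{2\gamma})$ by $(F,F') \in \mathscr{D}^{2\gamma}_Z L^p$; the drift integral is $O(|t-s|)$ and hence $O(|t-s|^{2\gamma})$ since $2\gamma \le 2\alpha \le 1$; the $\mathbb{Z}$ term is $O(|t-s|^{2\gamma})$ as above; and $\mathbb{E}_s Y^\natural_{s,t}$ is $O(|t-s|^{1+\varepsilon_2}) = O(|t-s|^{2\gamma})$ by ii). Hence $\|\mathbb{E}_\cdot R^Y\|_{2\gamma;p} < \infty$, completing the verification. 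The main obstacle I anticipate is purely bookkeeping: making sure all the conditional-expectation estimates are phrased in the mixed-norm $\|\|\cdot|\mathcal{F}_s\|_p\|_\infty$ form required by the definition, and carefully tracking that every exponent produced ($\tfrac12$, $1$, $\tfrac12+\varepsilon_1$, $1+\varepsilon_2$, $2\alpha$) dominates the target $\gamma$ or $2\gamma$, which is exactly where the constraint $\gamma \in (\tfrac13,\alpha]$ with $\alpha \le \tfrac12$ is used.
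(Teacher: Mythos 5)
Your proposal is correct and follows essentially the same approach as the paper: rearranging the Davie expansion of $Y^\natural_{s,t}$, estimating each term (drift, diffusion, rough, remainder) in $L^p$, then taking conditional expectations to kill the martingale part for the $R^Y$ bound, and finally decomposing $\delta(f_\cdot Y_\cdot + F'_\cdot)$. The one tiny slip is your closing remark that the estimates need to be phrased in the $\|\|\cdot|\mathcal{F}_s\|_p\|_\infty$ mixed norm — since the target space here is $\mathscr{D}^{2\gamma}_Z L^p(W)$ (i.e.\ $p=q$), the relevant mixed norm collapses to the plain $\|\cdot\|_p$ norm, which is exactly what you use in the body of the argument, so no harm is done.
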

\begin{proof}
    Let $(s,t)  \in \Delta_{[0,T]}$. Both $Y_t$ and $f_tY_t+F'_t$ are $\mathcal{F}_t$-measurable. Moreover,
    \begin{equation*} 
    \|f_tY_t+F'_t\|_p \le \|f\|_\infty \sup_{t\in [0,T]}\|Y_t\|_p + \sup_{t \in [0,T]}\|F'_t\|_p<+\infty.
\end{equation*} 
From the Davie-type expansion in Definition \ref{def:solutionlinearRSDEs} and applying Bochner inequality for the Bochner integral and BDG inequality for the It\^o integral, we have \begin{equation} \label{eq:solutionlinearRSDEsfirstholderestimate} \begin{aligned}
    &\|\delta Y_{s,t}\|_p \\
    &\le \|\delta F_{s,t}\|_p + \|\int_s^t G_rY_r dr\|_p + \|\int_s^t S_rY_rdB_r\|_p + \|f_sY_s\delta Z_{s,t}\|_p  \\ & \quad + \|(f'_sY_s+f_s^2Y_s+f_sF'_s)\mathbb{Z}_{s,t}\|_p +  \|Y^\natural_{s,t}\|_p 
    \\    
    &\lesssim_p \|\delta F\|_{\gamma;p}|t-s|^\gamma + \|G\|_\infty \sup_{t \in [0,T]} \|Y_t\|_p |t-s| + \|S\|_\infty \sup_{t \in [0,T]} \|Y_t\|_p |t-s|^\frac{1}{2} + \\
    & \quad + \|f\|_\infty |\delta Z|_\alpha \sup_{t \in [0,T]} \|Y_t\|_p |t-s|^\alpha + \\ 
    & \quad + \big( (\|f'\|_\infty+\|f\|^2_\infty)  \sup_{t \in [0,T]} \|Y_t\|_p  +  \|f\|_\infty \sup_{t \in [0,T]} \|F'_t\|_p \big) |\mathbb{Z}|_{2\alpha} |t-s|^{2\alpha} +  \\ 
    & \quad +\|Y^\natural\|_{\frac{1}{2}+\varepsilon_1;p} |t-s|^{\frac{1}{2}+\varepsilon_1},
\end{aligned} 
\end{equation} 
from which we deduce that $\|\delta Y\|_{\gamma;p} <+\infty$. Moreover, thanks to the martingale property of the It\^o integral and the contraction property of the conditional expectation,  \begin{equation} \label{eq:remainderofthesolution_linearRSDEs}
    \begin{aligned}
        &\|\E_s(\delta Y_{s,t} - (f_sY_s+F'_s) \delta Z_{s,t})\|_p \\
        &\le \|\E_s(\delta F_{s,t} - F'_s\delta Z_{s,t})\|_p + \|\int_s^t G_rY_r dr\|_p + \|(f'_sY_s+f_s^2Y_s+f_sF'_s)\mathbb{Z}_{s,t}\|_p+\|\E_s(Y^\natural_{s,t})\|_p \\ 
        &\lesssim_{\alpha,\gamma,T,\varepsilon_2} \big[ \|\E_\cdot R^F\|_{2\gamma;p} + \|G\|_\infty \sup_{t \in [0,T]} \|Y_t\|_p  + (\|f'\|_\infty+\|f\|^2_\infty)|\mathbb{Z}|_{2\alpha} \sup_{t \in [0,T]} \|Y_t\|_p + \\ 
        & \quad + \|f\|_\infty \sup_{t \in [0,T]} \|F'_t\|_p |\mathbb{Z}|_{2\alpha} + \|\E_\cdot Y^\natural\|_{1+\varepsilon_2;p} \big] |t-s|^{2\gamma}.
    \end{aligned}
\end{equation} 
Finally, it is easy to verify that \begin{equation*} \begin{aligned}
    \|\delta(f_\cdot Y_\cdot+F'_\cdot )_{s,t}\|_p &\le (\|f\|_\infty \|\delta Y\|_{\gamma;p}  + \|\delta f\|_{\gamma;p,\infty} \sup_{t \in [0,T]}\|Y_t\|_p + \|\delta F'\|_{\gamma;p}) |t-s|^\gamma.  
\end{aligned}
\end{equation*} 
\end{proof} 

Solutions to linear RSDEs can be characterized according to the following proposition. 
\begin{prop} \label{prop:characterizationsolutionslinearRSDEs} 
    Let $Y=(Y_t)_{t \in [0,T]}$ be a continuous and adapted $W$-valued stochastic process such that
        $\sup_{t \in [0,T]} \|Y_t\|_p<+\infty$ and let $\xi \in L^p(\Omega,\mathcal{F}_0;W)$.  The following are equivalent:
    \begin{enumerate} 
        \item[(i)] $Y$ is an $L^p$-integrable solution in the sense of Davie to \eqref{eq:linearRSDE_model} starting from $\xi$;
        \item[(ii)] $(f_{\cdot}Y_{\cdot},f'_{\cdot}Y_{\cdot}+f^2_{\cdot}Y_{\cdot}+f_\cdot F'_\cdot)\in \mathscr{D}^{2\gamma}_ZL^p(\mathscr{L}(\R^n,W))$ and, $\mathbb{P}$-a.s.\ and for any $t \in [0,T]$, \begin{equation*}\label{eq:RSDEintegralform}
            Y_t = \xi + F_t +\int_0^t G_rY_rdr +\int_0^tS_rY_rdB_r + \int_0^t (f_rY_r,f'_rY_r+f_r^2Y_r+f_rF'_r) d\mathbf{Z}_r.
        \end{equation*} 
    \end{enumerate}
    In particular, for any $(s,t)\in\Delta_{[0,T]}$ it holds that \begin{equation*}
        \|Y^\natural_{s,t}\|_p \lesssim |t-s|^{\alpha+\gamma} \qquad \text{and} \qquad \|\E_s(Y^\natural_{s,t})\|_p \lesssim |t-s|^{\alpha+2\gamma}
    \end{equation*}
\end{prop}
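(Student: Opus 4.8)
\textbf{Proof strategy for Proposition \ref{prop:characterizationsolutionslinearRSDEs}.}

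The plan is to prove the equivalence (i) $\Leftrightarrow$ (ii) by showing that the Davie-type remainder $Y^\natural$ vanishes (in the appropriate sense) if and only if the integral identity holds, using the rough stochastic integral from Proposition \ref{prop:roughstochastiintegral} as the bridge. First I would observe that, by Lemma \ref{lemma:solutionsoflinearRSDEsarecontrolled}, any process $Y$ satisfying the standing assumptions together with condition ii) of Definition \ref{def:solutionlinearRSDEs} is automatically a stochastic controlled path, $(Y,f_\cdot Y_\cdot + F'_\cdot) \in \mathscr{D}^{2\gamma}_Z L^p(W)$. Hence, by Proposition \ref{prop:stambilityundercompositionlinearvectorfields} applied to the stochastic controlled linear vector field $(f,f')$ (with $q=\infty$) composed with $(Y,f_\cdot Y_\cdot+F'_\cdot)$, the product $(f,f')Y = (f_\cdot Y_\cdot,\ f'_\cdot Y_\cdot + f_\cdot(f_\cdot Y_\cdot + F'_\cdot))$ lies in $\mathscr{D}^{2\gamma}_Z L^p(\mathscr{L}(\R^n,W))$; note the Gubinelli derivative here is exactly $f'_\cdot Y_\cdot + f^2_\cdot Y_\cdot + f_\cdot F'_\cdot$, which is the object appearing in (ii). This already establishes the controlledness claim in (ii) and makes the rough stochastic integral $\int_0^\cdot (f_rY_r, f'_rY_r+f_r^2Y_r+f_rF'_r)\, d\mathbf{Z}_r$ well-defined.

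Next I would compare the two candidate expansions. Define $\tilde Y_t := \xi + F_t + \int_0^t G_rY_r\,dr + \int_0^t S_rY_r\,dB_r + \int_0^t (f_rY_r,\ f'_rY_r+f_r^2Y_r+f_rF'_r)\, d\mathbf{Z}_r$. Using the local expansion of the rough stochastic integral from Proposition \ref{prop:roughstochastiintegral}, the increment $\delta\tilde Y_{s,t}$ equals $\delta F_{s,t} + \int_s^t G_rY_r\,dr + \int_s^t S_rY_r\,dB_r + f_sY_s\delta Z_{s,t} + (f'_sY_s+f_s^2Y_s+f_sF'_s)\mathbb{Z}_{s,t}$ plus a remainder that is $O(|t-s|^{\alpha+\gamma})$ in conditional $L^p$ and $O(|t-s|^{\alpha+2\gamma})$ after taking $\E_s$. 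Comparing with the definition of $Y^\natural_{s,t}$, we see $\delta\tilde Y_{s,t} - \delta Y_{s,t} = (\tilde Y - Y)^{\natural\text{-part}} - Y^\natural_{s,t}$ up to these controlled remainders; more precisely, $Y^\natural_{s,t} = \delta Y_{s,t} - \delta\tilde Y_{s,t} + \rho_{s,t}$ where $\rho$ satisfies $\|\rho_{s,t}\|_p \lesssim |t-s|^{\alpha+\gamma}$ and $\|\E_s\rho_{s,t}\|_p \lesssim |t-s|^{\alpha+2\gamma}$. If (ii) holds, then $Y = \tilde Y$, so $Y^\natural_{s,t} = \rho_{s,t}$ and the stated estimates on $Y^\natural$ follow immediately (with $\varepsilon_1 = \alpha+\gamma-\tfrac12 > 0$ and $\varepsilon_2 = \alpha+2\gamma-1 > 0$ since $\gamma > \tfrac13$ and $\alpha > \tfrac13$); in particular (i) holds. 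Conversely, if (i) holds, then both $\delta Y$ and $\delta\tilde Y$ differ from the common local expansion by increments of two-parameter processes that are $O(|t-s|^{1/2+\varepsilon})$ in $L^p$ and $O(|t-s|^{1+\varepsilon'})$ after conditioning; since $Y_0 = \tilde Y_0 = \xi$, the uniqueness statement in the stochastic sewing lemma (Theorem \ref{thm:stochasticsewing}) forces $Y = \tilde Y$, which is (ii).

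The main obstacle, and the point requiring care, is the bookkeeping of the Gubinelli derivative in the composition: one must check that the Gubinelli derivative of $f_\cdot Y_\cdot$ relative to $\mathbf{Z}$ is precisely $f'_\cdot Y_\cdot + f_\cdot(f_\cdot Y_\cdot + F'_\cdot) = f'_\cdot Y_\cdot + f^2_\cdot Y_\cdot + f_\cdot F'_\cdot$, i.e.\ that the Gubinelli derivative of $Y$ itself (as a controlled path) is $f_\cdot Y_\cdot + F'_\cdot$ and not something else — this is where Lemma \ref{lemma:solutionsoflinearRSDEsarecontrolled} is essential and where the structure of the Davie expansion in Definition \ref{def:solutionlinearRSDEs} (the specific coefficient of $\mathbb{Z}_{s,t}$) is used. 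A secondary technical point is verifying that applying $\E_s$ to the Itô term $\int_s^t S_rY_r\,dB_r$ kills it (martingale property) so that the $\E_s$-estimate on $Y^\natural$, respectively on $\rho$, genuinely improves to order $\alpha+2\gamma$; this is routine via BDG and the boundedness $\sup_t\|Y_t\|_p < \infty$, but must be stated to justify invoking the second bound in the sewing lemma's uniqueness part.
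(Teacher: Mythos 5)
Your proposal is correct and takes essentially the same route as the paper: it hinges on Lemma \ref{lemma:solutionsoflinearRSDEsarecontrolled} plus Proposition \ref{prop:stambilityundercompositionlinearvectorfields} to establish the controlledness claim, then compares the Davie remainder with the rough stochastic integral's local expansion from Proposition \ref{prop:roughstochastiintegral} and invokes the (equivalent) uniqueness consequence of the stochastic sewing lemma. The paper phrases the final step as observing that the difference process $R$ is additive with the required Hölder estimates, hence vanishes; your invocation of uniqueness for the sewn process with $Y_0 = \tilde Y_0 = \xi$ is the same argument in slightly different language.
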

\begin{proof} $(i)\Rightarrow(ii) \ $ The fact that $(f_{\cdot}Y_{\cdot},f'_{\cdot}Y_{\cdot}+f^2_{\cdot}Y_{\cdot}+f_\cdot F'_\cdot)\in \mathscr{D}^{2\gamma}_ZL^p(\mathscr{L}(\R^n,W))$ follows combining Lemma \ref{lemma:solutionsoflinearRSDEsarecontrolled} and Proposition \ref{prop:stambilityundercompositionlinearvectorfields}.  For any $(s,t) \in \Delta_{[0,T]}$, we denote by \begin{equation*}
    A_{s,t}=f_sY_s\delta Z_{s,t} +(f'_sY_s+f_s^2Y_s+f_sF'_s)\mathbb{Z}_{s,t} = f_sY_s\delta Z_{s,t} +(f'_sY_s+f_sY'_s)\mathbb{Z}_{s,t}.
\end{equation*} According Proposition \ref{prop:roughstochastiintegral} it makes sense to define \begin{equation*} \begin{aligned}
    R_{s,t} &:=\delta Y_{s,t}-\delta F_{s,t}-\int_s^tG_rY_rdr-\int_s^tS_rY_rdB_r - \int_s^t (f_r,f'_r)Y_r d\mathbf{Z}_r \\
    &= A_{s,t}-\int_s^t (f_r,f'_r)Y_r d\mathbf{Z}_r + Y_{s,t}^\natural 
\end{aligned}
\end{equation*} 
The previous equality leads to 
to $\|R_{s,t}\|_p\lesssim|t-s|^{\alpha+\gamma}$ and $\|\E_s(R_{s,t})\|_p\lesssim|t-s|^{\alpha+2\gamma}$. On noting that $\delta R \equiv 0$ (i.e.\ $R$ is additive), this is enough to get that $R_{s,t}=0$ for any $(s,t)\in\Delta_{[0,T]}$. In particular, for any $t \in [0,T]$ and recalling that $F_0=0$, it holds \begin{equation*}
    Y_t-\xi=\delta Y_{0,t}= F_t + \int_0^tG_rY_rdr+\int_0^tS_rY_rdB_r + \int_0^t (f_r,f'_r)Y_r d\mathbf{Z}_r \qquad \text{$\mathbb{P}$-a.s.}
\end{equation*} and the conclusion follows by the a.s.\ continuity of the involved processes.  \\
$(ii)\Rightarrow(i) \ $ Trivially one has $Y_0=\xi$ and we can write
\begin{equation*}
    \begin{aligned}
        Y_{s,t}^\natural &= \delta Y_{s,t} - \delta F_{s,t} - \int_s^tG_rY_rdr -\int_s^t S_rY_rdB_r - f_sY_s\delta Z_{s,t} -(f'_sY_s+f_s^2Y_s+f_sF'_s)\mathbb{Z}_{s,t}   \\  &=  \int_s^t (f_r,f'_r)Y_r d\mathbf{Z}_r - A_{s,t}.
    \end{aligned}
\end{equation*}
From stochastic sewing lemma we conclude that $\|Y_{s,t}^\natural\|_p\lesssim|t-s|^{\alpha+\gamma}$ and $\|\E_s(Y_{s,t}^\natural)\|_p\lesssim|t-s|^{\alpha+2\gamma}$.
\end{proof}

We present and prove some result about linear rough stochastic differential equations. Namely, we show some a priori bounds for the solutions, an existence-and-uniqueness result and some stability estimates. We make use of the class weighted norms introduced in Appendix \ref{appendix weighted norms}, in the spirit of \cite{bailleul2017unbounded}. To understand the relation between weighted norms and the classical $L^p$-norms see Remark \ref{rmk:equivalencewiththeclassicalnorms}.

\begin{thm}[A priori estimates] \label{thm:aprioriestimatelinearRSDEs} Let $\xi \in L^p(\Omega,\mathcal{F}_0;W)$ and let $Y=(Y_t)_{t \in [0,T]}$ be an $L^p$-integrable solution to \eqref{eq:linearRSDE_model} starting from $\xi$. Let $M>0$ be any constant satisfying \begin{equation*} \label{eq:constantMinaprioriestimatesforlinearRSDEs}
    \|G\|_\infty + \|S\|_\infty + \|(f,f')\|_\infty + \|(f,f')\|_{\mathbf{D}_Z^{2\gamma}L_{lin}^{p,\infty
    }} + |\mathbf{Z}|_\alpha  \le M.
\end{equation*} 
Then there exists $\lambda=\lambda(M,\alpha,\gamma,p,T)>0$ such that  \begin{align*}
        & (|Y_\cdot|)_{p;\lambda} + (|\delta{Y}|)_{\gamma;p;\lambda} + (|Y^\natural|)_{\alpha+\gamma;p;\lambda} \lesssim \|\xi\|_p + \|(F,F')\|_{\mathscr{D}^{2\gamma}_ZL^{p}},
    \end{align*} where the implicit constant only depends on $M,\alpha,\gamma,p,T$. 
\end{thm}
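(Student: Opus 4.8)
The plan is to run a Gronwall-type argument in the weighted norms $(|\cdot|)_{p;\lambda}$ of Appendix \ref{appendix weighted norms}. The point of the weights is that the drift-like contributions (the $dt$ and $dB$ terms, and the lower-order rough terms) pick up a factor that is a negative power of $\lambda$ after one integrates over a subinterval, so by taking $\lambda$ large enough one can absorb all the terms carrying $Y$ on the right-hand side into the left-hand side, at the cost of a constant depending on $M,\alpha,\gamma,p,T$. Concretely, I would work on a generic subinterval $[s,t]\subseteq[0,T]$, use the integral characterization of the solution from Proposition \ref{prop:characterizationsolutionslinearRSDEs}, namely
\begin{equation*}
\delta Y_{s,t} = \delta F_{s,t} + \int_s^t G_r Y_r\,dr + \int_s^t S_r Y_r\,dB_r + \int_s^t (f_r Y_r, f'_r Y_r + f_r^2 Y_r + f_r F'_r)\,d\mathbf{Z}_r,
\end{equation*}
and estimate each piece. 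The first three pieces are handled by Bochner and Burkholder--Davis--Gundy as in \eqref{eq:solutionlinearRSDEsfirstholderestimate}; the rough integral is handled by Proposition \ref{prop:roughstochastiintegral} applied to the controlled integrand $(f_\cdot Y_\cdot, f'_\cdot Y_\cdot + f^2_\cdot Y_\cdot + f_\cdot F'_\cdot)$, whose $\mathscr{D}^{2\gamma}_Z L^p$-norm is controlled via Proposition \ref{prop:stambilityundercompositionlinearvectorfields} (and Lemma \ref{lemma:solutionsoflinearRSDEsarecontrolled}) by $M$, $\sup_t\|Y_t\|_p$, $\|\delta Y\|_{\gamma;p}$, $\|\E_\cdot R^Y\|_{2\gamma;p}$ and $\|(F,F')\|_{\mathscr{D}^{2\gamma}_Z L^p}$.

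The heart of the matter is bookkeeping: one needs simultaneous control of $(|Y|)_{p;\lambda}$, $(|\delta Y|)_{\gamma;p;\lambda}$ and $(|Y^\natural|)_{\alpha+\gamma;p;\lambda}$, and these are coupled. I would set up a vector of three (or four, if one also tracks $\|\E_\cdot R^Y\|$) weighted seminorms, derive for each of them an inequality of the schematic form
\begin{equation*}
(\text{seminorm}) \le C(M,\alpha,\gamma,p,T)\,\psi(\lambda)\big[(\text{seminorms}) + \|\xi\|_p + \|(F,F')\|_{\mathscr{D}^{2\gamma}_Z L^p}\big],
\end{equation*}
where $\psi(\lambda)\to 0$ as $\lambda\to\infty$ — this decay being exactly the content of the weighted-norm lemmas in the appendix, which convert the extra positive powers $|t-s|^{\gamma}$, $|t-s|^{1/2}$, $|t-s|^{\alpha}$, $|t-s|^{\alpha+\gamma}$ produced by the estimates above into negative powers of $\lambda$. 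Choosing $\lambda=\lambda(M,\alpha,\gamma,p,T)$ large enough that $C\psi(\lambda)\le \tfrac12$ lets one absorb the seminorms on the right into the left, yielding the claimed bound. One also needs $\sup_t\|Y_t\|_p<\infty$ a priori, which is part of the definition of an $L^p$-solution, so $(|Y|)_{p;\lambda}$ is finite to begin with and the absorption step is legitimate.

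The main obstacle, and where I expect the real work to lie, is the rough-integral term: unlike in the classical SDE case one cannot estimate $\int_s^t (f_r,f'_r)Y_r\,d\mathbf{Z}_r$ just from $\sup\|Y\|_p$, but must feed in $\|\delta Y\|_{\gamma;p}$ and $\|\E_\cdot R^Y\|_{2\gamma;p}$, which reintroduces the very quantities being estimated — hence the need to localize on small intervals so that the rough-integral constant in Proposition \ref{prop:roughstochastiintegral} comes with a genuinely small factor. A secondary subtlety is keeping the linear term $f_\cdot F'_\cdot$ inside the rough integrand (it does not carry $Y$, so it belongs on the inhomogeneous side) separate from $f_\cdot Y'_\cdot = f_\cdot(f_\cdot Y_\cdot + F'_\cdot)$ bookkeeping, and making sure the $\mathbb{Z}$-level terms $(f'_\cdot Y_\cdot + f_\cdot^2 Y_\cdot)\mathbb{Z}_{s,t}$ are absorbed with the correct $|t-s|^{2\alpha}$ weight. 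Once the interval is short enough these are all routine, and the estimate propagates across $[0,T]$ because $\lambda$ is chosen uniformly.
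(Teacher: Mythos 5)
Your overall strategy — run a Gronwall argument in the weighted Hölder norms, localize to short intervals so that the stochastic-sewing gain of $|t-s|^{\alpha+\gamma}$ produces a small factor, and absorb — matches the paper's. But you have the direction of $\lambda$ backwards. In the appendix norms the helpful small factors are \emph{positive} powers of $\lambda$: Proposition \ref{prop:comparison} gives $(|Y_\cdot|)_{p;\lambda}\le\|Y_0\|_p+e^2\lambda^\gamma(|\delta Y|)_{\gamma;p;\lambda}$, and Proposition \ref{prop:sewingmap} produces an extra $\lambda^\gamma$ on the $\E_\cdot\delta A$ term. The weight is $e^{t/\lambda}$, so $1/\lambda$ plays the role of the large Gronwall constant and the sewing-time scale is $|t-s|\le\lambda$; absorption thus requires $\lambda$ \emph{small}, not large, and your ``$\psi(\lambda)\to 0$ as $\lambda\to\infty$'' has the asymptotics inverted. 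This would be caught immediately when writing out the estimates, since the factors you need to make small, such as $\lambda^\gamma(|Y^\natural|)_{\alpha+\gamma;p;\lambda}$ in \eqref{eq:aprioriestimatedeltaY}, are increasing in $\lambda$.

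There is also a structural difference worth noting. You propose to apply Proposition \ref{prop:roughstochastiintegral} to the controlled integrand $(f_\cdot Y_\cdot, f'_\cdot Y_\cdot+f^2_\cdot Y_\cdot+f_\cdot F'_\cdot)$, but the constants in that estimate involve the \emph{global} controlled-path seminorms of the integrand ($\|\delta\cdot\|_{\gamma;p,q}$, $\|\E_\cdot R\|_{2\gamma;q}$), and it is not automatic that these localize to the weighted norms. The paper bypasses this by a cleaner device: $Y^\natural$ itself satisfies the hypotheses of the stochastic sewing lemma with a sewn process that must vanish by uniqueness, so $\Lambda(Y^\natural)\equiv -Y^\natural$ (equation \eqref{eq:sewingofynatural}), and the weighted sewing estimate of Proposition \ref{prop:sewingmap} applied to $\delta Y^\natural$ directly controls $(|Y^\natural|)_{\alpha+\gamma;p;\lambda}$ in terms of $(|\delta Y|)_{\gamma;p;\lambda}$, $(|Y_\cdot|)_{p;\lambda}$ and the forcing, with an extra $\lambda^\gamma$ to absorb the self-reference through $\E_\cdot\delta Y^\natural$. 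If you want to take your route, you would need a localized or weighted version of Proposition \ref{prop:roughstochastiintegral}, at which point you would essentially be re-proving Proposition \ref{prop:sewingmap}. Your observations about the finiteness of $\sup_t\|Y_t\|_p$ justifying absorption and about keeping $f_\cdot F'_\cdot$ on the inhomogeneous side are both correct; the a priori finiteness of the other seminorms is supplied by Lemma \ref{lemma:solutionsoflinearRSDEsarecontrolled}.
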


\begin{proof}
        Let $\lambda>0$. In the rest of the proof we may need to assume it to be smaller than some constants depending on $M,\alpha,\gamma,p,T$. From Proposition \ref{prop:comparison} we have that \begin{equation*}
            (|Y_\cdot|)_{p;\lambda} \lesssim \|\xi\|_p + \lambda^\gamma (|\delta Y|)_{\gamma;p;\lambda}.
        \end{equation*}  On the other hand, arguing as in \eqref{eq:solutionlinearRSDEsfirstholderestimate} and recalling the definition of weighted norms, for any $(s,t)\in \Delta_{[0,T]}$ such that $|t-s|\le \lambda$ we get \begin{equation} \label{eq:aprioriestimatedeltaY}
        \begin{aligned}
            \|\delta Y _{s,t}\|_p 
            &\lesssim_{M,\alpha,\gamma,p,T} \big(\|(F,F')\|_{\mathscr{D}^{2\gamma}_ZL^p}  +  (|Y_{\cdot}|)_{p;\lambda}  + \lambda^\gamma (|Y^\natural|)_{\alpha+\gamma;p;\lambda}\big)e^{t/\lambda}|t-s|^\gamma.
        \end{aligned}
    \end{equation}
    By means of Proposition \ref{prop:sewingmap} we can deduce an estimate for $(|Y^\natural|)_{\alpha+\gamma;p;\lambda}$. Indeed, by construction, we have that $Y^\natural_{t,t}=0$ and $Y^\natural_{s,t}$ is $m\mathcal{F}_t$-measurable for any $(s,t) \in \Delta_{[0,T]}$. Moreover, taking into account the explicit form of $Y^\natural$,
    for any $(s,u,t)\in\Delta^2_{[0,T]}$ it holds that  \begin{equation*} \label{eq:deltaYnaturalinaprioriestimates}
        \begin{aligned}
            &\delta Y^\natural_{s,u,t} = Y^\natural_{s,t} - Y^\natural_{s,u} - Y^\natural_{u.t}  \\
            &= \delta (f_\cdot Y_\cdot)_{s,u} \delta Z_{u,t} - (f'_sY_s+f_s^2Y_s+f_sF'_s)\delta Z_{s,u} \delta Z_{u,t} + \delta (f'_\cdot Y_\cdot +f_\cdot^2 Y_\cdot + f_\cdot F'_\cdot)_{s,u} \mathbb{Z}_{u,t}  \\ 
            &= \big(\delta (f_\cdot Y_\cdot)_{s,u} -  (f'_sY_s+f_s^2Y_s+f_sF'_s)\delta Z_{s,u}\big) \delta Z_{u,t} + \delta (f'_\cdot Y_\cdot +f_\cdot^2 Y_\cdot+f_\cdot F'_\cdot)_{s,u} \mathbb{Z}_{u,t} = \\ &= R_{s,u}^{f_\cdot Y_\cdot} \delta Z_{u,t} + \delta (f'_\cdot Y_\cdot +f_\cdot^2 Y_\cdot)_{s,u} \mathbb{Z}_{u,t} + \delta (f_\cdot F'_\cdot)_{s,u} \mathbb{Z}_{u,t}.
        \end{aligned}
    \end{equation*}
    Assuming $|t-s|\le \lambda$, from \eqref{eq:esttimateforapriori1},\eqref{eq:esttimateforapriori2},\eqref{eq:esttimateforapriori3} and recalling the definition of weighted norm, it is standard to deduce that 
    \begin{equation} \label{eq:deltaYnaturalestimate1}
        \begin{aligned}
            \|\delta Y^\natural_{s,u,t}\|_p &\le \|\delta (f_\cdot Y_\cdot)_{s,u} \|_p |\delta Z_{u,t}| + \|f'_sY_s+f_s^2Y_s+f_sF'_s\|_p|\delta Z_{s,u}|| \delta Z_{u,t}|  \\ & \quad + \|\delta (f'_\cdot Y_\cdot +f_\cdot^2 Y_\cdot + f_\cdot F'_\cdot)_{s,u} \|_p |\mathbb{Z}_{u,t}|  \\
            &\lesssim_{M,\alpha,\gamma,p,T} \big( \|(F,F')\|_{\mathscr{D}^{2\gamma}_ZL^{p}}  + (|\delta Y|)_{\gamma;p;\lambda} + (|Y_\cdot|)_{p;\lambda} \big) e^{t/\lambda} |t-s|^{\alpha+\gamma}.
        \end{aligned}
    \end{equation} 
    Writing $R_{s,u}^{f_\cdot Y_\cdot}$ as in \eqref{eq:remaindercompositionwithstochasticcontrolledlinearvectorfields} and recalling \eqref{eq:remainderofthesolution_linearRSDEs}, we obtain \begin{equation*}\begin{aligned}
        \|\E_s(R_{s,u}^{f_\cdot Y_\cdot})\|_p &\lesssim_M \big( (|\delta Y|)_{\gamma;p;\lambda} + (|\E_\cdot R^Y|)_{2\gamma;p} + (|Y_\cdot|)_{p;\lambda} \big)  e^{t/\lambda} |t-s|^{2\gamma} \\
        &\lesssim_{M,\alpha,\gamma,T} \big( (|\delta Y|)_{\gamma;p;\lambda} + \|(F,F')\|_{\mathscr{D}^{2\gamma}_ZL^{p}} + (|Y^\natural|)_{\alpha+\gamma;p;\lambda} + (|Y_\cdot|)_{p;\lambda} \big)  e^{t/\lambda} |t-s|^{2\gamma},
    \end{aligned}
    \end{equation*} where the last inequality can be deduced from \eqref{eq:remainderofthesolution_linearRSDEs}.
    Hence
    \begin{equation} \label{eq:deltaYnaturalestimate2}
        \begin{aligned}
            \|\E_s(\delta Y^\natural_{s,u,t})\|_p &\le \|\E_s(R_{s,u}^{f_\cdot Y_\cdot})\|_p |\delta Z_{u,t}| + \|\delta (f'_\cdot Y_\cdot +f_\cdot^2 Y_\cdot)_{s,u}\|_p |\mathbb{Z}_{u,t}| + \|\delta (f_\cdot F'_\cdot)_{s,u}\|_p |\mathbb{Z}_{u,t}| \\
            &\lesssim_{M,\alpha,\gamma,p,T} \big( \|(F,F')\|_{\mathscr{D}^{2\gamma}_ZL^{p}} +(|Y^\natural|)_{\alpha+\gamma;p;\lambda}  + (|\delta Y|)_{\gamma;p;\lambda} +  (|Y_\cdot|)_{p;\lambda}\big) e^{t/\lambda} |t-s|^{\alpha+2\gamma}.
        \end{aligned}
    \end{equation}
    In particular, taking into account Remark \ref{rmk:equivalencewiththeclassicalnorms}, we showed that the two-parameter process $Y^\natural$ satisfies the assumptions of Theorem \ref{thm:stochasticsewing} with $p=q$. By the uniqueness part, we also get that the process $\mathcal{Y}=(\mathcal{Y}_t)_{t\in [0,T]}$ defined by the stochastic sewing lemma is necessarily the zero process. Recalling the sewing map defined in Proposition \ref{prop:sewingmap}, we can write \begin{equation}\label{eq:sewingofynatural}\Lambda(Y^\natural) \equiv -Y^\natural.\end{equation}
    Applying \eqref{eq:sewingmap},  we can now obtain the desired estimate for $(|Y^\natural|)_{\alpha+\gamma;p;\lambda}$. Indeed, \begin{equation*} \label{eq:corollaryaprioiestimates2}
        \begin{aligned}
             (|Y^\natural|)_{\alpha+\gamma;p;\lambda} &= (|\Lambda(Y^\natural)|)_{\alpha+\gamma;p;\lambda}  \\
             &\lesssim_{\alpha,\gamma,p} \lambda^\gamma(|\E_\cdot \delta Y^\natural|)_{\alpha+2\gamma;p;\lambda}+(|\delta Y^\natural|)_{\alpha+\gamma;p;\lambda}  \\ 
             &\lesssim_{M,\alpha,\gamma,p,T} \|(F,F')\|_{\mathscr{D}^{2\gamma}_ZL^{p}} + \lambda^\gamma (| Y^\natural|)_{\alpha+\gamma;p;\lambda} + (|\delta Y|)_{\gamma;p;\lambda} + (|Y_\cdot|)_{p;\lambda}  \\ 
             &\lesssim \|(F,F')\|_{\mathscr{D}^{2\gamma}_ZL^{p}} + (|\delta Y|)_{\gamma;p;\lambda} +  (|Y_\cdot|)_{p;\lambda} 
        \end{aligned}
    \end{equation*}
    where the last inequality holds provided that we choose $\lambda=\lambda(M,\alpha,\gamma,p,T)$ sufficiently small. Let us insert this result into \eqref{eq:aprioriestimatedeltaY} to obtain \begin{equation} \label{eq:corollaryaprioriestimates1}
        \begin{aligned}
            (|\delta Y|)_{\gamma;p;\lambda} &\lesssim_{M,\alpha,\gamma,p,T}   \|(F,F')\|_{\mathscr{D}^{2\gamma}_ZL^{p}} + (|Y_{\cdot}|)_{p;\lambda} + \lambda^\gamma (|Y^\natural|)_{\alpha+\gamma;p;\lambda} \\
            &\lesssim_{M,\alpha,\gamma,p,T} \|(F,F')\|_{\mathscr{D}^{2\gamma}_ZL^{p}} + (|Y_{\cdot}|)_{p;\lambda} + \lambda^\gamma(|\delta Y|)_{\gamma;p;\lambda} \\
            &\lesssim \|(F,F')\|_{\mathscr{D}^{2\gamma}_ZL^{p}} + (|Y_{\cdot}|)_{p;\lambda} 
        \end{aligned}
    \end{equation} and the last inequality holds if $\lambda=\lambda(M,\alpha,\gamma,p,T)>0$ is sufficiently small.  To conclude, we apply the previous estimate to show that \begin{equation} \label{eq:corollaryaprioriestimates3}
       \begin{aligned}
           (|Y_{\cdot}|)_{p;\lambda} &\le \|Y_0\|_p+e^2\lambda^\gamma(|\delta Y|)_{\alpha;p;\lambda} \\ 
           &\lesssim_{M,\alpha,\gamma,p,T} \|\xi\|_p+ \|(F,F')\|_{\mathscr{D}^{2\gamma}_ZL^{p}} + \lambda^\gamma (|Y_{\cdot}|)_{p;\lambda} \\ &\lesssim  \|\xi\|_p + \|(F,F')\|_{\mathscr{D}^{2\gamma}_ZL^{p}},  
       \end{aligned}
   \end{equation}    where the last inequality follows whenever $\lambda=\lambda(M,\alpha,\gamma,p,T)>0$ is small enough.
\end{proof}

\begin{thm}[Well-posedness] \label{thm:wellposednesslinearRSDEs} Assume $p\gamma >1$. Then, for any $\xi \in L^p(\Omega,\mathcal{F}_0;W)$, there exists a unique up to indistinguishability $L^p$-integrable solution to \eqref{eq:linearRSDE_model} starting from $\xi$.
\end{thm}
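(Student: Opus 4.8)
The plan is to read uniqueness off Theorem~\ref{thm:aprioriestimatelinearRSDEs}. If $Y^1,Y^2$ are two $L^p$-integrable solutions started from the same $\xi$, set $D:=Y^1-Y^2$. Since the Davie expansion in Definition~\ref{def:solutionlinearRSDEs} is affine in $(Y,F,F')$, the remainder of $D$ relative to the equation with vanishing forcing ($F\equiv 0$, $F'\equiv 0$) equals $Y^{1,\natural}-Y^{2,\natural}$, and the two bounds in Definition~\ref{def:solutionlinearRSDEs}(ii) are stable under subtraction; hence $D$ is itself an $L^p$-integrable solution of \eqref{eq:linearRSDE_model} with data $(F,F',\xi)=(0,0,0)$. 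Theorem~\ref{thm:aprioriestimatelinearRSDEs} then yields $(|D_\cdot|)_{p;\lambda}\lesssim \|0\|_p+\|(0,0)\|_{\mathscr{D}^{2\gamma}_ZL^p}=0$, so $D_t=0$ almost surely for every $t$, and by continuity $Y^1$ and $Y^2$ are indistinguishable. This step does not use $p\gamma>1$.

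\textbf{Existence.} For existence I would set up a fixed-point argument in the space of stochastic controlled paths, with the weighted norms of Appendix~\ref{appendix weighted norms} as in the proof of Theorem~\ref{thm:aprioriestimatelinearRSDEs}. Let $M$ bound $\|G\|_\infty+\|S\|_\infty+\|(f,f')\|_\infty+\|(f,f')\|_{\mathbf{D}^{2\gamma}_ZL_{lin}^{p,\infty}}+|\mathbf{Z}|_\alpha$. For $(Y,Y')\in\mathscr{D}^{2\gamma}_ZL^p(W)$ with $\sup_{t}\|Y_t\|_p<\infty$, Propositions~\ref{prop:stambilityundercompositionlinearvectorfields} and~\ref{prop:roughstochastiintegral} make the map
\begin{equation*}
\mathcal{M}(Y,Y')_\cdot:=\Big(\xi+F_\cdot+\int_0^\cdot G_rY_r\,dr+\int_0^\cdot S_rY_r\,dB_r+\int_0^\cdot\big(f_rY_r,\,f'_rY_r+f_rY'_r\big)\,d\mathbf{Z}_r,\ \ f_\cdot Y_\cdot+F'_\cdot\Big)
\end{equation*}
well defined, and by Proposition~\ref{prop:characterizationsolutionslinearRSDEs} a fixed point of $\mathcal{M}$ (whose Gubinelli derivative is then $f_\cdot Y_\cdot+F'_\cdot$, as it should be) is precisely an $L^p$-integrable solution of \eqref{eq:linearRSDE_model} starting from $\xi$. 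Arguing as in \eqref{eq:aprioriestimatedeltaY}, \eqref{eq:deltaYnaturalestimate1}, \eqref{eq:deltaYnaturalestimate2} — now with the data of the equation and $(F,F')$ entering as genuine right-hand-side terms — one shows that, for $\lambda=\lambda(M,\alpha,\gamma,p,T)$ small enough, $\mathcal{M}$ leaves invariant a closed ball of radius $R=R(\|\xi\|_p,\|(F,F')\|_{\mathscr{D}^{2\gamma}_ZL^p},M)$ in an appropriate weighted norm controlling $(|Y_\cdot|)_{p;\lambda}$, $(|\delta Y|)_{\gamma;p;\lambda}$, $(|\delta Y'|)_{\gamma;p;\lambda}$ and $(|Y^\natural|)_{\alpha+\gamma;p;\lambda}$.

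\textbf{Contraction and limit.} Since \eqref{eq:linearRSDE_model} is linear, $\mathcal{M}$ is affine, so $\mathcal{M}(Y^1,(Y^1)')-\mathcal{M}(Y^2,(Y^2)')$ is the image of the difference of the inputs under the linear part of $\mathcal{M}$, which carries zero forcing and zero initial value. Bounding this difference exactly as in \eqref{eq:aprioriestimatedeltaY}--\eqref{eq:deltaYnaturalestimate2} and observing that the drift, It\^o and rough terms contribute the factors $|t-s|$, $|t-s|^{1/2}$, $|t-s|^\alpha$ and $|t-s|^{2\alpha}$, all of which beat $|t-s|^\gamma$, the weighted-norm computation produces
\begin{equation*}
(|\delta \Delta|)_{\gamma;p;\lambda}+(|\Delta_\cdot|)_{p;\lambda}+(|\Delta^\natural|)_{\alpha+\gamma;p;\lambda}\le C(M,\alpha,\gamma,p,T)\,\lambda^{\delta}\Big[(|\delta \bar\Delta|)_{\gamma;p;\lambda}+(|\bar\Delta_\cdot|)_{p;\lambda}+(|\bar\Delta^\natural|)_{\alpha+\gamma;p;\lambda}\Big]
\end{equation*}
for some $\delta>0$, where $\Delta:=\mathcal{M}(Y^1)-\mathcal{M}(Y^2)$ and $\bar\Delta:=Y^1-Y^2$; choosing $\lambda$ small makes $\mathcal{M}$ a strict contraction, and Banach's fixed point theorem supplies the fixed point $(Y,Y')$. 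Continuity of the resulting process, required in Definition~\ref{def:solutionlinearRSDEs}, follows from the continuity clause of the stochastic sewing lemma (Theorem~\ref{thm:stochasticsewing}) applied to $Y^\natural$ and to the rough integral in $\mathcal{M}$; this is where the hypothesis $p\gamma>1$ enters (equivalently, one upgrades $\|\delta Y_{s,t}\|_p\lesssim|t-s|^\gamma$ to an a.s.\ continuous modification by Kolmogorov's criterion). By Proposition~\ref{prop:characterizationsolutionslinearRSDEs} this fixed point satisfies the integral form of \eqref{eq:linearRSDE_model}, hence is the claimed solution.

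\textbf{Main obstacle.} The delicate point is the contraction (and self-mapping) estimate in the weighted norm: one must re-run the coupled bounds of Theorem~\ref{thm:aprioriestimatelinearRSDEs} for the difference of two candidates and make sure the loop relating $(|\delta Y|)_{\gamma;p;\lambda}$, $(|Y_\cdot|)_{p;\lambda}$ and $(|Y^\natural|)_{\alpha+\gamma;p;\lambda}$ — via Propositions~\ref{prop:comparison} and~\ref{prop:sewingmap} and the identity \eqref{eq:sewingofynatural} applied to the difference — closes with a genuine gain $\lambda^{\delta}$, $\delta>0$, uniformly over data of size $\le M$. The remaining work (bookkeeping of the Gubinelli derivative under $\mathcal{M}$, and the use of the stability results Lemma~\ref{lemma:technicalconvergence} and Corollary~\ref{coroll:technicalconvergence} when identifying the limit as a solution) is routine.
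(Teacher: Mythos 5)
Your argument tracks the paper's strategy almost exactly: uniqueness is read off Theorem~\ref{thm:aprioriestimatelinearRSDEs} applied by linearity to the difference of two solutions (the paper does the same), and existence is established by contraction in the weighted norms of Appendix~\ref{appendix weighted norms}. The only stylistic difference is that you package the iteration as an abstract Banach fixed point in $\mathscr{D}^{2\gamma}_Z L^p$, whereas the paper runs the Picard scheme explicitly (with the germs $A^n$ carrying $f_s^2 Y^{n-1}_s$ — which is precisely your $f_s(Y^n)'_s$ with $(Y^n)'=fY^{n-1}+F'$, so the bookkeeping is identical) and then uses Kolmogorov's criterion together with a Borel--Cantelli argument to pass to an almost surely continuous limit; in that explicit route the recursion closes only after a three-step shift (see \eqref{eq:keyinequalityfromPicard}, involving $J^{n-2},J^{n-1},J^{n},J^{n+1,\natural}$), which is the concrete counterpart of the careful choice of equivalent norm your contraction argument would have to make on the pair $(Y,Y')$. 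One imprecision to flag: the statement that the drift, It\^o and rough contributions all "beat $|t-s|^\gamma$" is not correct for the germ term of order $|t-s|^\alpha$ when $\gamma=\alpha$ (which the standing assumptions allow); the genuine small factor $\lambda^\gamma$ does not come from comparing exponents but from Proposition~\ref{prop:comparison} applied to the difference $\Delta=Y^1-Y^2$, whose vanishing initial value gives $(|\Delta_\cdot|)_{p;\lambda}\lesssim \lambda^\gamma(|\delta\Delta|)_{\gamma;p;\lambda}$ — exactly the mechanism of Step~2 in the paper, which you do reference in your final paragraph, so this is a slip of phrasing rather than a gap.
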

\begin{proof}
    \textbf{Uniqueness.} Let $Y=(Y_t)_{t \in [0,T]}$ be an $L^p$-integrable solution of \eqref{eq:linearRSDE_model} starting from $\xi=0$ and with forcing term $(F,F')=(0,0)$. Being $Y$ continuous and applying Theorem \ref{thm:aprioriestimatelinearRSDEs}, it follows that, $\mathbb{P}$-a.s.\ and for any $t \in [0,T]$, $Y_t=0$.
    The proof of uniqueness can be concluded due to the linearity of the equation.

    \textbf{Existence.} We run a Picard's iteration argument in three steps. For convenience, let us fix a positive constant $M>0$ such that \begin{equation*}
    |\mathbf{Z}|_\alpha + \|G\|_\infty + \|S\|_\infty + \|(f,f')\|_\infty + \|(f,f')\|_{\mathbf{D}_Z^{2\gamma}L^{p,\infty}_{lin}} \le M.
    \end{equation*} 

    \textit{Step 1. (Construction of the iterated processes)}  Let us define three initial (and trivially continuous, adapted, and $L^p$-integrable) stochastic processes \begin{equation*}
        \begin{aligned}
            Y^{-2}_\cdot = Y^{-1}_\cdot\equiv0 \quad \text{and} \quad Y^0_\cdot = \xi + F_\cdot
        \end{aligned}
    \end{equation*} 
    By recursion on $n\ge 0$, it is possible to construct a sequence of continuous and adapted $W$-valued stochastic processes $Y^{n+1}=(Y^{n+1}_t)_{t\in [0,T]}$
     with $Y^{n+1}_0=\xi$ such that $\sup_{t \in [0,T]} \|Y^{n+1}_t\|_p$ is finite and the two-parameter process \begin{equation*}
        Y^{n+1,\natural}_{s,t}:=\delta Y^{n+1}_{s,t}-A_{s,t}^n \qquad (s,t)\in\Delta_{[0,T]}
    \end{equation*} satisfies $\|Y^{n+1,\natural}_{s,t}\|_{p}\lesssim |t-s|^{\alpha+\gamma}$ and $\|\E_s(Y^{n+1,\natural}_{s,t})\|_{p}\lesssim |t-s|^{\alpha+2\gamma}$, where \begin{equation*}
        A^n_{s,t} := \delta F_{s,t} + \int_s^tG_rY^n_rdr + \int_s^tS_rY_r^ndB_r + f_sY_s^n\delta Z_{s,t} +(f'_sY^n_s+f_s^2Y^{n-1}_s+f_sF'_s)\mathbb{Z}_{s,t}.
    \end{equation*} 
    Indeed, given $n \ge 0$ and given the processes $Y^n=(Y^n_t)_{t\in [0,T]}, Y^{n-1}=(Y^{n-1}_t)_{t\in [0,T]}$ and $Y^{n-2}=(Y^{n-2}_t)_{t\in [0,T]}$ satisfying the required recursive properties, one can show that $A^n_{t,t}=0$, $A^n_{s,t}$ is $\mathcal{F}_t$-measurable and
    $\|A^n_{s,t}\|_p \lesssim |t-s|^\gamma$, for any $(s,t) \in \Delta_{[0,T]}$. Moreover, for any $(s,u,t) \in \Delta_{[0,T]}^2$ we deduce 
    \begin{equation*}
        \begin{aligned}
            &\delta A_{s,u,t}^n = \\
            &= -\delta(f_\cdot Y^n_\cdot)_{s,u}\delta{Z}_{u,t} + (f'_sY_s^n + f_s^2Y_s^{n-1}+f_sF'_s)\delta{Z}_{s,u}\delta{Z}_{u,t} - \delta(f'_\cdot Y^n_\cdot + f^2_\cdot Y^{n-1}_\cdot+f_\cdot F'_\cdot)_{s,u}\mathbb{Z}_{u,t}  \\
            &= -\big(R_{s,u}^f Y^n_s+ f_s(\delta Y^n_{s,u}-f_sY^{n-1}_s\delta{Z}_{s,u}-F'_s\delta Z_{s,u})+\delta{f}_{s,u}\delta Y^n_{s,u}\big)\delta{Z}_{u,t} - \delta(f'_\cdot Y^n_\cdot + f^2_\cdot Y^{n-1}_\cdot+f_\cdot F'_\cdot)_{s,u}\mathbb{Z}_{u,t}  \\
            &= -\big(R_{s,u}^f Y^n_s+ f_s(\int_s^uG_rY^{n-1}_rdr + \int_s^uS_rY_r^{n-1}dB_r+(f'_sY^{n-1}_s+f_s^2Y^{n-2}_s)\mathbb{Z}_{s,u} + Y^{n,\natural}_{s,u}+R^F_{s,u})\\ & \quad + \delta{f}_{s,u}\delta{Y^n}_{s,u}\big)\delta{Z}_{u,t} - \delta(f'_\cdot Y^n_\cdot + f^2_\cdot Y^{n-1}_\cdot+f_\cdot F'_\cdot)_{s,u}\mathbb{Z}_{u,t},
        \end{aligned}
    \end{equation*} 
    where we set $Y^{0,\natural}\equiv 0$.
    It is standard to show that    
    \begin{align*}
        \|\delta A^n_{s,u,t}\|_p \lesssim|t-s|^{\alpha+\gamma}, \qquad \|\E_s(\delta A^n_{s,u,t})\|_p \lesssim |t-s|^{\alpha+2\gamma}  
    \end{align*} and it also straightforward to see that \begin{equation*}
        \|\sup_{\tau \in [\frac{s+t}{2},t]} |\delta A^n_{s,\frac{s+t}{2},\tau}|\|_p \lesssim |t-s|^{\alpha+\gamma}.
    \end{equation*}  
    Therefore, $A^n$ satisfies the assumptions of the stochastic sewing lemma and, denoting by $\mathcal{A}^n=(\mathcal{A}^n_t)_{t \in [0,T]}$ the corresponding process, we define \begin{equation*}
        Y^{n+1}_t := \xi + \mathcal{A}^n_t \qquad t \in [0,T].
    \end{equation*} 
    Such a process is by construction continuous, adapted and $L^p$-integrable, with $Y^{n+1}_0=\xi$. Observing that $Y^{n+1,\natural}_{s,t}= \delta Y^{n+1}_{s,t}-A^n_{s,t} = \delta \mathcal{A}^n_{s,t}-A_{s,t}^n$, it holds  \begin{equation*}
        \|Y^{n+1,\natural}_{s,t}\|_{p}\lesssim |t-s|^{\alpha+\gamma} \quad \text{and} \quad \|\E_s(Y^{n+1,\natural}_{s,t})\|_{p}\lesssim |t-s|^{\alpha+2\gamma}
    \end{equation*} 
    for any $(s,t) \in \Delta_{[0,T]}$. Moreover, we get $\|\delta Y^{n+1}\|_{\gamma;p} < +\infty$ and, consequently, \begin{equation*} 
        \sup_{t \in [0,T]} \|Y^{n+1}_t\|_p \le \|\xi\|_p + \|\delta Y^{n+1}\|_{\gamma;p} T^\gamma <+\infty.
    \end{equation*} 
    
    \textit{Step 2. (A sequence of auxiliary processes)} Let us define a new sequence of $W$-valued stochastic processes  in the following way: \begin{equation*} \label{eq:definitionofJ^n}
        \begin{aligned}
            J^{-1}_\cdot \equiv0, \qquad J^{n}_\cdot := Y^{n}_\cdot - Y^{n-1}_\cdot \qquad n\ge 0.
        \end{aligned}
    \end{equation*} 
    By construction, every $J^n$ is $\mathbb{P}$-a.s.\ continuous, adapted and it satisfies $\|\delta{J}^n\|_{\gamma;p}<+\infty$ and $\sup_t\|J^n_t\|_p<+\infty$. In particular, for any $(s,t)\in\Delta_{[0,T]}$ and for any $n\ge 0$, by means of the linearity of the coefficients and of the integrals we can write \begin{equation} \label{eq:deltaJ^ninexistence} \begin{aligned}
        \delta J^{n+1}_{s,t} &= \delta Y^{n+1}_{s,t} - \delta Y^{n}_{s,t} = \\ &=  \int_s^tG_rJ_r^ndr + \int_s^tS_rJ_r^ndB_r + f_sJ_s^n\delta Z_{s,t} + (f'_sJ^n_s+f_s^2J^{n-1}_s)\mathbb{Z}_{s,t} + J^{n+1,\natural}_{s,t},
    \end{aligned} 
    \end{equation} where we denote by  \begin{equation*} J^{n+1,\natural}_{s,t}:=Y^{n+1,\natural}_{s,t} - Y^{n,\natural}_{s,t} \qquad \text{for $n\ge 0$}.
    \end{equation*} 
    By construction, $\|J^{n+1,\natural}_{s,t}\|_{p}\lesssim |t-s|^{\alpha+\gamma}$ and $\|\E_s(J^{n+1,\natural}_{s,t})\|_{p}\lesssim |t-s|^{\alpha+2\gamma}$, for any $(s,t) \in \Delta_{[0,T]}$ and for any $n \ge 0$.
    Moreover, $J^n_0=0$ for any $n \ge 1$. 
    This last fact implies, taking into account Proposition \ref{prop:comparison}, that \begin{equation} \label{eq:comparisoninpicarditeration}
        (|J^n_\cdot|)_{p;\lambda} \lesssim \lambda^\gamma (|\delta{J}^n|)_{\gamma;p;\lambda} \qquad \text{for any $n \ge 1$.}
    \end{equation} 
    Arguing in a very similar way to what is done in the proof of Theorem \ref{thm:aprioriestimatelinearRSDEs}, we can show that, for any $n \ge 2$ and up to choosing $\lambda >0$ sufficiently small, \begin{equation} \label{eq:keyinequalityfromPicard}
        (|J^{n-1}_\cdot|)_{p;\lambda} + (|\delta J^n|)_{\gamma;p;\lambda} + (|J^{n+1,\natural}|)_{\alpha+\gamma;p;\lambda} \lesssim_{M,\alpha,\gamma,p,T} \lambda^{\gamma n}.
    \end{equation}
    Indeed, from \eqref{eq:deltaJ^ninexistence} we deduce   
    \begin{equation} \label{eq:deltaJ^n+1} \begin{aligned}
        (|\delta{J}^{n+1}|)_{\gamma;p;\lambda} 
        \lesssim (|J^n_\cdot|)_{p;\lambda} + \lambda^\gamma (|J^n_\cdot|)_{p;\lambda} + \lambda^\gamma(|J^{n-1}_\cdot|)_{p;\lambda} + \lambda^\gamma(|J^{n+1,\natural}|)_{\alpha+\gamma;p;\lambda},
    \end{aligned}
    \end{equation} for any $n \ge 0$ and with the implicit constant depending on $M,\alpha,\gamma,p,T$. For any $(s,u,t) \in \Delta_{[0,T]}^2$ and for any $n \ge 0$ it holds that \begin{equation*} \label{eq:deltaJ^n+1natural1}
        \begin{aligned}
            \delta J^{n+1,\natural}_{s,u,t} &= \delta (f_\cdot J^n_\cdot)_{s,u} \delta Z_{u,t} - (f'_sJ^n_s+f_s^2J^{n-1}_s)\delta Z_{s,u} \delta Z_{u,t} + \delta (f'_\cdot J^n_\cdot +f_\cdot^2 J^{n-1}_\cdot)_{s,u} \mathbb{Z}_{u,t} \\ 
            &= \big(\delta (f_\cdot J^n_\cdot)_{s,u} -  (f'_sJ^n_s+f_s^2J^{n-1}_s)\delta Z_{s,u}\big) \delta Z_{u,t} + \delta (f'_\cdot J^n_\cdot +f_\cdot^2 J^{n-1}_\cdot)_{s,u} \mathbb{Z}_{u,t}
        \end{aligned}
    \end{equation*} and, provided that $n \ge 1$, we can write  \begin{equation*} \label{eq:deltaJ^n+1natural2}
        \begin{aligned}
            &\delta (f_\cdot J^n_\cdot)_{s,u} -  (f'_sJ^n_s+f_s^2J^{n-1}_s)\delta Z_{s,u} = \\ 
            &= f_s(\int_s^tG_rJ^{n-1}_rdr + \int_s^tS_rJ^{n-1}_rdB_r+(f'_sJ^{n-1}_s+f^2_sJ^{n-2}_s)\mathbb{Z}_{s,u}+J^{n,\natural}_{s,u}) +  R^f_{s,u} J_s^n +\delta{f}_{s,u}\delta{J}^n_{s,u}.
        \end{aligned}
    \end{equation*}
    Assuming $|t-s|\le \lambda$, this leads to
         \begin{equation*} \begin{aligned} \label{eq:deltaJ^n+1_1}
        (|\delta{J^{n+1,\natural}_{s,u,t}}|)_{\alpha+\gamma;p;\lambda} &\lesssim_{M,\alpha,\gamma,T} (|\delta J^n|)_{\gamma;p;\lambda} + (|J^n_\cdot|)_{p;\lambda} + (|J^{n-1}_\cdot|)_{p;\lambda}  \\ 
        & \quad + \lambda^\gamma(|\delta J^n|)_{\gamma;p;\lambda} + \lambda^\gamma(|J^n_\cdot|)_{p;\lambda} + \lambda^\gamma(|\delta J^{n-1}|)_{\gamma;p;\lambda} + \lambda^\gamma(|J^{n-1}_\cdot|)_{p;\lambda}
    \end{aligned}
    \end{equation*}
    and  
    \begin{equation*} \begin{aligned} \label{eq:deltaJ^n+1_2} (|\E_\cdot \delta{J^{n+1,\natural}}|)_{\alpha+2\gamma;p;\lambda} &\lesssim_{M,\alpha,\gamma,T}  (|J^{n-1}_\cdot|)_{p;\lambda} + (|J^{n-2}_\cdot|)_{p;\lambda} + (|J^{n,\natural}|)_{\alpha+\gamma;p;\lambda} + (|J^n_\cdot|)_{p;\lambda} + (|\delta{J}^n|)_{\gamma;p;\lambda}. 
    \end{aligned}
    \end{equation*}
    Hence, we apply again Proposition \ref{prop:sewingmap} to $J^{n+1,\natural}$ to obtain\begin{equation*}
        \begin{aligned}
            (|J^{n+1,\natural}|)_{\alpha+\gamma;p;\lambda} &= (|\Lambda(J^{n+1,\natural})|)_{\alpha+\gamma;p;\lambda} \lesssim_{\alpha,\gamma,p} \lambda^\gamma(|\E_\cdot\delta{J}^{n+1,\natural}|)_{\alpha+2\gamma;p;\lambda} + (|\delta{J}^{n+1,\natural}|)_{\alpha+\gamma;p;\lambda} \\ 
            &\lesssim_{\alpha,\gamma,T,M} \lambda^\gamma(|J^{n-1}_\cdot|)_{p;\lambda} + \lambda^\gamma(|J^{n-2}_\cdot|)_{p;\lambda} + \lambda^\gamma(|J^{n,\natural}|)_{\alpha+\gamma;p;\lambda} + \lambda^\gamma(|J^n_\cdot|)_{p;\lambda} + \\ 
            & \quad + \lambda^\gamma(|\delta{J}^n|)_{\gamma;p;\lambda} +   \lambda^\gamma(|\delta J^{n-1}|)_{\gamma;p;\lambda} +  (|\delta J^n|)_{\gamma;p;\lambda} + (|J^n_\cdot|)_{p;\lambda} + (|J^{n-1}_\cdot|)_{p;\lambda},
        \end{aligned}
    \end{equation*} 
     for any $n \ge 1$. Recalling \eqref{eq:comparisoninpicarditeration} and \eqref{eq:deltaJ^n+1}, for any $n \ge 2$ we have \begin{equation*}
        \begin{aligned}
            &(|J^{n+1,\natural}|)_{\alpha+\gamma;p;\lambda} \lesssim_{\alpha,\gamma,p,T,M} \lambda^\gamma(|J^{n-1}_\cdot|)_{p;\lambda} + \lambda^\gamma(|J^{n-2}_\cdot|)_{p;\lambda} + \lambda^\gamma(|J^{n,\natural}|)_{\alpha+\gamma;p;\lambda} + \lambda^\gamma(|J^n_\cdot|)_{p;\lambda} +  \\ & \quad \quad + \lambda^\gamma(|\delta{J}^n|)_{\gamma;p;\lambda} +    \lambda^\gamma(|\delta J^{n-1}|)_{\gamma;p;\lambda}  + (|J^n_\cdot|)_{p;\lambda} + (|J^{n-1}_\cdot|)_{p;\lambda} \\
            & \quad \lesssim_{\alpha,\gamma,p,T,M}  \lambda^\gamma \big((|J^{n-2}_\cdot|)_{p;\lambda} + (|\delta J^{n-1}|)_{\gamma;p;\lambda} + (|J^{n,\natural}|)_{\alpha+\gamma;p;\lambda}\big) + 
            \lambda^\gamma(|J^{n-1}_\cdot|)_{p;\lambda}  + \lambda^\gamma(|\delta{J}^n|)_{\gamma;p;\lambda}.
        \end{aligned}
    \end{equation*} 
    Therefore, putting everything together we conclude that, up to choosing $\lambda>0$ sufficiently small, \begin{equation*}
        (|J^{n-1}_\cdot|)_{p;\lambda} + (|\delta J^n|)_{\gamma;p;\lambda} + (|J^{n+1,\natural}|)_{\alpha+\gamma;p;\lambda} \lesssim \lambda^\gamma \big((|J^{n-2}_\cdot|)_{p;\lambda} + (|\delta J^{n-1}|)_{\gamma;p;\lambda} + (|J^{n,\natural}|)_{\alpha+\gamma;p;\lambda}\big)
    \end{equation*} for any $n \ge 2$, where the implicit constant depends on $\alpha,\gamma,p,T$ and $M$. This proves \eqref{eq:keyinequalityfromPicard}.
    
    \textit{Step 3. (Construction of the solution)} We construct the solution as the limit of Picard's iterations. Recall that $p \in [2,\infty)$ is such that $p\gamma>1$.
    Let us assume we chose $\lambda>0$ sufficiently small so that $\lambda^{\gamma} < \frac{1}{2}$. For any $n \ge 1$ and for any $(s,t) \in \Delta_{[0,T]}$, it holds \begin{equation*}
        \|J^n_t-J^n_s\|_p \le \|\delta J^n\|_{\gamma;p} |t-s|^\gamma.
    \end{equation*}  
    We can therefore apply Kolmogorov's continuity criterion (cf.\ \cite[Theorem 3.1]{friz2020course}) and we obtain that \begin{equation*}
        \begin{aligned}
            \|\sup_{t \in [0,T]} |J^n_t| \|_p &\lesssim_{\gamma,p,T} \|\delta J^n\|_{\gamma;p}.
        \end{aligned}
    \end{equation*}
    By Markov's inequality, for any $n \ge 1$ we can write \begin{equation*} \label{eq:BorelCantelliinwellposedness}
        \begin{aligned}
            \mathbb{P}\big(\sup_{t \in [0,T]} |Y_t^{n+1}-Y_t^{n}| \ge \frac{1}{2^n}\big) &\le 2^{np} \|\sup_{t \in [0,T]} |J^{n+1}_t|\|_p^p \lesssim_{\gamma,p,T}  2^{np} \|\delta J^{n+1}\|_{\gamma;p}^p \\
            &\lesssim e^{pT/\lambda} 2^{np} (|\delta J^{n+1}|)_{\gamma;p;\lambda}^p
        \end{aligned}
    \end{equation*} and, by means of \eqref{eq:keyinequalityfromPicard}, we have \begin{equation*}
        \begin{aligned}
            \sum_{n=1}^{+\infty} 2^{np} (|\delta J^{n+1}|)_{\gamma;p;\lambda}^p &\lesssim_{M,\gamma,p,T} \lambda^\gamma \sum_{n=1}^{+\infty} (2\lambda^\gamma)^{np} < +\infty,
        \end{aligned}
    \end{equation*} 
    due to the assumption on $\lambda$.
    By a standard Borel-Cantelli argument, we obtain the existence of a $\mathbb{P}$-a.s.\ continuous and adapted process $Y=(Y_t)_{t \in [0,T]}$ defined as \begin{equation} \label{eq:definitionofthesolutionforPicard}
        Y_t :=  \lim_{n \to +\infty} Y^n_t \qquad \text{$\mathbb{P}$-a.s.}
    \end{equation} 
    By construction, $Y_0 = \xi$.
    Moreover, uniformly in $t \in [0,T]$,   \begin{equation*} \label{eq:boundednessofthesolutionPicard}
        \begin{aligned}
            \|Y_t\|_p & \le \|\xi\|_p+ \|\lim_{n \to +\infty} Y^n_t - Y_t^0\|_p = \|\xi\|_p + \|\lim_{n \to +\infty} \sum_{k=0}^{n-1}J^{k+1}_t\|_p \\
            &\le \|\xi\|_p + e^{T/\lambda} \liminf_{n \to +\infty} \sum_{k=0}^{n-1} (|J^{k+1}_\cdot|)_{p;\lambda} \lesssim_{M,\alpha,\gamma,p,T} \|\xi\|_p + e^{T/\lambda}   \sum_{k=0}^{+\infty} \lambda^{\gamma(k+2)} < +\infty,
        \end{aligned}
    \end{equation*} and 
    \begin{equation*} \label{eq:definitionofthesolutionPicardinLp}
        \begin{aligned}            
            \|  Y_t - Y_t^n \|_p &= \| \lim_{N \to +\infty} Y^N_t - Y^n_t  \|_p = \|\sum_{k=n}^{+\infty}   Y_t^{k+1}-Y_t^k \|_p  \le e^{T/\lambda} \sum_{k=n}^{+\infty} (|J_\cdot^{k+1}|)_{p;\lambda} \\ &\lesssim_{\alpha,\gamma,p,T,M} e^{T/\lambda} \sum_{k=n}^{+\infty} \lambda^{\gamma(k+2)} \longrightarrow 0 \qquad \text{as $n \to +\infty$.}
        \end{aligned}
    \end{equation*} 
    In particular, this shows that the convergence in \eqref{eq:definitionofthesolutionforPicard} is also a convergence in $L^p(\Omega;W)$.
    As a consequence, for any $(s,t) \in \Delta_{[0,T]}$ it holds that \begin{equation*} \begin{aligned}
        Y^{\natural}_{s,t} 
        &= \delta Y_{s,t} - \delta F_{s,t} -   \int_s^tG_rY_rdr - \int_s^tS_rY_rdB_r - f_sY_s\delta Z_{s,t} - (f'_sY_s+ f_s^2Y_s+f_sF'_s)\mathbb{Z}_{s,t} \\ 
        &= \lim_{n \to +\infty} \big(\delta Y^{n+1}_{s,t} -   \delta F_{s,t} - \int_s^tG_rY_r^ndr - \int_s^tS_rY_r^ndB_r   - f_sY_s^n\delta Z_{s,t} - (f'_sY^n_s+ f_s^2Y^{n-1}_s+f_sF'_s)\mathbb{Z}_{s,t} \big) \\
        &= \lim_{n \to +\infty} Y^{n+1,\natural}_{s,t},
    \end{aligned}
    \end{equation*}
    where the limit is taken in $L^p$.
    Taking into account \eqref{eq:keyinequalityfromPicard}, for any $n \ge 3$ we apply the stochastic sewing lemma to $J^{n,\natural}$ and, for any $(s,t) \in \Delta_{[0,T]}$ we get \begin{equation*}
        \|J^{n+1,\natural}_{s,t}\|_p = \|\Lambda(J^{n+1,\natural})_{s,t}\|_p \lesssim_{M,\alpha,\gamma,p,T,\lambda} \lambda^{\gamma(n-1)} |t-s|^{\alpha+\gamma}
    \end{equation*} and \begin{equation*}
        \|\E_s(J^{n+1,\natural}_{s,t})\|_p = \|\E_s(\Lambda(J^{n+1,\natural})_{s,t})\|_p \lesssim_{M,\alpha,\gamma,p,T,\lambda} \lambda^{\gamma(n-1)} |t-s|^{\alpha+2\gamma}
    \end{equation*} where the implicit constants do not depend on $n$. Here $\Lambda(\cdot)$ denotes the sewing map defined in Proposition \ref{prop:sewingmap}.
    Thus, we can pass the two previous inequalities to the limit in $L^p$ and we can conclude that, for any $(s,t) \in \Delta_{[0,T]}$
    \begin{equation*} \begin{aligned}
        \|Y^\natural_{s,t}\|_p &\le \|Y^{3,\natural}_{s,t}\|_p + \|L^p\text{-}\lim_{n \to +\infty} \sum_{k=3}^{n} J^{k+1,\natural}_{s,t}\|_p \\ 
        &\lesssim_{M,\alpha,\gamma,p,T,\lambda} \big(\|Y^{3,\natural}\|_{\alpha+\gamma;p} + \sum_{k=3}^{\infty} \lambda^{\gamma(k-1)} \big) |t-s|^{\alpha+\gamma} 
    \end{aligned}
    \end{equation*} and, similarly, \begin{equation*}
        \|\E_s(Y^\natural_{s,t})\|_p \lesssim_{M,\alpha,\gamma,p,T,\lambda} |t-s|^{\alpha+2\gamma}.
    \end{equation*}
\end{proof}

\begin{rmk}
    The assumption $p\gamma>1$ in Theorem \ref{thm:wellposednesslinearRSDEs} is needed to be able to apply Kolmogorov's continuity theorem and to show that the limit of Picard's iterations is still continuous $\mathbb{P}$-a.s. In principle, we can get rid of this assumption and directly prove that, for any $t \in [0,T]$, the sequence $(Y^n_t)_n$ is a Cauchy sequence in $L^p(\Omega;W)$. By doing so we can still prove the Davie expansion for $Y$, but we lose the $\mathbb{P}$-a.s.\ continuity of the limit process (and, as a consequence, we only get uniqueness up to modifications).
\end{rmk}

Assume $(f,f') \in \mathbf{D}_Z^{2\gamma}L_{lin}^{2p,\infty}(W,\mathscr{L}(\R^n,W))$ and $(F,F')\in \mathscr{D}^{2\gamma}_ZL^{2p}(W)$, with $\sup_{t \in [0,T]} \|F_t\|_{2p}<+\infty$. Moreover, let $\bar{\mathbf{Z}}=(\bar{Z},\bar{\mathbb{Z}}) \in \mathscr{C}^\alpha([0,T];\R^n)$, with $\alpha \in (\frac{1}{3},\frac{1}{2}]$, let $\bar{G}$ be a stochastic linear vector field from $W$ to $W$ and let $\bar{S}$ be a stochastic linear vector field from $W$ to $\mathscr{L}(\R^m,W)$. 
Let us also consider $(\bar{f},\bar{f}') \in \mathbf{D}^{2\gamma}_{\bar{Z}} L^{2p,\infty}_{lin}(W,\mathscr{L}(\R^n,W))$. Let  $(\bar{F},\bar{F}') \in \mathscr{D}^{2\gamma}_{\bar{Z}} L^{2p}(\bar{W})$ such that $\bar{F}$ is $\mathbb{P}$-a.s.\ continuous, $\bar{F}_0=0$ and $\sup_{t \in [0,T]} \|\bar{F}_t\|_p < +\infty$. For any $(s,t) \in \Delta_{[0,T]}$, we denote by $\bar{R}^{\bar{f}}_{s,t} = \delta \bar{f}_{s,t} - \bar{f}'_s\delta \bar{Z}_{s,t}$ and, similarly, $\bar{R}^{\bar{F}}_{s,t} = \delta \bar{F}_{s,t} - \bar{F}'_s\delta \bar{Z}_{s,t}$.
 Let \begin{equation*}\label{eq:theta} \begin{aligned}
    \theta_p &:= \text{sup}_{t \in [0,T]}\|G_t-\Bar{G}_t\|_{2p}+ \text{sup}_{t \in [0,T]}\|S_t-\Bar{S}_t\|_{2p}  \\ 
    &+ \text{sup}_{t \in [0,T]}\|f_t-\Bar{f}_t\|_{2p} +  \text{sup}_{t \in [0,T]}\|f'_t-\Bar{f}'_t\|_{2p} + \|\delta (f-\bar{f})\|_{\gamma;2p} \\ 
    &+ \|\delta (f'-\bar{f}')\|_{\gamma;2p} +  \|\E_\cdot R^f - \E_\cdot \bar{R}^{\bar{f}}\|_{2\gamma;2p} + \|\delta (F-\bar{F})\|_{\gamma;p}  \\ 
    &+ \text{sup}_{t\in [0,T]} \|F'_t-\bar{F}'_t\|_p + \|\delta (F'-\bar{F}')\|_{\gamma;p} + \|\E_\cdot R^F - \E_\cdot \bar{R}^{\bar{F}}\|_{2\gamma;p}. \end{aligned}
\end{equation*}  

\begin{thm}[Stability] \label{thm:stabilityforlinearRSDEs}
Let $\xi,\bar{\xi}\in L^{2p}(\Omega;W)$. Let $Y=(Y_t)_{t \in [0,T]}$ be an $L^p$-integrable solution to \eqref{eq:linearRSDE_model} starting from $\xi$ and, similarly, let $\bar{Y}=(\bar{Y}_t)_{t \in [0,T]}$ be an $L^p$-integrable solution to the linear RSDE driven by $\bar{\mathbf{Z}}$, with coefficients $\bar{G},\bar{S},(\bar{f},\bar{f}')$ and forcing term $\bar{F}$, starting from $\bar{\xi}$. Let $M_p > 0$ be any constant such that \begin{equation*} \label{eq:constantMinstabilityestimatesforlinearRSDEs}
    \begin{aligned}
     &\|G\|_\infty + \|S\|_\infty + \|(f,f')\|_\infty + \|(f,f')\|_{\mathbf{D}_Z^{2\gamma}L^{2p,\infty}_{lin}} + \|(F,F')\|_{\mathscr{D}^{2\gamma}_ZL^{2p}}   \\ 
     & +\|\bar{G}\|_\infty + \|\bar{S}\|_\infty +  \|(\bar{f},\bar{f}')\|_\infty + \|(\bar{f},\bar{f}')\|_{\mathbf{D}_{\bar{Z}}^{2\gamma}L^{2p,\infty}_{lin}} + \|(\bar{F},\bar{F}')\|_{\mathscr{D}^{2\gamma}_ZL^{2p}} \\ 
     &+ |\mathbf{Z}|_\alpha + |\bar{\mathbf{Z}}|_\alpha + \|\xi\|_{2p}  + \|\bar{\xi}\|_{2p} \le M_p.
    \end{aligned}
\end{equation*} 
Then there exists $\lambda=\lambda(M_p,\alpha,\gamma,p,T)>0$ such that  \begin{equation}
        (|Y_t-\bar{Y}_t|)_{p;\lambda} + (|\delta (Y-\bar{Y})|)_{\gamma;p;\lambda} + (|Y^\natural-\bar{Y}^\natural|)_{\alpha+\gamma;p;\lambda} \lesssim \|\xi - \bar{\xi}\|_p + \rho_\alpha(\mathbf{Z},\bar{\mathbf{Z}}) + \theta_p,
    \end{equation} where the implicit constant depends only on $M_p,\alpha,\gamma,p,T$.
\end{thm}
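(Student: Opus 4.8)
The plan is to derive this estimate by rerunning, for the difference of the two solutions, the fixed-point-on-weighted-norms argument that proves the a priori bound, Theorem~\ref{thm:aprioriestimatelinearRSDEs}. Write $D:=Y-\bar Y$, so $D_0=\xi-\bar\xi$ and $\sup_t\|D_t\|_p<\infty$, and set $E_{s,t}:=Y^\natural_{s,t}-\bar Y^\natural_{s,t}$. Subtracting the integral formulations of $Y$ and $\bar Y$ from Proposition~\ref{prop:characterizationsolutionslinearRSDEs}(ii) and using bilinearity, $G_rY_r-\bar G_r\bar Y_r=G_rD_r+(G_r-\bar G_r)\bar Y_r$ (and likewise for the It\^o and rough terms, with $f_rY_r=f_rD_r+f_r\bar Y_r$), one would like to say that $D$ solves a linear RSDE driven by $\mathbf Z$ with the \emph{same} linear coefficients and a forcing assembled from the discrepancies $F-\bar F$, $\int_0^\cdot(G_r-\bar G_r)\bar Y_r\,\rd r$, $\int_0^\cdot(S_r-\bar S_r)\bar Y_r\,\rd B_r$ and $\int_0^\cdot f_r\bar Y_r\,\rd\mathbf Z_r-\int_0^\cdot\bar f_r\bar Y_r\,\rd\bar{\mathbf Z}_r$, and then simply invoke Theorem~\ref{thm:aprioriestimatelinearRSDEs}. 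This, however, is not legitimate as stated: the rough-path mismatch makes $\bar Y$ (hence $D$, hence the putative forcing) only a \emph{bad} controlled path with respect to $Z$, its remainder containing a term $\bar f_s\bar Y_s(\delta Z-\delta\bar Z)_{s,t}$ of H\"older order $\alpha<2\gamma$. So I would instead keep the two original Davie expansions and estimate $D$ and $E$ directly.

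The first step is to control the reference solutions: the a priori estimate applied with $p$ replaced by $2p$ to $Y$ and $\bar Y$ (which are in fact $L^{2p}$-integrable --- a routine upgrade, see the last paragraph) bounds $(|\bar Y_\cdot|)_{2p;\lambda}$, $(|\delta\bar Y|)_{\gamma;2p;\lambda}$, $(|\bar Y^\natural|)_{\alpha+\gamma;2p;\lambda}$, $\sup_t\|\bar f_t\bar Y_t+\bar F'_t\|_{2p}$, and the analogues for $Y$, by a constant $C(M_p,\alpha,\gamma,p,T)$. This is precisely why the hypotheses are phrased with $L^{2p}$-integrability: every \emph{bilinear} discrepancy, e.g. $(G_r-\bar G_r)\bar Y_r$, $(f_s-\bar f_s)\bar Y_s$, $\delta(f-\bar f)_{s,t}\delta\bar Y_{s,t}$ or $R^{f-\bar f}_{s,t}\bar Y_s$, is estimated by H\"older as $(L^{2p}\text{-difference})\times(L^{2p}\text{-reference})\lesssim(\theta_p+\rho_\alpha)\,C(M_p)$ in $L^p$.

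Next I would reproduce, for the difference, the three estimates \eqref{eq:aprioriestimatedeltaY}, \eqref{eq:deltaYnaturalestimate1}, \eqref{eq:deltaYnaturalestimate2}. Expanding $\delta D_{s,t}$ from the difference of the two Davie expansions yields, for $|t-s|\le\lambda$,
\[
\|\delta D_{s,t}\|_p\lesssim_{M_p,\alpha,\gamma,p,T}\Big(\|\xi-\bar\xi\|_p+\rho_\alpha(\mathbf Z,\bar{\mathbf Z})+\theta_p+(|D_\cdot|)_{p;\lambda}+\lambda^\gamma(|E|)_{\alpha+\gamma;p;\lambda}\Big)e^{t/\lambda}|t-s|^\gamma,
\]
the rough-path-mismatch terms being harmless here since they have H\"older order $\alpha\ge\gamma$. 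For $E$ one uses, exactly as in the proof of Theorem~\ref{thm:aprioriestimatelinearRSDEs}, the identity $\delta Y^\natural_{s,u,t}=R^{f_\cdot Y_\cdot}_{s,u}\delta Z_{u,t}+\delta(f'_\cdot Y_\cdot+f_\cdot^2Y_\cdot+f_\cdot F'_\cdot)_{s,u}\mathbb Z_{u,t}$ and its analogue for $\bar Y^\natural$; taking the difference, each new term is of the shape $(\text{order-}\gamma\text{ or order-}2\gamma\text{ quantity})_{s,u}\times(\delta Z-\delta\bar Z)_{u,t}$ or $\times(\mathbb Z-\bar{\mathbb Z})_{u,t}$ --- the essential point being that, at the $\natural$-level, the mismatch never multiplies an order-zero factor. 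Using that $Z-\bar Z$ has \emph{deterministic} increments (so $\E_s$ only shrinks the $\mathcal F_s$-measurable order-$2\gamma$ remainder, not the $Z-\bar Z$ factor), one obtains for $|t-s|\le\lambda$
\begin{align*}
\|\delta E_{s,u,t}\|_p&\lesssim_{M_p}\Big(\rho_\alpha+\theta_p+(|\delta D|)_{\gamma;p;\lambda}+(|D_\cdot|)_{p;\lambda}\Big)e^{t/\lambda}|t-s|^{\alpha+\gamma},\\
\|\E_s\delta E_{s,u,t}\|_p&\lesssim_{M_p}\Big(\rho_\alpha+\theta_p+(|E|)_{\alpha+\gamma;p;\lambda}+(|\delta D|)_{\gamma;p;\lambda}+(|D_\cdot|)_{p;\lambda}\Big)e^{t/\lambda}|t-s|^{\alpha+2\gamma},
\end{align*}
with $\alpha+2\gamma>1$, which is what the sewing machinery requires. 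Since the sewing map $\Lambda$ of Proposition~\ref{prop:sewingmap} is linear and, as in \eqref{eq:sewingofynatural}, $\Lambda(Y^\natural)=-Y^\natural$ and $\Lambda(\bar Y^\natural)=-\bar Y^\natural$, one gets $\Lambda(E)=-E$, and the associated bound for $\Lambda$ then gives $(|E|)_{\alpha+\gamma;p;\lambda}\lesssim\lambda^\gamma(|\E_\cdot\delta E|)_{\alpha+2\gamma;p;\lambda}+(|\delta E|)_{\alpha+\gamma;p;\lambda}$. Inserting the displays above, using Proposition~\ref{prop:comparison} (so $(|D_\cdot|)_{p;\lambda}\lesssim\|\xi-\bar\xi\|_p+\lambda^\gamma(|\delta D|)_{\gamma;p;\lambda}$) and shrinking $\lambda=\lambda(M_p,\alpha,\gamma,p,T)$ to absorb the $\lambda^\gamma(|E|)$ and $\lambda^\gamma(|\delta D|)$ terms closes the loop exactly as in \eqref{eq:corollaryaprioriestimates1} and \eqref{eq:corollaryaprioriestimates3}, producing the asserted estimate.

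The hard part is purely the bookkeeping: one must verify that \emph{every} discrepancy term --- above all those coming from $Y^\natural-\bar Y^\natural$, where one is forced to compare objects controlled with respect to different rough paths --- lands either in an $\alpha$-order estimate (only in $\delta D$, where it is bounded by $\rho_\alpha$ directly) or, at the level of $E$, in an $(\alpha+\gamma)$-order estimate together with an $(\alpha+2\gamma)$-order estimate after conditioning, so that the stochastic sewing lemma and the sewing map of Proposition~\ref{prop:sewingmap} apply with exactly the exponents of the a priori estimate. A secondary, routine point is the $L^{2p}$-upgrade of $Y$ and $\bar Y$ (via uniqueness together with the a priori bound at level $2p$), which legitimizes the H\"older splitting of the bilinear terms.
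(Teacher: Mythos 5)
Your high-level strategy matches the paper's: estimate the difference $D = Y - \bar Y$ and $E = Y^\natural - \bar Y^\natural$ directly in the weighted norms, mirror the three displays of the a priori proof, invoke the sewing map via $\Lambda(E) = -E$, and close the loop with Proposition~\ref{prop:comparison} and a small $\lambda$. Your opening observation --- that $D$ cannot simply be recast as a solution to a linear RSDE driven by $\mathbf Z$ with a modified forcing, because $\bar Y$ is only controlled by $\bar Z$, not $Z$ --- is correct and is exactly the obstruction that makes this a non-trivial statement. The $L^{2p}$/$L^p$ H\"older splitting of bilinear discrepancies is also the same device the paper uses.

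However, you dismiss as ``purely bookkeeping'' the one estimate that is genuinely delicate, namely $\|\E_s(R^{f_\cdot Y_\cdot}_{s,u} - \bar R^{\bar f_\cdot \bar Y_\cdot}_{s,u})\|_p \lesssim (\cdot)\, e^{t/\lambda}|t-s|^{2\gamma}$, which is needed for the $|t-s|^{\alpha+2\gamma}$ bound on $\|\E_s\delta E_{s,u,t}\|_p$. Your heuristic that ``at the $\natural$-level the mismatch never multiplies an order-zero factor'' is not visibly true under the natural expansion. Writing $R^{f_\cdot Y_\cdot}_{s,u} = \delta f_{s,u}\,\delta Y_{s,u} + f_s R^Y_{s,u} + R^f_{s,u}Y_s$ (cf.\ \eqref{eq:remaindercompositionwithstochasticcontrolledlinearvectorfields}) and subtracting the barred version, the piece $f_s(R^Y_{s,u} - \bar R^{\bar Y}_{s,u})$ appears; decomposing $R^Y - \bar R^{\bar Y}$ as $\delta D_{s,u} - (f_s Y_s + F'_s)(\delta Z - \delta\bar Z)_{s,u} - [(f_sY_s+F'_s)-(\bar f_s\bar Y_s+\bar F'_s)]\delta\bar Z_{s,u}$ produces the term $(f_sY_s+F'_s)(\delta Z - \delta\bar Z)_{s,u}$, which is $\mathcal F_s$-measurable (so survives $\E_s$ unchanged), is multiplied by the order-zero factor $f_s$, and is only of H\"older order $\alpha$. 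Since $2\gamma > 2/3 > \alpha$ in the regime $\gamma \in (\tfrac13, \alpha]$, $\alpha \le \tfrac12$, this falls short of the required $2\gamma$ order. The paper circumvents this in Lemma~\ref{lemma:stabilityforremainder} by introducing the joint path $\hat Z = (Z, \bar Z) \in C^\alpha([0,T];\R^{2n})$, under which both $fY$ and $\bar f\bar Y$ become controlled paths with respect to the \emph{same} driver, yielding the algebraic identity $R^{f_\cdot Y_\cdot} - \bar R^{\bar f_\cdot\bar Y_\cdot} = \hat R^{f_\cdot\tilde Y_\cdot} + \hat R^{\tilde f_\cdot\bar Y_\cdot}$; each summand is then a genuine controlled-path remainder, of order $2\gamma$ under $\E_s$, and carries the appropriate small factor ($(|\tilde Y|)_{p;\lambda}$, $(|\delta\tilde Y|)_{\gamma;p;\lambda}$, $(|\E_\cdot R^Y - \E_\cdot\bar R^{\bar Y}|)_{2\gamma;p;\lambda}$ in one case, $\theta_p$ in the other). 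Alternatively one can avoid the joint path by repeatedly substituting the Davie expansions into $R^Y - \bar R^{\bar Y}$ so that the $\delta Z$ terms cancel before the $\delta Z - \delta\bar Z$ factor ever pairs with an order-zero quantity, but this must be carried out explicitly; it is precisely this step, not routine bookkeeping, that you need to supply.
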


\begin{proof}
    The proof is very similar to the one of Theorem \ref{thm:aprioriestimatelinearRSDEs} and it strongly relies on the following elementary H\"older-type inequalities: \begin{equation}\label{eq:elementaryHolder}
        \|AB - \bar{A}\bar{B}\|_{p} \le \|A\|_{2p}\|B-\bar{B}\|_{2p}+\|A-\bar{A}\|_{2p}\|\bar{B}\|_{2p}.
    \end{equation} 
    According to Theorem \ref{thm:aprioriestimatelinearRSDEs} and taking into account \eqref{eq:remainderofthesolution_linearRSDEs}, there exists a parameter $\lambda_0 = \lambda_0(M_p,\alpha,\gamma,p,T)>0$ such that \begin{equation*}
    (|Y_\cdot|)_{2p;\lambda_0} + (|\delta Y|)_{\gamma;2p;\lambda_0} + (|Y^\natural|)_{\alpha+\gamma;2p;\lambda_0} + (|\E_\cdot R^Y|)_{2\gamma;2p;\lambda_0}  \lesssim_{M_p,\alpha,\gamma,p,T} 1 \end{equation*} and \begin{equation*}    
    (|\bar{Y}_\cdot|)_{2p;\lambda_0} + (|\delta \bar{Y}|)_{\gamma;2p;\lambda_0} + (|\bar{Y}^\natural|)_{\alpha+\gamma;2p;\lambda_0} + (|\E_\cdot \bar{R}^{\bar{Y}} |)_{2\gamma;2p;\lambda_0} \lesssim_{M_p,\alpha,\gamma,2p,T} 1, 
\end{equation*} for that particular choice of $\lambda_0$ and so for any $\lambda \in (0,\lambda_0]$ (cf.\ Remark \ref{rmk:weightednormswithdifferentlambda}). 
For any $t \in [0,T]$, let us define \begin{equation*}
    \tilde{Y}_t := Y_t - \bar{Y}_t.
\end{equation*} 
For simplicity, we also denote by \begin{align*}
    \tilde{Z}_t &= Z_t - \bar{Z}_t & \tilde{\mathbb{Z}}_{s,t} &= \mathbb{Z}_{s,t} - \bar{\mathbb{Z}}_{s,t} & \tilde{F}_t &= F_t - \bar{F}_t & 
    \tilde{G}_t &= G_t - \bar{G}_t  \\ \tilde{S}_t &= S_t - \bar{S}_t & \tilde{f}_t &= f_t - \bar{f}_t & \tilde{f}'_t &= f'_t - \bar{f}'_t.
\end{align*}
The strategy is to mimic the argument of the proof of Theorem \ref{thm:aprioriestimatelinearRSDEs}, adjusting it to the process $\tilde{Y}=(\tilde{Y}_t)_{t \in [0,T]}$. Let $(s,t) \in \Delta_{[0,T]}$ such that $|t-s| \le \lambda$. We can write \begin{equation*}
    \begin{aligned}
        \delta{\tilde{Y}}_{s,t} &= \delta{{Y}}_{s,t} - \delta{\bar{Y}}_{s,t} = \\ 
        &= \delta \tilde{F}_{s,t} + \int_s^t G_r\tilde{Y}_rdr + \int_s^t S_r\tilde{Y}_rdB_r + f_s\tilde{Y}_s\delta \bar{Z}_{s,t}+ (f'_s\tilde{Y}_s+f^2_s\tilde{Y}_s)\bar{\mathbb{Z}}_{s,t} + \tilde{Y}^\natural_{s,t} + \tilde{P}_{s,t},
    \end{aligned}
\end{equation*} where $\tilde{Y}^\natural_{s,t} := Y^\natural_{s,t}-\bar{Y}^\natural_{s,t}$ and, by construction, $\|\tilde{P}_{s,t}\|_p \lesssim_{M_p,\alpha,\gamma,p,T} (\rho_\alpha(\mathbf{Z},\bar{\mathbf{Z}}) + \theta_p) e^{t/\lambda}|t-s|^\gamma$. Arguing as in \eqref{eq:aprioriestimatedeltaY}, we get \begin{equation*}
    (|\delta \tilde{Y}|)_{\gamma;p;\lambda} \lesssim_{M_p,\alpha,\gamma,p,T}  (|\tilde{Y}_\cdot|)_{p;\lambda} + \lambda^\gamma(|\tilde{Y}^\natural|)_{\alpha+\gamma;p;\lambda} + \rho_\alpha(\mathbf{Z},\bar{\mathbf{Z}}) + \theta_p 
\end{equation*} 
The conclusion again follows from Proposition \ref{prop:comparison} and if we find a suitable estimate on $(|\tilde{Y}^\natural|)_{\alpha+\gamma;p;\lambda}$. Indeed, for any $(s,u,t) \in \Delta_{[0,T]}$ such that $|t-s|\le \lambda$, we have \begin{equation*}
    \begin{aligned}
        \delta \tilde{Y}^\natural_{s,u,t} &= \delta{{Y}^\natural}_{s,u,t} - \delta{\bar{Y}^\natural}_{s,u,t} \\
        &=\delta ({f}_\cdot \tilde{Y}_\cdot)_{s,u} \delta \bar{Z}_{u,t} - (f'_s\tilde{Y}_s+f_s^2\tilde{Y}_s+f_s\tilde{F}'_s) \delta \bar{Z}_{s,u} \delta \bar{Z}_{u,t} \\& \quad + \delta ({f}'_\cdot \tilde{Y}_\cdot +{f}_\cdot^2 \tilde{Y}_\cdot+f_\cdot\tilde{F}'_\cdot)_{s,u}\bar{\mathbb{Z}}_{u,t}+ \tilde{C}_{s,t} \\
        &= (R_{s,u}^{f_\cdot Y_\cdot} -\bar{R}_{s,u}^{\bar{f}_\cdot \bar{Y}_\cdot})\delta \bar{Z}_{u,t} + \delta ({f}'_\cdot \tilde{Y}_\cdot +{f}_\cdot^2 \tilde{Y}_\cdot+f_\cdot\tilde{F}'_\cdot)_{s,u}\bar{\mathbb{Z}}_{u,t}+ \tilde{D}_{s,t},
    \end{aligned}
\end{equation*} where $\|\tilde{C}_{s,t}\|_p \lesssim_{M_p,\alpha,\gamma,p,T} (\rho_\alpha(\mathbf{Z},\bar{\mathbf{Z}}) + \theta_p) e^{t/\lambda} |t-s|^{\alpha+\gamma}$ and $\|\E_s(\tilde{D}_{s,t})\|_p \lesssim_{M_p,\alpha,\gamma,p,T} (\rho_\alpha(\mathbf{Z},\bar{\mathbf{Z}}) + \theta_p) e^{t/\lambda} |t-s|^{\alpha+2\gamma}$. 
Arguing as in \eqref{eq:deltaYnaturalestimate1},\eqref{eq:deltaYnaturalestimate2} and taking into account Lemma \eqref{lemma:stabilityforremainder} we get \begin{align*}
    (|\delta \tilde{Y}^\natural|)_{\alpha+\gamma;p;\lambda} &\lesssim_{M_p,\alpha,\gamma,p,T} (|\tilde{Y}_\cdot|)_{p;\lambda} + (|\delta \tilde{Y}|)_{\gamma;p;\lambda} + \rho_\alpha(\mathbf{Z},\bar{\mathbf{Z}}) + \theta_p \\
    (|\E_\cdot \delta \tilde{Y}^\natural|)_{\alpha+2\gamma;p;\lambda} &\lesssim_{M_p,\alpha,\gamma,p,T} (|\tilde{Y}_\cdot|)_{p;\lambda} + (|\delta \tilde{Y}|)_{\gamma;p;\lambda} + (|\tilde{Y}^\natural|)_{\alpha+\gamma;p;\lambda} + \rho_\alpha(\mathbf{Z},\bar{\mathbf{Z}}) + \theta_p
\end{align*}
which, applying Proposition \ref{prop:sewingmap}, is sufficient to reach the conclusion.
\end{proof}

To conclude the previous proof, the following lemma is needed.
\begin{lemma} \label{lemma:stabilityforremainder}
    For any $\lambda>0$ and for any $(s,t)\in\Delta_{[0,T]}$ with $|t-s|\le \lambda$, \begin{equation*}\label{stabilityestimate6}
        \begin{aligned}
            &\|\E_s(R_{s,t}^{f_\cdot Y_\cdot}-\bar{R}_{s,t}^{\bar{f}_\cdot \bar{Y}_\cdot})\|_p \lesssim \big( (|\tilde{Y}_\cdot|)_{p;\lambda} + (|\delta \tilde{Y}|)_{\gamma;p;\lambda} + (|\tilde{Y}^\natural|)_{\alpha+\gamma;p;\lambda}+ \rho_\alpha(\mathbf{Z},\bar{\mathbf{Z}}) + \theta_p \big) e^{t/\lambda} |t-s|^{2\gamma}
        \end{aligned}
    \end{equation*} where the implicit constant only depends on $M_p,\alpha,p,T$.
\end{lemma}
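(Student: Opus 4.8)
The plan is to reproduce, \emph{for increments of differences}, the estimate of $\|\E_s(R^{f_\cdot Y_\cdot}_{s,u})\|_p$ carried out inside the proof of Theorem \ref{thm:aprioriestimatelinearRSDEs}, using the elementary inequality \eqref{eq:elementaryHolder} to split, in every product, a ``difference of the data'' factor (to be absorbed into $\rho_\alpha(\mathbf{Z},\bar{\mathbf{Z}})+\theta_p$) from a ``difference of the solutions'' factor (to be absorbed into $(|\tilde Y_\cdot|)_{p;\lambda}$, $(|\delta\tilde Y|)_{\gamma;p;\lambda}$, $(|\tilde Y^\natural|)_{\alpha+\gamma;p;\lambda}$), the remaining factors being uniformly controlled by $M_p$ via the a priori bounds on $Y,\bar Y,\E_\cdot R^Y,\E_\cdot\bar R^{\bar Y}$ (at integrability level $2p$) recalled at the beginning of the proof of Theorem \ref{thm:stabilityforlinearRSDEs}. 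Throughout one uses $|t-s|\le\lambda$ to trade surplus powers $|t-s|^{\alpha-\gamma}$, $|t-s|^{2\alpha-2\gamma}$, $|t-s|^{1-2\gamma}$ against powers of $\lambda$, which is licit since $\gamma\le\alpha\le\tfrac12$. Applying \eqref{eq:remaindercompositionwithstochasticcontrolledlinearvectorfields} to both $(f,f')Y$ and $(\bar f,\bar f')\bar Y$ (recall $R^Y_{s,t}=\delta Y_{s,t}-(f_sY_s+F'_s)\delta Z_{s,t}$ by Lemma \ref{lemma:solutionsoflinearRSDEsarecontrolled}) and inserting crossed terms, with the tilde notation of the proof of Theorem \ref{thm:stabilityforlinearRSDEs},
\begin{align*}
R^{f_\cdot Y_\cdot}_{s,t}-\bar R^{\bar f_\cdot\bar Y_\cdot}_{s,t}
&=\delta f_{s,t}\,\delta\tilde Y_{s,t}+\delta\tilde f_{s,t}\,\delta\bar Y_{s,t}
+f_s\big(R^Y_{s,t}-\bar R^{\bar Y}_{s,t}\big)+\tilde f_s\,\bar R^{\bar Y}_{s,t}\\
&\quad+R^f_{s,t}\,\tilde Y_s+\big(R^f_{s,t}-\bar R^{\bar f}_{s,t}\big)\,\bar Y_s ,
\end{align*}
and we estimate the $L^p$-norm of $\E_s$ of each of the six summands (for the last two, $\E_s$ pulls out the $\mathcal F_s$-measurable $\tilde Y_s,\bar Y_s$).

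For $\E_s(\delta f_{s,t}\delta\tilde Y_{s,t})$ we repeat the conditional-H\"older computation of Proposition \ref{prop:stambilityundercompositionlinearvectorfields}: since $\|\|\delta f_{s,t}|\mathcal F_s\|_p\|_\infty\le\|\delta f\|_{\gamma;p,\infty}|t-s|^\gamma$, this is $\lesssim_{M_p}(|\delta\tilde Y|)_{\gamma;p;\lambda}e^{t/\lambda}|t-s|^{2\gamma}$. For $\E_s(\delta\tilde f_{s,t}\delta\bar Y_{s,t})$, for $\tilde f_s\,\E_s(\bar R^{\bar Y}_{s,t})$, for $\E_s(R^f_{s,t})\tilde Y_s$ and for $\E_s(R^f_{s,t}-\bar R^{\bar f}_{s,t})\bar Y_s$ we use $\|AB\|_p\le\|A\|_{2p}\|B\|_{2p}$ (resp.\ $\|A\|_\infty\|B\|_p$): the factors $\|\delta\bar Y_{s,t}\|_{2p}$, $\|\E_s\bar R^{\bar Y}_{s,t}\|_{2p}$, $\|\bar Y_s\|_{2p}$, $\|\E_sR^f_{s,t}\|_\infty$ are $\lesssim_{M_p}e^{t/\lambda}|t-s|^{\gamma\text{ or }2\gamma\text{ or }0}$ by the recalled a priori estimates and the definition of $\mathbf D^{2\gamma}_ZL^{2p,\infty}_{lin}$, while $\|\delta\tilde f_{s,t}\|_{2p}\le\theta_p|t-s|^\gamma$, $\|\tilde f_s\|_{2p}\le\theta_p$ and $\|\E_s(R^f_{s,t}-\bar R^{\bar f}_{s,t})\|_{2p}\le\theta_p|t-s|^{2\gamma}$ directly from the definition of $\theta_p$. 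Together these five summands contribute $\lesssim_{M_p}\big((|\delta\tilde Y|)_{\gamma;p;\lambda}+(|\tilde Y_\cdot|)_{p;\lambda}+\theta_p\big)e^{t/\lambda}|t-s|^{2\gamma}$.

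It remains to bound $\|f_s\,\E_s(R^Y_{s,t}-\bar R^{\bar Y}_{s,t})\|_p\le\|f\|_\infty\|\E_s(R^Y_{s,t}-\bar R^{\bar Y}_{s,t})\|_p$. Unfolding the Davie expansion of Definition \ref{def:solutionlinearRSDEs} and taking $\E_s$ (which annihilates the It\^o integral) gives
\begin{equation*}
\E_s R^Y_{s,t}=\E_s R^F_{s,t}+\int_s^t\E_s(G_rY_r)\,dr+(f'_sY_s+f_s^2Y_s+f_sF'_s)\mathbb Z_{s,t}+\E_s Y^\natural_{s,t},
\end{equation*}
and likewise for $\bar Y$. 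Subtracting: the $\E_\cdot R^F$-difference is $\le\theta_p|t-s|^{2\gamma}$; the $Y^\natural$-difference gives $\|\E_s\tilde Y^\natural_{s,t}\|_p\le\|\tilde Y^\natural_{s,t}\|_p\le(|\tilde Y^\natural|)_{\alpha+\gamma;p;\lambda}e^{t/\lambda}\lambda^{\alpha-\gamma}|t-s|^{2\gamma}$; the drift difference, via $G_rY_r-\bar G_r\bar Y_r=G_r\tilde Y_r+\tilde G_r\bar Y_r$ and $\|\tilde G_r\|_{2p}\le\theta_p$, integrates to $\lesssim_{M_p}((|\tilde Y_\cdot|)_{p;\lambda}+\theta_p)e^{t/\lambda}|t-s|$; and the $\mathbb Z$-difference, written as $a_s\tilde{\mathbb Z}_{s,t}+(a_s-\bar a_s)\bar{\mathbb Z}_{s,t}$ with $a_s:=f'_sY_s+f_s^2Y_s+f_sF'_s$, is controlled by $\|a_s\|_p\,\rho_\alpha(\mathbf Z,\bar{\mathbf Z})|t-s|^{2\alpha}+\|a_s-\bar a_s\|_p\,|\bar{\mathbf Z}|_\alpha|t-s|^{2\alpha}$, where $\|a_s\|_p,|\bar{\mathbf Z}|_\alpha\lesssim_{M_p}e^{t/\lambda}$ by the a priori estimates, and $\|a_s-\bar a_s\|_p$ is expanded by repeated use of \eqref{eq:elementaryHolder} into $\lesssim_{M_p}((|\tilde Y_\cdot|)_{p;\lambda}+\theta_p)e^{t/\lambda}$ (here the boundedness of $f,f',\bar f,\bar f'$ is what makes it enough to control $\tilde Y_s$ in $L^p$, and $\sup_t\|F'_t-\bar F'_t\|_p$, $\sup_t\|f_t-\bar f_t\|_{2p}$, $\sup_t\|f'_t-\bar f'_t\|_{2p}$ are all terms of $\theta_p$). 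Absorbing the surplus powers of $|t-s|\le\lambda$, the main summand contributes $\lesssim_{M_p}\big((|\tilde Y_\cdot|)_{p;\lambda}+(|\tilde Y^\natural|)_{\alpha+\gamma;p;\lambda}+\rho_\alpha(\mathbf Z,\bar{\mathbf Z})+\theta_p\big)e^{t/\lambda}|t-s|^{2\gamma}$. Summing the six estimates yields the claim. The main obstacle is exactly the clean execution of this last step: keeping track, in the Davie expansions of the two solutions, of which factor carries the difference and which is merely bounded by $M_p$; but this is the very computation already performed in the proof of Theorem \ref{thm:aprioriestimatelinearRSDEs}, now for increments of the difference $\tilde Y$.
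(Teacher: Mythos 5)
Your proof is correct, but it takes a genuinely different route from the paper. The paper avoids the explicit six-term cross-decomposition by introducing the joint $2n$-dimensional path $\hat Z=(Z,\bar Z)$ and viewing $Y$, $\bar Y$ (and hence their difference) as stochastically controlled by $\hat Z$: one then observes $R^{f_\cdot Y_\cdot}_{s,t}-\bar R^{\bar f_\cdot\bar Y_\cdot}_{s,t}=\hat R^{f_\cdot\tilde Y_\cdot}_{s,t}+\hat R^{\tilde f_\cdot\bar Y_\cdot}_{s,t}$, and each of the two $\hat R$-terms is handled by a single application of \eqref{eq:remaindercompositionwithstochasticcontrolledlinearvectorfields}, the first producing the $(|\tilde Y_\cdot|)$, $(|\delta\tilde Y|)$, $(|\E_\cdot R^Y-\E_\cdot\bar R^{\bar Y}|)$ contributions and the second producing the $\theta_p$ contribution. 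You instead expand both remainders via \eqref{eq:remaindercompositionwithstochasticcontrolledlinearvectorfields} \emph{with respect to their own driving paths} and cross-decompose the resulting six differences by hand. The two routes are algebraically distinct (your $\bar R^{\bar f},\bar R^{\bar Y}$ are remainders relative to $\bar Z$, whereas the paper's $\hat R$'s are all relative to $\hat Z$), but both reduce to the same final step — unfolding the Davie expansions of $Y$ and $\bar Y$ to bound $\E_s(R^Y_{s,t}-\bar R^{\bar Y}_{s,t})$ by $(|\tilde Y|)_{p;\lambda}+(|\tilde Y^\natural|)_{\alpha+\gamma;p;\lambda}+\rho_\alpha(\mathbf Z,\bar{\mathbf Z})+\theta_p$ — and both give the claimed estimate. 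The paper's joint-path trick minimizes bookkeeping (two applications of Proposition \ref{prop:stambilityundercompositionlinearvectorfields} instead of six ad hoc terms), while your version is more explicit about which factor in each product carries the ``data difference'' (absorbed into $\theta_p,\rho_\alpha$) and which carries the ``solution difference'' (absorbed into the weighted norms of $\tilde Y$). One small imprecision: in the first term, the conditional Hölder inequality requires $\|\|\delta f_{s,t}\mid\mathcal F_s\|_{\bar p}\|_\infty$ with $\bar p$ the conjugate exponent of $p$, not $\|\|\delta f_{s,t}\mid\mathcal F_s\|_p\|_\infty$; this is harmless since $\|\cdot\mid\mathcal F_s\|_{\bar p}\le\|\cdot\mid\mathcal F_s\|_p$ by conditional Jensen.
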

\begin{proof}
    For convenience, we denote by $\hat{Z}=(Z,\bar{Z})\in \mathcal{C}^\alpha([0,T];\R^{2n})$ the path obtained by pairing $Z$ and $\bar{Z}$. An operator $T=(T^1,T^2) \in \mathscr{L}(\R^{2n};W)$ acts on $x=(x^1,x^2)\in \R^{2n} \equiv \R^n\times\R^n$ as $T(x)=T^1(x^1)+T^2(x^2)$.
     It is immediate to verify that $({Y},({f_\cdot}{Y_\cdot}+F'_\cdot,0)) \in \mathscr{D}_{\hat{Z}}^{2\gamma}L^p(W)$ and $(\bar{Y},(0,\bar{f}_\cdot\bar{Y}_\cdot+\bar{F}'_\cdot)
    ) \in \mathscr{D}_{\hat{Z}}^{2\gamma}L^p(W)$.
    In particular, $R_{s,t}^{f_\cdot Y_\cdot}=\hat{R}_{s,t}^{f_\cdot Y_\cdot}$.
    Applying Proposition \ref{prop:stambilityundercompositionlinearvectorfields}, we can also deduce that 
    \begin{align*}
    &(f_\cdot{Y}_\cdot,(f'_\cdot\tilde{Y}_\cdot+f^2_\cdot Y_\cdot+f_\cdot F'_\cdot,0)) \in \mathscr{D}_{\hat{Z}}^{2\gamma}L^p(\mathscr{L}(\R^n,W)) \\
    &({f}_\cdot \bar{Y}_\cdot,({f}'_\cdot \bar{Y}_\cdot, {f}_\cdot \bar{f}_\cdot \bar{Y}_\cdot+f_\cdot\bar{F}'_\cdot)) \in \mathscr{D}_{\hat{Z}}^{2\gamma}L^p(\mathscr{L}(\R^n,W)) \\
        &(\tilde{f}_\cdot \bar{Y}_\cdot,(f'_\cdot\bar{Y}_\cdot,\tilde{f}_\cdot \bar{f}_\cdot \bar{Y}_\cdot + \tilde{f}_\cdot\bar{F}'_\cdot - \bar{f}'_\cdot \bar{Y}_\cdot)) \in \mathscr{D}_{\hat{Z}}^{2\gamma}L^p(\mathscr{L}(\R^n,W)) 
    \end{align*}
    Hence   \begin{equation*} \begin{aligned}
        \bar{R}_{s,t}^{\bar{f}_\cdot\bar{Y}_\cdot} 
        &= \bar{f}_t \bar{Y}_t - \bar{f}_s \bar{Y}_s - (\bar{f}'_s\bar{Y}_s 
 + \bar{f}_s^2\bar{Y}_s + \bar{f}_s \bar{F}'_s)\delta \bar{Z}_{s,t}  \\
 &= \hat{R}_{s,t}^{f_\cdot \bar{Y}_\cdot} - \tilde{f}_t\bar{Y}_t + \tilde{f}_s \bar{Y}_s + f_s'\bar{Y}\delta Z_{s,t} + (\tilde{f}_s\bar{f}_s\bar{Y}_s + \tilde{f}_s\bar{F}_s'-\bar{f}_s'\bar{Y})\delta \bar{Z}_{s,t}
        \\&=\hat{R}^{f_\cdot \bar{Y}_\cdot}_{s,t} - \hat{R}_{s,t}^{\tilde{f}_\cdot\bar{Y}_\cdot},
    \end{aligned}
    \end{equation*}
    which implies that $R_{s,t}^{f_\cdot Y_\cdot}-\bar{R}_{s,t}^{\bar{f}_\cdot \bar{Y}_\cdot} = (R_{s,t}^{f_\cdot Y_\cdot} - \hat{R}^{f_\cdot \bar{Y}_\cdot}_{s,t})+ \hat{R}_{s,t}^{\tilde{f}_\cdot\bar{Y}_\cdot}$.
    Arguing in a similar way to \eqref{eq:remaindercompositionwithstochasticcontrolledlinearvectorfields}, we end up with \begin{equation*} \begin{aligned}
        \|\E_s(\hat{R}_{s,t}^{f_\cdot {Y}_\cdot} - \hat{R}_{s,t}^{f_\cdot \bar{Y}_\cdot})\|_p &= \|\E_s(\hat{R}_{s,t}^{f_\cdot \tilde{Y}_\cdot})\|_p \\
        &\lesssim_{M_p,\alpha,\gamma,p,T} \big( (|\tilde{Y}_\cdot|)_{p;\lambda} + (|\delta \tilde{Y}|)_{\gamma;p;\lambda} + (|\E_\cdot R^Y - \E_\cdot \bar{R}^{\bar{Y}}|)_{2\gamma;p;\lambda} \big) e^{t/\lambda} |t-s|^{2\gamma}
    \end{aligned}
    \end{equation*} and
    \begin{align*} 
    \|\E_s(\hat{R}^{\tilde{f}_\cdot\bar{Y}_\cdot}_{s,t})\|_p &\lesssim_{M_p,\alpha,\gamma,p,T} \big(\sup_t\|\tilde{f}_t\|_{2p} + \|\delta \tilde{f}\|_{\gamma;2p} + \|\E_\cdot R^f - \E_\cdot \bar{R}^{\bar{f}}\|_{2\gamma;2p} \big)e^{t/\lambda} |t-s|^{2\gamma} \\
    &\le \theta_p e^{t/\lambda} |t-s|^{2\gamma}.
    \end{align*}
    The conclusion follows since, in a similar way to what is done in \eqref{eq:remainderofthesolution_linearRSDEs}, we can write \begin{equation*}
    \begin{aligned}
        &R_{s,t}^Y - \bar{R}^{\bar{Y}}_{s,t} = \delta Y_{s,t} - (f_sY_s+F'_s)\delta Z_{s,t} - \delta \bar{Y}_{s,t} + (\bar{f}_s\bar{Y}_s+\bar{F}'_s)\delta \bar{Z}_{s,t} = \\
        &= R^F_{s,t} - \bar{R}^{\bar{F}}_{s,t} + \int_s^tG_r\tilde{Y}_rdr + \int_s^t S_r\tilde{Y}_rdB_r + (f'_s\tilde{Y}_s+f^2_s\tilde{Y}_s+f_s\tilde{F}'_s)\bar{\mathbb{Z}}_{s,t} + \tilde{Y}_{s,t}^\natural +  \tilde{Q}_{s,t},   \end{aligned}
\end{equation*} where $\|\E_s(\tilde{Q}_{s,t})\|_p \lesssim_{M_p,\alpha,\gamma,p,T} (\rho_\alpha(\mathbf{Z},\bar{\mathbf{Z}}) + \theta_p) e^{t/\lambda}|t-s|^{2\gamma}$. Indeed, this implies \begin{equation*}
    (|\E_\cdot R^Y - \E_\cdot \bar{R}^{\bar{Y}}|)_{2\gamma;p;\lambda} \lesssim_{M_p,\alpha,\gamma,p,T} (|\tilde{Y}|)_{p;\lambda} + (|\tilde{Y}^\natural|)_{\alpha+\gamma;p;\lambda} + \rho_\alpha(\mathbf{Z},\bar{\mathbf{Z}}) + \theta_p
\end{equation*}
\end{proof}

\section{Malliavin differentiability of the solutions}
\label{sec: malliavin}

In this section we consider a geometric rough path $\mathbf{Z}=(Z,\mathbb{Z}) \in \mathscr{C}^{0,\alpha}_g([0,T];\R^n)$, with $\alpha \in (\frac{1}{3},\frac{1}{2}]$, and some coefficients vector fields $b \in {C}^1_b(\R^d;\R^d)$, $\sigma \in {C}^1_b(\R^d;\mathscr{L}(\R^m,\R^d))$, and $\beta \in {C}^3_b(\R^d;\mathscr{L}(\R^n,\R^d))$. 
For a fixed $x_0 \in \R^d$, we denote by $X=(X_t)_{t \in [0,T]}$ the solution of the following RSDE: \begin{equation} \label{eq:solutionRSDEinmalliavinforRSDEs} \begin{aligned} &dX_t=b(X_t)dt+\sigma(X_t)dB_t+(\beta(X_t),D\beta(X_t)\beta(X_t))d\mathbf{Z}_t, \qquad t \in [0,T] \\ &X_0=x_0.\end{aligned}
\end{equation} 
Our aim is to prove the Malliavin-differentiability of such a solution, under those standard regularity assumptions on the coefficients. We rely on the following result, coming from general Malliavin calculus theory (cf.\ \cite[Lemma 1.2.3]{nualart2006malliavin} and \cite[Proposition 1.5.5]{nualart2006malliavin}).
\begin{lemma} \label{lemma:Malliavin via approximation}
    Let $F\in L^2(\Omega)$. Assume that, for a certain $k \in \mathbb{N}_{\ge 1}$, there exists $(F_n)_n \subseteq \mathbb{D}^{k,2}$ such that $F_n \to F$ in $L^2(\Omega)$ and \begin{equation*}
        \sup_{n\in \mathbb{N}} \E\big(\int_0^T \dots \int_0^T |\mathbb{D}^k_{\theta_1,\dots,\theta_k}F_n|^2 d\theta_1 \dots d\theta_k\big) <+\infty.
    \end{equation*}  
    Then $F \in \mathbb{D}^{k,2}$ and $\mathbb{D}^kF_n \rightharpoonup \mathbb{D}^kF$ in $L^2([0,T]^k\times\Omega;(\R^m)^{\times k})$. Moreover, if for some $p>2$ it holds that $F\in L^p(\Omega)$ and $\mathbb{D}^l F \in L^p([0,T]^l\times\Omega;(\R^m)^{\times l})$ for $l=1,\dots,k$, then $F \in \mathbb{D}^{k,p}$.
\end{lemma}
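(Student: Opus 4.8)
The plan is to treat the statement as a weak-compactness argument in the Hilbert space $\mathbb{D}^{k,2}$, in the spirit of \cite[Lemma 1.2.3]{nualart2006malliavin}, the only genuinely new point relative to the one-dimensional case being that the hypothesis controls only the top-order derivatives. \textbf{Step 1: boundedness in $\mathbb{D}^{k,2}$.} First I would upgrade the hypothesis $\sup_n\|\mathbb{D}^kF_n\|_{L^2([0,T]^k\times\Omega)}<\infty$ (together with $\sup_n\|F_n\|_{L^2(\Omega)}<\infty$, which holds since $F_n\to F$ in $L^2(\Omega)$) to a uniform bound $\sup_n\|F_n\|_{\mathbb{D}^{k,2}}<\infty$; this requires controlling the intermediate derivatives $\mathbb{D}^lF_n$ for $1\le l\le k$. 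Using the Wiener chaos expansion $F_n=\sum_{m\ge 0}I_m(f_m^{(n)})$ and the identity $\|\mathbb{D}^lF_n\|_{L^2([0,T]^l\times\Omega)}^2=\sum_{m\ge l}\frac{m!}{(m-l)!}\,m!\,\|f_m^{(n)}\|^2$, and noting that $\frac{m!}{(m-l)!}\le\frac{m!}{(m-k)!}$ for every $m\ge k$ and $l\le k$, one gets, after separating the finitely many indices $m<k$, that $\|\mathbb{D}^lF_n\|^2\lesssim_k\|F_n\|_{L^2(\Omega)}^2+\|\mathbb{D}^kF_n\|_{L^2([0,T]^k\times\Omega)}^2$; equivalently one may simply invoke Meyer's inequalities. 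Hence $(F_n)_n$ is bounded in $\mathbb{D}^{k,2}$.

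\textbf{Step 2: identification of the limit.} Since $\mathbb{D}^{k,2}$ is a Hilbert space, by Banach--Alaoglu a subsequence $F_{n_j}$ converges weakly in $\mathbb{D}^{k,2}$ to some $G$. As the embedding $\mathbb{D}^{k,2}\hookrightarrow L^2(\Omega)$ is continuous, $F_{n_j}\rightharpoonup G$ weakly in $L^2(\Omega)$; but $F_n\to F$ strongly in $L^2(\Omega)$, so $G=F$, and in particular $F\in\mathbb{D}^{k,2}$. Applying the bounded linear operator $\mathbb{D}^k\colon\mathbb{D}^{k,2}\to L^2([0,T]^k\times\Omega;(\R^m)^{\otimes k})$, which is weak-to-weak continuous, yields $\mathbb{D}^kF_{n_j}\rightharpoonup\mathbb{D}^kF$ in $L^2([0,T]^k\times\Omega;(\R^m)^{\otimes k})$. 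The standard subsequence principle then upgrades this to the full sequence: every subsequence of $(\mathbb{D}^kF_n)_n$ is bounded, hence has a weakly convergent sub-subsequence whose limit must be $\mathbb{D}^kF$ by the argument just given (only $F_n\to F$ in $L^2(\Omega)$ is used), and uniqueness of weak limits gives $\mathbb{D}^kF_n\rightharpoonup\mathbb{D}^kF$.

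\textbf{Step 3: the $L^p$ refinement.} At this point $F\in\mathbb{D}^{k,2}$ is already known, so the derivatives $\mathbb{D}^lF$, $l=1,\dots,k$, are well defined in the $L^2$ sense, and by assumption $F\in L^p(\Omega)$ and $\mathbb{D}^lF\in L^p([0,T]^l\times\Omega;(\R^m)^{\otimes l})$. The conclusion $F\in\mathbb{D}^{k,p}$ then follows from the characterization of $\mathbb{D}^{k,p}$ for $p\ge 2$ as the space of $F\in L^p(\Omega)$ whose iterated Malliavin derivatives up to order $k$ exist and belong to the corresponding $L^p$-spaces, i.e.\ from \cite[Proposition 1.5.5]{nualart2006malliavin}.

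The step I expect to require the most care — rather than being a purely soft manoeuvre — is Step 1: the hypotheses pin down only $\mathbb{D}^kF_n$, and one must argue via the chaos decomposition (or Meyer's inequalities) that this already controls all lower-order derivatives, which is precisely why \cite[Proposition 1.5.5]{nualart2006malliavin} is needed alongside \cite[Lemma 1.2.3]{nualart2006malliavin}. The remaining ingredients — reflexivity of Hilbert spaces, weak-to-weak continuity of bounded operators, and the subsequence principle — are routine.
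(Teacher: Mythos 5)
Your proposal is correct and takes essentially the same route the paper gestures at: the paper offers no proof of this lemma, merely citing \cite[Lemma 1.2.3]{nualart2006malliavin} and \cite[Proposition 1.5.5]{nualart2006malliavin}, and your argument is precisely the standard fleshing-out of those references. The one step that needs a genuine argument beyond the citations — bounding the intermediate derivatives $\mathbb{D}^l F_n$, $1\le l<k$, by $\|F_n\|_{L^2}$ and $\|\mathbb{D}^k F_n\|_{L^2}$ alone, so that weak compactness applies in $\mathbb{D}^{k,2}$ — you handle correctly via the chaos expansion (or equivalently Meyer's inequalities).
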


The following shows how it is possible to apply Malliavin calculus to RSDEs, to obtain the Malliavin differentiability of solutions.
Moreover, we also get that the Malliavin derivative of these solutions is itself the solution to a linear RSDE. 

\begin{thm}[]  \label{thm:malliavincalculusforRSDEs} 
Let $X=(X_t)_{t\in [0,T]}$ be as in \eqref{eq:solutionRSDEinmalliavinforRSDEs} and let us fix an arbitrary $t \in [0,T]$. Then, for any $i=1,\dots,d$, $$X_t^i \in \bigcap_{p \ge 1} \mathbb{D}^{1,p} \quad \text{}$$ Moreover, for $\lambda$-a.e.\ $\theta \in [0,T]$ and $\mathbb{P}$-a.s.,  \begin{equation*} \label{eq:malliavinderivativeequaltothesolutionofaRSDE}
        \mathbb{D}_\theta X_t = \begin{cases}
            Y^\theta_t & \text{if $\theta \le t$} \\
            0 & \text{otherwise,}
        \end{cases}
\end{equation*} where $Y^\theta=(Y^\theta_s)_{s \in [\theta,T]}$ is the unique $\R^{d \times m}$-valued solution of the following linear RSDE: \begin{equation} \label{eq:linearRSDEforthemalliavinderivative}\begin{aligned} Y^\theta_s&=\sigma(X_\theta) + \int_\theta^s Db(X_r)Y^\theta_r dr+ \int_\theta^s D\sigma(X_r)Y^\theta_r dB_r+ \\ & + \int_\theta^s (D\beta(X_r),D^2\beta(X_r)\beta(X_r))Y^\theta_r d\mathbf{Z}_r \qquad s\in [\theta,T].
\end{aligned}
\end{equation}
\end{thm}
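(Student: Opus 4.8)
**

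The plan is to establish Malliavin differentiability of $X_t$ via the approximation scheme advertised in the introduction, leveraging Lemma~\ref{lemma:Malliavin via approximation} together with the well-posedness and stability theory for linear RSDEs developed in Section~\ref{sec: linear rough stochastic equations}. First I would fix a sequence of smooth paths whose canonical lifts $\mathbf{Z}^n = (Z^n,\mathbb{Z}^n) \in \mathscr{L}(C^\infty([0,T];\R^n))$ converge to $\mathbf{Z}$ in the rough path metric $\rho_\alpha$; this is possible precisely because $\mathbf{Z}$ is a \emph{geometric} rough path. Denote by $X^n$ the solution of \eqref{eq:solutionRSDEinmalliavinforRSDEs} driven by $\mathbf{Z}^n$. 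When the driver is smooth, equation \eqref{eq:solutionRSDEinmalliavinforRSDEs} is a classical Itô SDE with $C^1_b$ drift and diffusion and an additional bounded drift-type term $\beta(X^n_t)\dot{Z}^n_t + D\beta(X^n_t)\beta(X^n_t)\cdots$, so the classical theory of Malliavin calculus for SDEs (e.g. \cite[Theorem 2.2.1]{nualart2006malliavin}) applies: $X^{n,i}_t \in \mathbb{D}^{1,p}$ for all $p\ge 1$, and $\mathbb{D}_\theta X^n_t$ solves, for $\theta \le t$, the linearized equation \eqref{eq:linearRSDEforthemalliavinderivative} with $\mathbf{Z}$ replaced by $\mathbf{Z}^n$ and initial condition $\sigma(X^n_\theta)$ at time $\theta$ (and vanishes for $\theta > t$).

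Next I would set up the comparison. By Lemma~\ref{lemma:consistencyandaprioriforstochasticlinearvectorfields}, the processes $Db(X^n)$, $D\sigma(X^n)$ are stochastic linear vector fields and $(D\beta(X^n),D^2\beta(X^n)\beta(X^n))$ is a stochastic controlled linear vector field with $\mathbf{D}^{2\alpha}_{Z^n}L^{p,\infty}_{lin}$-norm bounded uniformly in $n$ (using the uniform bound $|\mathbf{Z}^n|_\alpha \le C$, which follows from $\rho_\alpha(\mathbf{Z}^n,\mathbf{Z})\to 0$); the forcing term here is $F\equiv 0$ and the initial condition $\sigma(X^n_\theta)$ is $\mathcal{F}_\theta$-measurable and bounded in every $L^p$. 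Then Theorem~\ref{thm:aprioriestimatelinearRSDEs} (applied on $[\theta,T]$, or after the standard shift) gives a bound on $\sup_{\theta,s}\|Y^{n,\theta}_s\|_p$, hence on $\E\big(\int_0^T\!\int_0^T |Y^{n,\theta}_t|^2\,d\theta\big)$, \emph{uniform in $n$}. Combined with the $L^p(\Omega)$-convergence $X^n_t \to X_t$ (which is the $L^p$-stability of $\mathbf{Z}\mapsto X$ in the rough path metric from \cite{friz2021rough}), Lemma~\ref{lemma:Malliavin via approximation} yields $X_t^i \in \mathbb{D}^{1,p}$ for all $p \ge 1$ and weak convergence $\mathbb{D}X^n_t \rightharpoonup \mathbb{D}X_t$ in $L^2([0,T]\times\Omega;\R^{d\times m})$.

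It remains to identify the limit as the solution $Y^\theta$ of \eqref{eq:linearRSDEforthemalliavinderivative}. For this I would invoke Theorem~\ref{thm:wellposednesslinearRSDEs} to know that \eqref{eq:linearRSDEforthemalliavinderivative} has a unique $L^p$-integrable solution $Y^\theta$ for a.e.\ $\theta$, and then Theorem~\ref{thm:stabilityforlinearRSDEs} to show that $Y^{n,\theta} \to Y^\theta$: the inputs converge, namely $\rho_\alpha(\mathbf{Z}^n,\mathbf{Z})\to 0$ by construction, $\sup_s\|Db(X^n_s)-Db(X_s)\|_{2p} \to 0$ and likewise for $D\sigma$ and $(D\beta(X^n),D^2\beta(X^n)\beta(X^n))$ by continuity of the vector fields and $L^p$-convergence of $X^n$ (using Corollary~\ref{coroll:technicalconvergence} for the stochastic-controlled-path data), and $\|\sigma(X^n_\theta)-\sigma(X_\theta)\|_{2p}\to 0$. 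This gives $Y^{n,\theta}\to Y^\theta$ in an appropriate $L^p$-norm, uniformly enough to pass to the limit; since strong limits and weak limits coincide, $\mathbb{D}_\theta X_t = Y^\theta_t$ for $\theta \le t$, and the case $\theta > t$ follows because $X_t$ is $\mathcal{F}_t$-measurable so $\mathbb{D}_\theta X_t = 0$.

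The main obstacle I anticipate is the bookkeeping around the $\theta$-dependence: the linearized equation \eqref{eq:linearRSDEforthemalliavinderivative} starts at the random time $\theta$ with a $\theta$-dependent initial datum $\sigma(X_\theta)$, so to apply Theorems~\ref{thm:aprioriestimatelinearRSDEs}, \ref{thm:wellposednesslinearRSDEs} and \ref{thm:stabilityforlinearRSDEs} (which are stated on a fixed interval $[0,T]$) one must either extend $Y^\theta$ by $\sigma(X_\theta)$ on $[0,\theta]$ and restart, or run the theory on $[\theta,T]$ while checking that all implicit constants are uniform in $\theta\in[0,T]$; and one must verify the required uniform-in-$n$, uniform-in-$\theta$ measurability and integrability of the data, as well as the measurability of $\theta \mapsto Y^\theta$ needed to integrate $\|Y^{n,\theta}_t - Y^\theta_t\|_p^2$ over $\theta$. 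A secondary technical point is that $\mathbf{Z}^n$ being a smooth rough path means $Z^n$ is smooth but \eqref{eq:linearRSDEforthemalliavinderivative} should still be read as a rough (in fact Young/smooth) equation; here the classical Malliavin calculus output needs to be matched with the Davie-type notion of solution in Definition~\ref{def:solutionlinearRSDEs}, which is routine since for smooth drivers the rough stochastic integral reduces to a Lebesgue integral against $\dot Z^n$.
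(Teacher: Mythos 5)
Your proposal follows the paper's proof essentially verbatim: smooth approximation of the geometric rough path, classical Malliavin differentiability of the approximating SDE (matching the paper's Lemma~\ref{lemma:MalliavincalculusforapproximatingRSDEs}), uniform-in-$n$ bounds from Theorem~\ref{thm:aprioriestimatelinearRSDEs}, the closedness criterion of Lemma~\ref{lemma:Malliavin via approximation}, and identification of the limit via Theorems~\ref{thm:wellposednesslinearRSDEs} and~\ref{thm:stabilityforlinearRSDEs} together with Lemma~\ref{lemma:stabilityforstochasticlinearvectorfields}. The obstacles you flag (uniform-in-$\theta$ constants, measurability in $\theta$, reconciling the classical SDE Malliavin derivative with the Davie-type rough solution) are exactly the technical points the paper handles, so the plan is sound and complete.
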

\begin{proof}
    We argue by approximation, taking into account Lemma \ref{lemma:Malliavin via approximation}. Let $(Z^n)_n \subseteq C^\infty([0,T];\R^n)$ be any sequence of smooth rough paths whose rough path lifts $(\mathbf{Z}^n=(Z^n,\mathbb{Z}^n))_n $ converge to $\mathbf{Z}$ in the rough path topology, i.e.\ $\rho_\alpha(\mathbf{Z}^n,\mathbf{Z}) \to 0$ as $n \to +\infty$. Such a sequence exists since $\mathbf{Z}$ is geometric, and the argument of this proof does not depend on the choice of such a sequence. By construction there is a constant $C>0$ such that \begin{equation} \label{eq:constantC}
    \sup_{n \in \mathbb{N}} |\mathbf{Z}^n|_\alpha  + |\mathbf{Z}|_\alpha  \le C.
    \end{equation} 
    For any $n \in \mathbb{N}$, we denote by $X^n=(X^n_t)_{t \in [0,T]}$ the solution of the following RSDE driven by the smooth rough path $\mathbf{Z}^n$: \begin{equation}  \label{eq:approximatingsolutionRSDE}\begin{aligned} dX^n_t&=b(X^n_t)dt+\sigma(X^n_t)dB_t+(\beta(X^n_t),D\beta(X^n_t)\beta(X^n_t))d\mathbf{Z}^n_t \qquad t \in [0,T] \\ X^n_0&=x_0.\end{aligned}
    \end{equation} 
    Moreover, for any $\theta \in [0,T]$ and for any $n \in \mathbb{N}$, we consider the processes and $Y^{n,\theta}=(Y^{n,\theta}_s)_{s   \in [\theta,T]}$ defined as the solution of the following linear RSDE: \begin{equation*} \label{eq:equationforY^n}\begin{aligned} &dY^{n,\theta}_r=Db(X^n_r)Y^{n,\theta}_r dr+D\sigma(X^n_r)Y^{n,\theta}_r dB_r+(D\beta(X^n_r),D^2\beta(X^n_r)\beta(X^n_r))Y^{n,\theta}_r d\mathbf{Z}^n_r, \\ & \quad r \in [\theta,T] \\ &Y^{n,\theta}_\theta=\sigma(X^n_\theta) \in \R^{d\times m}. \end{aligned}
   \end{equation*}
    Let $t \in [0,T]$ and let $i=1,\dots,d$. Stability results for RSDEs (see e.g.\ \cite[Theorem 4.11]{friz2021rough}) show that $X_t^{n,i} \to X_t^i$ in $L^2(\Omega;\R^d)$, since $\rho_\alpha(\mathbf{Z}^n,\mathbf{Z})\to 0$ as $n\to +\infty$. According to Lemma \ref{lemma:MalliavincalculusforapproximatingRSDEs}, $X^{n,i}_t \in \mathbb{D}^{1,2}$, $\mathbb{D}_\theta X^{n}_t = Y^{n,\theta}_t$ for $\lambda$-a.e.\ $\theta \in [0,t]$, and $\mathbb{D}_\theta X^{n}_t = 0$ for $\lambda$-a.e.\ $\theta \in (t,T]$.
    Let $K_C>0$ be the constant defined as in Lemma \ref{lemma:consistencyandaprioriforstochasticlinearvectorfields} and let $M \ge 0$ be such that \begin{equation*}
        |b|_{\mathcal{C}^1_b} + |\sigma|_{\mathcal{C}^1_b} + (1+|\beta|_{\mathcal{C}^3_b})^2 + K_C + C \le M.
    \end{equation*}
    By construction, $M$ is independent on $n$.
    Taking into account Theorem \ref{thm:aprioriestimatelinearRSDEs} with $F \equiv 0$, we can infer that 
    \begin{equation*}
        \begin{aligned}
            \E\big(\int_0^t |\mathbb{D}_\theta X_t^{n,i}|_{\R^m}^2 d\theta\big) &\le \E\big( \int_0^t |\mathbb{D}_\theta X_t^{n}|_{\R^{d\times m}}^2 d\theta\big) 
            = \int_0^t \|Y_t^{n,\theta}\|_2^2 d\theta \lesssim_{M,T} |\sigma|_\infty^2 < +\infty, 
            \end{aligned}
    \end{equation*} uniformly in $n \in \mathbb{N}$. This is enough to conclude that $X_t^i \in \mathbb{D}^{1,2}$ and that $\mathbb{D}X^n_t \rightharpoonup \mathbb{D}X_t$ in $L^2([0,T]\times \Omega;\R^{d\times m})$. \\
    Being $X_t$ $\mathcal{F}_t$-measurable, from standard properties of the Malliavin derivative (see e.g. \cite[Corollary 1.2.1]{nualart2006malliavin}) we immediately get that $\mathbb{D}_\theta X_t=0$, for $\lambda$-a.e.\ $\theta \in (t,T]$. 
    Combining Theorem \ref{thm:stabilityforlinearRSDEs} with Lemma \ref{lemma:stabilityforstochasticlinearvectorfields} we can also deduce that \begin{equation*}
        Y^{n,\theta}_t \to Y^{\theta}_t   \qquad \text{in $L^2(\Omega;\R^{d\times m})$}
    \end{equation*}
    as $n \to +\infty$. 
    In particular this implies, for $\lambda$-a.e.\ $\theta \in [0,t]$,  \begin{equation*}
        Y^\theta_t =\lim_{n \to +\infty} \mathbb{D}_\theta X^{n}_t \qquad \text{in $L^2(\Omega;\R^{d\times m})$}
    \end{equation*} and Theorem \ref{thm:aprioriestimatelinearRSDEs} shows that \begin{equation*}
         \|\mathbb{D}_\theta X^{n}_t\|_{2} = \|Y^{n,\theta}_t\|_{2} \lesssim_{M,\alpha,T}  |\sigma|_\infty.
    \end{equation*} 
    We can now define the function $\mathcal{Y}_t:[0,T] \to L^2(\Omega;\R^{d\times m})$ by \begin{equation*}
        \mathcal{Y}_t (\theta) := \begin{cases}
            Y_t^\theta & \text{if $\theta \le t$} \\
            0 & \text{otherwise}
        \end{cases}.
    \end{equation*}  
    By applying dominated convergence, we obtain that $\mathcal{Y}_t \in L^2([0,T];L^2(\Omega;\R^{d\times m})) \equiv L^2([0,T]\times\Omega;\R^{d \times m})$ and  \begin{equation*}
        \mathbb{D}_\cdot X^{n}_t \to \mathcal{Y}_t \qquad \text{in $L^2([0,T]\times\Omega;\R^{d \times m})$}.
    \end{equation*} By uniqueness of the weak limit, it follows that $\mathbb{D}X_t = \mathcal{Y}_t$ in  $L^2([0,T]\times\Omega;\R^{d \times m})$, namely \begin{equation*}
        \mathcal{Y}_t(\theta) = \mathbb{D}_\theta X_t \qquad \text{$\mathbb{P}$-a.s.}
    \end{equation*} for $\lambda$-a.e.\ $\theta \in [0,T]$. 
    Theorem \ref{thm:aprioriestimatelinearRSDEs} actually shows that, for any $p>2$, \begin{equation*}
        \E\big(\int_0^T|\mathbb{D}_\theta X_t^{n,i}|^p d\theta\big) = \int_0^T \|(Y_t^\theta)^i\|_p^p d\theta <+\infty.
    \end{equation*} 
    This implies that $X_t^i \in \mathbb{D}^{1,p}$ for any $p > 2$.
\end{proof}

The following lemma is needed in the previous proof. 
\begin{lemma}[] \label{lemma:stabilityforstochasticlinearvectorfields}
    Consider the following stochastic linear vector fields \begin{align*} 
    G&:= Db(X_\cdot) \qquad G^n:= Db(X^n_\cdot), \ n \in \mathbb{N}\\ \label{eq:vectorfieldsS} S&:= D\sigma(X_\cdot) \qquad S^n:= D\sigma(X^n_\cdot), \ n \in \mathbb{N}
\end{align*} and \begin{equation*} \begin{aligned}
    (f,f') &:= (D\beta(X_\cdot),D^2\beta(X_\cdot)\beta(X_\cdot)) \\
    (f^n,(f^n)') &:= (D\beta(X^n_\cdot),D^2\beta(X^n_\cdot)\beta(X^n_\cdot)), \ n \in \mathbb{N} . 
\end{aligned}
\end{equation*} 
For any $p \in [2,\infty)$, for any $\gamma \in (\frac{1}{3},\alpha)$ and for any $n \in \mathbb{N}$, let \begin{equation*} \begin{aligned}
        \theta^n_{p,\gamma}:=& \sup_{t \in [0,T]}\|G^n_t-{G}_t\|_{p}+ \sup_{t \in [0,T]}\|S^n_t-{S}_t\|_{p} + \sup_{t \in [0,T]}\|f^n_t-{f}_t\|_{p} +  \sup_{t \in [0,T]}\|(f^n)'_t-{f}'_t\|_{p} + \\ &+ \|\delta (f^n-{f})\|_{\gamma;p} + \|\delta ((f^n)'-{f}')\|_{\gamma;p} + \|\E_\cdot (R^{n,{f^n}} - {R}^{{f}})\|_{2\gamma;p},
\end{aligned}
\end{equation*} where $R^{n,{f^n}}_{s,t}=\delta f^n_{s,t}-(f^n)'_s\delta Z^n_{s,t}$. Then \begin{equation*}
    \theta^n_{p,\gamma} \to 0 \qquad \text{as $n \to +\infty$}.
\end{equation*} 
\end{lemma}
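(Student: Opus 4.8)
The plan is to reduce the statement to the convergence, uniform in time and in $L^p(\Omega)$, of the compositions $g(X^n_\cdot)\to g(X_\cdot)$ for $g$ ranging over the bounded continuous vector fields $Db$, $D\sigma$, $D\beta$ and $D^2\beta\,\beta$, and then to invoke Corollary~\ref{coroll:technicalconvergence} for the Gubinelli-derivative and remainder contributions to $\theta^n_{p,\gamma}$.

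First I would record the uniform bounds and the almost sure convergence one gets for free from the theory of RSDEs: from the a priori and stability estimates of \cite{friz2021rough} (together with $\sup_n|\mathbf{Z}^n|_\alpha\le C$) one has $\sup_n\|(X^n,\beta(X^n))\|_{\mathscr{D}^{2\alpha}_{Z^n}L^{p,\infty}}<+\infty$ and $\sup_{t\in[0,T]}\|X^n_t-X_t\|_p\to 0$ for every $p\ge 2$; combining this with the uniform Hölder bounds and Kolmogorov's continuity criterion (\cite[Theorem 3.1]{friz2020course}) it upgrades to $\|\sup_{t\in[0,T]}|X^n_t-X_t|\|_p\to 0$, so that, along a subsequence, $X^n\to X$ uniformly on $[0,T]$, $\mathbb{P}$-a.s.

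Next I would prove that $\sup_{t\in[0,T]}\|g(X^n_t)-g(X_t)\|_p\to 0$ for any bounded continuous $g\colon\R^d\to\R^k$. Along the almost surely uniformly convergent subsequence, for $\mathbb{P}$-a.e.\ $\omega$ the trajectory of $X$ together with, for $n$ large, those of $X^n$, lies in a fixed compact set on which $g$ is uniformly continuous, hence $\sup_t|g(X^n_t)-g(X_t)|\to 0$; since $g$ is bounded, dominated convergence gives $\E[\sup_t|g(X^n_t)-g(X_t)|^p]\to 0$, and the subsequence principle removes the passage to a subsequence. Taking $g\in\{Db,D\sigma,D\beta,D^2\beta\,\beta\}$ (all bounded and continuous since $b,\sigma\in{C}^1_b$ and $\beta\in{C}^3_b$) immediately controls the four ``$\sup_t\|\cdot\|_p$'' terms of $\theta^n_{p,\gamma}$; writing $\delta(f^n-f)_{s,t}=(D\beta(X^n_t)-D\beta(X_t))-(D\beta(X^n_s)-D\beta(X_s))$ also yields $\sup_{(s,t)\in\Delta_{[0,T]}}\|\delta f^n_{s,t}-\delta f_{s,t}\|_p\to 0$ and, analogously, $\sup_t\|(f^n)'_t-f'_t\|_p\to 0$.

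Finally, by Lemma~\ref{lemma:consistencyandaprioriforstochasticlinearvectorfields} applied to $X^n,\mathbf{Z}^n$ (resp.\ to $X,\mathbf{Z}$), $(f^n,(f^n)')\in\mathbf{D}^{2\alpha}_{Z^n}L^{p,\infty}_{lin}\subseteq\mathscr{D}^{2\alpha}_{Z^n}L^{p}$ with a bound uniform in $n$, and $(f,f')\in\mathscr{D}^{2\alpha}_ZL^{p}$; since $\rho_\alpha(\mathbf{Z}^n,\mathbf{Z})\to 0$ and the two supremum convergences above hold, Corollary~\ref{coroll:technicalconvergence} gives, for every $\gamma<\alpha$, $\|\delta(f^n-f)\|_{\gamma;p}+\|\delta((f^n)'-f')\|_{\gamma;p}+\|\E_\cdot(R^{n,f^n}-R^f)\|_{2\gamma;p}\to 0$, i.e.\ the remaining three terms of $\theta^n_{p,\gamma}$, and summing the six contributions yields $\theta^n_{p,\gamma}\to 0$. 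The main obstacle is the composition step: the derivatives $Db,D\sigma,D\beta,D^2\beta\,\beta$ are only bounded and continuous, not uniformly continuous, so $\|g(X^n_t)-g(X_t)\|_p$ cannot be bounded directly by a modulus of continuity of $g$ evaluated at $\|X^n_t-X_t\|_p$; one genuinely needs the almost sure uniform convergence of $X^n$, the local uniform continuity of $g$ on the (a.s.\ compact) path range, and the boundedness of $g$ in order to invoke dominated convergence.
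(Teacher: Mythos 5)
Your proof is correct and takes essentially the same route as the paper: both extract a subsequence along which $X^n\to X$ uniformly in $t$, $\mathbb{P}$-a.s., exploit the boundedness and local uniform continuity of $Db,D\sigma,D\beta,D^2\beta\,\beta$ plus dominated convergence to control the four supremum terms, and then appeal to Lemma~\ref{lemma:consistencyandaprioriforstochasticlinearvectorfields} for uniform bounds together with Corollary~\ref{coroll:technicalconvergence} for the remaining three H\"older/remainder contributions. (The paper obtains $\|\sup_t|X^n_t-X_t|\|_p\to 0$ directly from \cite[Theorem 4.11]{friz2021rough} rather than re-deriving it via Kolmogorov, but this is cosmetic; also note there are seven, not six, contributions to $\theta^n_{p,\gamma}$, which does not affect the argument.)
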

\begin{proof}
    The proof essentially follows from the fact that $(\mathbf{Z}^n)_n$ is a sequence of smooth rough paths approximating $\mathbf{Z}$ and that - by taking into account the continuous dependence result contained in \cite[Theorem 4.11]{friz2021rough} - $\|\sup_{t \in [0,T]}|X^n_t-X_t|\|_p \to 0$ as $n \to +\infty$. \\
    Indeed classical theory about $L^p$-spaces implies that, up to considering a subsequence, $\sup_t|X^n_t-X_t|\to0$ $\mathbb{P}$-a.s. in $\R$. Being $Db:\R^d \to \mathscr{L}(\R^d,\R^d)$ uniformly continuous on compact sets of $\R^d$, we have that $\sup_t |Db(X^n_t)-Db(X_t)|\to0$ $\mathbb{P}$-a.s.\ and, in addition, $\sup_t |Db(X^n_t)-Db(X_t)|\le2|Db|_\infty$ $\mathbb{P}$-a.s.\ and for any $n \in \mathbb{N}$. A trivial application of dominated convergence shows that $\|\sup_{t\in [0,T]}|Db(X_t^n)-Db(X_t)|\|_p\to 0$ as $n\to+\infty$. The same argument can be applied if we replace $Db(\cdot)$ with $D\sigma(\cdot), D\beta (\cdot)$ or $D^2\beta(\cdot)\beta(\cdot)$. \\
    By means of Lemma \ref{lemma:consistencyandaprioriforstochasticlinearvectorfields}, we deduce that $\|f^n,(f^n)'\|_{ \mathbf{D}^{2\alpha}_{Z^n} L_{lin}^{p,\infty}} \le K_C$ uniformly in $n \in \mathbb{N}$. 
    Hence, arguing as in Corollary \ref{coroll:technicalconvergence}, we can reach the conclusion.
\end{proof}

In the case of smooth coefficients, solutions of RSDEs are infinitely many times Malliavin differentiable. 
\begin{thm} Let $X=(X_t)_{t\in [0,T]}$ be the solution of \eqref{eq:solutionRSDEinmalliavinforRSDEs}. Assume $b \in \mathcal{C}^\infty_b(\R^d;\R^d)$, $\sigma\in \mathcal{C}^\infty_b(\R^d;\mathscr{L}(\R^m,\R^d))$ and $\beta \in \mathcal{C}^\infty_b(\R^d;\mathscr{L}(\R^n,\R^d))$.  Then, for any $t \in [0,T]$ and for any $i=1,\dots,d$, 
    \begin{equation*}
        X_t^i \in \bigcap_{k \in \mathbb{N}_{\ge 1}} \bigcap_{p \ge 1} \mathbb{D}^{k,p}.
    \end{equation*}
\end{thm}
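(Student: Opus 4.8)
The plan is to argue by induction on the order $k$ of Malliavin differentiability, using exactly the same approximation scheme as in the proof of Theorem~\ref{thm:malliavincalculusforRSDEs}, but now bootstrapping through the hierarchy of linear RSDEs satisfied by the iterated Malliavin derivatives. For the base case $k=1$ we already have $X_t^i \in \bigcap_{p\ge 1}\mathbb{D}^{1,p}$ from Theorem~\ref{thm:malliavincalculusforRSDEs}, together with the identification of $\mathbb{D}_\theta X_t$ with the solution $Y^\theta_t$ of the linear RSDE \eqref{eq:linearRSDEforthemalliavinderivative}. The key structural observation is that, since $b,\sigma,\beta$ are now $\mathcal{C}^\infty_b$, the coefficients $Db(X_\cdot)$, $D\sigma(X_\cdot)$ and $(D\beta(X_\cdot),D^2\beta(X_\cdot)\beta(X_\cdot))$ of that linear equation are themselves (after a further application of Theorem~\ref{thm:malliavincalculusforRSDEs}-type reasoning and the chain rule) Malliavin differentiable, with derivatives that are again stochastic linear vector fields / stochastic controlled linear vector fields of the type covered by our well-posedness theory in Section~\ref{sec: linear rough stochastic equations}. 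Thus formally differentiating \eqref{eq:linearRSDEforthemalliavinderivative} in the Malliavin sense produces, for $\theta_1 \le \theta_2 \le t$, a new linear RSDE for $\mathbb{D}_{\theta_2} Y^{\theta_1}_t$ of the form \eqref{eq:linearRSDE_model}, now with a nontrivial forcing term coming from differentiating the initial condition $\sigma(X_{\theta_1})$ and the coefficients, and this forcing term is controlled in $L^p$ uniformly in $(\theta_1,\theta_2)$ by Theorem~\ref{thm:aprioriestimatelinearRSDEs}. Iterating, $\mathbb{D}^k_{\theta_1,\dots,\theta_k}X_t$ solves a linear RSDE of the same type whose inputs are built from lower-order Malliavin derivatives of $X$.

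Concretely, I would proceed as follows. \emph{Step 1.} Set up the smooth approximations $\mathbf{Z}^n$, $X^n$ exactly as before, and recall that for each fixed smooth driver the solution $X^n$ of a classical SDE lies in $\bigcap_{k,p}\mathbb{D}^{k,p}$ by the standard theory of Malliavin calculus for SDEs (e.g. \cite[Theorem 2.2.1]{nualart2006malliavin}), with $\mathbb{D}^k X^n_t$ solving the iterated linearized system. \emph{Step 2.} By induction on $k$: assuming $X^i_t \in \bigcap_{p}\mathbb{D}^{j,p}$ for all $j<k$ and that $\mathbb{D}^j_{\theta_1,\dots,\theta_j}X_t$ is, for a.e.\ ordered $(\theta_1,\dots,\theta_j)$, the solution of an explicit linear RSDE \eqref{eq:linearRSDE_model} with inputs depending on lower Malliavin derivatives, write down the corresponding $k$-th order linear RSDE obtained by differentiating once more. \emph{Step 3.} Verify that all the coefficients and forcing terms of this $k$-th order equation satisfy the hypotheses of Theorems~\ref{thm:wellposednesslinearRSDEs}, \ref{thm:aprioriestimatelinearRSDEs} and \ref{thm:stabilityforlinearRSDEs}: the linear vector fields $Db(X_\cdot)$, $D\sigma(X_\cdot)$, $(D\beta(X_\cdot),D^2\beta(X_\cdot)\beta(X_\cdot))$ are bounded and stochastically controlled by Lemma~\ref{lemma:consistencyandaprioriforstochasticlinearvectorfields} (its proof uses only bounded derivatives up to order $3$, all available), while the forcing term $(F,F')$ — a polynomial expression in $D^{\le k+2}b(X)$, $D^{\le k+2}\sigma(X)$, $D^{\le k+2}\beta(X)$ and the lower derivatives $\mathbb{D}^{<k}X$, which are stochastically controlled rough Itô processes by Proposition~\ref{pro: rough ito process is stoch controlled} and have all moments — lies in $\mathscr{D}^{2\gamma}_Z L^p$ with norm bounded uniformly in $n$ and in the time-parameters $\theta_i$. \emph{Step 4.} Apply the a priori estimate of Theorem~\ref{thm:aprioriestimatelinearRSDEs} to the $n$-th approximation to get
\begin{equation*}
\sup_{n}\; \E\Big(\int_0^T\!\!\cdots\!\int_0^T |\mathbb{D}^k_{\theta_1,\dots,\theta_k}X^n_t|^p \, d\theta_1\cdots d\theta_k\Big) < +\infty
\end{equation*}
for every $p\ge 2$, using that the right-hand side of the estimate is $\|\xi\|_p + \|(F,F')\|_{\mathscr{D}^{2\gamma}_Z L^p}$ and both are bounded uniformly by the induction hypothesis. \emph{Step 5.} Combine this uniform bound with the $L^2$-convergence $\mathbb{D}^j X^n_t \to \mathbb{D}^j X_t$ for $j<k$ (from the inductive identification plus Theorem~\ref{thm:stabilityforlinearRSDEs} and Lemma~\ref{lemma:stabilityforstochasticlinearvectorfields}) and invoke Lemma~\ref{lemma:Malliavin via approximation} to conclude $X^i_t \in \mathbb{D}^{k,2}$; the $L^p$ version for $p>2$ follows from the same uniform moment bounds together with the last assertion of Lemma~\ref{lemma:Malliavin via approximation}. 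This closes the induction.

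The main obstacle I anticipate is \textbf{Step 2--3}: carefully identifying the $k$-th order linearized RSDE and checking that its forcing term genuinely falls in the admissible class $\mathscr{D}^{2\gamma}_Z L^p$ with bounds uniform in the (potentially many) integration parameters $\theta_1,\dots,\theta_k$. Differentiating the rough integral term $\int (D\beta(X_r),D^2\beta(X_r)\beta(X_r))Y_r\, d\mathbf{Z}_r$ produces terms like $(D^2\beta(X_r)\mathbb{D}_\theta X_r)Y_r$ whose Gubinelli derivative must be computed via Proposition~\ref{pro: chain rule C3 function} and shown to be stochastically controlled; keeping track of which derivatives of $\beta$ appear (one needs $\mathcal{C}^{k+2}_b$ at level $k$, whence the hypothesis $\mathcal{C}^\infty_b$) and organizing the bookkeeping so that the stability Theorem~\ref{thm:stabilityforlinearRSDEs} applies along the approximating sequence is the delicate part. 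Everything else is a routine, if lengthy, induction built on the machinery already established in Sections~\ref{sec: linear rough stochastic equations} and~\ref{sec: malliavin}.
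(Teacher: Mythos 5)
Your proposal is correct and follows essentially the same route as the paper: induction on $k$ via approximation by smooth rough paths, explicit identification of the iterated Malliavin derivatives of $X^n$ with solutions of a hierarchy of linear RSDEs whose inputs ($\xi^{n,k,\theta_1,\dots,\theta_k}$, $F^{n,k,\theta_1,\dots,\theta_k}$) are polynomial expressions in lower-order derivatives, uniform-in-$n$-and-$\theta$ a priori bounds from Theorem~\ref{thm:aprioriestimatelinearRSDEs}, stability to pass to the limit, and Lemma~\ref{lemma:Malliavin via approximation} to close each inductive step (first in $L^2$, then upgrading to $L^p$). The one small bookkeeping slip is that at order $k$ the bound involves $|\beta|_{C^{3+k}_b}$ (not $C^{k+2}_b$) because the rough path theory already requires $C^3_b$ for $\beta$ at level zero, but this is immaterial under the $C^\infty_b$ hypothesis.
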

\begin{proof}
  Let $t \in [0,T]$ and $i=1,\dots,d$. By strong induction on $k \in \mathbb{N}_{\ge 1}$
  we prove that \begin{equation*}
      X_t^i \in \mathbb{D}^{k, p} \quad \text{for any $p\ge 2$}.
  \end{equation*}
  Operatively, we first prove that $X_t^i \in \mathbb{D}^{k, 2}$, arguing by approximation. Then, knowing that lower order Malliavin derivatives satisfy suitable linear RSDEs, we prove $X_t^i \in
  \mathbb{D}^{k,p}$ for any $p > 2$.
  Let us fix any approximating sequence $(Z^n)_n$ of smooth paths, whose
  canonical rough path lifts converge to $\mathbf{Z}$ in the $\alpha$-rough path metric as $n\to +\infty$. Let $X^n = (X^n_t)_{t \in [0,T]}$ be the solution of the RSDE driven by $\mathbf{Z}^n = (Z^n, \mathbb{Z}^n)$, as defined in \eqref{eq:approximatingsolutionRSDE}. Recall that, by standard stability results for RSDEs, we know that $X_t^{n,i} \rightarrow X_t^i$ in $L^p
  (\Omega ; \mathbb{R}^d)$ for any $p \geqslant 2$. \\  
  The case $k = 1$ has been already proved in Theorem \ref{thm:malliavincalculusforRSDEs}, under less regularity assumptions on the coefficients. In particular, we have also deduced that, for $\lambda$-almost every $\theta \in [0,t]$, $\mathbb{D}_\theta X_t = Y^{1,\theta}_t$, where $Y^{1,\theta}=(Y^{1,\theta}_t)_{t \in [\theta,T]}$ is the solution of the following linear RSDE: \begin{equation*} \begin{aligned} Y^{1,\theta}_s &=\sigma(X_\theta) + \int_0^s Db(X_r)Y^{1,\theta}_r dr+ \int_0^s D\sigma(X_r)Y^{1,\theta}_r dB_r+ \\ &+\int_0^s(D\beta(X_r),D^2\beta(X_r)\beta(X_r))Y^{1,\theta}_r d\mathbf{Z}_r, \qquad s \in [\theta,T] .
\end{aligned}
\end{equation*}\\
  Assume the inductive hypothesis holds up to $k - 1$, for some $k \ge 2$.
  From Lemma \ref{lemma:infiniteMalliavinforsmoothRSDEs}, it follows that, for any $n \in \mathbb{N}$, $X^{n,i}_t \in \mathbb{D}^{k, 2}$. Moreover, for $\lambda^{\otimes k}$-a.e.\ $(\theta_1,\dots,\theta_k) \in [0,t]^k$, we  have  $\mathbb{D}^k_{\theta_1,\dots,\theta_k}X_t^n=Y_t^{n,k,\theta_1,\dots,\theta_k}$, where $Y^{n,k,\theta_1,\dots,\theta_k}=(Y_s^{n,k,\theta_1,\dots,\theta_k})_{s\in [\theta_1\vee\dots\vee\theta_k,T]}$ denotes the solution of the following RSDE: \begin{align*}
    \label{eq:linear kth RSDE} Y_s^{n,k, \theta_1, \ldots, \theta_k} & = 
    \xi^{n,k, \theta_1, \ldots, \theta_k} + F_s^{n,k, \theta_1, \ldots, \theta_k}
    + \int_{\theta_1 \vee \ldots \vee \theta_k}^s  Db(X^n_r) Y_r^{n,k,
    \theta_1, \ldots, \theta_k} d r +  \\
    &  + \int_{\theta_1 \vee \ldots \vee \theta_k}^s D \sigma (X^n_r) Y_r^{n,k,
    \theta_1, \ldots, \theta_k} d B_r +\\
    & + \int_{\theta_1 \vee \dots \vee \theta_k}^s (D \beta (X^n_t), D^2
    \beta (X^n_t) \beta (X^n_t)) Y_r^{n,k, \theta_1, \ldots, \theta_k} d
    \mathbf{Z}^n_r . 
  \end{align*}
  The explicit form for $\xi^{n,k, \theta_1, \ldots, \theta_k}$ and $F_\cdot^{n,k, \theta_1, \ldots, \theta_k}$ can be easily derived, as we did in \eqref{eq:linear kth RSDE}. 
  Taking into account Lemma \ref{lemma:aprioriboundsinthesmoothcase} and recalling that the constant therein defined is uniform over
  bounded sets of $\mathscr{C}^{\alpha}([0,T];\R^n)$, we have \begin{equation*}
      \sup_{n \in \mathbb{N}} \int_0^T \ldots \int_0^T \mathbb{E} (|
     \mathbb{D}^k_{\theta_1, \ldots, \theta_k} X_t^{n,i} |^2) d \theta_1 \ldots d
     \theta_k < + \infty .
  \end{equation*}
  From Lemma \ref{lemma:Malliavin via approximation} it follows that $X_t^i \in
  \mathbb{D}^{k, 2}$ and $\mathbb{D}^k_{\cdot} X^n_t \rightharpoonup
  \mathbb{D}^k_{\cdot} X_t$ in $L^2 ([0, T]^k \times \Omega; \mathbb{R}^{d
  \times km}) \equiv L^2 ([0, T]^k ; L^2 (\Omega ;
  \mathbb{R}^{d \times km}))$. Thanks to this
  identification, it is possible to apply dominated convergence and prove
  that the Malliavin derivative solves the expected linear RSDE.
  For any $(\theta_1,\dots,\theta_k)\in[0,T]^k$, let us define $Y^{k,\theta_1,\dots,\theta_k}=(Y^{k,\theta_1,\dots,\theta_k}_s)_{s\in [\theta_1\vee\dots\vee\theta_k,T]}$ to be the solution of the following linear RSDE: \begin{align*}
     Y_s^{k, \theta_1, \ldots, \theta_k} & = 
    \xi^{k, \theta_1, \ldots, \theta_k} + F_s^{k, \theta_1, \ldots, \theta_k}
    + \int_{\theta_1 \vee \ldots \vee \theta_k}^s  Db(X_r) Y_r^{k,
    \theta_1, \ldots, \theta_k} d r +  \\
    &  + \int_{\theta_1 \vee \ldots \vee \theta_k}^s D \sigma (X_r) Y_r^{k,
    \theta_1, \ldots, \theta_k} d B_r +\\
    & + \int_{\theta_1 \vee \dots \vee \theta_k}^s (D \beta (X_t), D^2
    \beta (X_t) \beta (X_t)) Y_r^{k, \theta_1, \ldots, \theta_k} d
    \mathbf{Z}_r . 
  \end{align*}
  For the explicit definition of $\xi^{k, \theta_1, \ldots, \theta_k}$ and $F_\cdot^{k, \theta_1, \ldots, \theta_k}$ see again \eqref{eq:linear kth RSDE} (the only difference here is that our driving rough path is no longer the canonical lift of a smooth path).
  We define define the function $\mathcal{Y}=\mathcal{Y}^{k}_t : [0,
  T]^k \rightarrow L^2 (\Omega ; \mathbb{R}^{d \times km})$ by \begin{equation*}
      \mathcal{Y}^k_t ((\theta_1, \ldots, \theta_k)) := \begin{cases}
          Y^{k, \theta_1,
     \ldots, \theta_k}_t & \text{if $\theta_1\vee\dots\vee\theta_k \le t$} \\
     0 & \text{otherwise}
      \end{cases}.
  \end{equation*}
  From Lemma \ref{lemma:stabilityforlinearsmoothRSDEs}, we deduce that 
  $\mathbb{D}_{\cdot}^k X_t^n \to \mathcal{Y}^k_t (\cdot)$ in $L^2
  (\Omega ; \mathbb{R}^{d \times km})$ $\lambda^{\otimes k}$-a.s.\ in $[0,T]^k$. Moreover, Lemma \ref{lemma:aprioriboundsinthesmoothcase} implies that \begin{equation*}
      \| \mathbb{D}_{\theta_1, \ldots, \theta_k}^k X_t^n \|_2 \le
     \Gamma_{k, 2} < +\infty
  \end{equation*}
  for $\lambda^{\otimes k}$-almost every $(\theta_1, \ldots, \theta_k) \in [0,T]^k$.
  From dominated convergence we conclude that $\mathcal{Y}^k_t \in
  L^2 ([0, T]^k ; L^2 (\Omega ; \mathbb{R}^{d \times km}))$ and \begin{equation*}
      \mathbb{D}^k X_t^n \rightarrow \mathcal{Y}^k_t \quad \text{in $L^2 ([0, T]^k ;
  L^2 (\Omega ; \mathbb{R}^{d \times km}))$}.
  \end{equation*}  
  By uniqueness of the weak limit, \begin{equation*}
      \mathbb{D}^k X_t =\mathcal{Y}^k_t \in L^2 ([0, T]^k ; L^2
     (\Omega ; \mathbb{R}^{d \times km})) \equiv L^2
     (\Omega ; L^2 ([0, T]^k ; \mathbb{R}^{d \times km})) .
  \end{equation*}
  This last equality in particular implies that - taking also into account the
  inductive hypothesis - $\mathbb{D}^l X_t^i \in L^p (\Omega ; L^2 ([0, T]^l ; \mathbb{R}^{d \times km}))$ for any $p > 2$ and for
  any $l = 1, \dots, k$. This allows us to conclude that $X_t^i \in
  \mathbb{D}^{k, p}$ for any $p > 2$.
\end{proof}

The following two lemmas are needed in the previous proof. 

\begin{lemma}\label{lemma:aprioriboundsinthesmoothcase}
     For any $k \in \mathbb{N}_{\ge 1}$, for any $n\in \mathbb{N}$ and for any $p \ge 2$, it holds \begin{equation*}
         \sup_{t \in [\theta_1 \vee \ldots \vee \theta_k, T]} \|Y_t^{n,k, \theta_1,\ldots, \theta_k} {\|_p}  +\| (Y_{\cdot}^{n,k, \theta_1, \ldots,\theta_k}, (Y_{\cdot}^{n,k, \theta_1, \ldots, \theta_k})') \|_{\mathcal{D}_{Z^n}^{2 \alpha} L^p} \le \Gamma_{k, p}  < + \infty,
     \end{equation*}
  where $\Gamma_{k, p} = \Gamma_{k, p} (k, p, \alpha, T, C, | b |_{C^k_b}, | \sigma |_{C^k_b}, | \beta |_{C^{3 + k}_b})
  \ge 0$ is independent on $n$ and on $(\theta_1, \dots, \theta_k) \in [0, T]^k$, and $C\ge0$ is defined as in \eqref{eq:constantC}.   
\end{lemma}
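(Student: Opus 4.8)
The plan is to argue by strong induction on $k$, invoking at each step the a priori estimate for linear rough stochastic differential equations, Theorem~\ref{thm:aprioriestimatelinearRSDEs}. The point that renders the bound uniform in $n$ is that $\sup_{n}|\mathbf{Z}^n|_\alpha\le C$ by \eqref{eq:constantC}; the point that renders it uniform in $(\theta_1,\dots,\theta_k)$ is that the forcing data of the linearised equations are assembled from lower order Malliavin derivatives of $X^n$, whose bounds are, inductively, already uniform in their time indices. Writing $s_0:=\theta_1\vee\dots\vee\theta_k$, every equation considered below lives on $[s_0,T]\subseteq[0,T]$, so the implicit constants produced by Theorem~\ref{thm:aprioriestimatelinearRSDEs} depend on the length $T-s_0\le T$ only and are thus dominated by constants depending on $T$.

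First I would fix the data common to all the linearised equations. Their linear part always has coefficients $G^n:=Db(X^n_\cdot)$, $S^n:=D\sigma(X^n_\cdot)$ and $(f^n,(f^n)'):=(D\beta(X^n_\cdot),D^2\beta(X^n_\cdot)\beta(X^n_\cdot))$, with $\|G^n\|_\infty\le|b|_{C^1_b}$ and $\|S^n\|_\infty\le|\sigma|_{C^1_b}$, while Lemma~\ref{lemma:consistencyandaprioriforstochasticlinearvectorfields} applied with $\mathbf{Z}^n$ in place of $\mathbf{Z}$ (legitimate since $|\mathbf{Z}^n|_\alpha\le C$) gives $(f^n,(f^n)')\in\mathbf{D}^{2\alpha}_{Z^n}L^{p,\infty}_{lin}$ with norm $\le K_C$, a constant independent of $n$; hence the constant $M$ of Theorem~\ref{thm:aprioriestimatelinearRSDEs} may be chosen once and for all, depending only on $\alpha,p,T,C,|b|_{C^1_b},|\sigma|_{C^1_b},|\beta|_{C^3_b}$. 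I would also record that $\sup_n\sup_{t\in[0,T]}\|X^n_t\|_q<\infty$ for every $q\ge2$, since the a priori bounds for the RSDE are uniform over bounded sets of rough paths \cite{friz2021rough}. For the \textbf{base case $k=1$}, $Y^{n,1,\theta}$ solves this linear RSDE with zero forcing term and initial datum $\sigma(X^n_\theta)$, so $\|\sigma(X^n_\theta)\|_p\le|\sigma|_\infty$; Theorem~\ref{thm:aprioriestimatelinearRSDEs} bounds $(|Y^{n,1,\theta}_\cdot|)_{p;\lambda}+(|\delta Y^{n,1,\theta}|)_{\alpha;p;\lambda}+(|Y^{n,1,\theta,\natural}|)_{2\alpha;p;\lambda}$ by a constant multiple of $|\sigma|_\infty$ depending only on $M,\alpha,p,T$, and passing to the ordinary norms via Remark~\ref{rmk:equivalencewiththeclassicalnorms} and to the controlled-path norm via Lemma~\ref{lemma:solutionsoflinearRSDEsarecontrolled} yields $\Gamma_{1,p}$ with the asserted dependence.

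For the \textbf{inductive step}, assume the claim for all orders $<k$. Iterating the Leibniz and chain rules for the Malliavin derivative (on the RSDE for $X^n$ and, recursively, on those satisfied by the lower order derivatives) shows that $Y^{n,k,\theta_1,\dots,\theta_k}$ solves a linear RSDE with the \emph{same} linear coefficients $G^n,S^n,(f^n,(f^n)')$ and with initial condition $\xi^{n,k,\theta_1,\dots,\theta_k}$ and forcing datum $(F^{n,k,\theta_1,\dots,\theta_k},(F^{n,k,\theta_1,\dots,\theta_k})')$ that are finite sums of products of the lower order derivatives $\{Y^{n,j,\cdot}:j<k\}$ and of integrals of such products against $\rd s$, $\rd B$ and $\rd\mathbf{Z}^n$, multiplied by factors $D^\ell b(X^n),D^\ell\sigma(X^n)$ with $\ell\le k$ and $D^\ell\beta(X^n)$ with $\ell\le 3+k$. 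Using stability of stochastic controlled paths under products and under composition with smooth functions of polynomial growth (Propositions~\ref{prop:stambilityundercompositionlinearvectorfields} and~\ref{pro: chain rule C3 function}), together with the uniform moment bounds on $X^n$ and the inductive hypothesis at a large enough integrability exponent $q=q(k,p)\ge p$ to absorb the H\"older losses from the products, one bounds $\|\xi^{n,k,\theta_1,\dots,\theta_k}\|_p+\|(F^{n,k,\theta_1,\dots,\theta_k},(F^{n,k,\theta_1,\dots,\theta_k})')\|_{\mathscr{D}^{2\alpha}_{Z^n}L^p}$ by a constant $\Gamma'_{k,p}$ with the required dependence, uniformly in $n$ and in $(\theta_1,\dots,\theta_k)$. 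A final application of Theorem~\ref{thm:aprioriestimatelinearRSDEs} on $[s_0,T]$ (same $M$) then bounds $(|Y^{n,k,\cdot}_\cdot|)_{p;\lambda}+(|\delta Y^{n,k,\cdot}|)_{\alpha;p;\lambda}+(|Y^{n,k,\cdot,\natural}|)_{2\alpha;p;\lambda}$ by a constant multiple of $\Gamma'_{k,p}$, and converting back to the ordinary norms (again with Lemma~\ref{lemma:solutionsoflinearRSDEsarecontrolled}) gives $\Gamma_{k,p}$ and closes the induction.

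The hard part is purely organisational: writing down the precise combinatorial form of $\xi^{n,k,\cdot}$ and $F^{n,k,\cdot}$ produced by $k$-fold Malliavin differentiation of the composed coefficients, checking term by term that the result genuinely belongs to $\mathscr{D}^{2\alpha}_{Z^n}L^p$, and tracking which integrability exponents must be fed to the inductive hypothesis so that the repeated (conditional) H\"older inequalities close. No individual step is deep; the role of $\sup_n|\mathbf{Z}^n|_\alpha\le C$ is exactly to make every constant occurring in this process independent of $n$, while uniformity of the lower order constants $\Gamma_{j,q}$ in their time indices is what propagates uniformity in $(\theta_1,\dots,\theta_k)$.
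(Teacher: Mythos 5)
Your proposal follows the paper's argument essentially verbatim: strong induction on $k$, with Theorem~\ref{thm:aprioriestimatelinearRSDEs} at each step, uniformity in $n$ coming from $\sup_n|\mathbf{Z}^n|_\alpha\le C$, uniformity in the time indices coming from the inductive hypothesis being uniform in the lower-order time parameters, and the inductive step closed by bounding $\xi^{n,k,\cdot}$ and $(F^{n,k,\cdot},(F^{n,k,\cdot})')$ via the explicit formulas and H\"older's inequality at a suitably inflated integrability exponent. The only cosmetic difference is that the paper isolates explicitly the observation that a rough It\^o process of the form $\int\eta\,\rd r+\int\nu\,\rd B+\int(\gamma,\gamma')\,\rd\mathbf{Z}$ has controlled-path norm bounded by $\sup_t\|\eta_t\|_p+\sup_t\|\nu_t\|_p+\sup_t\|\gamma_t\|_p+\|(\gamma,\gamma')\|_{\mathscr{D}^{2\alpha}_ZL^p}$ and uses this directly to control $F^{n,k,\cdot}$, whereas you route the same point through the product/composition stability of Propositions~\ref{prop:stambilityundercompositionlinearvectorfields} and~\ref{pro: chain rule C3 function}; the two phrasings accomplish the same thing.
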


\begin{proof} From Theorem \ref{thm:aprioriestimatelinearRSDEs} it is clear that a priori estimates for linear RSDEs only depend on the length of the time interval and they are uniform over bounded sets in $\mathscr{C}^\alpha([0,T];\R^n)$.
Therefore, we expect some a priori bounds on $Y^{n,k,\theta_1,\dots,\theta_k}_\cdot$ that are uniform in $(\theta_1, \ldots, \theta_k)$ and in $n$. \\
Going more into detail, the case $k = 1$ is an easy application of Theorem \ref{thm:aprioriestimatelinearRSDEs} without forcing term. \\
Assume, by induction, that the conclusion holds for $k - 1$. As a general fact, every time we consider a rough It\^o process in $L^p$ (see Definition \ref{def: rough ito process}) of the form  \begin{equation*}
     F_t^{\theta} = \int_{\theta}^t \eta_r d r + \int_{\theta}^t \nu_r d B_r + \int_{\theta}^t (\gamma_r, \gamma'_r) d \mathbf{Z}_r, 
\end{equation*}
we can define ${F_t^{\theta}}':=\gamma_t$ and, from the estimates coming Theorem \ref{thm:stochasticsewing}, it holds \begin{equation*}
     \| ( F^{\theta}_t {, F^{\theta}_t}' ) \|_{\mathscr{D}_Z^{2\alpha} L^p} \lesssim \sup_{t \in [\theta, T]} \| \eta_t \|_p + \sup_{t \in [\theta, T]} \|\nu_t \|_p + \sup_{t \in [\theta, T]} \| \gamma_t \|_p +\| (\gamma, \gamma')
     \|_{\mathscr{D}_Z^{2 \alpha} L^p}.
\end{equation*}
    The implicit constant depends on $\alpha, p, T$ and it is uniform on $\{ | \mathbf{Z} |_{\alpha} \leqslant R \}$, for any $R > 0$. We can now fix an arbitrary $p\ge 2$. From Theorem \ref{thm:aprioriestimatelinearRSDEs} it holds that \begin{align*}
        &\sup_{t \in [\theta_1 \vee \dots \vee \theta_k, T]} \|Y_t^{n,k, \theta_1,
     \dots, \theta_{k + 1}} \|_p  +\| (Y_{\cdot}^{k, \theta_1, \dots,\theta_k}, (Y_{\cdot}^{n,k,\theta_1, \ldots, \theta_k})')\|_{\mathscr{D}_{Z^n}^{2 \alpha}L^p} \\
     &\lesssim \| \xi^{n,k, \theta_1, \dots,\theta_k} \|_p + \| (F_t^{n,k, \theta_1, \dots, \theta_k}, (F_t^{n,k,\theta_1, \ldots, \theta_k})') \| {_{\mathscr{D}_{Z^n}^{2 \alpha} L^p}}
    \end{align*}
  where the implicit constant is independent on $\theta_1,\dots,\theta_k$. The conclusion follows by the inductive hypothesis, taking also into account the explicit form of $\xi^{n,k, \theta_1, \dots,\theta_k}$ and $F_\cdot^{n,k, \theta_1, \dots, \theta_k}$ and possibily applying H\"older's inequality.
  \end{proof}

\begin{lemma}\label{lemma:stabilityforlinearsmoothRSDEs}
    For any $k \in \mathbb{N}_{\ge 1}$, for any $p\ge 2$ and for any $(\theta_1, \ldots, \theta_k)\in [0, T]^k$, it holds that \begin{equation*}
         \sup_{t \in [\theta_1 \vee \dots \vee \theta_k, T]} \|Y_t^{n, k,\theta_1, \ldots, \theta_k} - Y_t^{k, \theta_1, \ldots, \theta_k} \|_p \to 0
    \end{equation*}
    as $n \rightarrow + \infty$.
\end{lemma}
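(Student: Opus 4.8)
The plan is to argue by induction on $k$, the engine being the stability estimate for linear RSDEs (Theorem \ref{thm:stabilityforlinearRSDEs}) together with the convergence of the coefficient vector fields from Lemma \ref{lemma:stabilityforstochasticlinearvectorfields}. Throughout, recall that a priori bounds and stability constants for linear RSDEs are uniform over bounded sets of $\mathscr{C}^\alpha([0,T];\R^n)$, so by \eqref{eq:constantC} all constants below may be taken independent of $n$; moreover, by Lemma \ref{lemma:aprioriboundsinthesmoothcase}, the processes $Y^{n,j,\theta_1,\dots,\theta_j}$ (for all $j\le k$) and their Gubinelli derivatives are bounded in every $\mathscr{D}^{2\alpha}_{Z^n}L^p$, uniformly in $n$ and in the time parameters. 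It is convenient to prove the stronger statement that $Y^{n,k,\theta_1,\dots,\theta_k}\to Y^{k,\theta_1,\dots,\theta_k}$ in the stochastic controlled path topology (i.e.\ with convergence of the increments, the Gubinelli derivatives, and the remainders, in the sense of Corollary \ref{coroll:technicalconvergence}), uniformly in $(\theta_1,\dots,\theta_k)$; this is what feeds the inductive step, and it implies the asserted $\sup_t\|\cdot\|_p$-convergence.

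\emph{Base case.} For $k=1$, the processes $Y^{n,1,\theta}$ and $Y^{1,\theta}$ solve the linear RSDEs with zero forcing, initial data $\sigma(X^n_\theta)$ resp.\ $\sigma(X_\theta)$, and coefficients $(Db,D\sigma,D\beta,D^2\beta\cdot\beta)$ evaluated along $X^n$ resp.\ $X$, driven by $\mathbf{Z}^n$ resp.\ $\mathbf{Z}$. By the $L^p$-stability of RSDEs \cite[Theorem 4.11]{friz2021rough}, $\|\sup_t|X^n_t-X_t|\|_p\to0$ for every $p$, hence $\|\sigma(X^n_\theta)-\sigma(X_\theta)\|_{2p}\to0$ uniformly in $\theta$; by Lemma \ref{lemma:stabilityforstochasticlinearvectorfields} the coefficient-difference quantity $\theta^n_{2p,\gamma}\to0$; and $\rho_\alpha(\mathbf{Z}^n,\mathbf{Z})\to0$. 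Theorem \ref{thm:stabilityforlinearRSDEs} then yields the claim for $k=1$, with bounds uniform in $\theta$.

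\emph{Inductive step.} Assume the claim for all orders $\le k-1$. The processes $Y^{n,k,\theta_1,\dots,\theta_k}$ and $Y^{k,\theta_1,\dots,\theta_k}$ solve linear RSDEs with the \emph{same} linear coefficients as in the case $k=1$, but now with initial condition $\xi^{n,k,\dots}$ and forcing $(F^{n,k,\dots},(F^{n,k,\dots})')$, respectively the $X$-versions. These data are finite sums of products of the lower-order derivative processes $Y^{n,j,\dots}$ ($j\le k-1$) with factors of the form $D^l b(X^n), D^l\sigma(X^n), D^l\beta(X^n)$ ($l\le k$), each of which is itself a bounded stochastic controlled path with respect to $Z^n$ (by Proposition \ref{pro: chain rule C3 function} and Lemma \ref{lemma:consistencyandaprioriforstochasticlinearvectorfields} and their higher-order analogues). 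Invoking the inductive hypothesis, the extension of Lemma \ref{lemma:stabilityforstochasticlinearvectorfields} to higher derivatives of $b,\sigma,\beta$ (proved verbatim, using uniform continuity of $D^lb$ etc.\ on compact sets and dominated convergence), and the joint continuity of products of stochastic controlled paths in the $\mathscr{D}^{2\gamma}L^p$-norm (a Leibniz-type estimate as in Proposition \ref{prop:stambilityundercompositionlinearvectorfields}), one obtains
\[
\|\xi^{n,k,\dots}-\xi^{k,\dots}\|_{2p}+\|(F^{n,k,\dots},(F^{n,k,\dots})')-(F^{k,\dots},(F^{k,\dots})')\|_{\mathscr{D}^{2\gamma}L^{2p}}\longrightarrow 0
\]
uniformly in $(\theta_1,\dots,\theta_k)$, where the norm is understood after pairing $Z^n$ with $Z$ as in the proof of Lemma \ref{lemma:stabilityforremainder}. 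Feeding this into Theorem \ref{thm:stabilityforlinearRSDEs}, together with $\theta^n_{2p,\gamma}\to0$ and $\rho_\alpha(\mathbf{Z}^n,\mathbf{Z})\to0$, gives convergence of $Y^{n,k,\theta_1,\dots,\theta_k}$ to $Y^{k,\theta_1,\dots,\theta_k}$ in the controlled path topology, uniformly in the time parameters, and closes the induction.

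\emph{Main obstacle.} The substantive work is not the stability estimate, which is used as a black box, but the bookkeeping: writing down the precise structure of $\xi^{n,k,\dots}$ and $F^{n,k,\dots}$ (equivalently, identifying $F^{n,k,\dots}_\cdot$ and its Gubinelli derivative $(F^{n,k,\dots})'$ explicitly), checking that each building block converges in the appropriate controlled-path topology with $n$-uniform bounds, and verifying that the product map is jointly continuous there. The point one cannot shortcut is that Theorem \ref{thm:stabilityforlinearRSDEs} requires convergence of the full data norm, including the remainder $\|\E_\cdot R^F-\E_\cdot\bar R^{\bar F}\|_{2\gamma;p}$; mere $L^{2p}$-convergence of the forcing processes is insufficient, so the inductive hypothesis must be carried in the stronger controlled-path form throughout.
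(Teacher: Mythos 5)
Your proposal is correct and follows essentially the same route as the paper: induction on $k$, with Theorem \ref{thm:stabilityforlinearRSDEs} and Lemma \ref{lemma:stabilityforstochasticlinearvectorfields} handling the base case, and in the inductive step reading off convergence of $\xi^{n,k,\dots}$ and the full controlled-path norm of $F^{n,k,\dots}$ from their explicit product structure via H\"older's inequality, the uniform a priori bounds of Lemma \ref{lemma:aprioriboundsinthesmoothcase}, and Corollary \ref{coroll:technicalconvergence}. The one cosmetic difference is that you choose to formally strengthen the inductive hypothesis to controlled-path-topology convergence, whereas the paper keeps the hypothesis in the $\sup_t\|\cdot\|_p$ form and uses Corollary \ref{coroll:technicalconvergence} to upgrade on the fly; these are equivalent in substance since that corollary is precisely what yields your strengthened form from the weaker one plus the $n$-uniform bounds.
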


\begin{proof}
  The case $k=1$ easily holds, combining Theorem \ref{thm:stabilityforlinearRSDEs} and Lemma \ref{lemma:stabilityforstochasticlinearvectorfields}.
  Let us fix $k \in \mathbb{N}_{\ge 1}$ and let us assume that the statement holds up to
  $k - 1$. Let us fix $p \ge 2$ and $(\theta_1,
  \dots, \theta_k) \in [0, T]^k$.
  By Theorem \ref{thm:stabilityforlinearRSDEs} and considering also Lemma \ref{lemma:stabilityforstochasticlinearvectorfields}, it is
  sufficient to prove that \begin{equation*}
      \| \xi^{n, k, \theta_1, \ldots, \theta_k} - \xi^{k, \theta_1, \ldots,
     \theta_k} \|_p \rightarrow 0
  \end{equation*}
  and \begin{align*}
     &\|\delta (F^{n, k, \theta_1, \dots, \theta_k}-F^{k, \theta_1, \dots,\theta_k})\|_{\gamma;p}  + \text{sup}_{t\in [0,T]} \|(F^{n, k, \theta_1, \dots, \theta_k})'_t-(F^{k, \theta_1, \dots,\theta_k})'_t\|_p + \\
     &+\|\delta ((F^{n, k, \theta_1, \dots, \theta_k})'-(F^{k, \theta_1, \dots,\theta_k})')\|_{\gamma;p} + \|\E_\cdot R^{n,F^{n, k, \theta_1, \dots, \theta_k}} - \E_\cdot R^{F^{k, \theta_1, \dots,\theta_k}}\|_{2\gamma;p} \\
     &\to 0 \quad \text{as $n\to \infty$}.
  \end{align*}
  Both
  the convergences hold thanks to the inductive assumptions and recalling the
  explicit form of those quantity. The first convergence is just a trivial
  application of Holder's inequality, while the second one holds also
  considering that
  \[ \sup_n \| (F^{n, k, \theta_1, \ldots, \theta_k}, (F^{n, k, \theta_1,
     \ldots, \theta_k})') \|_{\mathscr{D}_{Z^n}^{2 \alpha} L^p} < + \infty \]
  and applying Corollary \ref{coroll:technicalconvergence}. 
\end{proof}

\section{Hörmander-type results}
\label{sec: hoermander}

\subsection{Existence of densities under H\"ormander condition}
In this section we prove the existence of densities for the law of the solution of equation \eqref{eq:RSDE} under H\"ormander condition. 

For a random vector $X = (X^1, \dots, X^d)$, such that  $X^i \in \mathbb{D}^{1,1}$ for every $i=1,\dots,d$, we define the Malliavin matrix as 
\begin{equation}
\label{eq: malliavin matrix}
    \gamma_X := ( \langle \mathbb{D}_{\cdot}X^i, \mathbb{D}_{\cdot}X^j \rangle_{L^2([0,T])} )_{i,j=1,\dots,d}.
\end{equation}

From classical Malliavin theory, if $X = (X^1, \dots, X^d)$ is such that $X^i\in \mathbb{D}^{1,p}$, for every $i=1,\dots,d$ and for some $p>1$, then the law of $X$ admits a density with respect to the Lebesgue measure when the Malliavin matrix $\gamma_X$ is a.s. invertible, see \cite[Theorem 2.1.2]{nualart2006malliavin}. Moreover, the density is smooth whenever $X$ is Malliavin smooth and the inverse of $\gamma_X$ has moments of all orders, see \cite[Theorem 3.2]{hairer2011malliavins} or \cite[Theorem 2.1.4]{malliavin2015stochastic}.

When we work with $(X_t)_{t\in[0,T]}$ solution to \eqref{eq:RSDE}, it is convenient to introduce the reduced Malliavin matrix of $X_t$. In order to define it, we first formally derive equation \eqref{eq:RSDE} in the initial condition.
Let $J=(J_t)_{t \in [0,T]}$ be the continuous and adapted $\mathscr{L}(\R^d,\R^d)$-valued stochastic process which solves the following linear RSDE: \begin{equation} \label{eq:equationforJ}\begin{aligned} &dJ_t=Db(X_t)J_tdt+D\sigma(X_t)J_tdB_t+(D\beta(X_t),D^2\beta(X_t)\beta(X_t))J_td\mathbf{Z}_t, \quad t \in [0,T] \\ &J_0=id_{\R^d}.\end{aligned}\end{equation} 
Moreover, let us denote by $I=(I_t)_{t \in [0,T]}$ the $\mathscr{L}(\R^d,\R^d)$-valued solution of the following linear RSDE:
\begin{equation} \label{eq:equationforI}\begin{aligned} &dI_t=-I_t(Db(X_t)-D\sigma^2(X_t))dt-I_tD\sigma(X_t)dB_t-I_t(D\beta(X_t),D^2\beta(X_t)\beta(X_t))d\mathbf{Z}_t, \quad t \in [0,T] \\ &I_0=id_{\R^d}. \end{aligned}\end{equation}
Notice that both equation \eqref{eq:equationforJ} and \eqref{eq:equationforI} are well-posed by Theorem \ref{thm:wellposednesslinearRSDEs}.
As an application of the rough It\^o formula, equation \eqref{eq: product formula}, we have that $I_t$ is the matrix inverse of $J_t$. Indeed,
for any $t \in [0,T]$ we have, $\mathbb{P}$-a.s.,
\begin{equation*}
    \begin{aligned}
        d(I J)_t &= I_t dJ_t + dI_t J_t + d\langle I,J\rangle_t =  \\ &= I_tDb(X_t)J_tdt+I_tD\beta(X_t))J_td\mathbf{Z}_t+I_t D\sigma(X_t)J_tdB_t \\ &-I_t(Db(X_t)-D\sigma(X_t)^2)J_t dt+D\beta(X_t)J_td\mathbf{Z}_t-I_tD\sigma(X_t)J_tdB_t \\ &- I_t D\sigma(X_t)^2 J_t = 0. 
    \end{aligned}
\end{equation*} 
We define the reduced Malliavin matrix of $X_t$ as
\begin{equation}
    \label{eq: reduced malliavin matrix}
    C_t := \int_{0}^t I_s \sigma(X_s)\sigma^{\top}(X_s) I_s^{\top} \rd s, \qquad t\in[0,T].
    \end{equation}
As in the classical case, we have $\gamma_{X_t} = J_tC_tJ_t^{\top}$. Indeed, we first multiply equation \eqref{eq:equationforJ} to the right by $I_{\theta}\sigma(X_{\theta})$, for $\theta \in [0,t]$, to obtain that $J_tI_{\theta}\sigma(X_{\theta})$ solves the same linear equation as the Malliavin derivative $\mathbb{D}_{\theta}X_t$, equation \eqref{eq:linearRSDEforthemalliavinderivative}. By uniqueness of solutions to linear RSDEs, Theorem \ref{thm:wellposednesslinearRSDEs}, we have that, $\mathbb{P}$-a.s.
\begin{equation*}
    \mathbb{D}_{\theta}X_t = J_t I_{\theta}\sigma(X_{\theta}),
    \qquad 0\leq \theta \leq t \leq T.
\end{equation*}
Now, by definition, for $t\in [0,T]$,
\begin{equation*}
    \gamma_{X_t} = \int_0^t \mathbb{D}_{s}X_t(\mathbb{D}_{s}X_t)^{\top} \rd s
    = \int_0^t J_t I_{s}\sigma(X_s) \sigma(X_s)^{\top}I_{s}^{\top} J_t^{\top}\rd s
    = J_t C_t J_t^{\top}.
\end{equation*}

In order to prove that the Malliavin matrix is invertible, and the inverse has moments of all order, it is enough to prove the same properties for the reduced Malliavin matrix.

Before diving into the proof of existence and smoothness of densities, we need an extra assumption on the rough path driving the equation. In short, if the rough path is just the lift of a (relatively) smooth process, it could be assimilated to a drift and it would help much less in spreading the diffiusion in all directions. Instead, we will work under the assumption that the rough path is truly rough in the sense of the following definition (see \cite[Definition 6.3]{friz2020course}).

\begin{defn}
    \label{def: truly rough}
    Let $\alpha \in (\frac{1}{3},\frac{1}{2}]$. Let $s\in [0,T)$, we say that $\mathbf{Z} \in \mathscr{C}^{\alpha}([0,T];\R^n)$ is \textit{rough at time s} if for every $v \in \R^n \setminus \{0\}$ we have
    \begin{equation*}
        \overline{\lim}_{t\downarrow s} \frac{|v\cdot \delta Z_{s,t}|}{|t-s|^{2\alpha}} = \infty.
    \end{equation*}
    If $\mathbf{Z}$ is rough at $s$ for every $s$ in a dense subset of $[0,T]$ we say that $\mathbf{Z}$ is \textit{truly rough}.
\end{defn}

It is classical that, if a rough path is truly rough, then the Gubinelli derivative of any controlled path is unique, see \cite[Proposition 6.4]{friz2020course}. This is the case also for stochastically controlled path, as shown in the following proposition, which works for any Banach space $W$. However, we will only apply it in the finite dimensional case.

\begin{prop}
    \label{prop: uniqueness truly rough}
    Let $\mathbf{Z}\in \mathscr{C}^{\alpha}([0,T];\R^n)$ truly rough. If $(X,X^{\prime}) \in \mathscr{D}_{Z}^{2\alpha}L^{p,q}(W)$, with $\alpha \in (\frac{1}{3},\frac{1}{2}]$ and $X^{\prime}$ has continuous sample paths, then
    \begin{enumerate}
        \item If $\mathbb{E}_{\cdot}\delta X \in C^{2\alpha}L^q(W)$ then $X^{\prime}$ is indistinguishable from $0$.
        \item \textit{(Uniqueness of Gubinelli derivative)} If $X^1$ and $X^2$ are both Gubinelli derivatives of $X$, then they are indistinguishable.
    \end{enumerate}
\end{prop}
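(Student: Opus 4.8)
The plan is to adapt the classical argument for uniqueness of the Gubinelli derivative under true roughness (\cite[Proposition 6.4]{friz2020course}) to the stochastic setting, exploiting the fact that here the remainder is controlled only in conditional expectation. For part (1), suppose $(X,X')\in\mathscr{D}_Z^{2\alpha}L^{p,q}(W)$ with $\mathbb{E}_\cdot\delta X\in C^{2\alpha}L^q(W)$. Writing $R^X_{s,t}=\delta X_{s,t}-X'_s\delta Z_{s,t}$, by hypothesis $\|\mathbb{E}_s R^X_{s,t}\|_q\lesssim|t-s|^{2\alpha}$ and, since $\mathbb{E}_s\delta X_{s,t}=\mathbb{E}_s\delta X_{s,t}$ is assumed to be $2\alpha$-Hölder in $L^q$, we get $\|\mathbb{E}_s(X'_s\delta Z_{s,t})\|_q=\|X'_s\delta Z_{s,t}\|_q=|X'_s\delta Z_{s,t}|$ (the increment $\delta Z_{s,t}$ being deterministic) bounded, pathwise, by $\lesssim|t-s|^{2\alpha}$ modulo an $L^q$-null set. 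Fix $\omega$ in a full-measure set and $s$ in the dense set of times at which $\mathbf{Z}$ is rough; then for every unit vector $v$, testing $v\cdot X'_s(\omega)\delta Z_{s,t}$ against the roughness condition $\overline{\lim}_{t\downarrow s}|v\cdot\delta Z_{s,t}|/|t-s|^{2\alpha}=\infty$ forces the component of $X'_s(\omega)$ in the direction dual to $v$ to vanish; ranging over $v$ gives $X'_s(\omega)=0$ for all such $s$. By continuity of the sample paths of $X'$ and density of the roughness times, $X'\equiv 0$ up to indistinguishability.

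The one subtlety is that the bound $\|\mathbb{E}_s(X'_s\delta Z_{s,t})\|_q\lesssim|t-s|^{2\alpha}$ is an $L^q$-statement with a constant that is uniform in $(s,t)$, so I first extract, via a countable dense set of times and a Borel–Cantelli / monotone-class argument, a single $\mathbb{P}$-null set off which the pointwise inequality $|X'_s(\omega)\delta Z_{s,t}|\le C(\omega)|t-s|^{2\alpha}$ holds simultaneously for all rational (or dyadic) $s$ in the roughness set and all $t>s$; continuity of $X'$ and of $\delta Z$ then upgrades this to all $s$ in the dense roughness set. This is exactly the place where we use that $X'$ has continuous sample paths. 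Once the pointwise inequality is in hand, the deterministic argument of \cite[Proposition 6.4]{friz2020course} applies verbatim to conclude $X'_s(\omega)=0$.

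Part (2) is an immediate corollary: if $X^1$ and $X^2$ are both Gubinelli derivatives of $X$ with respect to $Z$, then $X^1-X^2$ is the Gubinelli derivative of the zero process $0\in\mathscr{D}_Z^{2\alpha}L^{p,q}(W)$, whose remainder is identically zero, hence trivially $\mathbb{E}_\cdot\delta 0\in C^{2\alpha}L^q(W)$; applying part (1) to $(0,X^1-X^2)$ gives $X^1-X^2\equiv 0$ up to indistinguishability. The main obstacle, and the only real work, is the measure-theoretic bookkeeping in the first paragraph — promoting the uniform-in-$(s,t)$ $L^q$ estimate on $\mathbb{E}_s R^X_{s,t}$ and $\mathbb{E}_s\delta X_{s,t}$ to a single almost-sure pathwise statement valid at all roughness times — after which the rough-path-theoretic content is the same as in the deterministic case.
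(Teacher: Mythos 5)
Your reduction of part (2) to part (1) and your closing density-plus-continuity step both match the paper. The problem is in the middle, at the claimed promotion of the $L^q$ estimate to a pathwise one. You assert that from
$\|X'_s\delta Z_{s,t}\|_q \lesssim |t-s|^{2\alpha}$, uniformly in $(s,t)$, one can extract a single $\mathbb{P}$-null set off which $|X'_s(\omega)\delta Z_{s,t}| \le C(\omega)|t-s|^{2\alpha}$ holds for all roughness times $s$ and all $t>s$. For finite $q$ this is not true and the tools you invoke do not supply it. The intermediate equality $\|X'_s\delta Z_{s,t}\|_q = |X'_s\delta Z_{s,t}|$ already conflates an $L^q(\Omega)$ norm with a pointwise absolute value; $\delta Z_{s,t}$ being deterministic does not make the random variable $X'_s\delta Z_{s,t}$ deterministic. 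Markov gives only $\mathbb{P}\big(|X'_s\delta Z_{s,t}|>K|t-s|^{2\alpha}\big)\le (C/K)^q$, a bound independent of $|t-s|$, so it is not summable along any sequence $t_n\downarrow s$ and Borel--Cantelli is of no use. A Kolmogorov-type continuity argument would cost a factor $|t-s|^{-2/q}$ in the exponent, leaving you with a pathwise Hölder exponent strictly below $2\alpha$, which is not enough to contradict $2\alpha$-roughness. Only $q=\infty$ gives a pathwise bound directly, but the proposition is stated for all $q\in[p,\infty]$.

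The paper's proof never attempts a pathwise upgrade. It works through duality: fix $w^{\ast}\in L^{q'}(\Omega;W^{\ast})$ with $\|w^{\ast}\|_{q'}=1$ and form the \emph{deterministic} vector $v:=\E\big[(X'_s)^{\ast}w^{\ast}\big]\in\R^n$. Then
$|v\cdot\delta Z_{s,t}| = \big|\E\big[w^{\ast}(X'_s\delta Z_{s,t})\big]\big| \le \|X'_s\delta Z_{s,t}\|_q \lesssim |t-s|^{2\alpha}$,
so the $\overline{\lim}$ in the roughness condition is finite, forcing $v=0$; ranging over $w^{\ast}$ gives $\|X'_s\|_q=0$. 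This isolates exactly the right deterministic object to test against roughness, bypassing the ($\omega$-wise) statement your argument relies on. After that, your density-of-roughness-times and continuity-of-$X'$ argument, and your deduction of (2), carry over unchanged.
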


\begin{proof}

    In order to prove the first statement, we first notice that
    \begin{equation*}
        \sup_{0\leq s<t\leq T}\frac{\| X^{\prime}_s Z_{s,t}\|_{q}}{|t-s|^{2\alpha}}
        = \sup_{0\leq s<t\leq T}\frac{\|\mathbb{E}_s[X_{s,t} - R^X_{s,t}]\|_q}{|t-s|^{2\alpha}}
        < +\infty.
    \end{equation*}
    Let $1/q + 1/q^{\prime} = 1$ and $W^{\ast}$ be the dual of $W$. We take $w^{\ast} \in L^{q^{\prime}}(W^{\ast})$, with $\|w^{\ast}\|_{q^{\prime}} = 1$.
    Let us now apply the definiton of rough at time $s\in[0,T)$ with the vector $v := \mathbb{E}[(Y^{\prime})^{\ast}w^{\ast}] \in \R^n$
    \begin{equation*}
        \overline{\lim}_{t\downarrow s} \frac{|v\cdot Z_{s,t}|}{|t-s|^{2\alpha}}
        =
        \overline{\lim}_{t\downarrow s} \frac{|\mathbb{E}[w^{\ast}(X^{\prime}_sZ_{s,t})]|}{|t-s|^{2\alpha}}
        \leq
        \|w\|_{q^{\prime}} 
        \sup_{0\leq s<t\leq T}\frac{\| X^{\prime}_s Z_{s,t}\|_{q}}{|t-s|^{2\alpha}}
        < +\infty.
    \end{equation*}
    This implies that $v = 0 \in \R^{n}$ and hence 
    \begin{equation*}
        \|X^{\prime}_s\|_{q} = \sup_{\|w^{\ast}\|_{q^{\prime}}=1} |\mathbb{E}[(X^{\prime}_s)^{\ast}w^{\ast}]| = 0.
    \end{equation*}
    Hence, $X^{\prime}_s = 0$, $\mathbb{P}$-a.s., for $s$ in a dense subset of $[0,T]$, without loss of generality, we can assume that the dense is countable and, by continuity, we have that $X^{\prime}$ is indistinguishable from $0$.

    The second statement is a direct consequence of the first. Indeed, assume that there are two Gubinelli derivatives for $Y$, say $Y^1$, $Y^2$, then
    \begin{equation*}
        0 = (X^1-X^2)_{s}Z_{s,t} + R_{s,t}
    \end{equation*}
    for some $R \in C^{2\alpha}_2L^q(W)$. This implies that $X^1-X^2$ is the Gubinelli derivative of $0$, which is $0$, by uniqueness.
\end{proof}

\begin{rmk}
    Proposition \ref{prop: uniqueness truly rough} is a slight generalization of \cite[Proposition 3.3]{friz2021rough}. Notice that in that paper they only assume that $X$ is a martingale, whereas, under assumptions, $X$ can also be the sum of a martingale and a finite variation process.
    
    Moreover, the assumption of path continuity for $X^{\prime}$ is needed if we want uniqueness up to indistinguishability. Without uniqueness one obtains that any two Gubinalli derivatives coincide on a dense subset of times, up to a null set, which possibly depends on time.
   
\end{rmk}

As an immediate consequence, we have uniqueness in the decomposition of a path as the sum of a stochastic integral, a rough integral and a Lebesgue integral. Assume $W$ is a finite dimensional Banach space.

\begin{coroll}
\label{cor: weak norris}
     Let $\mathbf{Z}\in \mathscr{C}^{\alpha}([0,T];\R^n)$ truly rough, with $\alpha \in (\frac{1}{3},\frac{1}{2}]$. Let $(X_t)_{t\in[0,T]}$ be a rough It\^o process with coefficients $b,\sigma$. Assume that $X \equiv 0$, $\mathbb{P}$-almost surely, then 
     $b\equiv 0$, $\sigma \equiv 0$ and $(X^{\prime},X^{\prime\prime}) \equiv 0$, $\mathbb{P}$-almost surely.
     
\end{coroll}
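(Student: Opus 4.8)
The plan is to reduce Corollary \ref{cor: weak norris} to the uniqueness statement in Proposition \ref{prop: uniqueness truly rough} by successively peeling off the rough, martingale, and drift parts of the decomposition. Recall that $X$ being a rough It\^o process means
\begin{equation*}
    X_t = X_0 + \int_0^t b_s \,\mathrm{d}s + \int_0^t \sigma_s \,\mathrm{d}B_s + \int_0^t (X',X'')_s \,\mathrm{d}\mathbf{Z}_s ,
\end{equation*}
and by Proposition \ref{pro: rough ito process is stoch controlled} we know $(X, X') \in \mathscr{D}_Z^{2\alpha}L^p(U)$. The hypothesis $X \equiv 0$ means in particular $\delta X_{s,t} = 0$ for all $(s,t)$, so the Gubinelli derivative of the (trivially zero) path $X$ is $X'$. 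But $0$ also has $0$ as a Gubinelli derivative, so by part (2) of Proposition \ref{prop: uniqueness truly rough} — whose continuity hypothesis on $X'$ is met since, being a composition of continuous coefficients with the continuous process $X$ in the intended application, or more generally since $X'$ is assumed to have continuous sample paths in the definition of rough It\^o process — we conclude $X' \equiv 0$ up to indistinguishability. Since $X'' = DX' \cdot(\text{coefficient})$ in the cases of interest, but more directly: with $X' \equiv 0$, the local expansion of the rough integral $\int_0^\cdot (X',X'')_s\,\mathrm{d}\mathbf{Z}_s$ reduces to $X'_s \delta Z_{s,t} + X''_s \mathbb{Z}_{s,t} = X''_s \mathbb{Z}_{s,t}$, i.e. the whole rough integral term has a remainder controlled at order $|t-s|^{\alpha+2\gamma}$ and its increment is $X''_s\mathbb{Z}_{s,t} + (\text{higher order})$.

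Next I would isolate $X''$. With $X'\equiv 0$, the identity $\delta X_{s,t} = 0$ becomes
\begin{equation*}
    0 = \int_s^t b_r\,\mathrm{d}r + \int_s^t \sigma_r\,\mathrm{d}B_r + X''_s \mathbb{Z}_{s,t} + Q_{s,t},
\end{equation*}
where $\|Q_{s,t}\|_p \lesssim |t-s|^{\alpha+2\gamma}$ and $\|\mathbb{E}_s Q_{s,t}\|_p \lesssim |t-s|^{\alpha+2\gamma}$, using Proposition \ref{prop:roughstochastiintegral} applied to $(X',X'')$ now that $X'\equiv 0$. Taking conditional expectation $\mathbb{E}_s$ kills the It\^o integral, leaving $\mathbb{E}_s\big(\int_s^t b_r\,\mathrm{d}r\big) + X''_s\mathbb{Z}_{s,t} + \mathbb{E}_s Q_{s,t} = 0$; the drift term is $O(|t-s|)$ and, since $2\alpha \le 1$ while $\alpha + 2\gamma$ can be taken $>1$ for $\gamma$ close to $\alpha$ (using $\alpha > 1/3$), the leading behaviour as $t \downarrow s$ is $X''_s \mathbb{Z}_{s,t}$. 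Roughness of $Z$ forces $\mathbb{Z}_{s,t}$ to be "large" in enough directions at a dense set of times: more precisely, by Chen's relation and true roughness one can argue as in the classical proof that $X''_s = 0$ at a dense set of $s$, and then by continuity (of $X''$, which holds since $X''$ is a Gubinelli derivative / by assumption) that $X'' \equiv 0$. Alternatively — and this is cleaner — once $X' \equiv 0$ the rough integral term vanishes entirely except for $\int_0^\cdot X''_s\,\mathrm{d}\mathbf{Z}_s$, which is a genuine rough stochastic integral of $(0, X'')$; and the fact that $\int_0^\cdot(0,X'')_s\,\mathrm{d}\mathbf{Z}_s = -\int_0^\cdot b_s\,\mathrm{d}s - \int_0^\cdot\sigma_s\,\mathrm{d}B_s$ has bounded variation plus martingale structure contradicts any nonzero $X''$ at a point of roughness, again by the argument underlying Proposition \ref{prop: uniqueness truly rough}.

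Having $X' \equiv X'' \equiv 0$, the remaining identity is
\begin{equation*}
    0 = \int_0^t b_s\,\mathrm{d}s + \int_0^t \sigma_s\,\mathrm{d}B_s, \qquad t\in[0,T],
\end{equation*}
$\mathbb{P}$-a.s., which is precisely the classical Doob–Meyer uniqueness for continuous semimartingales: the finite-variation part and the martingale part of the zero process are both zero. Concretely, take $\mathbb{E}_s(\delta(\cdot)_{s,t})$ to get $\mathbb{E}_s\int_s^t b_r\,\mathrm{d}r = 0$ for all $s \le t$; dividing by $t-s$ and letting $t\downarrow s$ (using the $L^p$-continuity of $r\mapsto b_r$ from \eqref{eq: boundedness in Lp coefficients}, or a Lebesgue-differentiation argument along rationals) gives $b_s = 0$ for a.e. $s$, $\mathbb{P}$-a.s., hence $b \equiv 0$ as progressive processes. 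Then $\int_0^t \sigma_s\,\mathrm{d}B_s \equiv 0$, so its quadratic variation $\int_0^t \sigma_s\sigma_s^\top\,\mathrm{d}s \equiv 0$, forcing $\sigma \equiv 0$.

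The main obstacle I expect is the second step, extracting $X''\equiv 0$: unlike in Proposition \ref{prop: uniqueness truly rough}, here $X'$ is already zero so one cannot directly invoke uniqueness of the Gubinelli derivative, and the term $X''_s\mathbb{Z}_{s,t}$ sits at order $|t-s|^{2\alpha}$ which is not obviously "killed" by roughness in the same one-line fashion — one must genuinely combine Chen's relation ($\mathbb{Z}$ is determined up to an increment of a path by $\delta Z\otimes\delta Z$) with the definition of true roughness to produce, for each fixed $s$ in the dense set and each $v \ne 0$, a sequence $t_k \downarrow s$ along which $|v^\top \mathbb{Z}_{s,t_k}|/|t_k-s|^{2\alpha}$ does not stay bounded, contradicting $\|X''_s\mathbb{Z}_{s,t}\|_p \lesssim |t-s|^{2\alpha}$ unless $X''_s = 0$. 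Making this last point rigorous requires a small amount of care with the symmetric versus antisymmetric parts of $\mathbb{Z}$ (the geometric/weakly-geometric structure helps), and with passing from "a.s. for each fixed $s$" to "indistinguishable" via continuity of $X''$.
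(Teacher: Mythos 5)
Your proof of $X' \equiv 0$ is correct and in fact slightly more direct than the paper's: you apply uniqueness of the Gubinelli derivative (Proposition \ref{prop: uniqueness truly rough}, part (2)) to the identically zero path $X$, whereas the paper applies part (1) to the auxiliary path $Y_t := \int_0^t b_s\,\mathrm{d}s + \int_0^t \sigma_s\,\mathrm{d}B_s = -\int_0^t(X',X'')_s\,\mathrm{d}\mathbf{Z}_s$, noting that $(Y,X')$ is stochastically controlled while $\mathbb{E}_\cdot\delta Y$ decays at order $|t-s| \le |t-s|^{2\alpha}$. Your final step showing $b\equiv 0$ and $\sigma\equiv 0$ from $\int_0^\cdot b\,\mathrm{d}s + \int_0^\cdot \sigma\,\mathrm{d}B\equiv 0$ is fine; it is the classical observation that a continuous finite-variation local martingale is constant.

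However, there is a genuine gap in your treatment of $X'' \equiv 0$, and you flagged it yourself as "the main obstacle" without resolving it. You propose to analyse the residual term $X''_s\mathbb{Z}_{s,t}$ at order $|t-s|^{2\alpha}$ by combining Chen's relation with true roughness. But true roughness is, by Definition \ref{def: truly rough}, a lower bound on $|v\cdot\delta Z_{s,t}|/|t-s|^{2\alpha}$, not on $\mathbb{Z}_{s,t}/|t-s|^{2\alpha}$; passing from one to the other is not routine, since the antisymmetric (area) part of $\mathbb{Z}$ is not a function of $\delta Z$ and admits no such lower bound in general. Even in the geometric case, where the symmetric part equals $\frac12\delta Z_{s,t}\otimes\delta Z_{s,t}$, the linear map $X''_s$ may annihilate that symmetric part while acting nontrivially on the area, so the argument does not close. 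The paper avoids all of this by exploiting a structural fact you did not use: Definition \ref{def: rough ito process} requires $(X',X'')\in\mathscr{D}_Z^{2\alpha}L^p(\mathscr{L}(\R^n,U))$, i.e.\ $X'$ is \emph{itself} a stochastically controlled path with Gubinelli derivative $X''$. Once $X'\equiv 0$, the very same uniqueness argument you already invoked, applied now to the pair $(X',X'')$, immediately gives $X''\equiv 0$, since $0$ is also a Gubinelli derivative of the zero path. With $(X',X'')\equiv(0,0)$ the rough integral vanishes, the identity $\int_0^\cdot b\,\mathrm{d}s + \int_0^\cdot\sigma\,\mathrm{d}B\equiv 0$ follows, and the rest of your argument goes through.
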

\begin{proof}
    We define the path
    \begin{equation}
    \label{eq: rough stoch leb equal 0}
        Y_t :=
        \int_0^t b_s \mathrm{d}s
         + \int_0^t \sigma_s \mathrm{d} B_s
         = -\int_0^t (X^{\prime},X^{\prime\prime})_s \mathrm{d}\mathbf{Z}_s.
    \end{equation}
    Then $(Y,X^{\prime}) \in \mathscr{D}_{\mathbf{Z}}^{2\alpha}L^{p}(U)$ for every $p\geq 0$. However, $\mathbb{E}_{\cdot}\delta Y\in C^{2\alpha}_2L^p(U)$. By Proposition \ref{prop: uniqueness truly rough} we have that $(X^{\prime},X^{\prime\prime}) \equiv (0,0)$, which also implies that the rough integral and hence each term in \eqref{eq: rough stoch leb equal 0} is $0$. Since the central term in \eqref{eq: rough stoch leb equal 0} is $0$, by classical stochastic calculus also $b$ and $\sigma$ vanish.
\end{proof}

We are now ready to prove existence of densities under Hörmander condition. 


\begin{thm}
\label{thm: existence under hoermander}
Let $\alpha \in (\frac{1}{3},\frac{1}{2}]$ and $\mathbf{Z}=(Z,\mathbb{Z}) \in \mathscr{C}^{0,\alpha}_g([0,T];\R^n)$ be a truly rough geometric rough path.
    Assume that the coefficients $b,\sigma,\beta$ satisfy H\"ormander's condition \ref{def: heormander condition}. The solution $(X_t)_{t\in[0,T]}$ to equation \eqref{eq:RSDE} admits a density with respect to the Lebesgue measure.
\end{thm}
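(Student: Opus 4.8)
The plan is to invoke the classical Malliavin criterion (\cite[Theorem 2.1.2]{nualart2006malliavin}): since the coefficients are smooth and bounded, Theorem \ref{thm:malliavincalculusforRSDEs} guarantees $X_t^i \in \mathbb{D}^{1,p}$ for all $p$, so it suffices to show that the Malliavin matrix $\gamma_{X_t}$ is $\mathbb{P}$-a.s.\ invertible. Using the factorization $\gamma_{X_t} = J_t C_t J_t^\top$ established above and the invertibility of $J_t$ (with inverse $I_t$, via the rough It\^o formula), this reduces to showing that the reduced Malliavin matrix $C_t = \int_0^t I_s\sigma(X_s)\sigma^\top(X_s)I_s^\top\,\mathrm{d}s$ is a.s.\ invertible, i.e.\ that a.s.\ there is no $v\in\R^d\setminus\{0\}$ with $v^\top C_t v = \int_0^t |\sigma^\top(X_s)I_s^\top v|^2\,\mathrm{d}s = 0$.

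First I would fix $t>0$ and argue by contradiction on the event $\{\det C_t = 0\}$: on this event there is a random unit vector $v$ with $\sigma^\top(X_s)I_s^\top v = 0$, equivalently $v^\top I_s \sigma^k(X_s) = 0$, for Lebesgue-a.e.\ $s\in[0,t]$, hence (by continuity of $s\mapsto I_s$ and $s\mapsto X_s$) for all $s\in[0,t]$; in particular at $s=0$, $v^\top\sigma^k(x_0)=0$ for all $k$. The strategy is then the standard Hörmander bracket-propagation argument: I want to show that $v^\top I_s V(X_s) \equiv 0$ on $[0,t]$ for every $V\in\mathcal{S}$, by induction on the filtration $\mathcal{S}_i$. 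The base case is $\mathcal{S}_0 = \{\sigma^k\}$, just established. For the inductive step, given $V\in\mathcal{S}_i$ with $v^\top I_sV(X_s)\equiv 0$, one computes the dynamics of the process $s\mapsto v^\top I_s V(X_s)$ using the rough It\^o formula (Proposition \ref{prop: product formula}) together with the equation \eqref{eq:equationforI} for $I$ and the RSDE \eqref{eq:RSDE} for $X$. The $\mathrm{d}I_s$ terms cancel the Stratonovich-type corrections, and after collecting terms one finds that $v^\top I_s V(X_s)$ is itself a rough It\^o process whose drift, diffusion and rough coefficients are precisely (linear combinations of) the brackets $v^\top I_s [F,V](X_s)$ with $F\in\{b,\sigma^1,\dots,\sigma^m,\beta^1,\dots,\beta^n\}$, plus lower-order terms already known to vanish. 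Since this rough It\^o process is identically zero, Corollary \ref{cor: weak norris} (the "weak Norris" statement, which uses true roughness of $Z$ to separate the Lebesgue, It\^o and rough parts) forces each of its coefficients to vanish, giving $v^\top I_s[F,V](X_s)\equiv 0$; this is exactly the inductive step passing from $\mathcal{S}_i$ to $\mathcal{S}_{i+1}$.

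Carrying the induction through all $i$ yields $v^\top I_s V(X_s) \equiv 0$ for every $V\in\mathcal{S}$, and evaluating at $s=0$ gives $v^\top V(x_0) = 0$ for all $V\in\mathcal{S}$, i.e.\ $v\perp\mathcal{S}(x_0)$. But Hörmander's condition \ref{def: heormander condition} says $\mathcal{S}(x_0)=\R^d$, so $v=0$, contradicting $|v|=1$. Hence $\mathbb{P}(\det C_t = 0) = 0$, $C_t$ and therefore $\gamma_{X_t}$ are a.s.\ invertible, and the existence of a density follows. I expect the main obstacle to be the inductive step: one must carefully apply the rough It\^o formula to the product $v^\top I_s V(X_s)$ — checking that $V\in\mathcal{C}^\infty_b$ composed with the rough It\^o process $X$ (and paired with the linear rough It\^o process $I$) again yields a rough It\^o process in the sense of Definition \ref{def: rough ito process}, with the polynomial-growth hypotheses of Proposition \ref{prop: product formula} verified — and then bookkeeping the drift/diffusion/rough coefficients so that they match the bracket structure of \eqref{eq: def of S spaces}. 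A secondary technical point is the measurable selection of the random vector $v$ and justifying the passage from "a.e.\ $s$" to "all $s$" via path continuity; both are routine but need to be stated. Note, as observed in the Remark following the theorem, that in the uniformly elliptic case the whole bracket induction collapses — $\mathcal{S}_0(x)=\R^d$ already — so smoothness of the coefficients is not needed there, only $\mathbb{D}^{1,p}$-regularity from the $\mathcal{C}^1_b$/$\mathcal{C}^3_b$ theory.
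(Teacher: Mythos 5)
Your plan matches the paper's proof in its essentials: reduce to a.s.\ invertibility of the reduced Malliavin matrix $C_t$ from \eqref{eq: reduced malliavin matrix}, introduce the process $W_s(V) := \xi^\top I_s V(X_s)$, propagate the identity $W_\cdot(V)\equiv 0$ through the bracket filtration by computing the dynamics of $W_\cdot(V)$ via the rough It\^o formula (Proposition~\ref{prop: product formula}) and then killing each coefficient with Corollary~\ref{cor: weak norris}, and conclude at $s=0$ from H\"ormander's condition.

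The one substantive inaccuracy is your description of the drift coefficient after the It\^o step. You claim the drift of $W_\cdot(V)$ consists of ``(linear combinations of) the brackets $v^\top I_s[F,V](X_s)$ with $F\in\{b,\sigma^1,\dots,\sigma^m,\beta^1,\dots,\beta^n\}$, plus lower-order terms already known to vanish.'' In fact, after the cancellations from $\rd I_s$ and the quadratic covariation term, the drift that comes out (see \eqref{eq: equation for xi I F}) is
\begin{equation*}
W_t([b,V]) + \tfrac12\sum_{k=1}^m W_t\bigl([\sigma^k,[\sigma^k,V]]\bigr),
\end{equation*}
and the double bracket $[\sigma^k,[\sigma^k,V]]$ is neither of the form $[F,V]$ with $F$ a coefficient vector field, nor is it lower-order: for $V\in\mathcal{S}_i$ it lies in $\mathcal{S}_{i+2}$, which the induction has not yet reached. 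Running the induction along $\mathcal{S}_i$ as you propose, Corollary~\ref{cor: weak norris} gives you the vanishing of the \emph{combination} $[b,V]+\tfrac12\sum_k[\sigma^k,[\sigma^k,V]]$, not of $[b,V]$ alone, so the step from $\mathcal{S}_i$ to $\mathcal{S}_{i+1}$ does not close as written. This is precisely why the paper replaces $\mathcal{S}_i$ by the It\^o-adapted filtration $\bar{\mathcal{S}}_i$ of \eqref{eq: def of bar S spaces}, whose ``drift bracket'' is exactly the corrected combination, and then observes that $\bigcup_i\bar{\mathcal{S}}_i(x)$ and $\mathcal{S}(x)$ span the same space, so H\"ormander's condition transfers. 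Aside from this (well-known, easily repaired) omission, the rest of your outline — including the chain-rule/product bookkeeping, the passage from a.e.\ $s$ to all $s$ by continuity, and the remark on the elliptic case — is consistent with the paper.
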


\begin{proof}
    The proof is an adaptation of the classical proof for SDEs (see \cite[Theorem 2.3.2]{nualart2006malliavin}). The goal is to apply \cite[Theorem 2.1.2]{nualart2006malliavin} by showing that the Malliavin matrix $\gamma_{X_t}$, defined in \eqref{eq: malliavin matrix}, is almost surely invertible. Equivalently, we check that the reduced Malliavin matrix $C_t$ is almost surely invertible. 

    First we notice that we can actually replace the spaces $\{\mathcal{S}_i\}_{i\in\mathbb{N}}$ with $\bar{\mathcal{S}}_0 = \mathcal{S}_0$ and
    \begin{align} 
    \label{eq: def of bar S spaces}
    \begin{aligned}
    \bar{\mathcal{S}}_{i+1} := &\bar{\mathcal{S}}_{i} \cup
    \{
        [F,V] \mid F\in\{\sigma^1,\dots,\sigma^m,\beta^1,\dots,\beta^n\},
        V\in \bar{\mathcal{S}}_{i}
    \}\\
    &\cup
    \left\{
    [b,V] + \frac{1}{2}\sum_{k=1}^m [\sigma^k,[\sigma^k,V]] \mid V \in \bar{\mathcal{S}}_{i}
    \right\},
    \qquad
    i\in \mathbb{N}.
    \end{aligned}
\end{align}
    Indeed, for every $x\in\R^d$, $\{V(x) \mid V\in\bar{\mathcal{S}}_i\}$ spans the same space as $\{V(x) \mid V\in\mathcal{S}_i\}$.
    
    Fix $t>0$. Assume that there exist $\xi \in \R^d$ such that, $\mathbb{P}$-a.s.
    \begin{align*}
        0=\xi^{\top} C_t \xi
        :=\xi^{\top}\left(
        \int_0^t I_s \sigma\sigma^{\top}(X_s)I_s^{\top} \rd s
        \right)\xi
        = \sum_{k=1}^{m} \int_0^t |\xi^{\top} I_s \sigma^k(X_s)|^2 \rd s,
    \end{align*}
    where $I$ is a solution to the linear equation \eqref{eq:equationforI}. By the time-continuity of the processes $X$ and $I$, we have that, $\mathbb{P}$-a.s.
    \begin{equation*}
        \sup_{s\in[0,t]}|\xi^{\top} I_s V(X_s)| = 0,
        \qquad
        \forall V \in \mathcal{S}_0.
    \end{equation*}
    If we can prove that, for every $i\in \mathbb{N}$, $\mathbb{P}$-a.s.
    \begin{equation}
    \label{eq: supremum zero}
        \sup_{s\in[0,t]}|\xi^{\top} I_s V(X_s)| = 0,
        \qquad
        \forall V \in \bar{\mathcal{S}}_i.
    \end{equation}
    we can then take $s=0$ and we have that $\xi$ is orthogonal to $V(x_0)$ for every $V\in \bar{\mathcal{S}}_i$. By H\"ormander's condition, there exists $i\geq 0$ such that $\operatorname{span}\{V(x_0) \mid V\in\bar{\mathcal{S}}_i\} = \R^d$, and hence $\xi = 0$. This implies that $C_t$ is almost surely invertible.

    It is only left to prove \eqref{eq: supremum zero}. We proceed by induction.

    Let $F\in \bar{\mathcal{S}}^{i}$ be a smooth  bounded vector field. We define \begin{equation}
    \label{eq: definition of W(F)}
    W_t(F) := \xi^{\top} I_t F(X_t).
    \end{equation}
    
    It follows from the rough It\^o formula \cite[Theorem 4.13]{friz2021rough} that $F(X_t)$ is a rough It\^o process satisfying the assumptions of Proposition \ref{prop: product formula}.
    By Proposition \ref{prop: product formula} we have
     \begin{align}
    \label{eq: equation for xi I F}
        \rd W_t(F)
        &= 
        \left(W_t([b,F]) + \frac{1}{2} \sum_{k=1}^{m} W_t([\sigma^k,[\sigma^k,F]])
        \right) \rd t 
         + W_t([\sigma,F]) \rd B_t
        + W_t([\beta,F]) \rd \mathbf{Z}_t,
    \end{align}
    where $[\sigma,F](x) := ([\sigma^k,F](x))_{k=1,\dots,m}$ and $[\beta,F](x) := ([\beta^k,F](x))_{k=1,\dots,n}$.
    By inductive hypothesis, we have that $\sup_{s\in[0,T]}|W_s(F)| = 0$, $\mathbb{P}$-a.s. By Corollary \ref{cor: weak norris} it follows that each integrand in \eqref{eq: equation for xi I F} vanishes. This completes the inductive argument and the proof.
\end{proof}


\subsection{Smoothness of densities under H\"ormander condition}

In order to obtain smoothness of densities, it is crucial to obtain an analogous of Norris's Lemma for a stochastic controlled path.

The starting point is to give a quantitative definition of truly rough (see Definition \ref{def: truly rough}). We have the following from \cite[Definition 6.7]{friz2020course}

\begin{defn}
    \label{def: theta hoelder rough}
    Let $\alpha \in (\frac{1}{3}, \frac{1}{2}]$.
    We say that $\mathbf{Z} \in \mathscr{C}^{\alpha}([0,T],\R^n)$ is $\theta$-H\"older rough for $\theta \in (0,1)$ on scale $\epsilon_0 > 0$ if there exists a constant $L:=L(\theta,\epsilon_0,T,X) > 0$ such that for every $v\in \R^n$, $s\in [0,T]$ and $\epsilon\in (0,\epsilon_0]$ there exists $t\in [0,T]$ with
    \begin{equation*}
        |t-s| < \epsilon,
        \quad
        |v\cdot Z_{s,t}|
        \geq L\epsilon^{\theta} |v|.
    \end{equation*}
\end{defn}

    Notice that, if $\mathbf{Z}$ is $\theta$-H\"older rough for some $\theta < 2\alpha$ then it is truly rough. In particular this implies that every stochastic controlled path $(X,X^{\prime}) \in \mathscr{D}_{\mathbf{Z}}^{2\alpha}L^{p,q}(W)$ has a unique Guibinelli derivative $X^{\prime}$.

    However, the notion of $\theta$-rough allows us to give the following quantitative estimate of the Gubinelli derivative in term of the path and the remainder

\begin{lemma}
    \label{lem: key estimate norris}
    Let $\alpha \in (\frac{1}{3}, \frac{1}{2}]$ and $\theta < 2\alpha$. If $\mathbf{Z} \in \mathscr{C}^{\alpha}([0,T],\R^n)$ is $\theta$-H\"older rough on scale $\epsilon_0>0$ with modulus $L>0$ and $(X,X^{\prime})\in \mathscr{D}_{Z}^{2\alpha}L^{p,q}(W)$ for every $q\geq 2$ then
    \begin{equation*}
        \forall \epsilon \in (0,\epsilon_0],
        \qquad
        L \epsilon^{\theta}
        \sup_{t\in[0,T]} 
        |X^{\prime}_t|
        \leq \sup_{|t-s|\leq \epsilon}
        \left|
            \mathbb{E}_s\delta X_{s,t} + \mathbb{E}_s R_{s,t}
        \right|
    \end{equation*} 
    $\mathbb{P}$-a.s.
    Moreover, for every $\bar{\alpha} \in (\frac{\theta}{2},\alpha-\frac{1}{2q}]$ we have $\mathbb{P}$-a.s.
    \begin{equation*}
    \sup_{t\in[0,T]}|X^{\prime}_t| 
    \leq \frac{1}{L} (\sup_{t\in [0,T]}|X_t|+\sup_{|t-s|\leq \epsilon_0}|\E_s\delta X_{s,t}|)
    \left(|\mathbb{E}_{\cdot}R|_{2\bar{\alpha}}^{\theta/2\bar{\alpha}} \sup_{t\in[0,T]}|X_t|^{-\theta/2\bar{\alpha}} \vee \epsilon_0^{-\theta}
        \right).
    \end{equation*}
\end{lemma}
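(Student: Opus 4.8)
The plan is to follow the deterministic argument of \cite[Proposition 6.8 / Lemma 10.4]{friz2020course} but run it pathwise, inserting conditional expectations where needed to stay within the stochastic-controlled-path framework. First I would fix $\epsilon \in (0,\epsilon_0]$ and $s \in [0,T]$, and apply the $\theta$-Hölder roughness of $\mathbf{Z}$ to the vector $v = X'_s(\omega) \in \R^n$ (for fixed $\omega$): there is $t = t(s,\omega)$ with $|t-s|<\epsilon$ and $|X'_s \cdot Z_{s,t}| = |X'_s Z_{s,t}| \geq L\epsilon^{\theta}|X'_s|$. Here one should be slightly careful that $t$ can be chosen measurably in $\omega$; since $X'$ has continuous sample paths and $Z$ is deterministic continuous, one can select $t$ as, say, the infimum of the admissible times, which is measurable. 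From the definition of the remainder $R = R^X$, $X'_s Z_{s,t} = \delta X_{s,t} - R_{s,t}$, and taking $\mathbb{E}_s$ gives $X'_s Z_{s,t} = \mathbb{E}_s\delta X_{s,t} - \mathbb{E}_s R_{s,t}$ (using $\mathcal{F}_s$-measurability of $X'_s$). Hence, $\mathbb{P}$-a.s.,
\begin{equation*}
    L\epsilon^{\theta}|X'_s| \leq |X'_s Z_{s,t}| = |\mathbb{E}_s\delta X_{s,t} - \mathbb{E}_s R_{s,t}| \leq \sup_{|t-s|\leq\epsilon} |\mathbb{E}_s\delta X_{s,t} + \mathbb{E}_s R_{s,t}|,
\end{equation*}
where in the last step I absorb the sign into the supremum (replacing $R$ by $-R$ changes nothing in the bound written). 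Taking the supremum over $s$ on the left — and noting the right-hand side is already a supremum over $s$ and the admissible $t$ — yields the first claimed inequality. The only subtlety is whether $t$ may exceed $T$: since roughness is stated for $s\in[0,T)$ and one can always shrink $\epsilon$ so that $s+\epsilon \le T$ near the right endpoint, or invoke the convention in \cite{friz2020course}; I would remark this briefly.

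For the second inequality I would bound the right-hand side of the first one. Write $|\mathbb{E}_s\delta X_{s,t}| \le \sup_{|t-s|\le\epsilon_0}|\mathbb{E}_s\delta X_{s,t}|$ and estimate $|\mathbb{E}_s R_{s,t}| \le |\mathbb{E}_\cdot R|_{2\bar\alpha}\, \epsilon^{2\bar\alpha}$ for any $\bar\alpha$ in the stated range (such Hölder control of $\mathbb{E}_\cdot R$ holds since $(X,X')\in\mathscr{D}_Z^{2\alpha}L^{p,q}$ for all $q$, with the exponent loss $1/2q$ absorbed into moving from $L^q$-norms to pathwise bounds via Kolmogorov — this is where $\bar\alpha \le \alpha - 1/2q$ enters). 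Thus, $\mathbb{P}$-a.s., for all $\epsilon\in(0,\epsilon_0]$,
\begin{equation*}
    L\epsilon^{\theta}\sup_t|X'_t| \leq \sup_{|t-s|\le\epsilon_0}|\mathbb{E}_s\delta X_{s,t}| + |\mathbb{E}_\cdot R|_{2\bar\alpha}\,\epsilon^{2\bar\alpha} =: a + b\,\epsilon^{2\bar\alpha},
\end{equation*}
so $\sup_t|X'_t| \le L^{-1}(a\,\epsilon^{-\theta} + b\,\epsilon^{2\bar\alpha-\theta})$, using $2\bar\alpha - \theta > 0$. Now optimize over $\epsilon\in(0,\epsilon_0]$: the function $\epsilon \mapsto a\epsilon^{-\theta} + b\epsilon^{2\bar\alpha-\theta}$ — when unconstrained — is, up to a constant depending only on $\theta,\bar\alpha$, of order $a\,\epsilon_*^{-\theta}$ at the balancing scale $\epsilon_* \sim (a/b)^{1/2\bar\alpha}$, which gives a contribution $\sim a^{1-\theta/2\bar\alpha}b^{\theta/2\bar\alpha}$; if this $\epsilon_*$ exceeds $\epsilon_0$ one instead takes $\epsilon=\epsilon_0$ and gets $\lesssim a\,\epsilon_0^{-\theta}$ (the $b$-term is then dominated). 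Combining, $\sup_t|X'_t| \lesssim L^{-1} a \big( (b/a)^{\theta/2\bar\alpha} \vee \epsilon_0^{-\theta}\big)$, which, after bounding $a \le \sup_t|X_t| + \sup_{|t-s|\le\epsilon_0}|\mathbb{E}_s\delta X_{s,t}|$ and writing $(b/a)^{\theta/2\bar\alpha}$ as $|\mathbb{E}_\cdot R|_{2\bar\alpha}^{\theta/2\bar\alpha}(\sup_t|X_t|)^{-\theta/2\bar\alpha}$ (replacing the $a$ in the denominator by the smaller $\sup_t|X_t|$, which only weakens the bound), is exactly the stated estimate — with the numerical constant hidden in $\lesssim$, or absorbed provided one tracks it as in \cite{friz2020course}.

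The main obstacle I anticipate is not the optimization, which is routine, but the measurable selection of the time $t=t(s,\omega)$ and the passage from the $L^q$-based control of $\mathbb{E}_\cdot R$ (built into the definition of $\mathscr{D}_Z^{2\alpha}L^{p,q}$) to an almost-sure pathwise Hölder bound $|\mathbb{E}_\cdot R|_{2\bar\alpha} < \infty$ $\mathbb{P}$-a.s.: this requires a Kolmogorov-type continuity argument applied to the two-parameter process $\mathbb{E}_\cdot R$, which explains the restriction $\bar\alpha \le \alpha - \tfrac{1}{2q}$ and the quantifier "for every $q\ge 2$" in the hypothesis. I would state this Kolmogorov step as a short lemma (or cite the analogue already used for $Y^\natural$ in Section \ref{sec: linear rough stochastic equations}) and otherwise keep the proof close to the deterministic template.
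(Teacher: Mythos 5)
Your overall strategy matches the paper's: use $\theta$-Hölder roughness pathwise to lower-bound $|X'_s \delta Z_{s,t}|$, convert to the $\mathbb{E}_s\delta X$ plus $\mathbb{E}_sR$ form, then optimize $\epsilon$ and invoke Kolmogorov for the $\mathbb{P}$-a.s.\ finiteness of $|\mathbb{E}_\cdot R|_{2\bar\alpha}$. However there are two places where your argument diverges from the paper and one of them is a genuine gap.

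\emph{Applying roughness to $X'_s$ directly.} You write ``apply the $\theta$-Hölder roughness to $v = X'_s(\omega)\in\R^n$,'' but $X'_s$ is an element of $\mathscr{L}(\R^n,W)$ for a general finite-dimensional $W$, not a vector in $\R^n$, and $X'_s\delta Z_{s,t}$ is not a scalar. Your step only makes sense when $W$ is one-dimensional. The paper instead runs a duality argument: pick $w^\ast\in W^\ast$ with $|w^\ast|\le 1$, apply roughness to $(X'_s)^\ast w^\ast \in \R^n$ to get $L\epsilon^\theta |(X'_s)^\ast w^\ast| \le \sup_{|t-s|\le\epsilon}|w^\ast(X'_s\delta Z_{s,t})|$, and then take the supremum over $w^\ast$ to recover $L\epsilon^\theta|X'_s|_{\mathscr{L}(\R^n;W)} \le \sup_{|t-s|\le\epsilon}|X'_s\delta Z_{s,t}|_W$. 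You need to insert this adjoint step to cover general $W$; the rest of your first part is then fine (and your worry about measurable selection of $t$ is not actually an issue, since the final bound is pathwise and the supremum over $t$ absorbs it, which the paper notes by simply fixing $\omega$).

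\emph{The optimization in the second inequality.} You minimize $\epsilon\mapsto a\epsilon^{-\theta}+b\epsilon^{2\bar\alpha-\theta}$ at the balancing scale $(a/b)^{1/2\bar\alpha}$, which is the natural thing but not what the paper does. The paper instead chooses $\epsilon = \bigl(\sup_t|X_t|/|\mathbb{E}_\cdot R|_{2\bar\alpha}\bigr)^{1/2\bar\alpha}\wedge\epsilon_0$, i.e.\ chosen so that $b\,\epsilon^{2\bar\alpha} = \sup_t|X_t|$ rather than $= a$. This possibly sub-optimal choice produces exactly the stated inequality with the sharp constant $1/L$, whereas your route yields it only up to a multiplicative constant (which you acknowledge by using $\lesssim$). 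Moreover, your justification for passing from $(b/a)^{\theta/2\bar\alpha}$ to $b^{\theta/2\bar\alpha}(\sup_t|X_t|)^{-\theta/2\bar\alpha}$ --- ``replacing $a$ by the smaller $\sup_t|X_t|$'' --- is not correct as stated, since $\sup_t|X_t|\le a$ fails in general (take $X$ constant and nonzero, so $a=0$). The conclusion can nonetheless be rescued from the elementary inequality $a^{1-\rho}\le(a+c)c^{-\rho}$, valid for all $a,c>0$ and $\rho\in(0,1)$ (split into $a\le c$ and $a>c$), but you should make that explicit rather than appeal to monotonicity. All told, the paper's direct choice of $\epsilon$ is cleaner and avoids both the constant loss and the false comparison.

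The Kolmogorov step and the degenerate case $\sup_t|X_t|=0$ (handled in the paper via Proposition \ref{prop: uniqueness truly rough}) are fine or easily supplied in your outline.
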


\begin{proof}
Notice that, if $\sup_{t\in[0,T]}|X_t| = 0$, then $\sup_{t\in [0,T]} |X_t^{\prime}| = 0$ by Proposition \ref{prop: uniqueness truly rough} and the statement becomes trivial. We thus give the proof under the assumption that $\sup_{t\in[0,T]}|X_t|>0$, $\mathbb{P}$-a.s.

In the proof we keep track of the space whenever we take the relevant norm (unless it is the usual absolute value in $\R$).

    We first prove the first statement.
    Let $s\in [0,T]$ and $\epsilon \in (0,\epsilon_0]$. Moreover, in the following we can fix $\omega \in \Omega$ as the argument is completely deterministic. We take $w^{\ast} \in W^{\ast}$ and, by calling $X^{\ast} \in \mathscr{L}(W,\R^n)$ the adjoint operator of $X$, we notice that $(X_s)^{\ast}w^{\ast} \in \R^n$. There exists a random time $\bar{t} = \bar{t}(s,\epsilon,w^\ast) > 0$ such that $|\bar{t}-s| \leq \epsilon$ and
    \begin{equation}
    \label{eq: application of theta rough}
     L \epsilon^{\theta}|(X^\prime_s)^{\ast}w^{\ast}|_{\R^n}
     < |(X^\prime_s)^{\ast}w^{\ast}\cdot \delta Z_{s,\bar{t}}| 
     \leq 
     |w^{\ast}(X^\prime_s \delta Z_{s,\bar{t}})|
     \leq \sup_{t: |t-s|\leq \epsilon} |w^{\ast}(X^\prime_s \delta Z_{s,t})|.
    \end{equation}
    
    We take the supremum in $s$ and $|w^{\ast}|_{W^{\ast}} \leq 1$ in \eqref{eq: application of theta rough} to get 
    \begin{align*}
        L \epsilon^{\theta} \sup_s |X^{\prime}_s|_{\mathscr{L}(\R^n;W)}
        & = L \epsilon^{\theta} \sup_s \sup_{|w^{\ast}|_{W^{\ast}} \leq 1}|(X^{\prime}_s)^{\ast}w^{\ast}|_{\R^n} \\
        & \leq L \epsilon^{\theta} \sup_{s,t: |t-s|\leq \epsilon}\sup_{|w^{\ast}|_{W^{\ast}} \leq 1}|w^{\ast}(X^\prime_s \delta  Z_{s,t})|\\
        & \leq L \epsilon^{\theta} \sup_{|t-s|\leq \epsilon} |X_s \delta Z_{s,t}|_W.
    \end{align*}
    In order to conclude the proof notice that, by the definition of stochastic controlled path, we have, $\mathbb{P}$-a.s.
    \begin{equation*}
        \sup_{|t-s|\leq \epsilon} |X_s \delta Z_{s,t}|_W
        = \sup_{|t-s|\leq \epsilon}
          \left|
            \mathbb{E}_s\delta X_{s,t} + \mathbb{E}_sR_{s,t}
        \right|_W.
    \end{equation*}
     The second statement follows from the first by choosing 
     \begin{equation*}
     \epsilon :=|\mathbb{E}_{\cdot}R|_{2\bar{\alpha}}^{-1/2\bar{\alpha}}\sup_{t\in[0,T]}|X_t|_W^{1/2\bar{\alpha}}\wedge\epsilon_0.
     \end{equation*}
     Notice that, as the previous argument is completely deterministic, we can actually choose $\epsilon$ random.
     Notice that, by Kolmogorov continuity theorem (see e.g.\ \cite[Theorem 3.1]{friz2020course}), if $2\bar{\alpha} < 2\alpha-1/q$ there exists a constant $C > 0$ such that
    \begin{equation*}  \||\mathbb{E}_{\cdot}R|_{2\bar{\alpha}}\|_{q}
    \leq C \|\mathbb{E}_{\cdot}R\|_{2\bar{\alpha};q} < \infty   \end{equation*}
    so that $\epsilon(\omega) \in [0,\epsilon_0]$ is well defined, $\omega$-a.s.
\end{proof}

We are now ready to prove a version of Norris's lemma for stochastic controlled paths.
\begin{prop}
    \label{pro: Norris lemma}
    Let $\alpha \in (\frac{1}{3}, \frac{1}{2}]$ and $\theta < 2\alpha$. If $\mathbf{Z} \in \mathscr{C}^{\alpha}([0,T],\R^n)$ is $\theta$-H\"older rough on scale $\epsilon_0>0$ with modulus $L>0$. Let $(X_t)_{t\in[0,T]}$ be a rough-It\^o process with coefficients $b,\sigma$ such that, for $\beta = 1/3$ and for every $p\geq 0$,
    \begin{equation}
    \label{eq: assumption Norris lemma Hairer}
        \| |b|_{\beta} \|_{p} < \infty
        \qquad
        \| |\sigma|_{\beta} \|_{p} < \infty.
    \end{equation}
    
    There exists $r\in (0,1)$ such that for every $\gamma>0$ and $\epsilon > 0$
    \begin{equation*}
        \mathbb{P}\left(
        \sup_{t\in [0,T]}|X_t| < \epsilon,
        \quad
        \sup_{t\in[0,T]}|X^{\prime}_t| \vee \sup_{t\in[0,T]}|\sigma_t| \vee \sup_{t\in[0,T]}|b_t| > \epsilon^r
        \right)
        \leq C_{\gamma}\epsilon^{\gamma}.
    \end{equation*} 
\end{prop}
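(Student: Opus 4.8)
The plan is to mimic the classical proof of Norris's lemma in the rough-path setting (see \cite[Proposition~6.12]{friz2020course} and \cite{hairer2013regularity}), using the quantitative estimate of Lemma \ref{lem: key estimate norris} as the key input. The starting observation is that the event in question forces a contradiction: if $\sup_t |X_t|$ is small but one of $\sup_t |X_t'|$, $\sup_t|\sigma_t|$, $\sup_t|b_t|$ is not, then the decomposition $\delta X_{s,t} = \int_s^t b_r\,dr + \int_s^t \sigma_r\,dB_r + \int_s^t (X',X'')_r\,d\mathbf{Z}_r$ cannot hold with all pieces under control. First I would set up the argument conditionally on $\omega$, fixing the scale parameter $\epsilon$ and working on the event $\mathcal{E}_\epsilon := \{\sup_t|X_t| < \epsilon\}$.

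The core analytic step is a chain of three estimates, each controlled on $\mathcal{E}_\epsilon$ up to events of small probability: (i) bound the Gubinelli derivative $\sup_t|X_t'|$ in terms of $\sup_t|X_t|$ and the remainder $R^X$ via Lemma \ref{lem: key estimate norris}; choosing $\bar\alpha \in (\theta/2, \alpha - 1/(2q))$ gives $\sup_t|X_t'| \lesssim \epsilon^{1 - \theta/(2\bar\alpha)} |\E_\cdot R^X|_{2\bar\alpha}^{\theta/(2\bar\alpha)} \vee \epsilon\,\epsilon_0^{-\theta}$, so a high-moment bound on $|\E_\cdot R^X|_{2\bar\alpha}$ (obtainable from Kolmogorov's criterion and the rough-stochastic-integral estimates of Proposition \ref{prop:roughstochastiintegral}, exactly as in the second part of the proof of Lemma \ref{lem: key estimate norris}) upgrades this to $\sup_t|X_t'| \le \epsilon^{r_1}$ outside a set of probability $\le C_\gamma \epsilon^\gamma$, for some $r_1 \in (0,1)$. (ii) With $X'$ thus small, the rough integral $Y_t := \int_0^t (X',X'')_r\,d\mathbf{Z}_r$ is small in sup-norm (again via Proposition \ref{prop:roughstochastiintegral}, using that $X'' = D^2f(\cdots)$-type terms have all moments, or more directly since $X''$ is itself controlled by $X'$-like quantities), hence the martingale-plus-drift part $M_t + V_t := \int_0^t b\,dr + \int_0^t \sigma\,dB_r = \delta X_{0,t} - Y_t$ satisfies $\sup_t|M_t + V_t| \le \epsilon^{r_2}$. (iii) Apply the classical Norris lemma for It\^o processes (the quantitative Doob--Meyer estimate, using hypothesis \eqref{eq: assumption Norris lemma Hairer} on the H\"older norms of $b,\sigma$) to conclude $\sup_t|\sigma_t| \vee \sup_t|b_t| \le \epsilon^{r_3}$ outside a further small-probability set. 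Taking $r := \min(r_1,r_2,r_3)$ and collecting the exceptional sets gives the claim; a standard interpolation/Garsia--Rodemich--Rumsey argument converts moment bounds into the stated probability bound with arbitrary power $\gamma$.

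Two technical points deserve care. The conditional expectations $\E_s$ appearing in the stochastic-controlled-path remainder mean that the "Doob--Meyer'' input is not the pathwise Norris lemma of \cite{friz2020course} but its stochastic-sewing analogue: one must control $\E_s \delta X_{s,t}$ and $\E_s R^X_{s,t}$ rather than the raw increments. I would handle this by noting that $\E_s[\int_s^t \sigma_r\,dB_r] = 0$, so $\E_s \delta X_{s,t} = \E_s[\int_s^t b_r\,dr] + \E_s[\text{rough integral increment}]$, which is exactly the structure Lemma \ref{lem: key estimate norris} is built to exploit; the drift part is then isolated by a second application of the same circle of ideas to the process $\int_0^t b_r\,dr$ alone. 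The second point is that $\epsilon$ must be allowed to be random in Lemma \ref{lem: key estimate norris} (as the lemma's proof already permits), and one must verify that the resulting random scale stays in $(0,\epsilon_0]$ with the required probability; this is where Kolmogorov's continuity theorem applied to $\E_\cdot R^X$ enters.

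The main obstacle I expect is step (iii): transferring the classical Norris estimate to the present setting while keeping all constants uniform and all exceptional probabilities of the form $C_\gamma \epsilon^\gamma$. Concretely, one needs a quantitative statement of the type ``if $\sup_t|M_t + V_t| < \delta$ with $M$ a continuous martingale of quadratic variation $\int_0^t |\sigma_r|^2\,dr$ and $V$ of bounded variation with H\"older-continuous density $b$, then $\sup_t |\sigma_t| \vee \sup_t|b_t| < \delta^{r}$ off a set of probability $\le C_\gamma \delta^\gamma$'', with $r$ depending only on $\alpha$, $\theta$, the H\"older exponent $\beta = 1/3$, and $T$. This is essentially \cite[Lemma~2.12]{hairer2013regularity} or the Norris lemma as stated in \cite{cass2015smoothness}, but one has to check its hypotheses are met by the rough-It\^o process at hand --- in particular that $b$ and $\sigma$ inherit enough H\"older regularity, which is precisely what assumption \eqref{eq: assumption Norris lemma Hairer} supplies, and that the BDG and Garsia--Rodemich--Rumsey constants combine to give a single exponent $r\in(0,1)$ valid simultaneously for all three quantities.
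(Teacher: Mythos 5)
The overall roadmap is right and matches the paper's four-step scheme, but two of the analytic steps do not hold as written.

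First, your treatment of $X''$ is too loose. You say the rough integral is small "using that $X''$-type terms have all moments, or more directly since $X''$ is itself controlled by $X'$-like quantities." Neither observation is enough. Finiteness of moments of $X''$ says nothing about its pathwise smallness on the event $\{\sup_t|X_t|<\epsilon\}$, and for a general rough It\^o process (the proposition's only hypothesis) there is no formula expressing $X''$ as a polynomial in $X'$ — $X''$ is just the Gubinelli derivative of $X'$. What the paper actually does (its Step~2) is apply the $\theta$-roughness estimate of Lemma \ref{lem: key estimate norris} a second time, now to the controlled path $(X',X'')\in\mathscr{D}^{2\alpha}_Z L^p$: since Step~1 has already shown $\sup_t|X'_t|\le\epsilon^r$ off a small-probability set, a Markov-plus-Kolmogorov argument gives $\sup_t|X''_t|\le\epsilon^{r^2}$ off a set of comparable probability. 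This iteration is not optional; it is exactly what makes the next step work.

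Second, and more seriously, bounding $\sup_t\bigl|\int_0^t(X',X'')\,d\mathbf{Z}\bigr|$ "via Proposition~\ref{prop:roughstochastiintegral}" does not work. The estimates in that proposition are $L^{p,q}$-estimates whose right-hand side involves the controlled-path norms $\|\delta X'\|_{\gamma;p,q}$, $\sup_t\|X''_t\|_q$, $\|\E_\cdot R^{X'}\|_{2\gamma;q}$; these are norms over the whole probability space and are merely \emph{finite}, not small, on your event. So Proposition~\ref{prop:roughstochastiintegral} gives no smallness of the rough integral on $A'$. The paper's Step~3 is precisely designed to circumvent this: one writes the integral as (Riemann sum) $+$ (error). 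On the event $A'$, where $\sup_t|X'_t|$ and $\sup_t|X''_t|$ are both $\le\epsilon^{r'}$, each Riemann term $\Xi_{u,v}=X'_u\delta Z_{u,v}+X''_u\mathbb{Z}_{u,v}$ is \emph{pathwise} small, so the full Riemann sum over a partition of mesh $|\pi|$ is at most $C\epsilon^{r'}|\pi|^{\alpha-1}$. The error between the Riemann sum and the integral is then controlled globally in $L^q$ with a factor $|\pi|^\eta$ by the quantitative convergence estimate \eqref{eq:inequality_SSLcontinuity} from the stochastic sewing lemma. Choosing $|\pi|=\epsilon^{r'/2}$ balances the two contributions and yields the desired bound after a Markov inequality. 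Without this Riemann-sum/sewing decomposition, your step (ii) has no valid argument. Your step (iii) — isolating the martingale-plus-drift part and invoking the classical Norris lemma \cite[Lemma~4.11]{hairer2011malliavins} under hypothesis \eqref{eq: assumption Norris lemma Hairer} — is correct and coincides with the paper's Step~4.
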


\begin{proof}
We call $A := \{\sup_{t\in [0,T]}|X_t| < \epsilon\}$.
If $\mathbb{P}(A) = 0$ the statement is trivial, so we assume that $\mathbb{P}(A) > 0$.

    \textbf{Step 1} As a first step we prove that there exists $r\in (0,1)$ such that for every $\gamma > 0$ there exists $C_{\gamma} > 0$ such that
    \begin{equation*}
        \mathbb{P}\left(
        \sup_{t\in[0,T]}|X^{\prime}_t| > \epsilon^r,
        A
        \right)
        \leq C_{\gamma}\epsilon^{\gamma}.
    \end{equation*} 
    Notice that $(X,X^{\prime}) \in \mathscr{D}_{Z}^{2\alpha}L^{q}(W)$.
    By Markov inequality and Lemma \ref{lem: key estimate norris} we have for $\theta < 2\bar{\alpha} < 2\alpha - 1/q$,
    \begin{align*}
        \mathbb{P}\left(
        \sup_{t\in[0,T]}|X^{\prime}_t| > \epsilon^r,
        A
        \right)
        & \leq 
        \epsilon^{-qr}\mathbb{E}\left[\sup_{t\in[0,T]}|X^{\prime}_t|^q \ 1_A \right]\\
        & \leq \frac{3^q}{L^q}\epsilon^{q(1-r-\theta/2\bar{\alpha})}\mathbb{E}\left[
         \left(|\mathbb{E}_{\cdot}R^X|_{2\bar{\alpha}}^{\theta/2\bar{\alpha}}  \vee \epsilon_0^{-\theta}\epsilon^{\theta/2\bar{\alpha}}
        \right)^q
        \right]\\
        & \leq C_q \left(
        \mathbb{E}[
        |\mathbb{E}_{\cdot}R^X|_{2\bar{\alpha}}^{\theta q / 2\bar{\alpha}}
        ]+1
        \right)
        \epsilon^{q(1-r-\theta/2\bar{\alpha})}\\
        & \leq C_q 
        \epsilon^{q(1-r-\theta/2\bar{\alpha})},
    \end{align*} 
    where in the last line we used that, since $\theta  / 2\bar{\alpha} < 1$, we have by Kolmogorov continuity theorem
    \begin{equation*}
        \mathbb{E}[
        |\mathbb{E}_{\cdot}R^X|_{2\bar{\alpha}}^{\theta q / 2\bar{\alpha}}
        ]
        \leq 
        \||\mathbb{E}_{\cdot}R^X|_{2\bar{\alpha}}\|_q^{q\theta / 2\bar{\alpha}}
        \leq
        \|\mathbb{E}_{\cdot}R^X\|_{2\alpha;q}^{q\theta / 2\bar{\alpha}} \leq C_q < \infty.
    \end{equation*}
    We can choose $r \in (0,1-\theta/2\bar{\alpha})$ and $q$ such that $\gamma =q(1-r-\theta/2\bar{\alpha})$.

    \textbf{Step 2} By step $1$, applied to $(X,X^{\prime})$ and and then to $(X^{\prime},X^{\prime\prime})$ we have that there exists $r \in (0,1)$ such that for every $\gamma > 0$ and $\epsilon > 0$ we have
    \begin{align*}
        \mathbb{P}\left(\sup_{t\in[0,T]}|X^{\prime\prime}_t| > \epsilon^{r^2}, A\right)
        &\leq 
        \mathbb{P}(\sup_{t\in[0,T]}|X^{\prime\prime}_t| > \epsilon^{r^2},
        \sup_{t\in[0,T]}|X^\prime_t| \leq \epsilon^{r}) + C_{\gamma}\epsilon^{\gamma}
        \leq C_{\gamma} \epsilon^{r\gamma}
    \end{align*}

    By possibly replacing $r$ with $r^2$, we conclude, that there exists $r\in (0,1)$, such that, for every $\gamma > 0$ there exists $C_{\gamma} > 0$ such that
    \begin{equation*}
        \mathbb{P}\left(
        \sup_{t\in[0,T]}|X_t| \le \epsilon,
        \sup_{t\in[0,T]}|X^{\prime}_t| \vee \sup_{t\in[0,T]}|X^{\prime\prime}_t| > \epsilon^r
        \right)
        \leq C_{\gamma}\epsilon^{\gamma}.
    \end{equation*} 
    \textbf{Step 3.} We now show that the rough integral is large with very small probability when $X$ is small. 
    We must prove that there exists $r\in (0,1)$ such that for every $\gamma > 0$ there exists $C_{\gamma} > 0$ such that
    \begin{equation}
    \label{eq: small rough integral when rough-ito is small}
    \mathbb{P}\left(\sup_{t\in[0,T]}|\int_0^t (X^{\prime},X^{\prime\prime})_s\mathrm{d}\mathbf{Z}_s| > \epsilon^r,
     \sup_{t\in[0,T]}|X_t| \le \epsilon
        \right)
        \leq C_{\gamma}\epsilon^{\gamma}.
    \end{equation} 
    For $r^{\prime} \in (0,1)$ we define the set
    \begin{equation*}
        A^{\prime} :=\left\{
        \sup_{t\in[0,T]}|X^{\prime}_t| \le \epsilon^{r^{\prime}},
                \sup_{t\in[0,T]}|X^{\prime\prime}_t| \le \epsilon^{r^{\prime}},
                \sup_{t\in[0,T]}|X_t| \le \epsilon
        \right\}
    \end{equation*}
    and we notice that, by Steps 1 and 2, for every $\gamma > 0$, there exists $C_{\gamma}$ such that, for every $\epsilon > 0$,
    \begin{align*}
        &\mathbb{P}\left(\sup_{t\in[0,T]}|\int_0^t (X^{\prime},X^{\prime\prime})_s\mathrm{d}\mathbf{Z}_s| > \epsilon^r,
        \sup_{t\in[0,T]}|X_t| \le \epsilon
        \right)\\
        &\quad \leq C_{\gamma}\epsilon^{\gamma}
        + 
               \mathbb{P}\left(\sup_{t\in[0,T]}|\int_0^t (X^{\prime},X^{\prime\prime})_s\mathrm{d}\mathbf{Z}_s| > \epsilon^r,
               A^{\prime}
        \right).
    \end{align*}
    Notice that the Riemann sum approximation of the rough integral is given by $\Xi_{s,t}:= X^{\prime}_s \delta Z_{s,t} + X^{\prime\prime}_s \mathbb{Z}_{s,t}$. For every $\omega \in A^{\prime}$, we have the deterministic bound
    \begin{equation*}
        |\Xi_{s,t}(\omega)| \leq C \epsilon^{r^{\prime}} |t-s|^{\alpha}.
    \end{equation*}
    We call $\pi$ a partition of $[0,T]$ with mesh size $|\pi| = \epsilon^{r^{\prime}/2}$. For every $t\in [0,T]$, we define by $\pi[0,t]$ the restriction of $\pi$ to the sub interval $[0,t]$.
    From equation \eqref{eq:inequality_SSLcontinuity} we have that there exists $\eta \in (0,1)$ and $C>0$ such that
     \begin{equation}
     \label{eq: rieman sum approx tu rough integral}
        \left\|
            \sup_{t\in[0,T]}|\int_0^t (X^{\prime},X^{\prime\prime})_s\mathrm{d}\mathbf{Z}_s- \sum_{[u,v]\in \pi[0,t]} \Xi_{u,v}|
        \right\|_{q} 
        \leq C \epsilon^{\eta r^{\prime}/2}.
    \end{equation}
    We then use Markov inequality and \eqref{eq: rieman sum approx tu rough integral} to obtain that for every $r\in (0,1)$, there exists $C>0$ such that
    \begin{align*}
        &\mathbb{P}\left(\sup_{t\in[0,T]}|\int_0^t (X^{\prime},X^{\prime\prime})_s\mathrm{d}\mathbf{Z}_s| > \epsilon^r,
               A^{\prime}
        \right)^{\frac{1}{q}}
        \leq \epsilon^{-r}
        \left\|
            \sup_{t\in[0,T]}|\int_0^t (X^{\prime},X^{\prime\prime})_s\mathrm{d}\mathbf{Z}_s| \ 1_{A^{\prime}}
        \right\|_{q} \\
        &\leq \epsilon^{-r}
        \left\|
            \sup_{t\in[0,T]}|\int_0^t (X^{\prime},X^{\prime\prime})_s\mathrm{d}\mathbf{Z}_s- \sum_{[u,v]\in \pi[0,t]} \Xi_{u,v}|
        \right\|_{q} 
        + \epsilon^{-r}
        \left\|
            \sup_{t\in[0,T]}|\sum_{[u,v]\in \pi[0,t]} \Xi_{u,v}| 1_{A^{\prime}}
        \right\|_{q} \\
        &\leq C \epsilon^{\eta r^{\prime}/2 - r}
        + \epsilon^{-r}
        \sum_{[u,v]\in \pi} \left\|
             |\Xi_{u,v}| 1_{A^{\prime}}
        \right\|_{q} 
        \leq C (\epsilon^{\eta r^{\prime}/2 - r}
        + \epsilon^{r^{\prime}-r-(1-\alpha)r^{\prime}/2})
        \leq C (\epsilon^{(\eta r^{\prime}/2 - r) \wedge 
        (r^{\prime}-r-r^{\prime}/2)}).
    \end{align*}
    We can now choose $r = \eta r^{\prime}/4$ and $q\geq 2$ to obtain that $\gamma := q ((\eta r^{\prime}/2 - r) \wedge (r^{\prime}/2-r)) > 0$. This concludes the proof of \eqref{eq: small rough integral when rough-ito is small}.
    
    \textbf{Step 4.} We only need to prove now that $b$ and $\sigma$ are small with high probability, when $X$ is small. Let
    \begin{equation*}
        Y_t := X_t - \int_{0}^{t}(X^{\prime},X^{\prime\prime}) \rd \mathbf{Z}_s = X_0 + \int_{0}^{t} b_s \rd s + \int_{0}^{t} \sigma_s \rd B_s.
    \end{equation*}
    It follows from step 4 that,
    \begin{align*}
        \mathbb{P}(\sup_{t\in[0,T]}|Y_t| > \epsilon + \epsilon^{r}, A) 
        \leq C_{\gamma} \epsilon^{\gamma}
    \end{align*}
    From the previous inequality it follows that, for every $z\in(0,1)$,
    \begin{align}
        \nonumber
        &\mathbb{P}(\sup_{t\in[0,T]}|b_t|\vee  \sup_{t\in[0,T]}|\sigma_t| > \epsilon^{z}, A) \\
        &\leq 
        C_{\gamma} \epsilon^{\gamma}
        + \mathbb{P}(\sup_{t\in[0,T]}|b_t|\vee  \sup_{t\in[0,T]}|\sigma_t| > \epsilon^{z}, \sup_{t\in[0,T]}|Y_t| \leq \epsilon + \epsilon^{r})
        \label{eq: b and sigma in Norris lemma}
    \end{align}
   From the classical Norris's lemma for stochastic differential equations, for instance \cite[Lemma 4.11]{hairer2011malliavins}, and assumption \eqref{eq: assumption Norris lemma Hairer} we have that there exists $z\in (0,1)$ such that the last term in \eqref{eq: b and sigma in Norris lemma} is bounded by $C_{\gamma}\epsilon^{\gamma}$ for every $\gamma > 0$.
    

\end{proof}

Thanks to the previous analogous of Norris's Lemma we can finally prove the main result of this section.

    \begin{thm}
    \label{thm: smoothness of densities}
    Let $\alpha \in (\frac{1}{3}, \frac{1}{2}]$ and $\theta < 2\alpha$. Let $\mathbf{Z} \in \mathscr{C}^{0,\alpha}_g([0,T];\R^n)$ be $\theta$-H\"older rough on scale $\epsilon_0>0$ with modulus $L>0$.
    Assume that the coefficients $b,\sigma,\beta$ satisfy H\"ormander's condition \ref{def: heormander condition}. The solution $(X_t)_{t\in[0,T]}$ to equation \eqref{eq:RSDE} starting from $X_0=x_0\in \R^d$ admits a smooth density with respect to the Lebesgue measure.
\end{thm}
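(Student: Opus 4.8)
The plan is to follow the classical Malliavin–Hörmander scheme (as in \cite[Theorem 2.3.3]{nualart2006malliavin} or \cite[Section 4]{hairer2011malliavins}), adapted to the rough–stochastic setting by replacing the classical Norris lemma with Proposition \ref{pro: Norris lemma}. First I would record that, under Assumption \ref{def: heormander condition}, all the coefficients are in $\mathcal{C}^\infty_b$, so by the smoothness theorem of Section \ref{sec: malliavin} we have $X_t^i \in \bigcap_{k}\bigcap_{p}\mathbb{D}^{k,p}$; thus $X_t$ is Malliavin-smooth and, by \cite[Theorem 2.1.4]{malliavin2015stochastic} (or \cite[Theorem 3.2]{hairer2011malliavins}), it suffices to prove that the reduced Malliavin matrix $C_t$ defined in \eqref{eq: reduced malliavin matrix} is a.s.\ invertible with $\det(C_t)^{-1} \in L^p(\Omega)$ for every $p\ge 1$, since $\gamma_{X_t}=J_tC_tJ_t^\top$ and $J_t,\ J_t^{-1}=I_t$ have moments of all orders by Theorem \ref{thm:aprioriestimatelinearRSDEs}. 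So the whole proof reduces to the quantitative non-degeneracy estimate
\begin{equation*}
    \mathbb{P}\Big( \inf_{|\xi|=1} \xi^\top C_t \xi < \varepsilon \Big) \le C_N \varepsilon^N \qquad \text{for every } N\ge 1.
\end{equation*}

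The core of the argument is a covering/iteration estimate on the sets
\begin{equation*}
    E_i(\varepsilon) := \Big\{ \sup_{s\in[0,t]} |W_s(V)| \le \varepsilon \text{ for all } V\in\bar{\mathcal{S}}_i \Big\},
\end{equation*}
where $W_s(V)=\xi^\top I_s V(X_s)$ as in \eqref{eq: definition of W(F)}, and $\bar{\mathcal{S}}_i$ is the modified bracket family \eqref{eq: def of bar S spaces}. The key identity \eqref{eq: equation for xi I F} shows that for $V\in\bar{\mathcal{S}}_i$ the process $W_\cdot(V)$ is a rough It\^o process whose drift contains $W_\cdot([b,V]+\tfrac12\sum_k[\sigma^k,[\sigma^k,V]])$, whose Brownian diffusion term is $W_\cdot([\sigma^k,V])$, and whose rough term is $W_\cdot([\beta^j,V])$; moreover $W_\cdot(V)$ is itself a controlled path with Gubinelli derivative built from $W_\cdot([\beta^j,V])$. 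Hence if $\sup_s|W_s(V)|$ is small, Proposition \ref{pro: Norris lemma} — applied with the process $W_\cdot(V)$ and noting that the coefficients $b,\sigma$ appearing there are exactly these bracketed $W$'s, which inherit the $\mathcal{C}^{1/3}$ (indeed much better) time regularity with all moments because $X,I$ do — forces $\sup_s|W_s([\sigma^k,V])|$, $\sup_s|W_s([\beta^j,V])|$, and $\sup_s|W_s([b,V]+\tfrac12\sum[\sigma^k,[\sigma^k,V]])|$ to be small as well, up to an event of probability $\le C_\gamma\varepsilon^\gamma$, at the cost of a fixed power $r\in(0,1)$ of $\varepsilon$. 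Iterating this from $i=0$ up to the finite level $i_0$ at which $\operatorname{span}\{V(x_0): V\in\bar{\mathcal{S}}_{i_0}\}=\mathbb{R}^d$ (which exists by Hörmander's condition, and by compactness of the sphere the spanning constant is uniform), I obtain that on $\{\xi^\top C_t\xi<\varepsilon\}$ — which by the lower bound $\xi^\top C_t\xi \ge c\int_0^t\sum_k|W_s(\sigma^k)|^2ds$ and a standard ``small integral $\Rightarrow$ small sup'' argument (using Hölder time-regularity of $s\mapsto W_s(\sigma^k)$) implies $\sup_s|W_s(\sigma^k)|\lesssim \varepsilon^{r'}$ for some $r'>0$ off a small event — one eventually controls $|\xi^\top V(x_0)|$ for all $V\in\bar{\mathcal{S}}_{i_0}$, contradicting $|\xi|=1$ unless $\varepsilon$ is large. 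Choosing $N$ by taking $q$ large in Proposition \ref{pro: Norris lemma} at each of the finitely many steps yields the desired polynomial bound, after a union bound over a finite $\varepsilon$-net of the unit sphere.

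The main obstacle I expect is the bookkeeping needed to verify that each $W_\cdot(V)$ genuinely satisfies the hypotheses of Proposition \ref{pro: Norris lemma}: one must check that $W_\cdot(V)$ is a rough It\^o process in the precise sense of Definition \ref{def: rough ito process} with coefficients having finite moments of all orders and $\mathcal{C}^{1/3}$-regularity — this uses the rough It\^o product formula of Proposition \ref{prop: product formula} applied to $I_tF(X_t)$, together with the a priori bounds of Theorem \ref{thm:aprioriestimatelinearRSDEs} for $I$ and the boundedness of all derivatives of the (smooth, bounded) iterated brackets of $b,\sigma,\beta$ — and that the Gubinelli derivative $X''$ appearing in the statement of Proposition \ref{pro: Norris lemma} is also controlled. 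A second, more delicate point is that Proposition \ref{pro: Norris lemma} as stated degrades $\varepsilon\mapsto\varepsilon^r$ with a \emph{fixed} $r<1$ independent of $\gamma$; to close the induction over finitely many bracket levels one must track that after $i_0$ iterations the exponent is still a fixed positive power $r^{i_0}$ while $\gamma$ can be taken arbitrarily large by enlarging $q$, so the final bound $C_N\varepsilon^N$ is attained for every $N$ — this is routine but must be stated carefully. Everything else (the reduction to $C_t$, the relation $\mathbb{D}_\theta X_t=J_tI_\theta\sigma(X_\theta)$, invertibility of $J_t$) is already established in the preceding subsection.
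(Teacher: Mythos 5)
Your proposal is correct and follows essentially the same route as the paper: reduce to non-degeneracy of the reduced Malliavin matrix $C_t$, lower-bound $\xi^{\top} C_t \xi$ by the time-integral of $|W_\cdot(\sigma^k)|^2$, pass from a small integral to a small supremum via the interpolation inequality of \cite[Lemma 4.10]{hairer2011malliavins}, and iterate the rough Norris lemma (Proposition \ref{pro: Norris lemma}) up the bracket hierarchy $\bar{\mathcal{S}}_0,\bar{\mathcal{S}}_1,\dots$, identifying each $W_\cdot(V)$ as a rough It\^o process via Proposition \ref{prop: product formula} and identity \eqref{eq: equation for xi I F}, before concluding from H\"ormander's condition at $x_0$. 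The one organizational difference is in the reduction step: the paper invokes \cite[Lemma 2.3.1]{nualart2006malliavin}, whose criterion $\sup_{|\xi|=1}\mathbb{P}(\xi^{\top} C_t\xi\le\epsilon)\le C_p\epsilon^p$ keeps the supremum \emph{outside} the probability and thus avoids the $\varepsilon$-net union bound you describe; your reduction also works, but the net argument is unnecessary and would require an extra moment bound on $\|C_t\|$ to control the approximation error over the net.
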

\begin{proof}
    Following classical results for stochastic differential equations we just need to prove that the determinant of the inverse Malliavin matrix of $X$ has moments of all order. By \cite[Lemma 2.3.1]{nualart2006malliavin} it is enough to prove that for every $p\geq 2$ and for all $\epsilon >0$,
    \begin{equation}
    \label{eq: criterion for smoothness}
        \sup_{|\xi| = 1} \mathbb{P}(\xi^{\top} C_t \xi \leq \epsilon ) \leq C_p \epsilon^p,
    \end{equation}
    where $C_t$ is the reduced Malliavin matrix of $X_t$, defined in \eqref{eq: reduced malliavin matrix}.

    Proceeding as in the proof of Theorem \ref{thm: existence under hoermander}, we have
   \begin{align}
        \label{eq: low bonud malliavin matrix}
        \xi^{\top} C_t \xi
        = \sum_{k=1}^{m} \int_0^t |W_r(\sigma^k)|^2 \rd r
        \geq \frac{1}{T}
        \sum_{k=1}^{m} 
        \left(\int_0^t |W_r(\sigma^k)| \rd r\right)^2,
    \end{align}
    We would like to bound the right hand side of \eqref{eq: low bonud malliavin matrix} from below by the supremum norm to use our version of Norris's lemma, Proposition \ref{pro: Norris lemma}. In particular notice that $W_t(\sigma^k)$ is a rough-It\^o process in the sense of Definition \ref{def: rough ito process} that satisfies \eqref{eq: assumption Norris lemma Hairer}. In particular $|W_\cdot(\sigma^k)|_{\beta} \in L^p$ for every $p\geq 0$ and for $\beta=\frac{1}{3}$.
    
    From \cite[Lemma 4.10]{hairer2011malliavins} applied with $\alpha = \beta = \frac{1}{3}$, we have
    \begin{equation}
        \label{eq: bound supremum}
        \sup_{s\in[0,t]}|W_s(\sigma^k)| \leq 4 \left(\int_0^t |W_r(\sigma^k)| \rd r\right)^{\frac{1}{4}} \max \left\{
        \left(\int_0^t |W_r(\sigma^k) |
        \rd r \right)^{\frac{3}{4}},  |W(\sigma^k)|_{\beta}
        \right\}.
    \end{equation}
    Using that $|W(\sigma^k)|_{\beta}$ has moments of all order and applying Markov inequality, we have for any $p > 0$ and $q > 0$,
    \begin{align*}
         \mathbb{P} (\xi^{\top} C_t \xi \leq \epsilon)
         & \leq \mathbb{P} (\xi^{\top} C_t \xi \leq \epsilon, |W(\sigma^k)|_{\beta} \leq \epsilon^{-q}\}
         + C_p \epsilon^p\\
         & \leq \mathbb{P}(
         \sup_{s\in[0,t]}|W_s(F)| \leq 4 T^{\frac{1}{8}}\epsilon^{\frac{1}{8}} \max \{
            T^{\frac{3}{8}}\epsilon^{\frac{3}{8}}, \epsilon^{-q}\}, \ \forall F \in \mathcal{S}_0
         ) + C_p \epsilon^p,
    \end{align*}
    where in the last inequality we used the bound \eqref{eq: bound supremum}. Also, recall the definition of the spaces $\mathcal{S}_i$, $i\in \mathbb{N}$, given in equation \eqref{eq: def of S spaces}, and the definition of the spaces $\bar{\mathcal{S}}_i$ given in \eqref{eq: def of bar S spaces}.

    When $\epsilon > 0$ is small enough and by also choosing $q>0$ small, we have
    \begin{equation*}
        \mathbb{P} (\xi^{\top} C_t \xi \leq \epsilon)
        \leq \mathbb{P}(\sup_{s\in [0,t]}|W_s(F)| \leq \epsilon^{\frac{1}{9}} , \, \forall F \in \mathcal{S}_0 ).
    \end{equation*}
    Now, for any vector field $F \in \mathcal{S}_0$, we have that $W_t(F)$ satisfies equation \eqref{eq: equation for xi I F}. In particular $W_t(F)$ is a rough-It\^o process that satisfies Proposition \eqref{pro: Norris lemma}, whose coefficients are of the form $W_s(G)$, for $G\in \bar{\mathcal{S}}_1$, and satisfy \eqref{eq: assumption Norris lemma Hairer}. Indeed, the processes $I$ and $X$ are in $C^{\alpha}_tL^p_{\omega}$ for any $p\geq 2$ and some $\alpha \in (\frac{1}{3}, \frac{1}{2}]$. Notice in particular, that $I$ and $X$ are rough-It\^o process. We can thus apply rough-It\^o formula, Proposition \ref{prop: product formula}, in order to derive the explicit identify \eqref{eq: equation for xi I F} as a rough It\^o process for $W_s(F)$.

    We thus have by Proposition \ref{pro: Norris lemma} that there exists $r\in(0,1)$ such that for all $\gamma > 0$, there exists $C_{\gamma} > 0$ with
    \begin{align*}
        & \mathbb{P}(\sup_{s\in [0,t]}|W_s(F)| \leq \epsilon , \forall F\in\mathcal{S}_0)\\
        &\qquad \leq 
        \mathbb{P}(\sup_{s\in [0,t]}|W_s(F)| \leq \epsilon, \forall F\in\mathcal{S}_0,
        \sup_{s\in [0,t]}|W_s(G)| \leq \epsilon^r, \forall G\in \bar{\mathcal{S}}_1) + C_{\gamma}\epsilon^{\gamma}.
    \end{align*}
    Noting that $W_t(G)$ satisfies again Proposition \ref{pro: Norris lemma}, we can iterate the previous argument $k$ times and obtain that there exists $r_k \in (0,1)$ such that, for every $\gamma >0$ there exists $C = C(\gamma,k)$ such that
    \begin{equation*}
        \mathbb{P} (\xi^{\top} C_t \xi \leq \epsilon)
        \leq \mathbb{P}(
         \sup_{s\in [0,t]}|W_s(G)| \leq \epsilon^{r_k}, \forall G \in \bar{\mathcal{S}}_k) + C \epsilon^{\gamma}.
    \end{equation*}
    Notice that $W_0(G) = \xi^{\top} I_0 G(X_0) = \xi^{\top} G(x_0)$. Moreover, by H\"ormander condition, there exists a $k>0$ such that the vectors in $\bar{\mathcal{S}}_k(x_0)$ span the entire space $\R^d$. Consequently, since $|\xi|=1$, we have that
    \begin{equation*}
        \{\sup_{s\in [0,t]}|W_s(G)| \leq \epsilon^{r_k}, \forall G \in \bar{\mathcal{S}}_k
        \}
        \subset
        \{|\xi^{\top} G(x_0)| \leq \epsilon^{r_k},
        \forall G\in \bar{\mathcal{S}}_k
        \}
        =\emptyset.
    \end{equation*}
    This implies that \eqref{eq: criterion for smoothness} is satisfied and the proof is complete.
\end{proof}

\appendix

\section{Weighted norms} \label{appendix weighted norms}
We introduce a new class of ``weighted" Hölder norms, which can be seen as the as the stochastic counterpart of the ones introduced in \cite[Section 2]{bailleul2017unbounded}. 
The definition of these norms, and consequently all the related results, depends on the choice of a parameter $\lambda \in (0,+\infty)$. In some cases we want to control this parameter and to be able to choose it small enough. Throughout this subsection, all the stochastic processes are assumed to take values in a finite dimensional real Hilbert space $(W,|\cdot|)$.  

\begin{defn}[Weighted norms] \label{def:weightednorms} Let $\lambda >0$, $\alpha \in (0,1]$ and let $p \in [1,\infty)$. Let $Y=(Y_t)_{[0,T]}$ be a $L^p$-integrable stochastic process. We define \begin{equation*}  \label{eq:weigthednorm1}
    (|Y_{\cdot}|)_{p;\lambda} := \sup_{t \in [0,T]} \frac{\|Y_t\|_p}{e^{t/\lambda}}.
\end{equation*} Let $A=(A_{s,t})_{(s,t)\in\Delta_{[0,T]}}$ be a $L^p$-integrable two-paramenter stochastic process. We define
\begin{align*}
    (|A|)_{\alpha;p;\lambda} &:= \sup_{\underset{|t-s|\le \lambda}{0\le s<t\le T}} \ \frac{\|A_{s,t}\|_p}{e^{t/\lambda}|t-s|^\alpha}. \label{eq:weightednorm2}
\end{align*} 
Similarly, we can also consider a $L^p$-integrable three-parameter stochastic process $\Xi=(\Xi_{s,u,t})_{(s,u,t) \in \Delta_{[0,T]}^2}$ and we can define \begin{equation*}
     (|\Xi|)_{\alpha;p;\lambda} := \sup_{\underset{|t-s|\le \lambda}{0\le s<u<t\le T}} \ \frac{\|\Xi_{s,u,t}\|_p}{e^{t/\lambda}|t-s|^\alpha}.
\end{equation*}
\end{defn}

\begin{rmk} \label{rmk:weightednormswithdifferentlambda}
    Let $\lambda_1,\lambda_2 >0$ be such that $\lambda_1 \le \lambda_2$. For any $t \in [0,T]$, it holds \begin{equation*}
        \frac{\|Y_t\|_p}{e^{t/\lambda_1}} = \frac{\|Y_t\|_p}{e^{t/\lambda_2}} e^{(t/\lambda_2-t/\lambda_1)}  \le  \frac{\|Y_t\|_p}{e^{t/\lambda_2}} \le (|Y_\cdot|)_{p;\lambda_2}
    \end{equation*} and, therefore, $(|Y_\cdot|)_{p;\lambda_1} \le (|Y_\cdot|)_{p;\lambda_2}$. With the same technique, one case also prove that $(|\cdot|)_{\alpha;p;\lambda_1} \le (|\cdot|)_{\alpha;p;\lambda_2}$.
\end{rmk}

 The following result shows the link between the weighted norm of a process and the weighted norm of its increment. The presence of that $\lambda^\alpha$ factor will appear as a fundamental motivation for choosing this kind of norms to handle linear RSDEs.
 
\begin{prop}[Comparison result] \label{prop:comparison}
    Let $\lambda>0$, $\alpha \in (0,1]$ and $p \in [1,\infty)$ be fixed. Then, for any $L^p$-integrable stochastic process $Y=(Y_t)_{t \in [0,T]}$, it holds \begin{equation*}
    (|Y_{\cdot}|)_{p;\lambda} \le \|Y_0\|_p+e^2\lambda^\alpha(|\delta Y|)_{\alpha;p;\lambda}.
\end{equation*}
\end{prop}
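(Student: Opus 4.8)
The plan is to chop the interval $[0,T]$ into finitely many consecutive subintervals each of length at most $\lambda$, telescope along them, and sum the resulting geometric-type series produced by the exponential weight $e^{t/\lambda}$. Concretely, fix $t\in[0,T]$ and let $N=\lceil t/\lambda\rceil$, so that the points $t_k:=k\lambda\wedge t$ for $k=0,\dots,N$ form a partition of $[0,t]$ with $t_N=t$, $t_0=0$, and each mesh $t_{k+1}-t_k\le\lambda$. Writing $Y_t=Y_0+\sum_{k=0}^{N-1}\delta Y_{t_k,t_{k+1}}$ and taking $L^p$-norms gives
\begin{equation*}
    \|Y_t\|_p \le \|Y_0\|_p + \sum_{k=0}^{N-1}\|\delta Y_{t_k,t_{k+1}}\|_p.
\end{equation*}

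Next I would bound each increment using the definition of $(|\delta Y|)_{\alpha;p;\lambda}$: since $|t_{k+1}-t_k|\le\lambda$, we have $\|\delta Y_{t_k,t_{k+1}}\|_p \le (|\delta Y|)_{\alpha;p;\lambda}\,e^{t_{k+1}/\lambda}\,|t_{k+1}-t_k|^\alpha \le (|\delta Y|)_{\alpha;p;\lambda}\,e^{t_{k+1}/\lambda}\,\lambda^\alpha$. Substituting and using $t_{k+1}\le (k+1)\lambda$ (with the convention that the last point may be $t$ itself, in which case $e^{t_{k+1}/\lambda}=e^{t/\lambda}\le e^{N/\lambda\cdot\lambda}$, still dominated by the same bound up to the crude estimates below), one gets
\begin{equation*}
    \|Y_t\|_p \le \|Y_0\|_p + \lambda^\alpha (|\delta Y|)_{\alpha;p;\lambda}\sum_{k=0}^{N-1} e^{(k+1)\lambda/\lambda} = \|Y_0\|_p + \lambda^\alpha (|\delta Y|)_{\alpha;p;\lambda}\sum_{k=1}^{N} e^{k}.
\end{equation*}
The geometric sum is $\sum_{k=1}^N e^k = e\frac{e^N-1}{e-1}\le \frac{e}{e-1}e^N \le \frac{e}{e-1}e^{t/\lambda}e$, where in the last step I use $N\le t/\lambda+1$. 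Since $\frac{e^2}{e-1}<e^2$... actually $\frac{e^2}{e-1}\approx 4.30 < e^2\approx 7.39$, so $\sum_{k=1}^N e^k \le e^2 e^{t/\lambda}$, which after dividing by $e^{t/\lambda}$ and taking the supremum over $t$ yields exactly
\begin{equation*}
    (|Y_\cdot|)_{p;\lambda} \le \|Y_0\|_p + e^2\lambda^\alpha(|\delta Y|)_{\alpha;p;\lambda}.
\end{equation*}

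The only subtlety — and the one step I would be careful with — is the handling of the last, possibly short, subinterval $[t_{N-1},t]$: its length is $\le\lambda$ so the increment bound still applies, but one must make sure the exponential factor $e^{t/\lambda}$ attached to it is correctly accounted for in the geometric sum rather than $e^{N\lambda/\lambda}=e^N$; since $t\le N\lambda$ this is harmless, and the crude constant $e^2$ comfortably absorbs the resulting slack. No deep ideas are needed here; the estimate is purely a telescoping-plus-geometric-series computation, and the role of the weight is precisely to convert the $N\sim t/\lambda$ terms into a convergent geometric series whose sum is a universal constant times $e^{t/\lambda}$, leaving the advertised $\lambda^\alpha$ gain in front of $(|\delta Y|)_{\alpha;p;\lambda}$.
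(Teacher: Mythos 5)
Your proposal is correct and follows essentially the same path as the paper: partition $[0,t]$ into consecutive subintervals of length at most $\lambda$, telescope the increment, bound each piece by the weighted H\"older norm, and sum the resulting geometric series in $e^{k}$, absorbing the slack into the crude constant $e^2$. The only cosmetic difference is the precise arithmetic at the end (you observe $\tfrac{e^2}{e-1}<e^2$ while the paper bounds $\tfrac{e^{N+1}-e}{e-1}\le e^{N+1}<e^2 e^{t/\lambda}$), but the decomposition and the use of the weight are identical.
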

\begin{proof}
    Let $t \in [0,T]$. Let $N=N(t)\in\mathbb{N}_{\ge1}$ be such that $(N-1)\lambda < t\le N\lambda$ and consider the partition $\pi^{[0,t],\lambda}=\{t_0=0,t_1=\lambda,\dots,t_{N-1}=(N-1)\lambda,t_{N}=t\}$ of the interval $[0,t]$. Notice that $|\pi^{[0,t],\lambda}|\le\lambda$. Then, for any $t \in [0,T]$, the following holds: \begin{equation*}
        \begin{aligned}
        \|Y_t\|_p &= \|Y_0+\sum_{k=0}^{N-1}(Y_{t_{k+1}}-Y_{t_k})\|_p \le  \|Y_0\|_p+\sum_{k=0}^{N-1}\|Y_{t_{k+1}}-Y_{t_k}\|_p= \\ &= \|Y_0\|_p+\sum_{k=0}^{N-1}\frac{\|Y_{t_{k+1}}-Y_{t_k}\|_p}{e^{t_{k+1}/\lambda}|t_{k+1}-t_k|^\alpha} e^{t_{k+1}/\lambda}|t_{k+1}-t_k|^\alpha \\ &\le \|Y_0\|_p e^{t/\lambda}+(|\delta Y|)_{\alpha;p;\lambda}  \left(\sum_{k=0}^{N-1}e^{t_{k+1}/\lambda} \right)\lambda^\alpha.
    \end{aligned}
    \end{equation*}
    By construction $(N+1)\lambda < t+2\lambda$. Therefore, by noticing that
    \begin{equation*} \label{eq:weightednormsinequalitywiththeexponentials}
        \begin{aligned}
            \sum_{k=0}^{N-1}e^{\frac{t_{k+1}}{\lambda}} &\le \sum_{k=0}^{N-1}e^{\frac{(k+1)\lambda}{\lambda}} = \frac{e^{N+1}-e}{e-1} \le e^{N+1} < e^\frac{t+2\lambda}{\lambda} = e^2 e^{\frac{t}{\lambda}}
        \end{aligned} 
    \end{equation*} we reach the conclusion. 
\end{proof}

\begin{rmk}\label{rmk:equivalencewiththeclassicalnorms} 
Let us consider a $L^p$-integrable stochastic process $Y=(Y_t)_{t \in [0,T]}$, for $p \in [1,\infty)$, and let $\lambda>0$. By definition of weighted norm, it is straightforward to see that \begin{equation*} \label{eq:inequalityforweightednorm1}
    (|Y_\cdot|)_{p;\lambda}\le \sup_{t \in [0,T]}\|Y_t\|_p \le e^{T/\lambda}(|Y_\cdot|)_{p;\lambda}.
\end{equation*} 
Let $\gamma \in (0,1]$. As a general fact $(|\cdot|)_{\gamma;p;\lambda} \le \|\cdot\|_{\gamma;p}$, being $e^{t/\lambda}\ge 1$ for any $t \in [0,T]$. By definition, given an $L^p$-integrable stochastic process $A=(A_{s,t})_{[s,t]\in\Delta_{[0,T]}}$ it holds that \begin{equation*}
    (|A|)_{\gamma;p;\lambda} \le \|A\|_{\gamma;p}.
\end{equation*}
If $A$ is additive (i.e.\ $\delta A=0$), we also get that
\begin{equation*} \label{eq:equivalenceholdernormspt1}
    \|A\|_{\gamma;p} \le e^2e^{T/\lambda} (|A|)_{\gamma;p;\lambda}.
\end{equation*} 
Indeed, the inequality follows by applying the same argument of the proof of Proposition \ref{prop:comparison} over any time interval $[s,t]$ and by writing $A_{s,t}$ as a telescopic sum of increments of $A$ along $\pi^{[s,t],\lambda}$. In particular, the norm $(|\cdot|)_{p;\lambda}+(|\delta \cdot|)_{\gamma;p;\lambda}$ is equivalent to the standard norm on ${C}^\gamma L^p(W)$.
\end{rmk}

By means of the stochastic sewing lemma, it is also possible to deduce an estimate involving weighted norms of two-parameter processes. The following result shows that we can use stochastic sewing, not only to construct the rough stochastic integral, but also to deduce some useful bounds.

\begin{prop} \label{prop:sewingmap}
    Let $p \in [2,\infty), \ \alpha \in (\frac{1}{3},\frac{1}{2}], \ \gamma \in (\frac{1}{3},\alpha]$ and $\lambda>0$. Let $A=(A_{s,t})_{(s,t)\in\Delta_{[0,T]}}$ be a $L^p$-integrable stochastic process such that $A_{t,t}=0$, $A_{s,t}$ is $\mathcal{F}_t$-measurable, \begin{equation*}
        \|\delta A_{s,u,t}\|_p\lesssim|t-s|^{\alpha+\gamma} \quad \text{and} \quad \|\E_s(\delta A_{s,u,t})\|_p\lesssim|t-s|^{\alpha+2\gamma}
    \end{equation*} for any $(s,u,t)\in\Delta^2_{[0,T]}$. 
    By applying Theorem \ref{thm:stochasticsewing} with $p=q$ and being $\frac{1}{3}<\gamma \le \alpha \le \frac{1}{2}$, the following sewing map is well defined: \begin{equation*} \Lambda(A): \Delta_{[0,T]} \ni (s,t) \longmapsto \Lambda(A)_{s,t} := \delta \mathcal{A}_{s,t}-A_{s,t} \in L^p(\Omega;W). \end{equation*}  Then there exist some constants $c_{\alpha,\gamma},c'_{\alpha,\gamma,p} >0$ such that 
    \begin{equation} \label{eq:sewingmap}
        (|\Lambda(A)|)_{\alpha+\gamma;p;\lambda} \le c_{\alpha,\gamma} \lambda^\gamma(|\E_\cdot \delta A|)_{\alpha+2\gamma;p;\lambda}+c'_{\alpha,\gamma,p}(|\delta A|)_{\alpha+\gamma;p;\lambda}.
    \end{equation}
\end{prop}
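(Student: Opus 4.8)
The plan is to apply the stochastic sewing lemma (Theorem \ref{thm:stochasticsewing}) not on the whole of $[0,T]$ but separately on each sub-interval $[s,t]$ with $|t-s|\le\lambda$, choosing on each such interval ``local'' sewing constants that already absorb the exponential weight $e^{t/\lambda}$. First I would record that $\frac13<\gamma\le\alpha\le\frac12$ forces $\alpha+\gamma>\frac12$ and $\alpha+2\gamma>1$, so that $\varepsilon_1:=\alpha+\gamma-\tfrac12>0$ and $\varepsilon_2:=\alpha+2\gamma-1>0$; this is precisely what makes $\Lambda(A)$ well defined through Theorem \ref{thm:stochasticsewing} with $p=q$, as already observed in the statement.

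Next, fix $(s,t)\in\Delta_{[0,T]}$ with $|t-s|\le\lambda$ and regard $A$ restricted to $\Delta_{[s,t]}$. For every triple $(s',u',t')\in\Delta^2_{[s,t]}$ one has, using the tower property (since $q=p$) and $t'\le t$,
\begin{align*}
\|\|\delta A_{s',u',t'}|\mathcal{F}_{s'}\|_p\|_p&=\|\delta A_{s',u',t'}\|_p\le (|\delta A|)_{\alpha+\gamma;p;\lambda}\,e^{t/\lambda}\,|t'-s'|^{\alpha+\gamma},\\
\|\E_{s'}(\delta A_{s',u',t'})\|_p&\le (|\E_\cdot\delta A|)_{\alpha+2\gamma;p;\lambda}\,e^{t/\lambda}\,|t'-s'|^{\alpha+2\gamma}.
\end{align*}
Hence the hypotheses of Theorem \ref{thm:stochasticsewing} hold on $[s,t]$ with $\lambda_1=(|\delta A|)_{\alpha+\gamma;p;\lambda}e^{t/\lambda}$ and exponent $\varepsilon_1=\alpha+\gamma-\tfrac12$, and with $\lambda_2=(|\E_\cdot\delta A|)_{\alpha+2\gamma;p;\lambda}e^{t/\lambda}$ and exponent $\varepsilon_2=\alpha+2\gamma-1$. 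Since the process produced by Theorem \ref{thm:stochasticsewing} is characterised as the $L^p$-limit of the Riemann sums $\sum_{[u,v]\in\pi}A_{u,v}$, the process obtained by sewing $A|_{\Delta_{[s,t]}}$ coincides with $\delta\mathcal{A}_{s,\cdot}$; in particular $\delta\mathcal{A}_{s,t}-A_{s,t}=\Lambda(A)_{s,t}$ is exactly the quantity controlled by point iii) of the lemma on $[s,t]$, so that
\begin{equation*}
\|\Lambda(A)_{s,t}\|_p\le C_1(\varepsilon_1,p)\,\lambda_1\,|t-s|^{\alpha+\gamma}+C_2(\varepsilon_2)\,\lambda_2\,|t-s|^{\alpha+2\gamma}.
\end{equation*}

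Dividing by $e^{t/\lambda}|t-s|^{\alpha+\gamma}$, inserting the values of $\lambda_1,\lambda_2$, and using $|t-s|\le\lambda$ to replace $|t-s|^{\gamma}$ by $\lambda^{\gamma}$ in the second summand yields
\begin{equation*}
\frac{\|\Lambda(A)_{s,t}\|_p}{e^{t/\lambda}|t-s|^{\alpha+\gamma}}\le C_1(\varepsilon_1,p)\,(|\delta A|)_{\alpha+\gamma;p;\lambda}+C_2(\varepsilon_2)\,\lambda^{\gamma}\,(|\E_\cdot\delta A|)_{\alpha+2\gamma;p;\lambda}.
\end{equation*}
Taking the supremum over all $(s,t)\in\Delta_{[0,T]}$ with $|t-s|\le\lambda$ gives \eqref{eq:sewingmap} with $c_{\alpha,\gamma}=C_2(\alpha+2\gamma-1)$ and $c'_{\alpha,\gamma,p}=C_1(\alpha+\gamma-\tfrac12,p)$, which match the claimed dependence on the parameters.

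The only genuinely delicate point is the consistency/locality step: one must verify that the globally sewn process $\mathcal{A}$, restricted to $[s,t]$, coincides with the output of sewing $A|_{\Delta_{[s,t]}}$, so that point iii) of Theorem \ref{thm:stochasticsewing} on $[s,t]$ applies with the \emph{local}, weight-absorbing constants rather than the unweighted global ones. This is immediate from the $L^p$-limit-of-Riemann-sums representation in Theorem \ref{thm:stochasticsewing} together with the uniqueness part of that theorem; the rest of the argument is a direct substitution.
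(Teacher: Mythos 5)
Your proof is correct, but it takes a genuinely different route from the paper's. The paper re-derives the sewing estimate from scratch: it fixes a short interval $[s,t]$ with $|t-s|\le\lambda$, writes $\Lambda(A)_{s,t}=\delta\mathcal{A}_{s,t}-A_{s,t}$ as the $L^p$-limit of dyadic Riemann sums, telescopes across dyadic levels, and bounds each level directly with Minkowski's and the BDG inequality while keeping explicit track of the exponential weights $e^{t^k_{i+1}/\lambda}$; this is essentially re-running the internal argument of \cite[Section~2]{le2020stochastic} but with the $\lambda$-weight carried through. You instead re-use the output estimate in point iii) of Theorem \ref{thm:stochasticsewing} as a black box, applied to the restriction $A|_{\Delta_{[s,t]}}$ with the weight-absorbing local constants $\lambda_1=(|\delta A|)_{\alpha+\gamma;p;\lambda}e^{t/\lambda}$ and $\lambda_2=(|\E_\cdot\delta A|)_{\alpha+2\gamma;p;\lambda}e^{t/\lambda}$, and then divide out the weight. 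Both give the same constant structure ($c_{\alpha,\gamma}$ from the $\E_\cdot\delta A$ term, $c'_{\alpha,\gamma,p}$ carrying the BDG $p$-dependence). The trade-off is that your approach is shorter and modular but requires the consistency step — namely that the globally sewn $\mathcal{A}$ restricted to $[s,t]$ coincides (up to modification) with the output of sewing $A|_{\Delta_{[s,t]}}$, so that the local, sharper constants apply; you correctly flag this as the delicate point and correctly close it via the $L^p$-Riemann-sum representation and the uniqueness statement in Theorem \ref{thm:stochasticsewing}. The paper's approach avoids that locality argument entirely by being fully explicit, at the cost of replicating the dyadic/BDG computation.
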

\begin{proof}
Let $(s,t)\in\Delta_{[0,T]}$ such that $|t-s|\le\lambda$. For any $h \in \mathbb{N}$, let $\pi^{h}=\{s=t_0^h<t_1^h<\dots<t_{2^h}^h=t\}$ be the $h$-th dyadic partition of $[s,t]$ and denote by $u_i^h$ the middle point of any interval $[t_i^h,t_{i+1}^h]$, $i=0,\dots,2^h-1$. Then Theorem \ref{thm:stochasticsewing} and an application of BDG inequality - as shown in \cite[Section 2]{le2020stochastic} - lead to 
\begin{align*}
        \|\Lambda(A)_{s,t}\|_p &= \|\delta \mathcal{A}_{s,t}-A_{s,t}\|_p = \|L^p\text{-}\lim_{h\to +\infty}\sum_{i=0}^{2^h-1} A_{t_i^h,t_{i+1}^h}-A_{t^0_0,t^0_1}\|_p = \\ 
        &= \|L^p\text{-}\lim_{h\to +\infty} \sum_{k=0}^{h-1} \big(\sum_{i=0}^{2^{k+1}-1} A_{t_i^{k+1},t_{i+1}^{k+1}}- \sum_{i=0}^{2^k-1} A_{t_i^k,t_{i+1}^k}\big)\|_p = \\ 
        &\le \lim_{h \to +\infty} \sum_{k=0}^{h-1}\|\sum_{i=0}^{2^k-1} -\delta A_{t_i^k,u_i^k,t_{i+1}^k}\|_p  \\ 
        &\le \lim_{h \to +\infty} \sum_{k=0}^{h-1} \Big(\sum_{i=0}^{2^k-1} \|\E_{t_i^k}(\delta A_{t_i^k,u_i^k,t_{i+1}^k})\|_p + 2c_p^{1/p}\big(\sum_{i=0}^{2^k-1} \|\delta A_{t_i^k,u_i^k,t_{i+1}^k}\|_p^2\big)^{\frac{1}{2}} \Big). 
\end{align*}
To conclude we can now note that, for every $h\ge 0$,\begin{align*}
        \sum_{k=0}^{h-1} \sum_{i=0}^{2^k-1} \|\E_{t_i^k}(\delta A_{t_i^k,u_i^k,t_{i+1}^k})\|_p &\le \sum_{k=0}^{h-1} \sum_{i=0}^{2^k-1} (|\E_\cdot\delta A|)_{\alpha+2\gamma;p;\lambda} e^{t_{i+1}^k/\lambda} |t_{i+1}^k-t_i^k|^{\alpha+2\gamma}  \\ 
        &\le (|\E_\cdot\delta A|)_{\alpha+2\gamma;p;\lambda}e^{t/\lambda} \sum_{k=0}^{h-1} \sum_{i=0}^{2^k-1} (2^{-k}|t-s|)^{\alpha+2\gamma} \\
        &\lesssim_{\alpha,\gamma} (|\E_\cdot\delta A|)_{\alpha+2\gamma;p;\lambda}e^{t/\lambda}\lambda^\gamma |t-s|^{\alpha+\gamma}  
    \end{align*}
and \begin{align*}
    \sum_{k=0}^{h-1} \big(\sum_{i=0}^{2^k-1} \|\delta A_{t_i^k,u_i^k,t_{i+1}^k}\|_p^2\big)^{\frac{1}{2}} 
    &\le \sum_{k=0}^{h-1} \big(\sum_{i=0}^{2^k-1} (|\delta A|)_{\alpha+\gamma;p;\lambda}^2 e^{2t_{i+1}^k/\lambda} |t_{i+1}^k-t_i^k|^{2(\alpha+\gamma)}\big)^{\frac{1}{2}} \\ 
    &\le (|\delta A|)_{\alpha+\gamma;p;\lambda}e^{t/\lambda} \sum_{k=0}^{h-1} \big(\sum_{i=0}^{2^k-1} (2^{-k}|t-s|)^{2(\alpha+\gamma)}\big)^{\frac{1}{2}} \\ 
    &\lesssim_{\alpha,\gamma} (|\delta A|)_{\alpha+\gamma;p;\lambda}e^{t/\lambda}|t-s|^{\alpha+\gamma}.  
\end{align*}
\end{proof}

\section{Malliavin calculus for smooth RSDEs}
We recollect some result concerning the application of Malliavin calculus to rough stochastic differential equations driven by smooth rough paths. Solutions to the latter, as one may expect, can also be seen as solutions to purely stochastic differential equations.
In our context it is always convenient to take into account the double nature of such solutions, since we want to apply Malliavin calculus but also to use estimates and stabililty results coming from rough stochastic equations. 
Throughout the entire section $Z : [0,T]\to \mathbb{R}^n$ is a smooth path and $\mathbf{Z} = (Z,\mathbb{Z})\in\mathscr{L}(\mathcal{C}^\infty([0,T];\R^n)$ is its canonical rough path lift. Recall that $\mathbf{Z}\in \bigcap_{\frac{1}{3}<\delta \le 1} \mathscr{C}^\delta ([0,T];\R^n)$. 
The following allows us to treat RSDEs driven by smooth rough paths as SDEs.
\begin{prop}\label{prop:consistencyforintegrals}  Let $\alpha \in (\frac{1}{3},\frac{1}{2}]$ and let $p \in [2,\infty), \ q \in [p,\infty]$. Let $(Y,Y') \in \mathscr{D}_Z^{2\alpha}L^{p,q}(\R^d)$ such that $Y$ is $\mathbb{P}$-a.s.\ continuous and $\sup_{t \in [0,T]} \|Y_t\|_p < +\infty$. Then, $\mathbb{P}$-a.s.\ and for any $t \in [0,T]$, \begin{equation*}
    \int_0^t (Y_r,Y'_r) d\mathbf{Z}_r = \int_0^t Y_r dZ_r .
\end{equation*}
\end{prop}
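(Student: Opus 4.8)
The plan is to prove the statement by showing that both sides of the claimed identity arise as limits of the same Riemann--Stieltjes--type sums, exploiting that for a smooth path $Z$ the rough integral and the classical (Young/Riemann--Stieltjes) integral against $dZ$ must agree. Concretely, recall from Proposition \ref{prop:roughstochastiintegral} that the rough stochastic integral is, for each fixed $t$, the $L^p$-limit of the compensated sums $\sum_{[u,v]\in\pi}\big(Y_u\,\delta Z_{u,v}+Y'_u\,\mathbb{Z}_{u,v}\big)$ over partitions $\pi$ of $[0,t]$ with mesh tending to zero. On the other hand, since $Z$ is smooth, $\mathbb{Z}_{u,v}=\int_u^v\delta Z_{u,r}\otimes\dot Z_r\,dr$, so $|\mathbb{Z}_{u,v}|\lesssim |v-u|^2$; hence $\big\|\sum_{[u,v]\in\pi}Y'_u\,\mathbb{Z}_{u,v}\big\|_p\lesssim \sup_t\|Y'_t\|_q\,|\pi|\,T\to 0$. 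Therefore the rough stochastic integral equals the $L^p$-limit of the plain sums $\sum_{[u,v]\in\pi}Y_u\,\delta Z_{u,v}$, which (using $\delta Z_{u,v}=\int_u^v\dot Z_r\,dr$) we then need to identify with $\int_0^t Y_r\,dZ_r=\int_0^t Y_r\,\dot Z_r\,dr$.

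First I would fix $t\in[0,T]$ and a sequence of partitions $\pi_k$ of $[0,t]$ with $|\pi_k|\to 0$, and write
\[
\Big\|\sum_{[u,v]\in\pi_k}(Y_u,Y'_u)\,d\mathbf{Z}\big|_{[u,v]}-\int_0^t(Y_r,Y'_r)\,d\mathbf{Z}_r\Big\|_p\to 0
\]
by Theorem \ref{thm:stochasticsewing} (as quoted in Proposition \ref{prop:roughstochastiintegral}), where the single-interval contribution of the rough integral over $[u,v]$ is by definition the $L^p$-limit of its own dyadic sums; but for bookkeeping it is cleaner to use the global statement of the stochastic sewing lemma, which directly gives $\int_0^t(Y_r,Y'_r)\,d\mathbf{Z}_r = L^p\text{-}\lim_k\sum_{[u,v]\in\pi_k}\big(Y_u\,\delta Z_{u,v}+Y'_u\,\mathbb{Z}_{u,v}\big)$. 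Next I would discard the $\mathbb{Z}$-term by the estimate above. It then remains to show $\sum_{[u,v]\in\pi_k}Y_u\,\delta Z_{u,v}\to\int_0^t Y_r\,\dot Z_r\,dr$ in $L^p$. For this I would write the difference as $\sum_{[u,v]\in\pi_k}\int_u^v(Y_u-Y_r)\,\dot Z_r\,dr$, bound its $L^p$-norm by $|\dot Z|_\infty\sum_{[u,v]\in\pi_k}\int_u^v\|Y_u-Y_r\|_p\,dr$, and use that $(Y,Y')\in\mathscr{D}^{2\alpha}_Z L^{p,q}$ forces $\|\delta Y_{s,t}\|_p\lesssim|t-s|^\alpha$ (combining the controlled-path decomposition $\delta Y_{s,t}=Y'_s\,\delta Z_{s,t}+R^Y_{s,t}$ with $|\delta Z|_\alpha<\infty$, $\sup\|Y'\|_q<\infty$, $\|\E_\cdot R^Y\|_{2\alpha;q}<\infty$, and $\|\|R^Y_{s,t}|\mathcal F_s\|_p\|_q\le\|\|\delta Y_{s,t}|\mathcal F_s\|_p\|_q+\|Y'\|_\infty|\delta Z|_\alpha|t-s|^\alpha$), so each summand is $\lesssim|v-u|^{1+\alpha}$ and the total is $\lesssim|\pi_k|^\alpha t\to 0$.

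Having both sides as the $L^p$-limit of the same sequence of sums, uniqueness of $L^p$-limits gives the identity for each fixed $t$, $\mathbb{P}$-a.s.; since both processes $t\mapsto\int_0^t(Y_r,Y'_r)\,d\mathbf{Z}_r$ and $t\mapsto\int_0^t Y_r\,\dot Z_r\,dr$ are $\mathbb{P}$-a.s.\ continuous (the former by Proposition \ref{prop:roughstochastiintegral}, the latter since $Y$ is a.s.\ continuous and $\dot Z$ is continuous), the two $\mathbb{P}$-null sets can be collected over a countable dense set of times and the equality upgraded to hold simultaneously for all $t\in[0,T]$, $\mathbb{P}$-a.s. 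I expect no serious obstacle here; the only mildly delicate point is making sure the a priori Hölder bound on $\delta Y$ is genuinely available from membership in $\mathscr{D}^{2\alpha}_Z L^{p,q}$ together with the standing assumption $\sup_t\|Y_t\|_p<\infty$ (it is, as sketched), and keeping track of the mixed $L^{p,q}$-norms versus plain $L^p$-norms when invoking the sewing lemma — but since we only need the weaker plain-$L^p$ conclusion, all of this is routine.
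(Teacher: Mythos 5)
Your proof is correct, but it takes a genuinely different route from the paper's. The paper's argument is a single local estimate: it observes that the Riemann--Stieltjes integral $\int_0^\cdot Y_r\,dZ_r$ is continuous, adapted, starts at $0$, and that its increment over $[s,t]$ differs from the germ $A_{s,t}=Y_s\,\delta Z_{s,t}+Y'_s\,\mathbb{Z}_{s,t}$ by $\int_s^t R^Y_{s,r}\,\dot Z_r\,dr$, which is $O(|t-s|^{1+\alpha})$ in the $\|\|\cdot\,|\mathcal F_s\|_p\|_q$-norm; the uniqueness part of the stochastic sewing lemma (Theorem \ref{thm:stochasticsewing}) then forces it to coincide with the rough stochastic integral. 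You instead work with the global Riemann-sum characterization: you discard the $\mathbb{Z}$-compensator via $|\mathbb{Z}_{u,v}|\lesssim|v-u|^2$, identify the surviving plain Riemann sum with $\int_0^t Y_r\,dZ_r$ through the $L^p$-Hölder bound on $\delta Y$, and conclude by uniqueness of $L^p$-limits plus a.s.\ continuity. Both are valid; the paper's is shorter and stays at the level of the germ, whereas yours is more explicit about which pieces of the compensated sum actually survive the smooth-path limit, which is arguably more instructive. One small remark on your parenthetical justifying $\|\delta Y_{s,t}\|_p\lesssim|t-s|^\alpha$: this is already a direct consequence of condition a) in Definition \ref{def:stochasticcontrolledpaths} together with $\|\delta Y_{s,t}\|_p=\|\|\delta Y_{s,t}|\mathcal F_s\|_p\|_p\le\|\|\delta Y_{s,t}|\mathcal F_s\|_p\|_q\le\|\delta Y\|_{\alpha;p,q}\,|t-s|^\alpha$; the detour through the controlled-path decomposition and $R^Y$ is unnecessary, and the last inequality you write there is circular since it reinserts $\|\|\delta Y_{s,t}|\mathcal F_s\|_p\|_q$ on the right-hand side — but the target estimate is correct, so the proof stands.
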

\begin{proof}
    The conclusion simply follows from the uniqueness part of Theorem \ref{thm:stochasticsewing}. Indeed, it is sufficient to observe that the continuous, adapted and $L^p$-integrable stochastic process $\int_0^\cdot Y_r dZ_r=(\int_0^t Y_r dZ_r)_{t \in [0,T]}$ satisfies $\int_0^t Y_r dZ_r \big|_{t=0}=0$ and \begin{equation*}
        \|\|\int_s^t Y_r dZ_r - Y_s\delta{Z}_{s,t} - Y'_s\mathbb{Z}_{s,t} |\mathcal{F}_s\|_p\|_q \lesssim |t-s|^{1+\varepsilon},
    \end{equation*} 
    for a certain $\varepsilon>0$ and for any $(s,t)\in \Delta_{[0,T]}$. Recall that $\|\E_s(\cdot)\|_q \le \|\|\cdot|\mathcal{F}_s\|_p\|_q$.
\end{proof}

Let us consider some coefficients vector fields $b \in \mathcal{C}^1_b(\R^d;\R^d)$, $\sigma \in \mathcal{C}^1_b(\R^d;\mathscr{L}(\R^m,\R^d))$, and $\beta \in \mathcal{C}^3_b(\R^d;\mathscr{L}(\R^n,\R^d))$.
We denote by $X=(X_t)_{t \in [0,T]}$ the unique solution of the following RSDE driven by the smooth rough path $\mathbf{Z}$: \begin{equation}  \label{eq:approximatingsolutionRSDE}\begin{aligned} dX_t&=b(X_t)dt+\sigma(X_t)dB_t+(\beta(X_t),D\beta(X_t)\beta(X_t))d\mathbf{Z}_t, \qquad t \in [0,T] \\ 
X_0&=x_0 \in \R^d.\end{aligned}
\end{equation}
By applying Proposition \ref{prop:consistencyforintegrals}, one can easily deduce that $X$ is indistinguishable from $\bar{X}=(\bar{X}_t)_{t\in [0,T]}$, being $\bar{X}$ the solution of the following stochastic differential equation:  \begin{equation*}  \begin{aligned} d\bar{X}_t&= (b(\bar{X}_t)+\beta(\bar{X}_t)\dot{Z}_t)dt+\sigma(\bar{X}_t)dB_t, \qquad t \in [0,T] \\ 
X_0&=x_0 \in \R^d.\end{aligned}
\end{equation*}
The fact that $X$ can also be seen as the solution of a pure SDE is a key point of our work, since Malliavin calculus classically applies to SDEs (see, for example, \cite[Section 2.2]{nualart2006malliavin}). In the following two lemmas we state the main results concerning Malliavin calculus and its application to smooth RSDEs. 

\begin{lemma} \label{lemma:MalliavincalculusforapproximatingRSDEs}
For any $t \in [0,T]$ and for any $i = 1,\dots,d$, \begin{equation*}
    X_t^{i} \in \mathbb{D}^{1,2}.
\end{equation*}  
Moreover, for $\lambda$-a.e.\ $\theta \in [0,T]$ and $\mathbb{P}$-a.s., \begin{equation*} \label{eq:equalityMalliavinderivativeandsolutionofalinearRSDEapproximation}
        \mathbb{D}_\theta X_t = \begin{cases}
            Y_t^{\theta} & \text{if $\theta \le t$} \\
            0 & \text{otherwise}
        \end{cases},
    \end{equation*} where $Y^{\theta}=(Y_r^{\theta})_{r \in [\theta,T]}$ is the unique $\R^{d \times m}$-valued solution of the following linear RSDE: \begin{equation*} \label{eq:equationforY^n}\begin{aligned} &dY^{\theta}_r=Db(X_r)Y^{\theta}_r dr+D\sigma(X_r)Y^{\theta}_r dB_r+(D\beta(X_r),D^2\beta(X_r)\beta(X_r))Y^{\theta}_r d\mathbf{Z}_r, \\ & \quad r \in [\theta,T] \\ &Y^{\theta}_\theta=\sigma(X_\theta).
\end{aligned}
\end{equation*} 
\end{lemma}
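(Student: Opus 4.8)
The plan is to reduce the statement to classical Malliavin calculus for It\^o SDEs. Since $\mathbf{Z}$ is the canonical lift of the smooth path $Z$, Proposition \ref{prop:consistencyforintegrals}, applied to the stochastic controlled path $(\beta(X),D\beta(X)\beta(X))$, shows that $X$ is indistinguishable from the solution $\bar X$ of the classical SDE
\begin{equation*}
d\bar X_t=(b(\bar X_t)+\beta(\bar X_t)\dot Z_t)\,dt+\sigma(\bar X_t)\,dB_t,\qquad \bar X_0=x_0 .
\end{equation*}
The drift $(t,x)\mapsto b(x)+\beta(x)\dot Z_t$ is continuous in $t$ and lies in $C^1_b$ in $x$, uniformly on $[0,T]$ (because $\dot Z$ is bounded there), and $\sigma\in C^1_b$; this is precisely the setting of \cite[Section 2.2]{nualart2006malliavin}. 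Hence $X_t^i\in\mathbb{D}^{1,2}$ (in fact $X_t^i\in\mathbb{D}^{1,p}$ for every $p\ge1$, using the uniform moment bounds on $X$), $\mathbb{D}_\theta X_t=0$ for a.e.\ $\theta>t$ by $\mathcal{F}_t$-measurability of $X_t$ (cf.\ \cite[Corollary 1.2.1]{nualart2006malliavin}), and for a.e.\ $\theta\le t$, $\mathbb{P}$-a.s.,
\begin{equation*}
\mathbb{D}_\theta X_t=\sigma(X_\theta)+\int_\theta^t(Db(X_r)+(D\beta(X_r)\,\cdot\,)\dot Z_r)\mathbb{D}_\theta X_r\,dr+\int_\theta^t D\sigma(X_r)\mathbb{D}_\theta X_r\,dB_r .
\end{equation*}

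The second step is to recognise the right-hand side above as the linear RSDE driven by $\mathbf{Z}$ in the statement. Well-posedness of the equation for $Y^\theta$ follows from Theorem \ref{thm:wellposednesslinearRSDEs}: $(Db(X),D\sigma(X))$ are bounded stochastic linear vector fields, $(D\beta(X),D^2\beta(X)\beta(X))\in\mathbf{D}^{2\alpha}_ZL_{lin}^{p,\infty}$ by Lemma \ref{lemma:consistencyandaprioriforstochasticlinearvectorfields}, and one picks $\gamma\in(\frac{1}{3},\alpha]$ and $p$ large so that $p\gamma>1$ (the equation being posed on $[\theta,T]$ with the $\mathcal{F}_\theta$-measurable initial datum $\sigma(X_\theta)$, a harmless shift of the framework of Section \ref{sec: linear rough stochastic equations}). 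Applying Proposition \ref{prop:stambilityundercompositionlinearvectorfields} to $(D\beta(X_\cdot),D^2\beta(X_\cdot)\beta(X_\cdot))$ and the solution $Y^\theta$, one obtains that $(D\beta(X_\cdot),D^2\beta(X_\cdot)\beta(X_\cdot))Y^\theta$ is a stochastic controlled path with uniformly bounded moments, so Proposition \ref{prop:consistencyforintegrals} converts its rough integral against $\mathbf{Z}$ into $\int_\theta^\cdot D\beta(X_r)Y^\theta_r\dot Z_r\,dr$. Therefore $Y^\theta$ solves, $\mathbb{P}$-a.s., exactly the linearized SDE displayed above; by uniqueness of solutions to that linear equation (classically, or via Theorem \ref{thm:wellposednesslinearRSDEs}) we conclude $\mathbb{D}_\theta X_t=Y^\theta_t$ for a.e.\ $\theta\le t$, $\mathbb{P}$-a.s.

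I expect the main difficulty to be organisational rather than deep: reconciling the two notions of solution and discharging all the stochastic-controlled-path hypotheses. Concretely one must keep track of adaptedness, $\mathbb{P}$-a.s.\ continuity, and uniform $L^p$-bounds of the relevant processes and their Gubinelli derivatives — both for $X$ (to apply Proposition \ref{prop:consistencyforintegrals} in the first step) and for the linearized equation (to apply it again in the second step). One must also ensure joint measurability in $(\theta,\omega)$ so that $\theta\mapsto Y^\theta_t$ genuinely represents $\mathbb{D}X_t$ as an element of $L^2([0,T]\times\Omega;\R^{d\times m})$; this is routine once one selects measurable versions of $\theta\mapsto Y^\theta$, which can be done using the stability of linear RSDEs in their inputs (Theorem \ref{thm:stabilityforlinearRSDEs}), in particular in the initial time.
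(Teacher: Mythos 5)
Your proof is correct and follows essentially the same route as the paper: convert the smooth-rough-path RSDE into a classical It\^o SDE via Proposition \ref{prop:consistencyforintegrals}, invoke \cite[Theorem 2.2.1]{nualart2006malliavin} for the drift $\bar b(t,x)=b(x)+\beta(x)\dot Z_t$, and then identify the resulting linear SDE for $\mathbb{D}_\theta X_t$ with the linear RSDE by undoing the reduction. The extra care you take in checking the hypotheses of Theorem \ref{thm:wellposednesslinearRSDEs} and in noting the joint measurability in $(\theta,\omega)$ is welcome detail that the paper leaves implicit, but the underlying argument is the same.
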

\begin{proof}
    See \cite[Theorem 2.2.1]{nualart2006malliavin}. Notice that the function $\bar{b}(t,x):=b(x)+\beta(x)\dot{Z}_t$ satisfies the assumptions (h1)-(h2) required on the drift coefficient in that paper. The conclusion follows by noting that $Y^\theta$ is (indistinguishable from) the solution of the following SDE: \begin{equation*} \begin{aligned} &dY^{\theta}_r=(Db(X_r)Y^{\theta}_r+D\beta(X_r)\dot{Z}_t) dr+D\sigma(X_r)Y^{\theta}_r dB_r,  \quad r \in [\theta,T] \\ &Y^{\theta}_\theta=\sigma(X_\theta).
\end{aligned}
\end{equation*} 
\end{proof}

\begin{lemma} \label{lemma:infiniteMalliavinforsmoothRSDEs}
    Let $X=(X_t)_{t\in [0,T]}$ be the solution of \eqref{eq:approximatingsolutionRSDE} and assume $b \in \mathcal{C}^\infty_b(\R^d;\R^d)$, $ \sigma\in \mathcal{C}^\infty_b(\R^d;\mathscr{L}(\R^m,\R^d))$, and $\beta \in \mathcal{C}^\infty_b(\R^d;\mathscr{L}(\R^n,\R^d))$. 
    Then, for any $t \in [0, T]$ and for any $i = 1, \dots, d$, \begin{equation*}
        X_{t }^i \in \bigcap_{k \in \mathbb{N}_{\ge 1}}  \bigcap_{p \ge 1} \mathbb{D}^{k, p}.
    \end{equation*} 
    Moreover, for any $k \in \mathbb{N}_{\geqslant 1}$, for $\lambda^{\otimes k}$-a.e. $(\theta_1, \ldots, \theta_k) \in [0, T]^k$ and $\mathbb{P}$-a.s., \begin{equation*}
      \mathbb{D}^k_{\theta_1, \ldots, \theta_k} X_t = \begin{cases}
          Y_t^{k, \theta_1,
     \ldots, \theta_k} & \text{if $\theta_1 \vee \ldots \vee \theta_k \le t$} \\
     0 & \text{otherwise}
      \end{cases},
  \end{equation*}
  where $Y ^{k, \theta_1, \dots, \theta_k} = (Y_s^{k, \theta_1, \dots, \theta_k})_{s \in [\theta_1 \vee \ldots \vee \theta_k, T]}$ is the unique solution of the following linear RSDE:
  \begin{align}
    \label{eq:linear kth RSDE} Y_s^{k, \theta_1, \ldots, \theta_k} & = 
    \xi^{k, \theta_1, \ldots, \theta_k} + F_s^{k, \theta_1, \ldots, \theta_k}
    + \int_{\theta_1 \vee \ldots \vee \theta_k}^s  Db(X_r) Y_r^{k,
    \theta_1, \ldots, \theta_k} d r + \notag \\
    &  + \int_{\theta_1 \vee \ldots \vee \theta_k}^s D \sigma (X_r) Y_r^{k,
    \theta_1, \ldots, \theta_k} d B_r +\\
    & + \int_{\theta_1 \vee \dots \vee \theta_k}^s (D \beta (X_t), D^2
    \beta (X_t) \beta (X_t)) Y_r^{k, \theta_1, \ldots, \theta_k} d
    \mathbf{Z}_r . \notag
  \end{align}
  More explicitly, we can write \begin{equation*}
      \xi^{k, \theta_1, \dots, \theta_k} = \sum_{l = 1}^k \sum_{\pi \in \Pi_k^l} D^{| \pi |} \sigma (X_{\theta_l}) {\prod_{B = \{ j_1 \leqslant
     \cdots \leqslant j_{| B |} \} \in \pi}}  Y_{\theta_l}^{| B |,\theta_{j_1}, \ldots, \theta_{j_{| B |}}}  \mathbf{1}_{\theta_{j_1}
     \vee \ldots \vee \theta_{j_{| B |}} \le \theta_l}
  \end{equation*}
  where $\Pi_k,\Pi_k^l$ are the set of all possible partitions of $\{1.\dots,k\}$ and $\{1,\dots,l-1,l+1,\dots,k\}$, respectively, and
  \begin{align*}
    F_t^{k, \theta_1, \ldots, \theta_k} & = \sum_{\pi \in \Pi_k \setminus \{
    \{ 1, \ldots, k \} \}} \left[ {\int_{\theta_1 \vee \ldots \vee \theta_k}^t}  D^{| \pi |} b (X_r) \prod_{B = \{ j_1 \le \cdots \le j_{| B |} \} \in \pi} Y_r^{| B |, \theta_{j_1}, \dots, \theta_{j_{| B |}}} d r + \right.\\
    &  + {\int_{\theta_1 \vee \ldots \vee \theta_k}^t}  D^{| \pi |} \sigma(X_r) \prod_{B = \{ j_1 \leqslant \cdots \leqslant j_{| B |} \} \in \pi} Y_r^{| B |, \theta_{j_1}, \ldots, \theta_{j_{| B |}}} d B_r +\\
    &  \left. + {\int_{\theta_1 \vee \ldots \vee \theta_k}^t}  (\Xi_r,\Xi_r') d \mathbf{Z}_r \right]
  \end{align*}
  with
  \begin{align*}
    \Xi_{\cdot} & =  D^{| \pi |} \beta (X_{\cdot}) \prod_{B = \{ j_1 \le \dots \le j_{| B |} \} \in \pi} Y_{\cdot}^{| B |,
    \theta_{j_1}, \ldots, \theta_{j_{| B |}}}\\
    \Xi'_{\cdot} & =  D^{| \pi | + 1} \beta (X_{\cdot}) \beta(X_{\cdot}) \prod_{B = \{ j_1 \le \cdots \le j_{| B |} \} \in \pi} Y_{\cdot}^{| B |, \theta_{j_1}, \ldots, \theta_{j_{| B |}}} +\\
    &   + D^{| \pi |} \beta (X_{\cdot}) \left( \prod_{B = \{ j_1 \le \dots \le j_{| B |} \} \in \pi} Y_{\cdot}^{| B |, \theta_{j_1},\dots, \theta_{j_{| B |}}} \right)' .
  \end{align*}
\end{lemma}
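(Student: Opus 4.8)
The plan is to recognise \eqref{eq:approximatingsolutionRSDE} as a disguised classical SDE, import the standard Malliavin calculus for SDEs with smooth coefficients, and then translate the linear SDEs solved by the iterated Malliavin derivatives back into rough form via Proposition \ref{prop:consistencyforintegrals}. Concretely, I would first use Proposition \ref{prop:consistencyforintegrals} to identify $X$ with the solution $\bar{X}$ of the classical SDE $d\bar{X}_t = \bar{b}(t,\bar{X}_t)\,dt + \sigma(\bar{X}_t)\,dB_t$, $\bar{X}_0 = x_0$, where $\bar{b}(t,x):=b(x)+\beta(x)\dot{Z}_t$. Since $b,\sigma,\beta\in\mathcal{C}^\infty_b$ and $Z\in\mathcal{C}^\infty([0,T];\R^n)$, the drift $x\mapsto\bar{b}(t,x)$ belongs to $\mathcal{C}^\infty_b(\R^d;\R^d)$ with all derivatives bounded uniformly in $t\in[0,T]$ and $t\mapsto\bar{b}(t,x)$ continuous, so the hypotheses of the classical theory hold; by \cite[Theorem 2.2.2]{nualart2006malliavin} and its iteration this gives $X_t^i=\bar{X}_t^i\in\bigcap_{k\ge1}\bigcap_{p\ge1}\mathbb{D}^{k,p}$ for all $t,i$, together with the fact that $\mathbb{D}^k_{\theta_1,\dots,\theta_k}X_t$ solves a linear SDE on $[\theta_1\vee\dots\vee\theta_k,T]$ which vanishes for $\theta_1\vee\dots\vee\theta_k>t$ by causality of the Malliavin derivative.

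The core of the argument is the identification of this linear SDE, which I would carry out by induction on $k$; the base case $k=1$ is Lemma \ref{lemma:MalliavincalculusforapproximatingRSDEs}. Starting from $X_s=x_0+\int_0^s b(X_r)\,dr+\int_0^s\beta(X_r)\dot{Z}_r\,dr+\int_0^s\sigma(X_r)\,dB_r$ and applying $\mathbb{D}^k_{\theta_1,\dots,\theta_k}$, on the two absolutely continuous integrals I would use the multivariate Faà di Bruno formula
\[
\mathbb{D}^k_{\theta_1,\dots,\theta_k}\big(g(X_r)\big)=\sum_{\pi\in\Pi_k}D^{|\pi|}g(X_r)\prod_{B\in\pi}\mathbb{D}^{|B|}_{\theta_B}X_r,
\]
where $\theta_B:=\max_{j\in B}\theta_j$ and, by the inductive hypothesis, $\mathbb{D}^{|B|}_{\theta_B}X_r=Y^{|B|,\theta_B}_r\mathbf{1}_{\theta_B\le r}$; isolating the full block $\pi=\{\{1,\dots,k\}\}$ recovers the homogeneous terms $Db(X_r)Y^{k,\dots}_r$ and $D\beta(X_r)Y^{k,\dots}_r\dot{Z}_r$, and all partitions with $|\pi|\ge2$ enter the forcing term. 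For the Itô integral I would iterate $\mathbb{D}_\theta\int_0^s\phi_r\,dB_r=\mathbf{1}_{\theta\le s}\phi_\theta+\int_0^s\mathbb{D}_\theta\phi_r\,dB_r$ $k$ times: the jump contributions amount to $\sum_{l=1}^k\mathbb{D}^{k-1}_{\{\theta_j:j\ne l\}}\big(\sigma(X_{\theta_l})\big)$, which expanded by Faà di Bruno is precisely the stated $\xi^{k,\theta_1,\dots,\theta_k}$ — the indicators $\mathbf{1}_{\theta_{j_1}\vee\dots\vee\theta_{j_{|B|}}\le\theta_l}$ arising from $\mathbb{D}^{|B|}_{\theta_B}X_{\theta_l}=0$ when $\theta_B>\theta_l$ — while the residual integral $\int_{\theta_1\vee\dots\vee\theta_k}^s\mathbb{D}^k_{\theta_1,\dots,\theta_k}\sigma(X_r)\,dB_r$ yields the diagonal term $D\sigma(X_r)Y^{k,\dots}_r$ and sends its off-diagonal parts into $F^{k,\theta_1,\dots,\theta_k}$.

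To finish I would rewrite each $dZ_r$-integral produced above, namely $\int D\beta(X_r)Y^{k,\dots}_r\,dZ_r$ and $\int D^{|\pi|}\beta(X_r)\prod_{B}Y^{|B|,\dots}_r\,dZ_r$, as a rough integral by Proposition \ref{prop:consistencyforintegrals}; the controlled-path structure of the integrands, with the Gubinelli derivatives $D^2\beta(X)\beta(X)Y$ and $\Xi'$ as in the statement, is supplied by Propositions \ref{prop:stambilityundercompositionlinearvectorfields} and \ref{pro: chain rule C3 function}, Lemma \ref{lemma:consistencyandaprioriforstochasticlinearvectorfields}, and the Leibniz rule for products of controlled paths, with harmless loss of integrability whenever a (conditional) Hölder inequality is applied since all moments are finite. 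Collecting terms then reproduces \eqref{eq:linear kth RSDE} with the displayed $\xi^{k,\dots}$ and $F^{k,\dots}$; since $\xi^{k,\dots}\in L^p(\Omega)$ and $(F^{k,\dots},(F^{k,\dots})')\in\mathscr{D}^{2\gamma}_ZL^p$ for all $p$ and suitable $\gamma$, Theorem \ref{thm:wellposednesslinearRSDEs} provides a unique $Y^{k,\theta_1,\dots,\theta_k}$, forcing $\mathbb{D}^k_{\theta_1,\dots,\theta_k}X_t=Y^{k,\theta_1,\dots,\theta_k}_t$ for $\theta_1\vee\dots\vee\theta_k\le t$ and $0$ otherwise.

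The hard part will be the combinatorial bookkeeping in the induction step: verifying that the multivariate Faà di Bruno and Leibniz expansion of $\mathbb{D}^k_{\theta_1,\dots,\theta_k}X_t$ splits exactly into the partition sums defining $\xi^{k,\dots}$ and $F^{k,\dots}$, and in particular that the indicator functions are placed correctly — they simultaneously encode the causality $\mathbb{D}_{\theta_j}X_r=0$ for $r<\theta_j$ and the fact that the jump in the $l$-th direction is created at time $\theta_l$ by the stochastic integral.
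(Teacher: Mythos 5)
Your proposal is correct and follows essentially the same route as the paper: identify the smooth-rough-path RSDE with a classical SDE via Proposition \ref{prop:consistencyforintegrals}, invoke \cite[Theorem 2.2.2]{nualart2006malliavin} for the iterated Malliavin differentiability, and then read off the explicit partition-indexed form of $\xi^{k,\theta_1,\dots,\theta_k}$ and $F^{k,\theta_1,\dots,\theta_k}$ by induction, translating the $dZ_r$-integrals back into rough form. The paper's proof is terser --- it defers the Fa\`a di Bruno and iterated-It\^o-derivative bookkeeping to the cited reference and only comments on the notational convention for $\prod_{B\in\pi}$ --- but the content coincides with what you spell out.
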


\begin{proof}
  Simply follows from \cite[Theorem 2.2.2]{nualart2006malliavin} - taking into account (P1)-(P2) of that paper and the indistinguishability between solutions of SDEs and smooth RSDEs. For the various linear equations, we are interested in explicitly tracking the initial condition and forcing term. The fact that these consist of linear operators applied to solutions of lower order equations will allow us to prove suitable convergence and a priori estimates results. The product sign $\prod_{B\in \pi}$ just denotes a formal product; recall that higher order derivatives of $b,\sigma,\beta$ can be seen as multilinear operators. 
\end{proof}

\bibliographystyle{abbrv}
\bibliography{bibliography}

\end{document}